\newcommand{\grad}{\nabla}
\newcommand{\norm}[2]{\mathopen{}\left\lVert #1\right\rVert_{#2}}
\DeclarePairedDelimiterX\dual[2]{\langle}{\rangle}{#1,#2}
\DeclarePairedDelimiter\parens()
\DeclarePairedDelimiter\abs{\lvert}{\rvert}
\providecommand\given{\nonscript\;\delimsize|\nonscript\;\mathopen{}}
\DeclarePairedDelimiterX\set[1]\{\}{#1}
\newcommand{\weaklyto}{\rightharpoonup}
\newcommand{\Lip}[1]{\mathrm{Lip}(#1)}
\newcommand{\cts}{\hookrightarrow}
\newcommand{\ctsCompact}{\xhookrightarrow{c}}
\newcommand{\ctsDense}{\xhookrightarrow{d}}
\newcommand{\Uad}{U_{\textup{ad}}}
\newcommand{\m}{\mathsf{m}}
\newcommand{\M}{\mathsf{M}}
\newcommand{\Id}{\mathrm{Id}}
\newcommand{\Z}{\mathsf{Z}}
\newcommand\N{\mathbb{N}}
\newcommand\R{\mathbb{R}}
\definecolor{darkorange}{rgb}{8.,.4,0.}
\newtheorem{theorem}{Theorem}[section]
\newtheorem{prop}[theorem]{Proposition}
\newtheorem{lem}[theorem]{Lemma}
\newtheorem{cor}[theorem]{Corollary}
\newtheorem{eg}[theorem]{Example}
\newtheorem{remark}[theorem]{Remark}
\newtheorem{ass}[theorem]{Assumption}
\theoremstyle{definition}
\newtheorem{defn}[theorem]{Definition}
\title{Minimal and maximal solution maps of elliptic QVIs of obstacle type: Lipschitz stability, differentiability and optimal control}
\author{Amal Alphonse\thanks{Weierstrass Institute, Mohrenstrasse 39, 10117 Berlin, Germany (\href{mailto:alphonse@wias-berlin.de}{alphonse@wias-berlin.de})} 
\and Michael Hinterm\"{u}ller\thanks{Weierstrass Institute, Mohrenstrasse 39, 10117 Berlin, Germany (\href{mailto:hintermueller@wias-berlin.de}{hintermueller@wias-berlin.de})} 
\and Carlos N. Rautenberg\thanks{Department of Mathematical Sciences and the Center for Mathematics and Artificial Intelligence (CMAI), George Mason University, Fairfax, VA 22030, USA (\href{mailto:crautenb@gmu.edu}{crautenb@gmu.edu})} 
\and Gerd Wachsmuth\thanks{ Institute of Mathematics, Brandenburgische Technische Universität Cottbus-Senftenberg, 03046 Cottbus, Germany (\href{mailto:gerd.wachsmuth@b-tu.de}{gerd.wachsmuth@b-tu.de})}}
\begin{document}
\maketitle
\begin{abstract}
Quasi-variational inequalities (QVIs) of obstacle type in many cases have
multiple solutions that can be ordered.
We study a multitude
of properties of the operator mapping the source term to the minimal or maximal
solution of such QVIs.
We prove that the solution
maps are locally Lipschitz continuous and directionally differentiable and show
existence of optimal controls for problems that incorporate these maps as the
control-to-state operator. We also consider a Moreau--Yosida-type penalisation for the
QVI wherein we show that it is possible to approximate the minimal and maximal
solutions by sequences of minimal and maximal solutions (respectively) of
certain PDEs, which have a simpler structure and offer a convenient
characterisation in particular for computation. For solution mappings of these
penalised problems, we prove a number of properties including   Lipschitz and
differential stability. Making use of the penalised equations, we derive (in
the limit) C-stationarity conditions for the control problem, in addition to
the Bouligand stationarity we get from the differentiability result.
\end{abstract}
\tableofcontents

\section{Introduction}\label{sec:intro}
Let $(\Omega, \sigma, \vartheta)$ be a measure space and define $H:=L^2(\Omega)$ to be the usual Lebesgue space on this measure space. 
We utilise the partial ordering $\leq$ defined in
the standard almost everywhere sense through $\vartheta$. Take $V$ to be a separable
Hilbert space with $V \cts H$ (a continuous embedding) and the
property that $v \in V$ implies $v^+\in V$ and that there exists a $C>0$ with
$\|v^+\|_V\leq C\|v\|_V$ for all $v\in V$.
Here, $(\cdot)^+ = \max(0,\cdot)$
denotes the positive part function. Let $A \colon V \to V^*$ be a bounded,
linear, coercive  and T-monotone operator and suppose that $\Phi\colon H \to V$ is a given
obstacle map which is increasing. 
Given a source term $f \in V^*$, consider the quasi-variational inequality (QVI)
\begin{equation}
\text{find } u \in V,\; u \leq \Phi(u)
\quad\text{such that}
\quad
\langle Au-f, u -v \rangle \leq 0\quad \forall v \in V \text{ with } v \leq \Phi(u).\label{eq:QVIIntro}
\end{equation}
Under certain circumstances, this inequality has solutions that can be ordered and we denote by $\M(f)$ the maximal solution of \eqref{eq:QVIIntro} and by $\m(f)$ the minimal solution.

In this paper, we study the sensitivity and directional differentiability of these extremal solution maps $\M$ and $\m$, in addition to deriving stationarity conditions for  optimisation problems with QVI constraints of the form
\begin{equation}\label{eq:ocProblemMostGeneral}
\min_{\substack{f \in \Uad}} J(\M(f),\m(f),f).
\end{equation}
Regarding particular instances of $J$, we have in mind optimisation problems such as
\begin{equation}\label{eq:ocProblemExamples}
\min_{\substack{f \in \Uad}} \frac 12\norm{\M(f) - \m(f)}{H}^2 + \frac{\nu}{2}\norm{f}{H}^2 \qquad\qquad\text{and}\qquad\qquad \min_{\substack{f \in \Uad}} \frac 12\norm{\M(f) -y_d}{H}^2 + \frac{\nu}{2}\norm{f}{H}^2.
\end{equation}
The first is a formulation aiming to minimise the variation in solutions, first modelled and motivated in \cite{AHRStability}, and the second is the typical tracking-type problem. 

Inspired in part by our interest in deriving stationarity conditions for the control problems and in part by some results of Lions and Bensoussan in \cite[Chapter 4]{LionsBensoussan}, a substantial portion of this paper is devoted to the study of the following penalised problem associated to the QVI \eqref{eq:QVIIntro}
\begin{equation}
Au + \frac 1\rho \sigma_\rho(u-\Phi(u))  = f,\label{eq:penalisedPDEGeneralMRho}
\end{equation}
where $\rho>0$ is a parameter and $\sigma_\rho$ is the following smoothed approximation of $(\cdot)^+$
\begin{equation}\label{eq:mrhoHK}
\sigma_\rho(r)
:= \begin{cases}
0 &\text{if } r \leq 0,\\
\frac{r^2}{2\rho} &\text{if } 0 < r < \rho, \\
r-\frac{\rho}{2} &\text{if } r \geq \rho,
\end{cases}
\end{equation}
It turns out that \eqref{eq:penalisedPDEGeneralMRho} also has multiple solutions that can be ordered and we can again find a maximal solution $\M_\rho(f)$ and a minimal one $\m_\rho(f)$. We  provide a substantive analysis of the properties of these maps $\M_\rho, \m_\rho$ and also their limiting behaviour as $\rho \searrow 0$.

For convenience, we summarise our most important findings.
\begin{itemize}
\item We show that $\M_\rho(f)$ and $\m_\rho(f)$ converge to $\M(f)$ and $\m(f)$ respectively under some assumptions. Along the way, we prove that $\M_\rho(f)$ and $\m_\rho(f)$ can themselves be approximated by iterative sequences of solutions of PDEs, opening up the possibility for computation and numerical simulation (see \cref{remark:constructiveApproxOfExtremals} for details).
\item We prove that all four of these extremal solution maps ($\M_\rho, \m_\rho, \M,$ and $\m$) are locally Lipschitz from $V^*$ into $V$ (by a bootstrapping and contraction argument; we also utilise some sharp estimates from \cite{Wachsmuth2021:2} to ensure that our assumptions are kept as unobtrusive as possible). 

\item We demonstrate that the four maps are directionally differentiable for more general directions than in previous works, and also Hadamard differentiable in a certain sense (the proof is along the lines of the iterative approach of \cite{AHR} with some modifications from \cite{Wachsmuth2021:2}).

\item Using the differentiability results on $\M$ and $\m$, we  derive first-order conditions of Bouligand type for the control problem. We also derive C-stationarity conditions, which is possible thanks to the various results on $\M_\rho$ and $\m_\rho$ that we obtain (we approximate \eqref{eq:ocProblemMostGeneral} with a penalised control problem and then pass to the limit).
\end{itemize}
For precise details of all the main results, see \cref{sec:mainresults} where we present them in full. Now, let us highlight the novelty and positioning of our work among the literature.
\begin{itemize}
\item Continuity of the minimal and maximal solution maps with perturbations in an $L^\infty$-type space was first proved in \cite[Theorem 4]{AHRStability} under the structural assumption that $\lambda\Phi( u) \leq \Phi(\lambda u)$ for all $\lambda \in (0,1)$ and for $u \in H_+$. In \cite[Theorem 3.2]{ChristofWachsmuth2021:1}, Lipschitz continuity of these maps was shown, again under this setup and for source terms belonging to a subset of $L^\infty$. 

In contrast, our result shows Lipschitz stability with respect to the $V$ norm and for sources in $V^*$ (thus we do not need to restrict to the $L^\infty$ setting) and we do not require the homogeneity-type assumption on $\Phi$ (we do however ask for a local small Lipschitz assumption, see \eqref{ass:PhiSmallLipschitzZ}).

In particular, if $\Phi$ has a small Lipschitz constant around $\M(f)$, we already know that locally there is a stable (with respect to the norm in $V$) solution of the QVI \cite{AHR, Wachsmuth2019:2, AHROCQVI}, but it is not clear whether these are the maximal solutions. On the other hand, there are results \cite{AHRStability, ChristofWachsmuth2021:1} showing that the maximal solution is stable (with respect to $L^\infty$). Now, our new results show that the maximal solution is indeed $V$-stable.

\item The first work on directional differentiability for solutions of QVIs in infinite dimensions is, to the best of our knowledge, \cite{AHR} where it was shown for localised solutions and for non-negative directions. Subsequent work in \cite{Wachsmuth2019:2} and \cite{AHROCQVI} relaxed the assumptions of \cite{AHR} greatly. All three papers use a type of smallness assumption on $\Phi$ (locally) similar to the one in this paper. However, neither paper tackled the case of extremal solutions. Regarding in particular differentiability for the minimal and maximal maps, this was proved in \cite{AHRExtremals} under some sign conditions on the direction and a QVI characterisation of the derivative was given. In \cite{ChristofWachsmuth2021:1}, again in an $L^\infty$-type setting and with $\Phi$ assumed to be concave, a differentiability result for the maximal solution appears and under assumptions that entail the unique global solvability of the QVI, a characterisation of the derivative is given. 

In this work, we provide a unique QVI characterisation of the directional derivative of the minimal and maximal solution maps under a general and natural function space setting and with relatively agreeable assumptions. In contrast to the two previous works \cite{AHRExtremals, ChristofWachsmuth2021:1} on extremal solution maps, we require neither sign restrictions on the perturbation directions nor concavity or homogeneity-type assumptions on $\Phi$ nor an embedding into $L^\infty$. 

\item The study of the specifics of the maps $\M_\rho$ and $\m_\rho$ in this general setting seems to entirely new, although we should once again remind the reader that \cite{LionsBensoussan} contains some results on the convergence behaviour of these maps in a specific setting (and not in generality like ours). The results on the sensitivity and differentiability of the maps are new as are the convergence results in this generality. 

\item The stationarity conditions for the control problem involving minimal and maximal solution maps are also entirely new. The works \cite{AHROCQVI, Wachsmuth2019:2} have addressed stationarity for control problems in a QVI setting but not for the extremal solution maps. Furthermore, our C-stationarity system in some sense improves the one in \cite{AHROCQVI}	 because we are able to show that the multipliers for the adjoints vanish on the inactive set (formally speaking; see \cref{prop:multiplerConditionsGerd}) without requiring any additional strong  assumptions.

\item On this note, we are for the first time able to treat problems like the first one in \eqref{eq:ocProblemExamples} in a substantial way. 

\item Our results remain valid when the obstacle mapping $\Phi \equiv \psi$ is constant, i.e., in the case where \eqref{eq:QVIIntro} is a variational inequality. We note in particular that \cref{prop:multiplerConditionsGerd} improves the $\mathcal{E}$-almost conditions derived in \cite[Theorem 3.4]{MR2822818} for control of the obstacle problem; see \cref{prop:epsAlmostCStationarity}.
\end{itemize}

Although we have specified the functional framework of this paper with the base space chosen as $L^2(\Omega)$, let us stress that in fact, many of our results will apply in far greater generality, with a much more general function space setting (than $H=L^2(\Omega)$ as taken above) and also with far more general maps $\sigma_\rho$ (provided certain crucial properties are satisfied) than the one above, such as for example $\sigma_\rho(u) := u^+$. For simplicity and clarity of exposition, we have decided to present our work with the choice of $H$ as above and with $\sigma_\rho$ as in \eqref{eq:mrhoHK} in the paper. We will not present the details here but invite interested readers to work out the details.

Regarding the organisation of the paper, we begin in \cref{sec:basicAssumptions} with some basic definitions, notations and fundamental results. In \cref{sec:mainresults} we state all of our main results for the convenience of the reader, include also some useful or interesting remarks and provide in \cref{sec:example} some examples. In \cref{sec:penalisedProblem}, we study \eqref{eq:penalisedPDEGeneralMRho} and an iterative sequence of associated problems and show that \eqref{eq:penalisedPDEGeneralMRho} does indeed possess extremal solutions. \cref{sec:convergencetoQVIs} is devoted to the study of the limit $\rho \to 0$ in \eqref{eq:penalisedPDEGeneralMRho}, both with and without a locally small Lipschitz assumption on $\Phi$. Using these obtained results, we prove our claims on the Lipschitzness of all the maps in \cref{sec:lipschitz} and directional differentiability in \cref{sec:dirDiff}. In \cref{sec:oc}, we study the optimisation problem \eqref{eq:ocProblemMostGeneral} and prove B-stationarity and various forms of C-stationarity.  In \cref{sec:conclusion}, we finish the main part of the paper with some final remarks.

\subsection{Notation and preliminaries}\label{sec:basicAssumptions}

Define the set $H_+ := \{ h \in H : h \geq 0\}$ of non-negative elements of $H = L^2(\Omega)$ and similarly, define $V_+$. We write $h^+=P_{H_+}h$ to denote the orthogonal projection of $h \in H$ onto $H_+$ and define $h^-:=h^+-h$. The infimum and supremum of two elements $h_1,h_2\in H$ are defined as usual: $\inf (h_1,h_2):=h_1-(h_1-h_2)^+$ and $\sup (h_1,h_2):=h_1+(h_2-h_1)^+$.
We define an order on the dual space $V^*$ via
\[ f \leq g \iff \langle f-g, v \rangle  \leq 0 \quad \forall v \in V_+.\]
Here $\langle \cdot, \cdot \rangle$ is the duality pairing between $V^*$ and $V$. Regarding the elliptic operator in \eqref{eq:QVIIntro}, as mentioned, we take $A\colon V \to V^*$ to be a linear operator that satisfies the following properties for all $u, v \in V$:
\begin{align*}
\langle Au, v \rangle &\leq C_b\norm{u}{V}\norm{v}{V}\tag{boundedness},\\
\langle Au, u \rangle &\geq C_a\norm{u}{V}^2\tag{coercivity},\\
\langle Au^+, u^- \rangle &\leq 0\tag{T-monotonicity},
\end{align*}
where $C_a, C_b > 0$ are constants.

With $\mathbf{K}(u) := \{ v \in V : v \leq \Phi(u)\}$, the QVI \eqref{eq:QVIIntro} can be written as	
\begin{equation*}
u \in \mathbf{K}(u) : \langle Au - f, u-v \rangle \leq 0 \quad \forall v \in \mathbf{K}(u).
\end{equation*}
We introduce $S\colon V^* \times H \to V$ as the solution map of the associated variational inequality, i.e, $u=S(f,\psi)$ if and only if
\[u \in \mathbf{K}(\psi) : \langle Au-f, u- v \rangle \leq 0 \quad \forall v \in \mathbf{K}(\psi).\]
Thus the solutions of \eqref{eq:QVIIntro} are precisely the fixed points of $S(f,\cdot)$.
\begin{ass} \label{ass:forZSubAndSuperSolutions}

Given $f \in V^*$, assume that there exist $\underline u, \overline u \in V$ such that
\[\underline u \leq S(f, \underline u), \quad \overline{u} \geq S(f, \overline u), \quad\text{and}\quad  \underline u \leq \overline u.\]
\end{ass}
The element $\underline u$ is called a \emph{subsolution} for $S(f,\cdot)$ and $\overline u$ is called a \emph{supersolution} for $S(f,\cdot)$. 

We come now to an existence result for \eqref{eq:QVIIntro}. For more existence results under different assumptions, see \cite[\S 2]{AHROCQVI}. 

\begin{prop}\label{prop:existenceOfZ}Under \cref{ass:forZSubAndSuperSolutions}, there exist a minimal solution $\m(f)$ and maximal solution $\M(f)$ to \eqref{eq:QVIIntro} on
	the interval $[\underline u, \overline u] := \set{v \in V \given \underline u \le v \le \overline u \text{ a.e.\ in } \Omega}$.
\end{prop}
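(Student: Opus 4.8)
The plan is to construct the minimal and maximal solutions as monotone limits of iterates of the variational-inequality solution map $S(f,\cdot)$, started respectively from the subsolution $\underline u$ and the supersolution $\overline u$. First I would record the two monotonicity properties of $S$ that make this work: (i) for fixed $f$, $\psi \mapsto S(f,\psi)$ is increasing (which follows from comparison for variational inequalities together with the assumption that $\Phi$ is increasing, since $\psi_1 \le \psi_2$ gives $\mathbf{K}(\psi_1) \subseteq \mathbf{K}(\psi_2)$), and (ii) $S(f,\cdot)$ maps the order interval $[\underline u, \overline u]$ into itself. For (ii), note that if $\underline u \le v \le \overline u$ then by (i) and \cref{ass:forZSubAndSuperSolutions} we get $\underline u \le S(f,\underline u) \le S(f,v) \le S(f,\overline u) \le \overline u$.

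Next I would define the iterates $u_0 := \underline u$ and $u_{n+1} := S(f,u_n)$. Since $u_0 = \underline u \le S(f,\underline u) = u_1$, property (i) gives by induction that $(u_n)$ is nondecreasing, and by (ii) it stays in $[\underline u, \overline u]$, hence is bounded in $H$ (and, I would check, bounded in $V$ using coercivity of $A$ and the uniform bound on the right-hand sides $f + \text{(obstacle data)}$). The sequence therefore converges monotonically in $H$ to some limit $\underline z$, and a standard weak-compactness plus Minty/Mosco argument shows $\underline z \in \mathbf{K}(\underline z)$ and $\underline z = S(f,\underline z)$, i.e. $\underline z$ solves the QVI and lies in $[\underline u, \overline u]$. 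Symmetrically, starting the iteration from $\overline u$ and using that $S(f,\overline u) \le \overline u$, one obtains a nonincreasing sequence converging to a solution $\overline z \in [\underline u, \overline u]$.

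It then remains to show these are the extremal solutions on $[\underline u,\overline u]$. Given any solution $w$ of the QVI with $\underline u \le w \le \overline u$, we have $w = S(f,w)$; since $u_0 = \underline u \le w$, monotonicity (i) yields $u_1 = S(f,\underline u) \le S(f,w) = w$, and inductively $u_n \le w$ for all $n$, so passing to the limit $\underline z \le w$; thus $\underline z = \m(f)$ is the minimal solution. The analogous argument with the decreasing sequence gives $\overline z \ge w$, so $\overline z = \M(f)$ is the maximal solution.

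The main obstacle I anticipate is not the monotone-iteration bookkeeping but the passage to the limit in the variational inequality defining $S(f,u_n)$: one must pass from $u_n \in \mathbf{K}(u_n)$, $\langle Au_n - f, u_n - v\rangle \le 0$ for all $v \in \mathbf{K}(u_n)$, to the same statement for $\underline z$, and the difficulty is that the constraint set $\mathbf{K}(u_n)$ itself moves with $n$. This requires continuity of $\Phi$ (or at least continuity along the monotone sequence $\Phi(u_n) \to \Phi(\underline z)$ in $V$, which should follow from $\Phi\colon H \to V$ being increasing and the $H$-convergence $u_n \to \underline z$ together with a uniform $V$-bound and a monotone-limit argument for ordered sequences in $V$) so that the limiting constraint set $\mathbf{K}(\underline z)$ is the Mosco limit of $\mathbf{K}(u_n)$; once that is in hand, the usual argument — test with a recovery sequence $v_n \to v$, $v_n \in \mathbf{K}(u_n)$, use weak lower semicontinuity of $u \mapsto \langle Au, u\rangle$ and weak continuity of the linear terms — closes the proof.
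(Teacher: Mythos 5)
Your constructive monotone-iteration argument is, in spirit, what the paper uses later for \cref{lem:convergenceOfVIsToQVIs}, but there the hypothesis \eqref{ass:PhiCC} (complete continuity of $\Phi$) is explicitly in force. \cref{prop:existenceOfZ}, by contrast, assumes nothing about $\Phi$ beyond the standing hypothesis that it is increasing. You correctly identify the passage to the limit in the moving constraint sets $\mathbf{K}(u_n)$ as the crux, but the fix you sketch is not available: an increasing map $\Phi\colon H\to V$ need not be order-continuous, so $u_n\nearrow \underline z$ in $H$ does not give $\Phi(u_n)\to\Phi(\underline z)$ in $V$ (already on $\mathbb{R}$, $\phi(x)=x$ for $x<0$ and $\phi(x)=x+1$ for $x\ge 0$ is increasing and jumps at $0$; the map $\Phi$ may similarly jump along the monotone sequence, and on top of that one needs convergence in the stronger $V$-topology). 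Without this, Mosco convergence $\mathbf{K}(u_n)\to\mathbf{K}(\underline z)$ is not secured, and the countable iteration does not establish $\underline z=S(f,\underline z)$. This is a genuine gap, not mere bookkeeping.

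The paper avoids the issue entirely by invoking the Birkhoff--Tartar theorem, a purely order-theoretic fixed-point result: an increasing self-map of an order interval $[\underline u,\overline u]$ that is a complete lattice (as it is in $L^2(\Omega)$, by the countable sup property) possesses least and greatest fixed points, with no continuity of $S(f,\cdot)$ or $\Phi$ required. Your extremality comparison at the end (comparing the iterates against an arbitrary fixed point $w$) is valid and does carry over to the order-theoretic setting; what must be replaced is the existence of the limit fixed point, which in the absence of continuity rests on Knaster--Tarski/Zorn (equivalently, transfinite rather than countable iteration), not on a passage to the limit along $n\in\mathbb{N}$.
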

\begin{proof}
We apply the Birkhoff--Tartar theorem \cite[\S 15.2.2, Proposition 2]{MR556865} which gives existence of fixed points for increasing maps that possess subsolutions and supersolutions to the map $S(f,\cdot)$ (which is increasing, see \cite[\S 4:5, Theorem 5.1]{Rodrigues}). See also \cite{tartar1974inequations}, \cite[\S 11.2]{MR745619} and \cite[Chapter 2]{Mosco}.

\end{proof}
We will use the notation $B_r(x)$ to denote the (closed) ball of radius $r$ centred at $x$. It should be clear from the context the function space in which the ball is taken but typically when we use $\delta$ (or a variant such as $\bar \delta$) for the radius, it refers to the $V^*$ ball, whereas the radius being $\epsilon$ (or a variant) refers to the $V$ ball.

\section{Main results}\label{sec:mainresults}
Let us discuss our main results. As a matter of notation, to handle both cases (of the minimal and maximal solution maps) simultaneously, we denote by $\Z$ one of the maps $\M$ or $\m$. Note that $\Z$ is defined at all points $f$ satisfying \cref{ass:forZSubAndSuperSolutions}.

\subsection{On directional differentiability}
Our first result concerns local Lipschitz continuity of $\Z$. For this, we need $\Z$ to be defined not just at a solitary point but in a neighbourhood. Thus, we need to expand \cref{ass:forZSubAndSuperSolutions} to take this into account.
\begin{ass}\label{ass:subAndSupersolutionsZ}
Let $f \in V^*$ and take a set  $W \subseteq V^*$ containing $f$ and assume  that there exist $\underline u, \overline u \in V$ and $\bar \delta$ such that
\begin{subequations}\label{ass:aAssumptionsForVI}
\begin{alignat}{2}
\underline u &\leq \overline u,\\
\underline u &\leq S(g, \underline u) \qquad &&\forall g \in B_{\bar\delta}(f) \cap W,\label{ass:a1VI}\\
\overline u &\geq S(g,\overline u) &&\forall g \in B_{\bar\delta}(f) \cap W\label{ass:a2VI}.
\end{alignat}
\end{subequations}
\end{ass}
The intersection with the set $W$ that appears in the assumption above is inspired by applications where the source terms may lie in some given ordered interval and it ensures that natural candidates for the sub- and supersolutions (namely those arising from the boundary of the ordered interval) indeed qualify as sub- and supersolutions, see the next remark.

\begin{remark}\label{remark:whyWeUseW}
Consider the example in \cref{sec:example}. If we had asked for  \eqref{ass:aAssumptionsForVI} to hold for all $g \in B_{\bar \delta}(f)$ (i.e. without the intersection with a set $W$), then $\overline u = 0$ does not satisfy \eqref{ass:a1VI} for the element $f=0 \in V^*$ since $B_{\bar \delta}(0)$ contains negative functions, so that $0 \leq S(g,0)$ may not hold for all $g \in B_{\bar \delta}(0)$. {Even worse, due to $S(g, \underline u) \le A^{-1} g$ for all $g \in V^*$ and $\underline u \in V$,
we would need $\underline u \le A^{-1} g$ for all $g \in B_{\bar\delta}(f)$, but this is not possible since $A^{-1} g$ could have negative singularities at arbitrary points.}
Hence, the intersection with $W$ is necessary for the existence of sub- and supersolutions.
\end{remark}
The next theorem will be proved in \cref{sec:lipschitz}. 
\begin{theorem}[Local Lipschitz continuity of $\Z$]\label{thm:localLipschitzForZ}
Let $f \in V^*$  and $W \subseteq V^*$ satisfy \cref{ass:subAndSupersolutionsZ}. Assume
\begin{align}
	&\Phi\colon V \to V \text{ is completely continuous}\label{ass:PhiCC},\\
\nonumber	&\text{there exists $\epsilon^* > 0$ such that $\Phi\colon B_{\epsilon^*}(\Z(f)) \to V$ has a Lipschitz constant $C_L$ satisfying} \\
								&\quad \text{$C_L < \frac{C_a}{C_b}$ or $A$ is self-adjoint and $C_L < 2\frac{ \sqrt{C_b/C_a}}{1 + C_b/C_a}$.}\label{ass:PhiSmallLipschitzZ}
\end{align}
Then then there exists  $\delta \in (0, \bar\delta)$ such that for all $g \in B_{\delta}(f) \cap W$,
\[\norm{\Z(f)-\Z(g)}{V} \leq C\norm{f-g}{V^*}\]
where $C > 0$ is a constant (which depends only on $C_L$, $C_a$, $C_b$ and the self-adjointness of $A$).
\end{theorem}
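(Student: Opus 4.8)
The plan is to exploit the characterisation of $\Z(f)$ as an \emph{extremal} fixed point of the VI solution map $S(f,\cdot)$, combined with the fact that a locally small Lipschitz constant on $\Phi$ makes $S(f,\cdot)$ a contraction on a suitable ball in $V$. First I would recall the standard Lipschitz estimate for the VI solution map $S$: using coercivity, T-monotonicity, and the boundedness of the projection-type inequality, one shows $\norm{S(f_1,\psi_1)-S(f_2,\psi_2)}{V} \le \frac{1}{C_a}\norm{f_1-f_2}{V^*} + c\,\norm{\psi_1-\psi_2}{V}$ with $c<1$ whenever $C_L$ satisfies \eqref{ass:PhiSmallLipschitzZ} (the self-adjoint case giving the sharper constant via the estimates from \cite{Wachsmuth2021:2}). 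Composing with $\Phi$, which has Lipschitz constant $C_L$ on $B_{\epsilon^*}(\Z(f))$, the map $T_g := S(g,\Phi(\cdot))$ is then a contraction with modulus $q := c\cdot C_L < 1$ on $B_{\epsilon^*}(\Z(f))$, \emph{provided} it maps this ball into itself; ensuring self-mapping for $g$ close to $f$ is the first technical point, handled by shrinking the $V^*$-radius $\delta$ so that $\norm{T_g(\Z(f)) - \Z(f)}{V} = \norm{T_g(\Z(f)) - T_f(\Z(f))}{V} \le \frac{1}{C_a}\norm{g-f}{V^*}$ is small enough that the Banach-fixed-point ball $B_{(1-q)^{-1}\frac{1}{C_a}\norm{g-f}{V^*}}(\Z(f))$ stays inside $B_{\epsilon^*}(\Z(f))$.

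The genuinely delicate step is to verify that the fixed point $\tilde u_g$ of $T_g$ produced by the contraction argument actually \emph{coincides} with the extremal solution $\Z(g)$ — a priori $\Z(g)$ is the extremal fixed point on the order interval $[\underline u, \overline u]$ from \cref{ass:subAndSupersolutionsZ}, which need not be contained in $B_{\epsilon^*}(\Z(f))$. Here I would use a two-sided squeezing argument. Consider, say, the maximal case $\Z = \M$. Starting the Banach iteration from the top, i.e.\ from $\M(f)$ itself (or from an element just above it that still lies in the order interval), monotonicity of $g \mapsto S(g,\cdot)$ and of $\Phi$, together with $g$ being an admissible perturbation, lets one sandwich the iterates between a decreasing sequence starting near $\overline u$ and the limit $\tilde u_g$; conversely, any fixed point of $S(g,\cdot)$ lying in $[\underline u, \overline u]$ that also lies in $B_{\epsilon^*}(\Z(f))$ must equal $\tilde u_g$ by uniqueness of the contraction's fixed point. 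The key claim to nail down is: for $g$ sufficiently close to $f$, the true extremal solution $\M(g)$ lies in $B_{\epsilon^*}(\M(f))$. This I would establish by a bootstrapping/iteration argument in the spirit of \cite{AHR}: use that $\M(g) = \lim_n S(g,\cdot)^n(\overline u)$ is squeezed between $\M(f)$-based iterates and the $\overline u$-based iterates, and propagate the $V^*$-closeness of $g$ to $f$ through the (globally valid but non-contractive) monotone iteration until it enters the ball where the contraction estimate takes over; complete continuity \eqref{ass:PhiCC} of $\Phi$ provides the compactness needed to pass to limits and exclude the iteration from "escaping" the ball.

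Once $\M(g) = \tilde u_g \in B_{\epsilon^*}(\M(f))$ is known, the Lipschitz bound is immediate from the contraction estimate:
\[
\norm{\M(f) - \M(g)}{V} = \norm{T_f(\M(f)) - T_g(\M(g))}{V} \le q\,\norm{\M(f) - \M(g)}{V} + \frac{1}{C_a}\norm{f - g}{V^*},
\]
so that $\norm{\M(f) - \M(g)}{V} \le \frac{1}{C_a(1-q)}\norm{f-g}{V^*}$, giving $C = (C_a(1-q))^{-1}$ with $q = c\,C_L$ depending only on $C_L, C_a, C_b$ and whether $A$ is self-adjoint — exactly the claimed dependence. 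The minimal case $\Z = \m$ is symmetric, iterating from $\underline u$ instead. The main obstacle, to reiterate, is purely the identification $\tilde u_g = \Z(g)$: the contraction argument alone only yields \emph{a} stable solution near $\Z(f)$, and the paper's own introduction flags precisely this gap ("it is not clear whether these are the maximal solutions"), so the proof must invest real work — via the order structure, the Birkhoff–Tartar construction, and complete continuity — into showing the stable solution is the extremal one.
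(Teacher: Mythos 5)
Your proposal takes essentially the same route as the paper: both reduce the Lipschitz estimate to showing $\Z(g) \in B_{\epsilon^*}(\Z(f))$ for $g$ near $f$, establishing this via the monotone Birkhoff--Tartar iteration from the sub-/supersolution (which identifies the limit as the extremal solution $\Z(g)$) combined with the contraction estimate of \cref{lem:TrhoLipschitz} near $\Z(f)$. The ``propagation of closeness'' step you outline is precisely what \cref{lem:ZnCts} and \cref{lem:ZnInBall} carry out, and the paper's proof obtains the Lipschitz bound by invoking the $\rho=0$ specialisation of \cref{thm:jointContinuityOfRhoMapsUnderContraction} in exactly this spirit.
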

In the assumption \eqref{ass:PhiSmallLipschitzZ} above, `self-adjoint' essentially means that the associated bilinear form is symmetric. Note that \eqref{ass:PhiSmallLipschitzZ} is indeed a rather strong assumption as it asks for a smallness condition on the Lipschitz constant of $\Phi$, albeit only locally. In particular, the assumption implies local uniqueness on the ball $B_{\epsilon^*}(\Z(f))$: any solution which exists in the ball is isolated. The nature of QVIs where non-uniqueness appears seems to necessitate such assumptions. We will demonstrate in \cref{sec:thermoforming} a real-world application in which such an assumption is satisfied.

With the addition of just one more assumption (namely the differentiability of $\Phi$ at a point) we can secure directional differentiability. Before we state the result, let us recall that the \emph{radial cone} of a set $C \subset X$ of a Banach space $X$ at a point $x \in C$ is defined as
\[\mathcal{R}_C(x) := \{ y \in X \mid \exists s_0 > 0 : x+sy \in C \;\; \forall s \in [0, s_0]\}.\]
The \emph{tangent cone} is defined as
\[\mathcal{T}_C(x) := \{ y \in X \mid \exists s_k \searrow 0, \; \exists y_k \to y \text{ in } X : x + s_ky_k \in C \;\;  \forall k \}.\]
In the case that $C$ is additionally convex,
the tangent cone is the closure of the radial cone in $X$, written $\mathcal{T}_C(x) = \overline{\mathcal{R}_C(x)}$.

\begin{theorem}[Hadamard differentiability of $\Z$]\label{thm:hadamardDiffZNEW}
Let $f \in V^*$ and $W \subseteq V^*$ satisfy \cref{ass:subAndSupersolutionsZ}. Assume  
\eqref{ass:PhiCC}, \eqref{ass:PhiSmallLipschitzZ} and 
\begin{align}
&\text{$\Phi$ is directionally differentiable at $\Z(f)$}\label{ass:PhiDiffAtUNew}.
\end{align}
Then
\begin{enumerate}[label=(\roman*)]\itemsep=0cm
\item\label{item:ZIsHadamardDiff} the map $\Z$ is Hadamard differentiable in the sense that if $d \in \mathcal{T}_W(f)$, then for any sequence $d_k \to d$ in $V^*$ with $f+s_kd_k \in W$ where $s_k \searrow 0$,
\[ \frac{\Z(f+s_kd_k)-\Z(f)}{s_k} \to \Z'(f)(d).\]
\item the derivative $\Z'(f)(d)$ is the unique solution of the QVI
	 \begin{equation}
 		\alpha \in \mathcal{K}^u(\alpha) : \langle A\alpha -d, \alpha - v \rangle \leq 0 \quad \forall v \in \mathcal{K}^u(\alpha)	 \label{eq:inequalityForAlpha}
	 \end{equation}
	 where, writing $u=\Z(f)$,
	 \[\mathcal{K}^u(\alpha) := \Phi'(u)(\alpha) + \mathcal{T}_{\mathbf K(u)}(u) \cap [f-Au]^\perp.\]
\item the map $\Z'(f)\colon \mathcal{T}_W(f) \to V$ can be extended to a bounded and continuous mapping from $V^*$ to $V$ by defining it via \eqref{eq:inequalityForAlpha} for all $d \in V^*$.  

\end{enumerate}
\end{theorem}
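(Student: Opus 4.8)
I would build $\Z'(f)(d)$ as the limit of an iteration of obstacle‑problem solution maps, following the scheme of \cite{AHR} combined with the sharp stability estimates of \cite{Wachsmuth2021:2}. Write $u:=\Z(f)$. After the substitution $z=w'-\Phi(w)$, the variational inequality defining $S$ reads $S(g,w)=\Phi(w)+P(g-A\Phi(w))$, where $P\colon V^*\to V$ is the solution operator of the VI over the fixed convex cone $\{v\in V:v\le 0\}$; recall that $P$ is Lipschitz with constant $1/C_a$ and, by conical differentiability of the obstacle problem, Hadamard directionally differentiable (being Lipschitz and conically differentiable). Since $\Z(g)$ is a fixed point of $S(g,\cdot)$ and $\psi\mapsto\psi+P(g-A\psi)$ is the obstacle‑to‑solution map, whose sharp Lipschitz constant $L_{\mathrm{obs}}$ in $V$ equals $C_b/C_a$ in general and the smaller value recorded in \cite{Wachsmuth2021:2} when $A$ is self‑adjoint, assumption \eqref{ass:PhiSmallLipschitzZ} says precisely that $q:=L_{\mathrm{obs}}C_L<1$; moreover \eqref{ass:PhiSmallLipschitzZ} makes $\Phi$ Lipschitz on $B_{\epsilon^*}(u)$, which together with \eqref{ass:PhiDiffAtUNew} yields that $\Phi'(u)$ is a Hadamard directional derivative at $u$ that is positively homogeneous and Lipschitz with constant $C_L$. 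Finally, \cref{thm:localLipschitzForZ} gives, for any sequence $d_k\to d$ in $V^*$, $s_k\searrow 0$ with $f+s_kd_k\in W$, that $\Z(f+s_kd_k)\to u$ with $\norm{\Z(f+s_kd_k)-u}{V}\le C s_k\norm{d_k}{V^*}$, so these points lie in $B_{\epsilon^*}(u)$ for $k$ large.

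Next I would isolate the limiting object. Put $w_\ast:=f-A\Phi(u)$, so that $P(w_\ast)=u-\Phi(u)$ and $AP(w_\ast)-w_\ast=Au-f$; consequently the critical cone of $P$ at $w_\ast$, namely $\mathcal{T}_{\{v\le 0\}}(P(w_\ast))\cap\{w_\ast\}^\perp$, coincides with $\mathcal{C}:=\mathcal{T}_{\mathbf K(u)}(u)\cap[f-Au]^\perp$. Define $G\colon V\to V$ by $G(\eta):=\Phi'(u)(\eta)+P'(w_\ast)(d-A\Phi'(u)(\eta))$. Since $\zeta\mapsto\zeta+P'(w_\ast)(d-A\zeta)$ is the conical derivative of the obstacle‑to‑solution map and hence inherits the Lipschitz constant $L_{\mathrm{obs}}$, the map $G$ is a $q$‑contraction; let $\alpha\in V$ be its unique fixed point and set $\alpha^0:=0$, $\alpha^{n+1}:=G(\alpha^n)$, so $\norm{\alpha^n-\alpha}{V}\le q^n\norm{\alpha}{V}$. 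Spelling out the VI that defines $P'(w_\ast)$ and substituting the test element $v\mapsto v+\Phi'(u)(\alpha)$, which bijects $\mathcal{C}$ onto $\mathcal{K}^u(\alpha)$, turns the identity $\alpha=G(\alpha)$ into exactly \eqref{eq:inequalityForAlpha}; conversely any solution of \eqref{eq:inequalityForAlpha} is a fixed point of $G$, so \eqref{eq:inequalityForAlpha} has a unique solution, equal to $\alpha$. This will give the second item of the theorem.

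For the convergence, fix a sequence as above, abbreviate $s=s_k$, $d_s=d_k$, and set $w_s^0:=u$, $w_s^{n+1}:=S(f+sd_s,w_s^n)$. For $s$ small, $S(f+sd_s,\cdot)$ is a $q$‑contraction of $B_{\epsilon^*}(u)$ into itself (a self‑map because $\norm{S(f+sd_s,u)-u}{V}=\norm{S(f+sd_s,u)-S(f,u)}{V}\le \tfrac{s}{C_a}\norm{d_s}{V^*}$), hence $w_s^n\to\Z(f+sd_s)$ as $n\to\infty$ with $\norm{w_s^n-\Z(f+sd_s)}{V}\le q^n\norm{u-\Z(f+sd_s)}{V}\le q^n Cs\norm{d_s}{V^*}$ — a bound uniform in $s$ once divided by $s$. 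Writing $w_s^{n+1}=\Phi(u+s\alpha_s^n)+P(w_\ast+sh_s^n)$ with $\alpha_s^n:=(w_s^n-u)/s$ and $h_s^n:=d_s-A(\Phi(u+s\alpha_s^n)-\Phi(u))/s$, one proves by induction that $\alpha_s^n\to\alpha^n$ in $V$ as $s\searrow 0$: granting this for $n$, Hadamard differentiability of $\Phi$ at $u$ gives $(\Phi(u+s\alpha_s^n)-\Phi(u))/s\to\Phi'(u)(\alpha^n)$, hence $h_s^n\to d-A\Phi'(u)(\alpha^n)$ in $V^*$ by boundedness of $A$, and then Hadamard differentiability of $P$ at $w_\ast$ gives $(P(w_\ast+sh_s^n)-P(w_\ast))/s\to P'(w_\ast)(d-A\Phi'(u)(\alpha^n))$, whence $\alpha_s^{n+1}\to\alpha^{n+1}$. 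Combining with the displays above,
\[
\norm{\frac{\Z(f+s_kd_k)-\Z(f)}{s_k}-\alpha}{V}\le q^nC\norm{d_k}{V^*}+\norm{\alpha_k^n-\alpha^n}{V}+q^n\norm{\alpha}{V},
\]
and choosing first $n$ large (using $\sup_k\norm{d_k}{V^*}<\infty$) and then $k$ large proves the difference quotients converge to $\alpha=:\Z'(f)(d)$; this gives \ref{item:ZIsHadamardDiff} and (ii). For (iii), note $G=G_d$ depends on $d$ only through the argument of $P'(w_\ast)$, on which $P'(w_\ast)$ is globally Lipschitz with constant $1/C_a$, so $G_d$ is a $q$‑contraction for every $d\in V^*$; thus \eqref{eq:inequalityForAlpha} is uniquely solvable for all $d\in V^*$, comparing $\alpha(d_i)=G_{d_i}(\alpha(d_i))$ yields $\norm{\alpha(d_1)-\alpha(d_2)}{V}\le\tfrac{1}{C_a(1-q)}\norm{d_1-d_2}{V^*}$, and since $\alpha(0)=0$ the extension $d\mapsto\alpha(d)$ is bounded and (Lipschitz‑)continuous and agrees with $\Z'(f)$ on $\mathcal{T}_W(f)$ by uniqueness.

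The main obstacle I anticipate is the interlocking of the two limits $s\searrow 0$ and $n\to\infty$: one must ensure, uniformly in $n$, that all iterates $w_s^n$ and the points $\Z(f+sd_s)$ remain inside the ball $B_{\epsilon^*}(u)$ on which \eqref{ass:PhiSmallLipschitzZ}, and hence the contraction constant $q$ and the Hadamard differentiability of $\Phi$, are available — which forces a careful interplay between the Banach fixed‑point estimates and \cref{thm:localLipschitzForZ} — and one must check at each inductive step that the directions $\alpha_s^n$ and $h_s^n$ fed into $\Phi'(u)$ and $P'(w_\ast)$ converge \emph{strongly} in $V$ respectively $V^*$, which is exactly what the induction together with the boundedness of $A$ delivers. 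A minor additional point is the identification, carried out in the second paragraph, of Mignot's critical cone for $P$ at $w_\ast$ with $\mathcal{T}_{\mathbf K(u)}(u)\cap[f-Au]^\perp$.
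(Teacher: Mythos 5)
Your proposal is correct and follows essentially the same route as the paper: iterate $w_s^{n+1}=S(f+sd_s,w_s^n)$ from $w_s^0=u$, differentiate each step, show the $\alpha^n$ converge by a Banach contraction, and interchange $s\searrow 0$ with $n\to\infty$ via the uniform-in-$s$ geometric tail $q^nCs\norm{d_s}{V^*}$. Both your Lemma-\ref{lem:iteratesInBallNEW}-style ball invariance and the two-sided triangle inequality used to pass to the diagonal limit are exactly what the paper does.

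The one genuine presentational difference is that you \emph{derive} the directional differentiability of $S$ from scratch by writing $S(g,w)=\Phi(w)+P(g-A\Phi(w))$, where $P$ is the solution operator of the obstacle problem over the fixed cone $\set{v\le 0}$, and then invoking Mignot-style conical differentiability of $P$; the paper instead cites its \cref{lem:SDirectionallyDiff} (going through \cite{AHR} and the vector-lattice version of Mignot's theorem in \cite{WGuidedTour}) and works with $S$ and the moving set $\mathcal{K}^u(\varphi,h)$ directly. Your unpacking has the advantage of making the role of the sharp Lipschitz constant $L_{\mathrm{obs}}$ from \cite{Wachsmuth2019:2,Wachsmuth2021:2} transparent, but it forces you to carry out the explicit identification of the critical cone of $P$ at $w_\ast$ with $\mathcal{T}_{\mathbf K(u)}(u)\cap[f-Au]^\perp$ — which you do, modulo the slip that the critical cone is $\mathcal{T}_{\set{v\le 0}}(P(w_\ast))\cap[w_\ast-AP(w_\ast)]^\perp$, not $\cap\set{w_\ast}^\perp$; your very next line $AP(w_\ast)-w_\ast=Au-f$ shows you know the right object.

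Two points worth making precise. First, the claim that $\zeta\mapsto\zeta+P'(w_\ast)(d-A\zeta)$ ``inherits'' the constant $L_{\mathrm{obs}}$ is not because it is literally the conical derivative of $\phi\mapsto\phi+P(g-A\phi)$ (which it is only for $d=0$); rather it is because, for every fixed $d$, testing the two VIs over the critical cone $\mathcal{C}$ against each other yields the same Wachsmuth-type inequality $\dual{A(\alpha_1-\alpha_2)}{(\alpha_1-\alpha_2)-(\zeta_1-\zeta_2)}\le 0$ independently of $d$, and then \cref{lem:small_estimate} (applied with $\Phi'(u)$ in place of $\Phi$) gives the contraction with $\tilde c<1$; this is precisely the argument in the paper's \cref{lem:alphaNBoundedNEW}. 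Second, the ball invariance must be verified for \emph{all} iterates $w_s^n$ simultaneously in $n$ and $s$, not only for $w_s^0$ and the fixed point $\Z(f+sd_s)$ — you flag this as a potential difficulty, and the paper's \cref{lem:iteratesInBallNEW} shows it does go through (the self-mapping of $T_\rho(f+s_kd_k,\cdot)$, or here $S(f+s_kd_k,\cdot)$, on $B_{\epsilon^*}(\Z(f))$ follows from exactly the triangle estimate you recorded).
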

For the proof, see \cref{sec:differentiabilityForZProof}.
\begin{remark}[Directional differentiability of $\Z$]\label{rem:HadamardToDirDiff}
A simple corollary of \cref{thm:hadamardDiffZNEW} \ref{item:ZIsHadamardDiff} is that the map $\Z\colon B_{\bar\delta}(f) \cap W \to V$ is directionally differentiable at $f$ in every direction $d \in \mathcal{R}_W(f) \subset V^*$:
\[\lim_{s \searrow 0}\frac{\Z(f+sd) - \Z(f)}{s} = \Z'(f)(d).\]	
Taking the direction from $\mathcal{R}_W(f)$ ensures that the perturbed solution $\Z(f+sd)$ is well defined via \cref{ass:subAndSupersolutionsZ}.
\end{remark}
\begin{eg}[The radial cone $\mathcal{R}_W(f)$]\label{eg:LinftyRadialCone}
Similarly to \cref{sec:example}, let us consider
\[W := \{ g \in V^* : 0 \leq g \leq F\}\]
with $F \geq k_0$ for some constant $k_0 > 0$, and $\underline u := 0$ and $\overline u := A^{-1}F$. Let us try to describe the radial cone at different points in $W$.
\begin{itemize}
\item Take $d \in L^\infty_+(\Omega)$. Then $sd \geq 0$ for all $s>0$ and if $s \leq k_0\slash \norm{d}{L^\infty(\Omega)}$, we have, for all $\varphi \in V_+$,
\[\langle sd-F, \varphi \rangle = \langle sd-k_0, \varphi \rangle + \langle k_0 - F, \varphi \rangle \leq 0,\]
so that $sd \in W$ for sufficiently small $s$. This shows that $L^\infty_+(\Omega) \subset \mathcal{R}_W(0)$.

\item In a similar way, take $d \in L^\infty_-(\Omega)$. For all $s \geq 0$, we have $F+sd \leq F$ and if $s \leq k_0\slash \norm{d}{L^\infty(\Omega)}$, we have
\[\langle F+sd,\varphi \rangle = \langle F-k_0, \varphi \rangle + \langle k_0 + sd, \varphi \rangle\]
and $k_0+sd \geq k_0 + k_0d\slash \norm{d}{L^\infty(\Omega)}  = k_0(1+d\slash \norm{d}{L^\infty(\Omega)})\geq 0$, thus $F+sd \geq 0$ and we have shown that $L^\infty_-(\Omega) \subset \mathcal{R}_W(F)$.

\item Now consider a point $f$ such that $0 < c_0 \leq f \leq c_1 < F$ where $c_0$ and $c_1$ are constants.
	Using similar arguments as above, we can show
	$L^\infty(\Omega) \subset \mathcal{R}_W(f)$.
\end{itemize}
\end{eg}

\subsection{On the penalised problem}\label{sec:main_results_penalised}
In this section, we address results for the 
penalised problem \eqref{eq:penalisedPDEGeneralMRho}, i.e.,
\begin{equation*}
Au+ \frac{1}{\rho}\sigma_\rho(u-\Phi(u)) = f.
\end{equation*} 
We denote by $T_\rho\colon V^* \times H \to V$ the solution map $(f,w) \mapsto u$ of the corresponding equation
\[Au + \frac{1}{\rho}\sigma_\rho(u-\Phi(w)) = f.\]
Here, we associate with the real-valued function $\sigma_\rho$ defined in \eqref{eq:mrhoHK} the operator $\sigma_\rho\colon V \to V^*$ defined as
\[\langle \sigma_\rho(u), v \rangle = \int_\Omega \sigma_\rho(u)v.\]
If, given $f \in V^*$ and a fixed $\rho > 0$, we have the availability of $\underline u, \overline u$ such that
\begin{equation}\label{ass:newTRhoSubSup}
\underline u \leq T_\rho(f, \underline u), \quad \overline{u} \geq T_\rho(f, \overline u), \quad\text{and}\quad  \underline u \leq \overline u,
\end{equation}
then  there exist a minimal solution $\m_\rho(f)$ and maximal solution $\M_\rho(f)$ to \eqref{eq:penalisedPDEGeneralMRho}
on $[\underline u, \overline u]$. We will show this in \cref{prop:existenceOfZRho}. In a similar way to before,
we use $\Z_\rho$ to denote either $\M_\rho$ or $\m_\rho$.

Since we want to consider the limit $\rho \searrow 0$, we need $\Z_\rho$ to be defined for sufficiently small $\rho$, hence \eqref{ass:newTRhoSubSup} (which holds for a fixed $\rho$) needs to be modified. We do this in the next assumption, which kills two birds with one stone: it also ensures that both $\Z_\rho$ and $\Z$ are defined on a neighbourhood (just like we argued for \cref{ass:subAndSupersolutionsZ}) and not just at one point.
\begin{ass}\label{ass:subAndSupersolutionsForZRho}Let $f \in V^*$ and take a set  $W \subseteq V^*$ containing $f$ and assume that there exist $\underline u, \overline u \in V$ and $\bar \delta, \rho_0 >0$ such that
\begin{subequations}\label{ass:aAssumptions}
\begin{alignat}{2}
\underline u &\leq \overline u,\\
\underline u &\leq S(g, \underline u) \qquad &&\forall g \in B_{\bar\delta}(f) \cap W,\label{ass:a1}\\
\overline u &\geq T_{\rho_0}(g,\overline u) &&\forall g \in B_{\bar\delta}(f)\cap W.\label{ass:a2}
\end{alignat}
\end{subequations}
\end{ass}
The fundamental question is whether $\M_\rho(f)$ and $\m_\rho(f)$ converge (in some sense) to $\M(f)$ and $\m(f)$. In fact, we can even prove something stronger with the following joint (in $\rho$ and the source term) continuity result; its proof appears in \cref{sec:convUnderContraction}.
\begin{theorem}[Convergence of $\Z_\rho(g)$ to $\Z(f)$]\label{thm:jointContinuityOfRhoMapsUnderContraction}
Let $f \in V^*$ and $W \subseteq V^*$ satisfy \cref{ass:subAndSupersolutionsForZRho}. 
Assume
\eqref{ass:PhiCC}
\footnote{Instead of \eqref{ass:PhiCC}
we could assume that $\Phi\colon V \to V$ is continuous, \eqref{ass:PhiWeakSeqCts}, \eqref{ass:VCompactInH} and \eqref{ass:gerd1}.}
and
\eqref{ass:PhiSmallLipschitzZ}.
Then
		\begin{equation*}
			\lim_{\substack{\rho \searrow 0\\g\to f}} \Z_\rho(g)		=
			\Z(f)
		\end{equation*}
		where the convergence $g \to f$ is understood in $V^*$ and for $g \in W$.
\end{theorem}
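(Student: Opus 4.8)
The plan is to combine three ingredients: (a) a uniform-in-$\rho$ a priori bound placing all the relevant extremal solutions in a fixed order interval $[\underline u, \overline u]$, (b) a contraction/fixed-point characterisation of $\Z_\rho(g)$ and $\Z(f)$ that is stable under the joint limit, and (c) the complete continuity of $\Phi$ to upgrade weak limits to strong ones. First I would record that \cref{ass:subAndSupersolutionsForZRho} gives, for every $g \in B_{\bar\delta}(f)\cap W$ and every $\rho \le \rho_0$, a subsolution $\underline u$ and supersolution $\overline u$ for $T_\rho(g,\cdot)$ (the supersolution property for $\rho \le \rho_0$ following from monotonicity of $\rho \mapsto \sigma_\rho$, or rather from the ordering $T_\rho \le T_{\rho_0}$ on supersolutions, which should be available from the penalised-problem section), so that $\Z_\rho(g) \in [\underline u, \overline u]$ and likewise $\Z(f) \in [\underline u,\overline u]$. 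Testing the penalised equation with $\Z_\rho(g) - \underline u$ (or using coercivity together with the sign of the penalty term) yields a bound $\norm{\Z_\rho(g)}{V} \le C$ independent of $\rho$ and of $g$ in the ball.

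Next I would extract, from an arbitrary sequence $\rho_n \searrow 0$, $g_n \to f$ in $V^*$ with $g_n \in W$, a weakly convergent subsequence $\Z_{\rho_n}(g_n) \weaklyto z$ in $V$. By \eqref{ass:PhiCC} (complete continuity of $\Phi$), $\Phi(\Z_{\rho_n}(g_n)) \to \Phi(z)$ strongly in $V$. The key step is then to pass to the limit in the penalised equation
\[
A\,\Z_{\rho_n}(g_n) + \tfrac{1}{\rho_n}\sigma_{\rho_n}\!\big(\Z_{\rho_n}(g_n) - \Phi(\Z_{\rho_n}(g_n))\big) = g_n
\]
and identify $z$ as a solution of the QVI \eqref{eq:QVIIntro} with source $f$. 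This is the standard Moreau--Yosida passage to the limit: the penalty term $\tfrac{1}{\rho_n}\sigma_{\rho_n}$ is bounded in $V^*$, one shows $z \le \Phi(z)$ a.e.\ by using that $(\Z_{\rho_n}(g_n) - \Phi(\Z_{\rho_n}(g_n)))^+ \to 0$, and for $v \le \Phi(z)$ one tests with an admissible competitor (using, e.g., $\inf(v, \Phi(\Z_{\rho_n}(g_n)))$ as a recovery sequence, which converges to $v$) to obtain the variational inequality in the limit, the penalty contribution having the right sign. Combined with $z \in [\underline u,\overline u]$ and the strong convergence from complete continuity, one gets $\Z_{\rho_n}(g_n) \to z$ strongly in $V$, with $z$ a solution of \eqref{eq:QVIIntro}.

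It remains to pin down which solution $z$ is, and this is where I expect the main obstacle to lie — identifying $z = \M(f)$ (resp.\ $\m(f)$) rather than merely some solution in the order interval. For this I would exploit the small-Lipschitz hypothesis \eqref{ass:PhiSmallLipschitzZ}: under it the relevant solution map is a contraction near $\Z(f)$, so the fixed-point iteration $u_{k+1} = S(f, u_k)$ started from $\underline u$ converges to $\m(f)$ and from $\overline u$ to $\M(f)$, and an analogous statement holds for $T_\rho$. The strategy is to show that $\Z_\rho$ is obtained as the limit of the same monotone iteration applied with the penalised map, then use the uniform contraction (with constant bounded away from $1$ uniformly in $\rho$, which is exactly what \eqref{ass:PhiSmallLipschitzZ} and the sharp estimates from \cite{Wachsmuth2021:2} buy us) together with the convergence $T_\rho(g,w) \to S(f,w)$ for fixed $w$ (a one-shot penalised-to-VI limit, again via complete continuity) to conclude by a standard three-$\varepsilon$ argument that $\Z_\rho(g) \to \Z(f)$. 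Since every subsequence has a further subsequence converging to the same limit $\Z(f)$, the full joint limit follows. The delicate points to get right are the uniformity of the contraction constant in $\rho$ near $\Z(f)$ and making sure the monotone-iteration characterisation of the \emph{extremal} penalised solutions is genuinely compatible in the limit with that of the extremal QVI solutions; these are presumably handled by the preparatory results of \cref{sec:penalisedProblem,sec:convergencetoQVIs} that I would cite here.
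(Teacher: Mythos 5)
Your third paragraph is essentially the paper's argument: define the iterative sequences $Z^n(\rho,g)$ starting from $\underline u$ or $\overline u$ under $T_\rho(g,\cdot)$, exploit the contraction (with constant $\hat c < 1$ from \cref{lem:TrhoLipschitz}, uniform in $\rho$) coming from \eqref{ass:PhiSmallLipschitzZ}, use the pointwise convergence $T_\rho(g,w)\to S(f,w)$, and conclude by an interchange-of-limits argument. Your first two paragraphs, by contrast, are a detour the paper does not take: extracting a weakly convergent subsequence of $\Z_{\rho_n}(g_n)$ and passing to the limit in the penalised equation (Minty-plus-recovery-sequence) only identifies accumulation points as \emph{some} solutions of the QVI, which, as you yourself then observe, does not pin down the extremal one. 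Nothing from those two paragraphs is actually used once you switch to the iteration argument, so they can be cut.

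The one genuine gap is what you wave at as a ``delicate point... presumably handled by the preparatory results of \cref{sec:penalisedProblem,sec:convergencetoQVIs}.'' The key ingredient — that the iterates $Z^n(\rho,g)$ lie in $B_{\epsilon^*}(\Z(f))$ (the ball where \eqref{ass:PhiSmallLipschitzZ} holds) uniformly for $n\ge N$, $\rho \le \rho_0$, $g\in B_\delta(f)$ — is \emph{not} a preparatory result from earlier sections; it is \cref{lem:ZnInBall}, which has to be established in the same place as the theorem and which is where the real work sits. Without this containment the contraction constant $\hat c<1$ is not applicable, because $T_\rho(g,\cdot)$ is only a contraction on the ball where $\Phi$ has the small Lipschitz constant, and the starting point $\underline u$ or $\overline u$ need not lie in it. The paper obtains it by: (i) using the monotone iteration (\cref{lem:convergenceOfVIsToQVIs}, needing only \eqref{ass:PhiCC}) to get $Z^N(0,f)$ into the ball for some large $N$; (ii) continuity of $Z^N$ at $(0,f)$ (\cref{lem:ZnCts}, built from \cref{lem:TrhoLipschitz} and \cref{lem:PDEApproximatesVI}) to get $Z^N(\rho,g)$ into the ball for $(\rho,g)$ near $(0,f)$; and (iii) an induction on $n\ge N$ using the contraction estimate together with the one-shot bound $\norm{T_\rho(f,\Z(f))-\Z(f)}{V}\le\frac{1-\hat c}{2}\epsilon$ for small $\rho$ to keep the iterates in the ball. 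Your proposal asserts the conclusion of this lemma but does not supply this induction, so the proof is incomplete at precisely the step that makes the contraction argument legitimate.
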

As we said in \cref{remark:whyWeUseW}, having $W \neq V^*$ in \cref{ass:subAndSupersolutionsForZRho} above makes it a weaker assumption than if it held with $W$ equal to the entire space $V^*$, and leads to a convergence result with respect to $g$ that is perhaps weaker than one might first expect, but this is obviously natural since the extremal maps only exist for such source terms.

By choosing $W = \{f\}$ in the statement of the theorem, we
get the corollary below. Note that the assumption below essentially asks for
the inequalities in \eqref{ass:aAssumptions} to hold only at ($g$ replaced
with) the particular point $f$.
\begin{cor}[Convergence of $\Z_\rho(f)$ to $\Z(f)$]\label{thm:continuityOfRhoMapsUnderContraction}
Let  $f \in V^*$ and $W:=\{f\}$ satisfy \cref{ass:subAndSupersolutionsForZRho} and assume
\eqref{ass:PhiCC}
and \eqref{ass:PhiSmallLipschitzZ}.
Then $\Z_\rho(f) \to \Z(f)$ in $V$.
\end{cor}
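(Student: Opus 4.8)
The statement is designed to be read as a specialisation of \cref{thm:jointContinuityOfRhoMapsUnderContraction}, so the plan is simply to invoke that theorem with the choice $W=\{f\}$ and check that the hypotheses match. Indeed, \cref{ass:subAndSupersolutionsForZRho} with $W=\{f\}$ is word for word the assumption of the corollary, and the two structural hypotheses \eqref{ass:PhiCC} and \eqref{ass:PhiSmallLipschitzZ} are the same. In \cref{thm:jointContinuityOfRhoMapsUnderContraction} the limit $g\to f$ is taken over $g\in W$; when $W=\{f\}$ the only admissible source is $g=f$, so the double limit $\lim_{\rho\searrow 0,\,g\to f}\Z_\rho(g)=\Z(f)$ degenerates into $\lim_{\rho\searrow 0}\Z_\rho(f)=\Z(f)$, which is precisely the assertion. (One only has to note in passing that $\Z_\rho(f)$ is well defined for all sufficiently small $\rho$ and that $\Z(f)$ is well defined, both of which are furnished by \cref{ass:subAndSupersolutionsForZRho} via \cref{prop:existenceOfZRho} and \cref{prop:existenceOfZ}.) So there is genuinely nothing to prove beyond this reduction, and all the real work sits in \cref{thm:jointContinuityOfRhoMapsUnderContraction}.

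For completeness, a self-contained argument (which is nothing but the proof of the theorem restricted to $g=f$) would go as follows. One first shows that $\{\Z_\rho(f)\}$ is bounded in $V$ uniformly for small $\rho$: test \eqref{eq:penalisedPDEGeneralMRho} with $\Z_\rho(f)$ itself, split $\Z_\rho(f)=\bigl(\Z_\rho(f)-\Phi(\Z_\rho(f))\bigr)+\Phi(\Z_\rho(f))$ to exploit $\langle\sigma_\rho(r),r\rangle\ge 0$, and bound $\Phi(\Z_\rho(f))$ using that all $\Z_\rho(f)$ lie in the order interval $[\underline u,\overline u]$ together with \eqref{ass:PhiCC} and the coercivity of $A$. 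One then extracts a subsequence $\Z_\rho(f)\weaklyto u^*$ in $V$; by complete continuity of $\Phi$ one gets $\Phi(\Z_\rho(f))\to\Phi(u^*)$ strongly in $V$. Showing that the penalty term stays bounded (in $H$, say) forces $\bigl(\Z_\rho(f)-\Phi(\Z_\rho(f))\bigr)^+\to 0$, whence $u^*\le\Phi(u^*)$, and a standard passage to the limit in the penalised equation yields that $u^*$ solves the QVI \eqref{eq:QVIIntro}. A final coercivity/ellipticity estimate (subtract the penalised equation and the QVI, test with the difference) upgrades this to strong convergence $\Z_\rho(f)\to u^*$ in $V$.

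The delicate point — and the reason \eqref{ass:PhiSmallLipschitzZ} is indispensable — is to identify \emph{which} solution of the QVI the limit $u^*$ is, namely the extremal one $\Z(f)\in\{\M(f),\m(f)\}$, and to conclude convergence of the whole family rather than just of subsequences. This is where one runs the iterative scheme built from $T_\rho$ (started at $\underline u$ or $\overline u$), uses the Birkhoff--Tartar-type monotonicity to sandwich the penalised extremal solutions, and combines it with the fact that the small Lipschitz constant turns the relevant fixed-point map into a contraction in $V$ on a ball around $\Z(f)$, uniformly in $\rho$ — this pins the limit down to $\Z(f)$ and removes the need for subsequences. For the corollary, however, even this is unnecessary: it is entirely subsumed by \cref{thm:jointContinuityOfRhoMapsUnderContraction}, and the one-line reduction above is the proof.
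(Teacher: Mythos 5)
Your reduction to \cref{thm:jointContinuityOfRhoMapsUnderContraction} with $W=\{f\}$ is exactly the paper's own proof (the paper states this reduction just before the corollary, and notes at the end of the proof of the theorem that the corollary follows by running the same argument with the sub- and supersolutions from \cref{ass:subAndSuperSolutions}, which is verbatim \cref{ass:subAndSupersolutionsForZRho} with $W=\{f\}$). The ``for completeness'' sketch you append is not needed and, as a matter of record, is structured more like the proof of \cref{thm:convergenceOfMRho} (extract a weak limit, identify it, upgrade to strong convergence) than like the paper's actual proof of the theorem, which is built around the interchange of iterated limits over the sequence $Z^n(\rho,g)$; but since you correctly identify the one-line reduction as the proof, this is immaterial.
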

\begin{remark}
This result uses the small Lipschitz condition \eqref{ass:PhiSmallLipschitzZ} but it is not necessary to obtain the convergence of $\M_\rho(f)$ to $\M(f)$, see \cref{thm:convergenceOfMRho}. It is an open problem whether $\m_\rho(f)$ converges to $\m(f)$ under the general assumptions of \cref{thm:convergenceOfMRho}. 
\end{remark}

In a similar fashion to \cref{thm:localLipschitzForZ} and \cref{thm:hadamardDiffZNEW}, we have the following local Lipschitz and differentiability results for $\Z_\rho$, proven in \cref{sec:lipschitz} and \cref{sec:rhoPositiveCase} respectively.
\begin{theorem}[Local Lipschitz continuity of $\Z_\rho$]\label{thm:localLipschitzForZRho}
Let $f \in V^*$ and $W \subseteq V^*$ satisfy \cref{ass:subAndSupersolutionsForZRho}. Assume
\eqref{ass:PhiCC} and
\eqref{ass:PhiSmallLipschitzZ}.
Then there exist $\rho_0$ and $\delta  > 0$ such that for all $\rho \leq \rho_0$ and $g \in B_{ \delta }(f) \cap W$,
\[\norm{\Z_\rho(f)-\Z_\rho(g)}{V} \leq C\norm{f-g}{V^*}\]
where $C > 0$ is a constant (which depends only on $C_L$, $C_a$, $C_b$ and the self-adjointness of $A$).
\end{theorem}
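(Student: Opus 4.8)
The plan is to mimic the bootstrapping-and-contraction argument that (per the introduction) underlies \cref{thm:localLipschitzForZ}, but now at the level of the penalised solution map $T_\rho(\cdot,\cdot)$ instead of the VI map $S(\cdot,\cdot)$. The key observation is that the map $w \mapsto T_\rho(g,w)$ is increasing (so that $\Z_\rho(g)$ exists by \cref{prop:existenceOfZRho}, whose hypotheses are guaranteed by \cref{ass:subAndSupersolutionsForZRho} for all $g \in B_{\bar\delta}(f)\cap W$ and all $\rho \le \rho_0$), and moreover that $w \mapsto T_\rho(g,w)$ is \emph{uniformly in $\rho$} a contraction on a small $V$-ball around $\Z(f)$ once the small Lipschitz condition \eqref{ass:PhiSmallLipschitzZ} is in force. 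The latter is the crucial quantitative input: if $u_i = T_\rho(g_i, w_i)$, then subtracting the two penalised equations and testing with $u_1 - u_2$, one uses coercivity of $A$ on the left, the monotonicity of $r \mapsto \sigma_\rho(r)$ (which makes the penalty term contribute with a favourable sign, up to the difference $\Phi(w_1)-\Phi(w_2)$), and the boundedness of $A$ together with the $1$-Lipschitz bound on $\sigma_\rho$ to obtain
\[
\norm{T_\rho(g_1,w_1)-T_\rho(g_2,w_2)}{V} \le \frac{1}{C_a}\norm{g_1-g_2}{V^*} + q\,\norm{\Phi(w_1)-\Phi(w_2)}{V},
\]
where $q = C_b/C_a$ (or the improved constant from \cite{Wachsmuth2021:2} in the self-adjoint case). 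Combined with the Lipschitz constant $C_L$ of $\Phi$ near $\Z(f)$ one gets the contraction factor $q C_L < 1$ exactly under \eqref{ass:PhiSmallLipschitzZ}.

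Next I would set up the fixed-point iteration carefully. Fix a radius $\epsilon \le \epsilon^*$ so small that $B_\epsilon(\Z(f))$ lies in the region where $\Phi$ has the small Lipschitz constant, and then choose $\delta \in (0,\bar\delta)$ small enough that for all $g \in B_\delta(f)\cap W$ and all $\rho \le \rho_0$ the iterates $u_0 := \Z(f)$, $u_{n+1} := T_\rho(g,u_n)$ stay inside $B_\epsilon(\Z(f))$; this requires knowing that $T_\rho(f,\Z(f))$ is close to $\Z(f)$ uniformly in small $\rho$, which is precisely \cref{thm:continuityOfRhoMapsUnderContraction} (or its proof), so that the perturbation in the source plus the penalisation error is absorbed into $\epsilon$. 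On this ball the iteration is a genuine contraction uniformly in $\rho$, hence converges to the unique fixed point of $T_\rho(g,\cdot)$ in $B_\epsilon(\Z(f))$, which by monotonicity/ordering arguments must coincide with both $\m_\rho(g)$ and $\M_\rho(g)$ when these lie in the ball — i.e.\ $\Z_\rho(g)$ is the unique solution of \eqref{eq:penalisedPDEGeneralMRho} in $B_\epsilon(\Z(f))$. (One has to check that the extremal solutions do lie in the small ball; this follows because $\Z_\rho(g) \to \Z(f)$ and the candidate solutions produced by the Birkhoff--Tartar iteration from $\underline u$ and $\overline u$ are squeezed toward $\Z(f)$.)

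With uniqueness in the ball established, the Lipschitz estimate is immediate: for $g_1, g_2 \in B_\delta(f)\cap W$, write $u_i := \Z_\rho(g_i) = T_\rho(g_i, u_i)$, apply the displayed estimate with $w_i = u_i$, use $\norm{\Phi(u_1)-\Phi(u_2)}{V} \le C_L \norm{u_1-u_2}{V}$ (legitimate since both $u_i \in B_\epsilon(\Z(f)) \subseteq B_{\epsilon^*}(\Z(f))$), and absorb the resulting $q C_L \norm{u_1-u_2}{V}$ term into the left-hand side, giving
\[
\norm{\Z_\rho(g_1)-\Z_\rho(g_2)}{V} \le \frac{1}{C_a(1 - q C_L)}\norm{g_1-g_2}{V^*} =: C\,\norm{g_1-g_2}{V^*},
\]
with $C$ depending only on $C_L, C_a, C_b$ and whether $A$ is self-adjoint, and crucially \emph{not} on $\rho$.

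The main obstacle I anticipate is not the contraction estimate itself (which is essentially the same computation as in the VI case, with $\sigma_\rho$'s monotonicity and $1$-Lipschitzness replacing the obstacle-set projection inequalities) but rather pinning down that the \emph{extremal} solutions $\m_\rho(g)$ and $\M_\rho(g)$ are the ones captured by the local fixed-point argument — equivalently, that there is no second solution of \eqref{eq:penalisedPDEGeneralMRho} outside the small ball but still below $\overline u$ that could be the true maximal one. This is where one must lean on the convergence result \cref{thm:continuityOfRhoMapsUnderContraction} (and an ordering argument: any solution in $[\underline u,\overline u]$ is trapped between the monotone iterations started at $\underline u$ and $\overline u$, both of which converge to points near $\Z(f)$ for small $\rho$), and care is needed to make the thresholds $\rho_0$ and $\delta$ depend only on the allowed data. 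A secondary technical point is verifying that complete continuity \eqref{ass:PhiCC} is enough (rather than needing the alternative hypotheses in the footnote of \cref{thm:jointContinuityOfRhoMapsUnderContraction}) to get the uniform-in-$\rho$ closeness of $T_\rho(f,\Z(f))$ to $\Z(f)$; this should follow from the same compactness-plus-uniqueness scheme already used to prove the convergence theorems.
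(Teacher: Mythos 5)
Your overall strategy matches the paper's: both rest on the fixed-point characterisation $\Z_\rho(g) = T_\rho(g, \Z_\rho(g))$, the contraction estimate for $T_\rho$ in its second argument under \eqref{ass:PhiSmallLipschitzZ} (that is, \cref{lem:TrhoLipschitz}), and the fact that $\Z_\rho(f)$ and $\Z_\rho(g)$ both lie in the ball $B_{\epsilon^*}(\Z(f))$ where the small Lipschitz constant applies --- delivered by \cref{thm:jointContinuityOfRhoMapsUnderContraction} --- followed by absorbing the term $\hat c\,\norm{\Z_\rho(f)-\Z_\rho(g)}{V}$ into the left-hand side. The paper simply invokes the joint-continuity theorem to place $\Z_\rho(g)$ in the small ball and then applies \cref{lem:TrhoLipschitz} in a single step; you re-derive the ``iterates stay in the ball and the fixed point is $\Z_\rho(g)$'' fact rather than citing that theorem directly, which is logically acceptable but redundant and circles back to exactly the result you cite.

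There is, however, a genuine gap in your sketch of the Lipschitz estimate for $T_\rho$. You say to subtract the two penalised equations, test with $u_1-u_2$, and use monotonicity of $\sigma_\rho$ (which gives ``a favourable sign up to the difference $\Phi(w_1)-\Phi(w_2)$'') together with the $1$-Lipschitzness of $\sigma_\rho$. That leaves a cross term
\begin{equation*}
\frac{1}{\rho}\bigl\langle \sigma_\rho\bigl(u_1-\Phi(w_1)\bigr) - \sigma_\rho\bigl(u_2-\Phi(w_2)\bigr),\ \Phi(w_1)-\Phi(w_2)\bigr\rangle
\end{equation*}
carrying an explicit $\rho^{-1}$ that the $1$-Lipschitz bound on $\sigma_\rho$ cannot tame uniformly in $\rho$ --- exactly the uniformity you need for the conclusion. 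The correct move (the one in \cref{lem:TrhoLipschitz}) is to test with the \emph{shifted} difference $(u_1-\Phi(w_1)) - (u_2-\Phi(w_2))$, which eliminates the penalty term entirely by T-monotonicity of $\sigma_\rho$ and leaves only the $A$-bilinear form, to which \cref{lem:small_estimate} applies and produces the $\rho$-independent constants $\hat C$ and $\hat c$. Without this change, your displayed estimate for $T_\rho$ does not follow from the argument you describe.
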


\begin{theorem}[Hadamard differentiability of $\Z_\rho$]\label{thm:hadamardDiffMRhoNEW}
Let $f \in V^*$ and $W \subseteq V^*$ satisfy \cref{ass:subAndSupersolutionsForZRho}. Assume
\eqref{ass:PhiCC},
\eqref{ass:PhiSmallLipschitzZ} and 
\begin{align}
&\text{$\Phi$ is directionally differentiable at $\Z_\rho(f)$}\label{ass:PhiDiffAtU}.
\end{align}
Then  for  $\rho$ sufficiently small,
\begin{enumerate}[label=(\roman*)]\itemsep=0cm
\item\label{item:HadamardDifferentiabilityOfZRho} the map $\Z_\rho$ is Hadamard differentiable in the sense that if $d \in \mathcal{T}_W(f)$, then for any sequence $d_k \to d$ in $V^*$ with $f+s_kd_k \in W$ where $s_k \searrow 0$,
\[ \frac{\Z_\rho(f+s_kd_k)-\Z_\rho(f)}{s_k} \to \Z_\rho'(f)(d).\]
\item the derivative $\Z_\rho'(f)(d)$ is the unique solution of the equation
	 \begin{equation}
 		A\alpha + \frac{1}{\rho}\sigma_\rho'(u-\Phi(u))(\alpha-\Phi'(u)(\alpha)) = d \label{eq:equationForAlphaRhoNEW}
	 \end{equation}
where $u=\Z_\rho(f)$.	
\item the map $\Z_\rho'(f)\colon \mathcal{T}_W(f) \to V$ can be extended to a bounded and continuous mapping from $V^*$ to $V$ by defining it via \eqref{eq:equationForAlphaRhoNEW} for all $d \in V^*$.  
\end{enumerate}
\end{theorem}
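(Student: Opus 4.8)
\textbf{Proof plan for \cref{thm:hadamardDiffMRhoNEW}.}
The strategy is to mimic the iterative-contraction scheme used for the unpenalised QVI, now applied to the fixed-point map $w \mapsto T_\rho(f,w)$ whose fixed points are the solutions of \eqref{eq:penalisedPDEGeneralMRho}. The starting point is that, by \cref{thm:localLipschitzForZRho}, the map $\Z_\rho$ is locally Lipschitz from $V^*$ into $V$ near $f$; this already furnishes the difference quotients
$\alpha_k^{s} := (\Z_\rho(f+s d_k)-\Z_\rho(f))/s$ with a uniform $V$-bound along any sequence $d_k\to d$, $s=s_k\searrow 0$, $f+s_kd_k\in W$. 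The aim is to show these quotients converge (strongly in $V$) to the solution $\alpha$ of the linear equation \eqref{eq:equationForAlphaRhoNEW}, and that this limit is independent of the chosen sequences, which is precisely Hadamard differentiability. Unique solvability of \eqref{eq:equationForAlphaRhoNEW} itself should be handled first: the operator $\alpha\mapsto A\alpha + \frac1\rho\sigma_\rho'(u-\Phi(u))(\alpha-\Phi'(u)\alpha)$ is bounded and, using $0\le \sigma_\rho'\le 1$ together with the small-Lipschitz bound \eqref{ass:PhiSmallLipschitzZ} on $\Phi'(u)$ (the directional derivative is Lipschitz with the same constant $C_L$, being a limit of difference quotients), one shows coercivity — either directly when $C_L<C_a/C_b$, or via the sharper self-adjoint estimate from \cite{Wachsmuth2021:2}; then Lax--Milgram (or its monotone-operator analogue, since $\Phi'(u)$ need not be linear but is positively homogeneous and Lipschitz) applies. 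Part (iii) is then immediate: the same coercivity/boundedness bounds give a uniform estimate $\|\alpha\|_V \le C\|d\|_{V^*}$ and continuous dependence on $d$, so the map extends from $\mathcal T_W(f)$ to all of $V^*$.

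For part (i)–(ii), I would proceed as follows. Write $u=\Z_\rho(f)$, $u^s = \Z_\rho(f+sd_k)$, and set $\alpha^s = (u^s-u)/s$. Subtracting the two penalised equations and dividing by $s$ gives
\[
A\alpha^s + \frac1s\cdot\frac1\rho\bigl(\sigma_\rho(u^s-\Phi(u^s)) - \sigma_\rho(u-\Phi(u))\bigr) = d_k.
\]
The function $\sigma_\rho$ is $C^1$ with Lipschitz derivative $\sigma_\rho'$ (explicitly $\sigma_\rho'(r)=\min(\max(r/\rho,0),1)$), so it is semismooth/Hadamard differentiable as a superposition operator on $L^2$; combined with the (Hadamard) directional differentiability of $\Phi$ at $u$ (assumption \eqref{ass:PhiDiffAtU}) and the $V$-convergence $u^s\to u$ with $\alpha^s$ bounded, one identifies the limit of the nonlinear term. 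Concretely, extract a weakly convergent subsequence $\alpha^s\weaklyto\alpha$ in $V$ (hence strongly in $H$ if $V\ctsCompact H$, or one argues directly); then $u^s-\Phi(u^s) = u-\Phi(u) + s(\alpha^s - \Phi'(u)\alpha^s) + o(s)$ pointwise-in-the-right-sense, and the difference-quotient chain rule for $\sigma_\rho$ yields
$\frac1s(\sigma_\rho(u^s-\Phi(u^s))-\sigma_\rho(u-\Phi(u))) \to \sigma_\rho'(u-\Phi(u))(\alpha-\Phi'(u)\alpha)$, so $\alpha$ solves \eqref{eq:equationForAlphaRhoNEW}. By uniqueness of that equation the whole sequence converges and the limit does not depend on $(d_k,s_k)$; upgrading weak to strong convergence in $V$ is done by testing the difference equation with $\alpha^s-\alpha$ and exploiting coercivity of $A$ and monotonicity (T-monotonicity / nonnegativity of $\sigma_\rho'$), a standard Minty-type argument.

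\textbf{Main obstacle.} The delicate point is the passage to the limit in the nonlinear penalty term $\frac1s(\sigma_\rho(u^s-\Phi(u^s))-\sigma_\rho(u-\Phi(u)))$: one must combine the chain rule for the Nemytskii operator induced by the piecewise-$C^1$ function $\sigma_\rho$ with the merely directional (Hadamard) differentiability of $\Phi$ and with the fact that $\alpha^s$ only converges weakly a priori. The cleanest route is to first establish, via the contraction structure inherited from \eqref{ass:PhiSmallLipschitzZ} (exactly as in the fixed-point iteration of \cite{AHR} adapted here with the modifications of \cite{Wachsmuth2021:2}), that $\alpha^s$ in fact converges strongly in $V$ along the full sequence — the contraction giving both existence of the limit of the iterates and its characterisation simultaneously — after which the nonlinear term passes to the limit by continuity of $\sigma_\rho'(u-\Phi(u))\colon V\to V^*$ (as a multiplication operator by the $L^\infty$ function $\sigma_\rho'(u-\Phi(u))$) composed with the Hadamard derivative of $\Phi$. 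I expect the bulk of the work, and the place where the hypotheses \eqref{ass:PhiCC}, \eqref{ass:PhiSmallLipschitzZ}, \eqref{ass:PhiDiffAtU} are all genuinely used, to be in setting up this iterative argument and showing that the iteration for the derivative problem is a contraction with the same constant, so that its fixed point is the unique solution of \eqref{eq:equationForAlphaRhoNEW}; the remaining parts (ii) and (iii) are then essentially bookkeeping around that core estimate.
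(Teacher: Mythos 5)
Your proposal ultimately lands on the same strategy as the paper — the fixed-point iteration $u_n^k := T_\rho(f+s_kd_k, u_{n-1}^k)$, $u_0^k := \Z_\rho(f)$, driven by the contraction property that \eqref{ass:PhiSmallLipschitzZ} gives $T_\rho(g,\cdot)$ on a ball around $\Z(f)$, plus directional differentiability of $T_\rho$ (which you correctly reduce to that of $\Phi$ and of the Nemytskii operator $\sigma_\rho$). You also correctly recognise that your first route — subtracting the two penalised equations, extracting a weak limit of $\alpha^s$, and identifying the penalty term — runs into a genuine catch-22, since Hadamard differentiability of $\Phi$ (and of $\sigma_\rho$ as a superposition operator) requires the \emph{strong} convergence of $\alpha^s$ that you are trying to establish. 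So discarding that route is the right call.

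The gap is in the second route, which you describe only loosely. The contraction argument does \emph{not} directly show that $\alpha^s := (\Z_\rho(f+s_kd_k)-\Z_\rho(f))/s_k$ converges; it shows three separate things that must then be combined by a double-limit interchange. Concretely: (a) for each fixed $n$, $(u_n^k-u)/s_k \to \alpha_n$ as $k\to\infty$, where $\alpha_n := T_\rho'(f,u)(d,\alpha_{n-1})$, by Hadamard differentiability of $T_\rho$; (b) $\alpha_n \to \alpha$ as $n\to\infty$, since $\beta\mapsto T_\rho'(f,u)(d,\beta)$ inherits the contraction constant; and — the step missing from your write-up — (c) $(u_n^k-u^k)/s_k \to 0$ as $n\to\infty$ \emph{uniformly in $k$ and $\rho$}. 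This uniformity is precisely what licenses the swap $\lim_k (u^k-u)/s_k = \lim_n\lim_k (u_n^k-u)/s_k = \lim_n\alpha_n = \alpha$, and it comes from the a~priori geometric decay of the Banach iteration together with \cref{thm:localLipschitzForZRho}:
\begin{equation*}
\norm{u_n^k-u^k}{V} \le \hat c^{\,n}\norm{u_0^k-u^k}{V} = \hat c^{\,n}\norm{\Z_\rho(f)-\Z_\rho(f+s_kd_k)}{V} \le \hat c^{\,n}\, C\, s_k \norm{d_k}{V^*},
\end{equation*}
so that $\norm{(u_n^k-u^k)/s_k}{V} \le \hat c^{\,n} C\norm{d_k}{V^*}\to 0$ uniformly in $k$. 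Without spelling out this uniformity and the resulting interchange, the argument does not close: strong convergence of $\alpha^s$ is the conclusion, not an input. A smaller point: for uniqueness of \eqref{eq:equationForAlphaRhoNEW} the paper runs a Banach fixed point on $\beta\mapsto\alpha$ rather than Lax--Milgram on the full operator, which avoids any question of whether "coercivity'' of a nonlinear (merely positively homogeneous) $\Phi'(u)$ is meaningful; your parenthetical about the monotone analogue shows you saw the issue, but the fixed-point route is cleaner and matches what the small-Lipschitz hypothesis actually provides.
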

Exactly as in \cref{rem:HadamardToDirDiff}, we obtain from \cref{thm:hadamardDiffMRhoNEW} \ref{item:HadamardDifferentiabilityOfZRho} the directional differentability of $\Z_\rho\colon B_{\bar \delta}(f) \cap W \to V$ at $f$ in every direction $d \in \mathcal{R}_W(f) \subset V^*$:
	\[\lim_{s \searrow 0}\frac{\Z_\rho(f+sd) - \Z_\rho(f)}{s} = \Z_\rho'(f)(d).\]

\subsection{On optimal control}
Regarding existing literature on the derivation of stationarity systems for optimal control with QVI constraints, we mention \cite{Wachsmuth2019:2} and \cite{AHROCQVI} in particular. The first work contains a strong stationarity system characterisation in the absence of control constraints, whilst the latter work includes the derivation of various forms of stationarity systems (including strong) with potential box constraints on the control. In this work, we extend these results to the setting of minimal and maximal solution mappings and derive a C-stationarity system.

Suppose that
\begin{equation*}
\text{$V \ctsCompact H \cts V^*$ is a Gelfand triple}
\end{equation*} 
($\ctsCompact$ means a compact embedding;  by definition of the Gelfand triple, $V \ctsDense H$ is a dense embedding) and let $\Uad \subset H$ be a non-empty, closed and convex set\footnote{It would suffice to replace `closed and convex' here with `weakly sequentially closed' (which is a weaker requirement) for the existence results  below.}. Recall the control problem \eqref{eq:ocProblemMostGeneral}, reproduced here:
\begin{equation*}
\min_{\substack{f \in \Uad}} J(\M(f),\m(f),f).
\end{equation*}
 We make the next standing assumption, which guarantees the well definedness of \eqref{eq:ocProblemMostGeneral}.
\begin{ass}\label{ass:ocSubAndSupersolutions}
There exist $\underline u, \overline u \in V$ such that
\begin{equation*}
\begin{aligned}
\underline u &\leq \overline u,\\
\underline u &\leq S(g, \underline u) &&\forall g \in \Uad,\\
\overline u &\geq S(g, \overline  u) &&\forall g \in \Uad.\\
\end{aligned}
\end{equation*}
\end{ass}

Regarding the objective functional $J$, we need the following assumptions in place. Observe that the last two assumptions below are conditions that involve $\Phi$.
\begin{ass}\label{ass:smallLipschitzForOCProblem}
Regarding $J(y,z,f)$, assume that 
\begin{enumerate}[label=(\roman*)]\itemsep=0cm
\item\label{item:test} $J\colon V \times V \times H \to \mathbb{R}$ is continuously Fr\'echet differentiable and bounded from below.
\item\label{item:wlscForJ} If $(y_n, z_n) \to (y,z)$ in $V\times V$ and $f_n \weaklyto f$ in $H$, then 
\[J(y,z,f) \leq \liminf_{n \to \infty} J(y_n, z_n, f_n).\]
\item\label{item:uniformBd}
	If
	$\{J(y_n,z_n,f_n)\}$ is bounded
	for a sequence $\{(y_n, z_n, f_n)\} \subset V\times V \times \Uad$,
	then $\{f_n\}$ is bounded in $H$.

\item \label{item:smallLipschitzM}  If $J_y \not\equiv 0$, for every $f \in \Uad$, there exists ${\epsilon^*} > 0$ such that $\Phi\colon B_{\epsilon^*}(\M(f)) \to V$ has a Lipschitz constant satisfying 			$C_L < {C_a}\slash {C_b}$ or $A$ is self-adjoint and $C_L < 2{ \sqrt{C_b/C_a}}(1 + C_b/C_a)^{-1}$.
\item \label{item:smallLipschitzm} If $J_z \not\equiv 0$, for every $f \in \Uad$, there exists ${\epsilon^*} > 0$ such that $\Phi\colon B_{\epsilon^*}(\m(f)) \to V$ has a Lipschitz constant satisfying $C_L < {C_a}\slash {C_b}$ or $A$ is self-adjoint and $C_L < 2{ \sqrt{C_b/C_a}}(1 + C_b/C_a)^{-1}$.
\end{enumerate}
\end{ass}
An example of $J$ satisfying items \ref{item:test}--\ref{item:uniformBd} above is  
\begin{equation*}
J(y,z,f) =  \frac 12\norm{ay + bz-y_d}{H}^2 + \frac{\nu}{2}\norm{f}{H}^2
\end{equation*}
given constants $a, b \in \mathbb{R}$, $\nu > 0$, and for some given $y_d \in H$. When we choose $a=1$, $b=-1$ and $y_d \equiv 0$, we recover the first  objective functional in \eqref{eq:ocProblemExamples} and  when $a=1$ and $b=0$ or vice versa, we recover the second one in \eqref{eq:ocProblemExamples}.

We remark that the assumptions in items \ref{item:smallLipschitzM} and \ref{item:smallLipschitzm} are unfortunately rather unsatisfactory because they impose local uniqueness around the extremal solutions for \textit{every} source term in $\Uad$.

\begin{theorem}[Existence of optimal controls]\label{thm:rfsfsdd}
Assume \eqref{ass:PhiCC}, \cref{ass:ocSubAndSupersolutions} and \cref{ass:smallLipschitzForOCProblem}.
Then there exists an optimal control $f^* \in \Uad$ to the problem \eqref{eq:ocProblemMostGeneral}.
\end{theorem}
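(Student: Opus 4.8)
The proof will follow the direct method of the calculus of variations. Since $J$ is bounded from below by \cref{ass:smallLipschitzForOCProblem}\ref{item:test} and $\Uad$ is nonempty — so that, thanks to \cref{ass:ocSubAndSupersolutions} and \cref{prop:existenceOfZ}, the extremal maps $\M,\m$ are defined throughout $\Uad$ — the value $j^\star := \inf_{f \in \Uad} J(\M(f),\m(f),f)$ is finite. Choose a minimising sequence $\{f_n\} \subset \Uad$. By \cref{ass:smallLipschitzForOCProblem}\ref{item:uniformBd} the sequence $\{f_n\}$ is bounded in $H$, so along a subsequence (not relabelled) $f_n \weaklyto f^\star$ in $H$ for some $f^\star$, and since $\Uad$ is closed and convex, hence weakly sequentially closed, $f^\star \in \Uad$. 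The embedding $V \ctsCompact H$ gives, by passing to adjoints, a compact embedding $H \ctsCompact V^*$; consequently $f_n \to f^\star$ strongly in $V^*$.

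The main point is to pass to the limit in the states. Suppose $J_y \not\equiv 0$. Then \cref{ass:smallLipschitzForOCProblem}\ref{item:smallLipschitzM}, applied at the point $f^\star \in \Uad$, is precisely the smallness hypothesis \eqref{ass:PhiSmallLipschitzZ} for the map $\Z = \M$; furthermore \cref{ass:ocSubAndSupersolutions} implies \cref{ass:subAndSupersolutionsZ} at $f^\star$ with $W = \Uad$ (with any $\bar\delta > 0$), and \eqref{ass:PhiCC} is assumed. Hence \cref{thm:localLipschitzForZ} applies and provides $\delta > 0$ and $C > 0$ with $\norm{\M(f^\star) - \M(g)}{V} \le C\,\norm{f^\star - g}{V^*}$ for every $g \in B_\delta(f^\star) \cap \Uad$. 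Since $f_n \to f^\star$ in $V^*$, we have $f_n \in B_\delta(f^\star) \cap \Uad$ for all large $n$, and therefore $\M(f_n) \to \M(f^\star)$ in $V$. If $J_z \not\equiv 0$, the same reasoning with \cref{ass:smallLipschitzForOCProblem}\ref{item:smallLipschitzm} gives $\m(f_n) \to \m(f^\star)$ in $V$. (When $J_y \equiv 0$, resp.\ $J_z \equiv 0$, the objective does not depend on that slot and the corresponding convergence is not needed.)

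With $\M(f_n) \to \M(f^\star)$ and $\m(f_n) \to \m(f^\star)$ in $V$ and $f_n \weaklyto f^\star$ in $H$, the weak lower semicontinuity assumption \cref{ass:smallLipschitzForOCProblem}\ref{item:wlscForJ} yields
\[
J(\M(f^\star),\m(f^\star),f^\star) \le \liminf_{n\to\infty} J(\M(f_n),\m(f_n),f_n) = j^\star ,
\]
and since $f^\star \in \Uad$ the reverse inequality is immediate, so $f^\star$ is optimal. The one delicate point is the passage to the limit in the states: a priori $f_n$ converges only weakly in $H$, so Lipschitz stability cannot be invoked directly. This is remedied by the compact embedding $H \ctsCompact V^*$, which turns this into strong $V^*$-convergence — the topology in which \cref{thm:localLipschitzForZ} is stated — while the merely local nature of \cref{thm:localLipschitzForZ} causes no difficulty because the minimising sequence eventually lies in any prescribed $V^*$-ball around $f^\star$. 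The real work behind this step is \cref{thm:localLipschitzForZ} itself.
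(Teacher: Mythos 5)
Your proof is correct and takes essentially the same route as the paper: both extract a weakly convergent (in $H$) minimising sequence via \cref{ass:smallLipschitzForOCProblem}~\ref{item:uniformBd}, upgrade to strong $V^*$-convergence via the compact embedding $H \ctsCompact V^*$, invoke \cref{thm:localLipschitzForZ} (via items \ref{item:smallLipschitzM}/\ref{item:smallLipschitzm}) to get strong $V$-convergence of the states, and close with the lower semicontinuity hypothesis \ref{item:wlscForJ}. Your explicit case distinction on $J_y \equiv 0$ or $J_z \equiv 0$ is a small and sensible refinement that the paper leaves implicit.
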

The proof (see \cref{sec:oc}) is more or less standard and uses the direct method in the calculus of variations.
From now on, let 
\[\text{$(y^*, z^*, f^*)$ be an arbitrary local minimiser of \eqref{eq:ocProblemMostGeneral}}\]
with $y^* = \M(f^*)$ and $z^*=\m(f^*)$. We begin with the following primal characterisation of the minimiser.

\begin{prop}[Bouligand stationarity]\label{lem:characterisationOfOC}Assume \eqref{ass:PhiCC}, \cref{ass:ocSubAndSupersolutions},  \cref{ass:smallLipschitzForOCProblem}
 and  
 \begin{align*}
& \text{if $J_y \not\equiv 0$, $\Phi$ is directionally differentiable at $\M(f^*)$},\\
& \text{if $J_z \not\equiv 0$, $\Phi$ is directionally differentiable at $\m(f^*)$}.
 \end{align*}
Then  
\begin{equation*}
\langle J_y(y^*,z^*,f^*), \M'(f^*)(h)\rangle + \langle J_z(y^*,z^*,f^*), \m'(f^*)(h)\rangle + \langle J_f(y^*,z^*,f^*), h \rangle  \geq 0 \quad \forall h \in \mathcal{T}_{\Uad}(f^*).
\end{equation*} 
\end{prop}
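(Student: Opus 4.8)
The plan is to exploit the local optimality of $f^*$ together with the Hadamard differentiability of the extremal maps $\M$ and $\m$ established in \cref{thm:hadamardDiffZNEW}. First I would fix an arbitrary direction $h \in \mathcal{R}_{\Uad}(f^*)$, i.e.\ a radial direction, so that $f^* + sh \in \Uad$ for all sufficiently small $s \geq 0$. For such $s$, the point $f^* + sh$ is admissible and, by local optimality of $f^*$, we have $J(\M(f^*+sh), \m(f^*+sh), f^*+sh) \geq J(y^*, z^*, f^*)$ for $s$ small. Dividing by $s$ and passing to the limit $s \searrow 0$ yields the desired inequality along $h$, provided the left-hand side is differentiable in $s$ at $0$ from the right.

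The key step is therefore to compute $\frac{d}{ds^+}\big|_{s=0} J(\M(f^*+sh), \m(f^*+sh), f^*+sh)$. Since $J$ is continuously Fréchet differentiable by \cref{ass:smallLipschitzForOCProblem}\ref{item:test}, and since $\M$ and $\m$ are (by \cref{thm:hadamardDiffZNEW} together with \cref{rem:HadamardToDirDiff}, using the hypotheses \eqref{ass:PhiCC}, \eqref{ass:PhiSmallLipschitzZ} as supplied by \cref{ass:smallLipschitzForOCProblem}\ref{item:smallLipschitzM}--\ref{item:smallLipschitzm}, and the directional differentiability of $\Phi$ at $\M(f^*)$ and $\m(f^*)$) directionally differentiable at $f^*$ in direction $h$, the composition is directionally differentiable by the chain rule for Hadamard-differentiable maps. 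Concretely, writing $s \mapsto (\M(f^*+sh), \m(f^*+sh), f^*+sh)$ and noting that the first two components have the incremental quotients $s^{-1}(\M(f^*+sh)-y^*) \to \M'(f^*)(h)$ in $V$ and $s^{-1}(\m(f^*+sh)-z^*) \to \m'(f^*)(h)$ in $V$ while the third is affine, the right derivative of the composition equals
\[
\langle J_y(y^*,z^*,f^*), \M'(f^*)(h)\rangle + \langle J_z(y^*,z^*,f^*), \m'(f^*)(h)\rangle + \langle J_f(y^*,z^*,f^*), h \rangle.
\]
Here one must be slightly careful when $J_y \equiv 0$ or $J_z \equiv 0$: in those degenerate cases the corresponding smallness hypothesis \eqref{ass:PhiSmallLipschitzZ} (and hence differentiability of the corresponding extremal map) is not assumed, but then the corresponding term simply does not appear and no differentiability of that map is needed — the chain rule is applied only to the components that actually enter $J$.

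Finally I would extend the inequality from the radial cone $\mathcal{R}_{\Uad}(f^*)$ to the whole tangent cone $\mathcal{T}_{\Uad}(f^*)$ by density and continuity: since $\Uad$ is closed and convex, $\mathcal{T}_{\Uad}(f^*) = \overline{\mathcal{R}_{\Uad}(f^*)}$ in $H$, and the left-hand side of the claimed inequality is, by \cref{thm:hadamardDiffZNEW}\,(iii), a continuous function of $h$ on $V^* \supseteq H$ (the maps $\M'(f^*)$ and $\m'(f^*)$ extend continuously from $V^*$ to $V$, and $J_y, J_z, J_f$ are fixed continuous functionals); passing to the limit along a sequence $h_k \to h$ with $h_k \in \mathcal{R}_{\Uad}(f^*)$ preserves the inequality $\geq 0$. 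The main obstacle is the bookkeeping in the chain-rule step — ensuring that the Hadamard (not merely Gâteaux) nature of the differentiability is what makes the composition with the $C^1$ functional $J$ differentiable, and handling the edge cases where one of $J_y$, $J_z$ vanishes so that the missing smallness assumption is harmless.
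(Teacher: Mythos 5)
Your proof is correct and takes essentially the same route as the paper: start with a radial direction, use local optimality to get a nonnegative difference quotient, pass to the limit via the chain rule for the Hadamard-differentiable composition, and then upgrade from $\mathcal{R}_{\Uad}(f^*)$ to $\mathcal{T}_{\Uad}(f^*)$ by density and the continuity of $d \mapsto \Z'(f^*)(d)$ from \cref{thm:hadamardDiffZNEW}\,(iii). Your explicit treatment of the degenerate cases $J_y \equiv 0$ or $J_z \equiv 0$ (where the corresponding smallness and differentiability assumptions are absent but harmless) is a careful touch that the paper's proof leaves implicit.
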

The proof of the proposition appears in \cref{sec:BS}.

For numerics, it is convenient to derive other forms of stationarity systems like C-stationarity. For this purpose, we consider the penalised control problem 
\begin{equation}
\min_{f \in \Uad}J(\M_\rho(f), \m_\rho(f), f).\label{eq:ocProblemMainResults}
\end{equation}
The following standing assumption is stronger than \cref{ass:ocSubAndSupersolutions} and it implies the assumptions of \cref{thm:localLipschitzForZRho}, which is needed for the existence of controls for the above control problem.
\begin{ass}\label{ass:ocSubAndSupersolutionsStronger}
There exist $\underline u, \overline u \in V$ and $\rho_0 > 0$ such that
\begin{equation*}
\begin{aligned}
\underline u &\leq \overline u,\\
\underline u &\leq S(g, \underline u) &&\forall g \in \Uad,\\
 \overline u &\geq T_{\rho_0}(g, \overline u) &&\forall g \in \Uad.
\end{aligned}
\end{equation*}
\end{ass}
We need some further regularity on $\Phi$ in the form of the next assumption. When $\Phi$ is continuously  Fr\'echet differentiable, these assumptions follow from \cref{ass:smallLipschitzForOCProblem} \ref{item:smallLipschitzM}, \ref{item:smallLipschitzm}. See the discussion around \eqref{ass:newAPhiInvCoerciveUniform} and the proof of \cite[Lemma 5.9]{AHROCQVI}.

\begin{ass}\label{ass:consolidatedAss}
Assume the following:
\begin{enumerate}[label=(\roman*)]\itemsep=0cm
\item\label{item:PhiDirDiffAndDerivLinearInBall} If $J_y \not\equiv 0$, assume that  there exists $\epsilon>0$ such that 
\begin{equation*}
\text{for all $w \in B_\epsilon(y^*)$, $\Phi$ is directionally differentiable at $w$ and $\Phi'(w)$ is linear}.
\end{equation*}
If $J_z \not\equiv 0$, the above holds with $y^*$ replaced by $z^*$.
\item\label{item:doubleConts} If $J_y \not\equiv 0$, assume that for sequences $v_n \to v$, $w_n \to w$ and $q_n \weaklyto q$  in $V$ with $v_n, v \in B_\epsilon(y^*)$, we have
\begin{align}
(\Id-\Phi'(v_n))^{-1}q_n &\weaklyto (\Id-\Phi'(v))^{-1}q \text{ in $V$}\label{ass:doubleContWeak},\\
(\Id-\Phi'(v_n))^{-1}w_n &\to (\Id-\Phi'(v))^{-1}w \text{ in $V$}\label{ass:doubleCont},
\end{align}
If $J_z \not\equiv 0$, the above holds with $y^*$ replaced by $z^*$.
\end{enumerate}
\end{ass}

We also need additional structure on the function spaces in the form of a Dirichlet space. 
\begin{ass}\label{ass:forCStationarity}
Let $V$ be a regular Dirichlet space and suppose that $(\cdot)^+ \colon V \to V$ is continuous.
\end{ass}
We will not enter into an exposition about Dirichlet spaces here (see \cite[Example 3.5]{AHROCQVI} for a convenient definition and comments on this as well as further references) but let us give some examples that satisfy the above assumption.  Suppose that $D \subset \mathbb{R}^n$ is a bounded Lipschitz domain. We can take $H=L^2(D)$ and $V=H^1_0(D)$ (thus $\Omega \equiv D$), or $H=L^2(\overline D)$ and $V = H^1(D)$ (thus $\Omega \equiv \overline D$). Fractional spaces are also a possibility. Indeed,  $V=H^s(D)$ for $s \in (0,1)$ and $H=L^2(\overline{D})$ is valid (i.e.,  $\Omega := \overline{D}$), where the fractional Sobolev space $H^s(D)$ is defined as usual as the space of measurable functions $u\colon D \to \mathbb{R}$ such that the norm
\begin{equation}\label{eq:normHs}
\norm{u}{H^s(D)} := \left(\int_{D} u^2 + \int_{D}\int_{D}\frac{|u(x)-u(y)|^2}{|x-y|^{n+2s}}\right)^{\frac 12}
\end{equation}
is finite.  On the plane, we could also pick $V=H^{s}(\mathbb{R}^d)$ and $H=L^2(\mathbb{R}^d)$ ($V$ is defined similarly to above via \eqref{eq:normHs} but with $D$ replaced with $\mathbb{R}^d$); thus here  $\Omega = \mathbb{R}^d.$ In these two cases the natural operator to choose for $A$ would be the fractional Laplacian $(-\Delta)^s$. We refer to \cite[\S 1.2.1]{AHR} for a QVI example involving the fractional Laplacian in an application involving fluid flow.  

Assuming a Dirichlet space structure enables us to define notions of capacity and quasi-continuity (capacity is, loosely speaking, a way to measure sets finer than through the Lebesgue measure), see \cite[\S 2.1]{FukushimaBook} or \cite[Section 2]{Wachsmuth2014:2} for the $H^1_0(\Omega)$ setting. In addition, it allows us to explicitly characterise the critical cone appearing in \cref{thm:hadamardDiffZNEW} (see \cref{rem:criticalConeExpression}) using capacity, and (more pertinently for us in this section) the tangent cone as well, which is something we will use to prove a statement in the stationarity system below. On that topic, note for any $y \in V$ we can define\footnote{Strictly speaking, every $y \in V$ has a quasi-continuous representative and we identify it with its representative. Then the set $\{y=\Phi(y)\}$ is quasi-closed. }  
\[\{y = \Phi(y) \} \equiv \{ x \in \Omega : y(x) = \Phi(y)(x) \},\]
which, when $y$ is a solution of the QVI, is called the \emph{active} or \emph{coincidence set}. This set is defined up to sets of capacity zero.

We will prove a version of C-stationarity (but note that this terminology is used somewhat inconsistently in the literature). Before we proceed, let us record that owing to the complementarity characterisation of solutions of QVIs (see e.g. \cite[Proposition 2.1]{AHROCQVI}), the statements $y^*=\M(f^*)$ and $z^*=\m(f^*)$ imply (but are not necessarily equivalent to) that
\begin{equation}\label{eq:complementarity}
\begin{aligned}
Ay^* - f^* +  \xi_1^* &= 0,\\
Az^* - f^* +  \xi_2^* &= 0,\\
\xi_1^* \geq 0 \text{ in $V^*$}, \quad y^* \leq \Phi(y^*), \quad \langle \xi_1^*, y^*-\Phi(y^*)\rangle &= 0,\\
\xi_2^* \geq 0 \text{ in $V^*$}, \quad z^* \leq \Phi(z^*), \quad \langle \xi_2^*, z^*-\Phi(z^*)\rangle &= 0.
\end{aligned}
\end{equation}
The main result in this section is the following, which will be proved through a succession of results in \cref{sec:CStationarity}.
\begin{theorem}[C-stationarity]\label{thm:ocPenalisation}
Assume \eqref{ass:PhiCC}, \cref{ass:smallLipschitzForOCProblem},
\cref{ass:ocSubAndSupersolutionsStronger}, \cref{ass:consolidatedAss}, and    \cref{ass:forCStationarity}. Take any local minimiser $(y^*,z^*,f^*)$ of \eqref{eq:ocProblemMostGeneral} and define $\xi_1^*, \xi_2^*$ as in \eqref{eq:complementarity}. 
Then there exist multipliers $(p^*, q^*,    \lambda^*, \zeta^*) \in   V \times V \times V^* \times V^*$ satisfying the C-stationarity system
\begin{subequations}\label{eq:cStationaritySystem}
\begin{align}
y^* &= \M(f^*),\\
z^* &= \m(f^*),\\
A^*p^* + (\Id-\Phi'(y^*))^*\lambda^* &= -J_y(y^*,z^*,f^*),\label{eq:eafirst}\\
A^*q^* + (\Id-\Phi'(z^*))^*\zeta^* &= -J_z(y^*,z^*,f^*),\label{eq:eafirstB}\\
f^* \in \Uad : \langle J_f(y^*, z^*, f^*)- p^*-q^* , f^*-v\rangle &\leq 0
 \quad \forall v \in \Uad,\\
\langle \lambda^*, p^* \rangle &\geq 0,
\label{eq:eaCs}\\
\langle \zeta^*, q^* \rangle &\geq 0,
\label{eq:eaCsB}\\
 \langle \lambda^*, v \rangle &= 0  \quad \text{$\forall v \in V : v = 0$ q.e. on $\{y^* = \Phi(y^*)\}$}\label{eq:eaGerd1},\\
 \langle \zeta^*, v \rangle &= 0  \quad  \text{$\forall v \in V : v = 0$ q.e. on $\{z^* = \Phi(z^*)\}$}\label{eq:eaGerd2},\\
\langle \xi_1^*, (p^*)^+ \rangle = \langle \xi_1^*, (p^*)^- \rangle = \langle \xi_2^*, (q^*)^+ \rangle = \langle \xi_2^*, (q^*)^- \rangle&= 0.\label{eq:eaXiP}
\end{align}
\end{subequations} 
\end{theorem}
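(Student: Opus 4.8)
The strategy is the classical one of penalisation followed by a limit passage, carried out separately for the two decoupled systems (the $y^*$-side with multiplier $p^*$, and the $z^*$-side with multiplier $q^*$), but now in the setting of the extremal maps. First I would introduce the penalised control problem $\min_{f\in\Uad}J(\M_\rho(f),\m_\rho(f),f)$ from \eqref{eq:ocProblemMainResults}; existence of a minimiser $f_\rho^*$ follows from \cref{thm:localLipschitzForZRho} (which is in force by \cref{ass:ocSubAndSupersolutionsStronger}) together with the weak lower semicontinuity and coercivity in \cref{ass:smallLipschitzForOCProblem}, by the direct method. Since $\M_\rho, \m_\rho$ are Hadamard differentiable with derivative characterised by the linear equation \eqref{eq:equationForAlphaRhoNEW} (\cref{thm:hadamardDiffMRhoNEW}), the penalised problem admits a genuine (linearised) adjoint calculus: I would write the variational inequality for $f_\rho^*$, introduce adjoint states $p_\rho^*, q_\rho^*$ solving the adjoint equations obtained by transposing \eqref{eq:equationForAlphaRhoNEW} (using that $\Phi'(\cdot)$ is linear, \cref{ass:consolidatedAss}\ref{item:PhiDirDiffAndDerivLinearInBall}, so $(\Id-\Phi'(u_\rho))$ is invertible with the bootstrapped bounds coming from \eqref{ass:PhiSmallLipschitzZ}), and define the penalisation multipliers $\lambda_\rho^* := \tfrac1\rho\sigma_\rho'(u_\rho-\Phi(u_\rho))\,(\Id-\Phi'(u_\rho))p_\rho^*$ (and analogously $\zeta_\rho^*$ on the $z$-side). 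This yields a penalised stationarity system: the adjoint equations \eqref{eq:eafirst}--\eqref{eq:eafirstB} with $\rho$-subscripts, the gradient inequality in $f$, and a sign/complementarity structure relating $\lambda_\rho^*$ and $p_\rho^*$ coming from the fact that $\sigma_\rho'\geq 0$.

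Next I would derive uniform-in-$\rho$ a priori bounds. The state bounds are immediate: $\M_\rho(f_\rho^*)\to\M(f^*)$-type compactness follows from \cref{thm:jointContinuityOfRhoMapsUnderContraction} and the local Lipschitz estimates, so $u_\rho:=\M_\rho(f_\rho^*)$ and $v_\rho:=\m_\rho(f_\rho^*)$ stay bounded in $V$ and converge strongly (along a subsequence) to $\M(f^*)$, $\m(f^*)$; the controls $f_\rho^*$ are bounded in $H$ by \cref{ass:smallLipschitzForOCProblem}\ref{item:uniformBd} and converge weakly in $H$. The crux is to bound $p_\rho^*$ in $V$, $q_\rho^*$ in $V$, and $\lambda_\rho^*, \zeta_\rho^*$ in $V^*$. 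Testing the adjoint equation for $p_\rho^*$ with $p_\rho^*$, one uses coercivity of $A^*$ on the left, the nonnegativity of $\tfrac1\rho\sigma_\rho'(\cdot)\geq 0$ (which, after unpacking $(\Id-\Phi'(u_\rho))$, contributes a term one controls via the small-Lipschitz bootstrapping exactly as in \cref{thm:localLipschitzForZRho}), and boundedness of $J_y$ on the convergent state sequence on the right; this gives $\|p_\rho^*\|_V\leq C$, and then $\|\lambda_\rho^*\|_{V^*}\leq C$ from the adjoint equation itself, and similarly for $q_\rho^*, \zeta_\rho^*$. Hence along a subsequence $p_\rho^*\weaklyto p^*$, $q_\rho^*\weaklyto q^*$ in $V$ and $\lambda_\rho^*\weaklystar\lambda^*$, $\zeta_\rho^*\weaklystar\zeta^*$ in $V^*$. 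Passing to the limit in the linear adjoint equations (using \eqref{ass:doubleContWeak}--\eqref{ass:doubleCont} from \cref{ass:consolidatedAss}\ref{item:doubleConts} to handle $(\Id-\Phi'(u_\rho))^*$) gives \eqref{eq:eafirst}--\eqref{eq:eafirstB}, and passing to the limit in the gradient inequality gives the $f$-variational inequality; here I would also invoke \cref{lem:characterisationOfOC} to make sure the limiting object is stationary for the original problem. For the compatibility/sign conditions: $\langle\lambda_\rho^*,p_\rho^*\rangle = \langle\tfrac1\rho\sigma_\rho'(\cdot)(\Id-\Phi'(u_\rho))p_\rho^*, p_\rho^*\rangle$, and one shows this is nonnegative up to vanishing terms, so weak-* lower semicontinuity of the pairing (valid because $p_\rho^*\to p^*$ strongly in $H$ by compact embedding, while $\lambda_\rho^*\weaklystar\lambda^*$) yields \eqref{eq:eaCs}--\eqref{eq:eaCsB}.

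The two genuinely delicate limit identities are \eqref{eq:eaGerd1}--\eqref{eq:eaGerd2} (the multiplier vanishes off the coincidence set) and \eqref{eq:eaXiP} (the "C-"type sign conditions $\langle\xi_1^*,(p^*)^\pm\rangle=0$). For \eqref{eq:eaGerd1}: on the set where $u_\rho < \Phi(u_\rho)$ (strictly inactive), $\sigma_\rho'(u_\rho-\Phi(u_\rho))=0$ once $\rho$ is small relative to the gap, so $\lambda_\rho^*$ is supported, in the limit, on $\{u^*=\Phi(u^*)\}$ up to capacity-zero sets — this is where \cref{ass:forCStationarity} (regular Dirichlet space, quasi-continuity, capacity) is essential, to make "supported on the coincidence set" precise and to identify the test functions $v$ with $v=0$ q.e. on $\{y^*=\Phi(y^*)\}$; I would mimic the capacity argument of \cite{AHROCQVI} here. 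For \eqref{eq:eaXiP}: this is the step that distinguishes C-stationarity, and I expect it to be the main obstacle. The idea is that $\xi_1^* = f^* - Ay^*$ is the QVI multiplier concentrated on $\{y^*=\Phi(y^*)\}$, while in the limit $p^*$ inherits — from the structure $\lambda_\rho^* = \tfrac1\rho\sigma_\rho'(u_\rho-\Phi(u_\rho))(\Id-\Phi'(u_\rho))p_\rho^*$ together with the monotonicity of $\sigma_\rho'$ and a careful sign bookkeeping on the active/biactive/inactive decomposition — that the pairing of $\xi_1^*$ with both the positive and negative parts of $p^*$ vanishes. Concretely I would test the adjoint equation against $(p_\rho^*)^+$ and $(p_\rho^*)^-$ (using $(\cdot)^+\colon V\to V$ continuous from \cref{ass:forCStationarity}), exploit T-monotonicity of $A$, show the penalty term has a definite sign on each piece, pass to the limit, and combine to squeeze both pairings to zero; the improvement over \cite{AHROCQVI} (that the multipliers vanish on the inactive set without extra assumptions) comes precisely from the explicit form of $\sigma_\rho$ and the fact that $\sigma_\rho'$ is bounded by $1$. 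The hard part throughout is controlling the products of weakly convergent sequences in the penalty term, which is why the compact embedding $V\ctsCompact H$ (Gelfand triple) and the continuity assumptions \eqref{ass:doubleContWeak}--\eqref{ass:doubleCont} are doing the heavy lifting.
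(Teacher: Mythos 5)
Your plan follows the correct overall strategy (penalise, derive adjoint stationarity, pass to the limit $\rho\searrow 0$), and the treatment of the state variables, the a priori bounds on $p_\rho^*,q_\rho^*$ via \eqref{ass:newAPhiInvCoerciveUniform}, and the use of \cref{ass:consolidatedAss}\ref{item:doubleConts} for the limit in the adjoint equation are all on target. But there are two genuine gaps.

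First, you penalise the \emph{unmodified} control problem \eqref{eq:ocProblemMainResults}. Without a localisation term, the minimisers of the penalised problems need not converge to the \emph{given} local minimiser $(y^*,z^*,f^*)$: they could drift to a different stationary point, and then the adjoint system you obtain in the limit would be attached to that other point, not the one in the statement. The paper avoids this by minimising $J(\M_\rho(f),\m_\rho(f),f)+\frac12\norm{f-f^*}{H}^2$ over $\Uad\cap B^H_\gamma(f^*)$ (\cref{prop:convergenceOfOptimalPoints}), which forces $f_\rho^*\to f^*$ strongly in $H$ and removes the artificial ball constraint in the limit. Invoking \cref{lem:characterisationOfOC} afterwards does not repair this, since Bouligand stationarity says nothing about which stationary point your limit is.

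Second, your route to \eqref{eq:eaXiP} is not the paper's and would not close. You propose to test the adjoint equation with $(p_\rho^*)^\pm$ and exploit T-monotonicity to give the penalty term a definite sign. But this produces pairings of $\lambda_\rho^*$ (the adjoint multiplier) against $(p_\rho^*)^\pm$, not pairings of $\xi_1^*$ against $(p^*)^\pm$; the objects $\xi_1^* = f^*-Ay^*$ and $\lambda^*$ are \emph{different} multipliers (one for the state complementarity \eqref{eq:complementarity}, the other for the adjoint) and your argument conflates them. There is also no reason for T-monotonicity of $A$ to be inherited by $A^*$. The paper's proof of \cref{prop:secondPart} instead exploits the explicit quadratic/linear structure of $\sigma_\rho$ and the Lebesgue structure of $H=L^2(\Omega)$: it splits $\Omega$ into $M_1(\rho)=\{0\le y_\rho-\Phi(y_\rho)<\epsilon\}$ and $M_2(\rho)=\{y_\rho-\Phi(y_\rho)\ge\epsilon\}$, uses $\langle\xi_\rho,y_\rho-\Phi(y_\rho)\rangle\to 0$ to show certain weighted norms of $y_\rho-\Phi(y_\rho)$ on these sets vanish, and then bounds $\langle\xi_\rho,p_\rho\rangle$ by Cauchy--Schwarz against the uniformly bounded quantity $\frac1\rho\int\sigma_\rho'(y_\rho-\Phi(y_\rho))p_\rho^2$, which is itself controlled through $\langle\mu_\rho,(\Id-\Phi'(y_\rho))^{-1}p_\rho\rangle$. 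This pointwise decomposition is the actual mechanism; the sign of the penalty term alone does not deliver \eqref{eq:eaXiP}. Finally, note a small inconsistency in your definition $\lambda_\rho^* := \tfrac1\rho\sigma_\rho'(u_\rho-\Phi(u_\rho))(\Id-\Phi'(u_\rho))p_\rho^*$: for \eqref{eq:eafirst} to hold with the factor $(\Id-\Phi'(y^*))^*$ sitting outside, $\lambda^*$ must be the limit of $\tfrac1\rho\sigma_\rho'(y_\rho-\Phi(y_\rho))^*p_\rho^*$ alone (the paper's $\lambda_\rho$); what you wrote is closer to the paper's $\mu_\rho$ and would double-count the $(\Id-\Phi')$ factor.
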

The `q.e.' appearing in \eqref{eq:eaGerd1} and \eqref{eq:eaGerd2} means \emph{quasi-everywhere} and a statement holds q.e. if it holds everywhere except on a set of capacity zero. Let us observe that \eqref{eq:eaGerd1} and \eqref{eq:eaGerd2} imply
\begin{align*}
\langle \lambda^*, y^*-\Phi(y^*)\rangle &= 0\\
\langle \zeta^*, z^*-\Phi(z^*)\rangle &= 0.
\end{align*}
It is worth noting that if \cref{ass:forCStationarity} is not available, it is still possible to show that a subset of the conditions above (called \emph{weak C-stationarity}) are satisfied, see \cref{prop:weakCStationarity}.
Therein, \eqref{eq:eaGerd1}, \eqref{eq:eaGerd2} and \eqref{eq:eaXiP}
are missing.
By assuming just the continuity of
$(\cdot)^+ \colon V \to V$,
we can further show some substitutes for the missing relations,
see
\cref{prop:secondPart} and \cref{lem:complementarity_condition}.

\subsection{Examples}
\subsubsection{Obstacle map as solution map of PDE}\label{sec:example}
It is illustrative to give an example which occurs commonly in applications that satisfies all of the assumptions (for the subsolution and supersolution) that appear in this paper. Let $\Phi$ be increasing and satisfy $\Phi(0) \geq 0$ and let $F \in V^*_+$ be a given function. Define 
\begin{align*}
\underline u &:= 0\\
\overline u &:= A^{-1}F.
\end{align*}
We define the set of source terms
\[W:= \{ g \in V^* : 0 \leq g \leq F\}.\] 
With these choices, we in fact satisfy \textit{every} assumption on the existence of sub- and supersolutions that is mentioned in the paper. We will prove this later in \cref{lem:allSubSupSolnsSatisfied}.

Regarding specific choices of the function spaces, we can take $\Omega \subset \mathbb{R}^n$ to be a bounded Lipschitz domain and set $V=H^1_0(\Omega)$. A concrete example of $\Phi\colon H \to V$ could be: $\Phi(w) = \phi$ defined via
\begin{equation}
\begin{aligned}
-\Delta \phi &= w &&\text{in $\Omega$},\\
\phi&= 0&&\text{on $\partial\Omega$}\\
\end{aligned}
\end{equation}
where $-\Delta\colon V \to V^*$ denotes the weak Laplacian. Clearly, $\Phi(0) = 0$. Regarding the operator $A$, we could take for example $A(u) = -\grad \cdot (a\grad u)$ where the coefficient $a\colon \Omega \to \mathbb{R}$ is a function satisfying $a \in L^\infty(\Omega)$ and $a \geq a_0 > 0$ a.e.  for a constant $a_0$. 

Further applications and examples of QVIs can be found in e.g. \cite{AHROCQVI} and \cite{ChristofWachsmuth2021:1}. 

\subsubsection{An application in thermoforming}\label{sec:thermoforming}

We consider now an application of our theory in thermoforming. Thermoforming is a manufacturing process in which mould shapes that are to be reproduced are forced into contact with heated membranes (which are typically plastic sheets): the membrane deforms and takes on the shape of the mould. In some circumstances, the ensuing heat exchange between the materials leads also to a deformation of the mould, giving rise to the QVI nature of the problem. For further details, we refer to \cite{AHR}. We look at a concrete one-dimensional realisation from \cite[\S 4.3]{ACHP}, which is co-authored by two of the present authors.  Let $D \equiv \Omega = (0, 1)$, $A=-\Delta$, $V:=H^1_0(\Omega)$, and consider the QVI 
\begin{equation}
\label{eq:ModelQVINum}
\begin{aligned}
&\text{$u \in V$, } u \leq  \Phi(u),
\quad\langle -\Delta u - f, u-v \rangle_{V^*,V} \leq 0 \quad 
\forall v \in V, v \leq  \Phi(u),
\end{aligned}
\end{equation}
for $f \in L^2(\Omega)$ defined $f(x) = \pi^2\sin(\pi x)$, $\Phi(u)$ given by $\Phi(u) := \varphi T$ where $T \in H^1(\Omega)$ is the unique weak solution of
\begin{equation*}
\begin{aligned}
kT-\Delta T &= g(\psi T -u) &&\text{in $\Omega$},\\
\partial_\nu T &= 0 &&\text{on $\partial\Omega$,}
\end{aligned}
\end{equation*}
and where
\begin{equation*}
\begin{gathered}
\quad 
\varphi(x) =  
\frac{10 \pi^2\sin(\pi x)}{5 - \cos(2 \pi x)},
\quad 
k = \pi^2,
\quad
g(s)
=
4\min(0,s)^2,
\quad
 \psi(x) =   \frac{5 \pi^2\sin(\pi x)}{5 - \cos(2 \pi x)}.
\end{gathered}
\end{equation*}
Here, $u$ refers to the displacement of the membrane, $\Phi(u)$ is the displacement of the mould and $T$ is the temperature of the membrane. The above model is valid for one time step in the time discretisation of the thermoforming process, see again \cite{AHR} for full details.

Regarding existence for \eqref{eq:ModelQVINum}, first note that the fact that $g$ is decreasing implies that $u \mapsto T$ is increasing (the argument is the same as in \cite[Lemma 6.3]{AHR}) and hence as is the map $\Phi$.  Regarding $S(f,\cdot)$, since $f \in L^2(\Omega)$ with $f \geq 0$, it is easy to check that $0$ is a subsolution and $A^{-1}f$ is a supersolution   with $0 \leq A^{-1}f$ by non-negativity of $f$. 
By \cref{prop:existenceOfZ}, the QVI \eqref{eq:ModelQVINum} possesses minimal and maximal solutions on $[0, A^{-1}f]$. It is not difficult to see that $0$ is a solution of the QVI, and hence $\m(f) = 0$ for all non-negative $f \in L^2(\Omega)$. Note that \eqref{eq:ModelQVINum} has the second explicit solution $\sin(\pi x)$, see \cite[Lemma 4.3]{ACHP}.

The assumption \eqref{ass:PhiCC} on the complete continuity of $\Phi$ follows from the compact embedding of $V$ into $L^2(\Omega)$ and the inequality
\begin{align*}
\|\Phi(u_1) - \Phi(u_2)\|_{V} &\leq \Lip{g}
\left ( \|\varphi \|_{L^\infty(\Omega)}  k^{-1/2} 
+
\|  \varphi' \|_{L^\infty(\Omega)} k^{-1}
\right )\|u_1 - u_2\|_{L^2(\Omega)},
\end{align*}
which was shown in the proof of \cite[Theorem 3.9]{ACHP}.

The smallness condition \eqref{ass:PhiSmallLipschitzZ} on $B_{\epsilon^*}(0)$ is satisfied thanks to the next result.
\begin{lem}
\label{lem:ex2}
There exists an $\epsilon^* > 0$ such that $\Phi\colon B_{\epsilon^*}(0) \to V$ has a Lipschitz constant $C_L$ satisfying $C_L < 1$.
\end{lem}  
\begin{proof}

Let $B \subset V$ be a closed ball such that $B \subset \{v \in V \mid \norm{v}{V} < R^* \}$
where
\[R^*  :=
\frac{3}{10(1+\pi)}
\left (\sqrt{\frac{13\pi^2 + 8\pi}{80}} - \frac{\pi}{4}\right ).\] 
If we take $R <  R^*$ and set
\begin{equation}
\label{eq:MRformula}
M_R = \frac{1}{2}R + \frac{10(1+\pi)}{3\pi}R^2
\end{equation}
then we have,  using the notation $\mathrm{Lip}(g, I)$ to mean the Lipschitz constant of $g\colon I \to \mathbb{R}$ on the interval $I$, 
\begin{equation}
\label{eq:loc_lip_estimate}
\begin{aligned}
\mathrm{Lip}(g,[-M_R , M_R])
\left ( \|\varphi \|_{L^\infty(\Omega)}  k^{-1/2} 
+
\| \varphi' \|_{L^\infty(\Omega)} k^{-1}
\right )\frac{1}{\pi}
&= \frac{50}{3} \left (
R + \frac{20(1+\pi)}{3\pi}R^2
\right ) < 1
\end{aligned}
\end{equation}
(see \cite[Lemma 4.3]{ACHP} for the equality; the inequality holds because we took $R < R^*$ and is not difficult to see by using the quadratic formula). From the estimate \cite[Theorem 3.12]{ACHP}
\begin{align*}
	&\|\Phi(u_1) - \Phi(u_2)\|_{V}
        \\	
 &\quad\leq
	\frac{1}{\pi}\Lip{g, [-M_R , M_R]} 
\left ( \|\varphi \|_{L^\infty(\Omega)}  k^{-1/2} 
+
\| \varphi' \|_{L^\infty(\Omega)} k^{-1}
\right )\|u_1 - u_2\|_{V}
\quad \forall u_1, u_2 \in B_R(0)
\end{align*}
and \eqref{eq:loc_lip_estimate}, it follows that there exists $\gamma_B \in [0,1)$ such that 
\begin{equation}
\label{eq:ex:2:contra}
\| \Phi(u_1) - \Phi(u_2)\|_{V}
\leq
\gamma_B\| u_1 - u_2 \|_{V}
\quad \forall u_1, u_2 \in B.
\end{equation}
This implies the claim.
\end{proof}
Let us now address \eqref{ass:PhiDiffAtUNew}. First we remark that $\Phi$ is Newton differentiable from $V$ into $V \cap H^2(\Omega)$, see \cite[Theorem 3.9 (iii)]{ACHP}. Indeed, defining $\xi$ by
\begin{equation*}
\begin{aligned}
k\xi-\Delta \xi - g'(\psi T - u)\psi \xi &= -g'(\psi T -u)h &&\text{in $\Omega$},\\
\partial_\nu T &= 0 &&\text{on $\partial\Omega$,}
\end{aligned}
\end{equation*}
we have that the Newton derivative of $\Phi$ is $G\Phi(u)(h) = \varphi \xi$. 
In fact, we can prove the following stronger result.
\begin{lem}
The map $\Phi\colon V \to V$ is continuously Fr\'echet differentiable.
\end{lem}
\begin{proof}
This relies on applying the implicit function theorem to the map $\mathcal{F} \colon V \times H^1(\Omega) \to H^1(\Omega)^*$ defined by $\mathcal{F}(u,T) := kT-\Delta T-g(\psi T -u)$, and is essentially the same as the proof of \cite[Theorem 8]{AHR}, except with two differences. We modify the first step of the cited proof and show the Fr\'echet differentiability of $g$ as follows. Using the mean value theorem, for $x,y \in \mathbb{R}$, 
\begin{align*}
g(x+y)-g(x)-g'(x)y &=\int_0^1 g'(x+(1-\lambda)y)y - g'(x)y\;\mathrm{d}\lambda,
\end{align*}
and hence if we take now $v, d \in H^1(\Omega)$, using the Lipschitzness of $g'$,
\begin{align*}
\norm{g(v+d)-g(v)-g'(v)d}{H^1(\Omega)^*} &= \sup_{\substack{w \in H^1(\Omega)\\ \norm{w}{H^1(\Omega)} = 1}}\int_\Omega \left(\int_0^1 g'(v+(1-\lambda)d)d - g'(v)d\;\mathrm{d}\lambda\right) w \\
&\leq 8\sup_{\substack{w \in H^1(\Omega)\\ \norm{w}{H^1(\Omega)} = 1}}\int_\Omega \left(\int_0^1 (1-\lambda)d^2 \;\mathrm{d}\lambda\right) w \\
&\leq 8\sup_{\substack{w \in H^1(\Omega)\\ \norm{w}{H^1(\Omega)} = 1}}\int_\Omega d^2w \\
&\leq 8\norm{d}{L^4(\Omega)}^2\sup_{\substack{w \in H^1(\Omega)\\ \norm{w}{H^1(\Omega)} = 1}}\norm{w}{L^2(\Omega)}\\
&\leq C_1\norm{d}{H^1(\Omega)}^2\tag{using $H^1(\Omega) \cts L^4(\Omega)$}\
\end{align*}
for some constant $C_1$; this shows that $g\colon H^1(\Omega) \to H^1(\Omega)^*$ is Fr\'echet differentiable. In the second step of the proof of \cite[Theorem 8]{AHR}, we can use the Lipschitzness of $g'$ (instead of the mean value theorem as utilised there) to show the continuity of $g'\colon H^1(\Omega) \to \mathcal{L}(H^1(\Omega), H^1(\Omega)^*)$. The rest of the proof follows as in  \cite[Theorem 8]{AHR}.
\end{proof}

Now, if we take $W:=[0, F]$ where $F \in V^*$ is such that $F \geq f$, and set $\underline u := 0$ and $\overline u := S(F,\infty)=T_{\rho}(F,\infty)$ as the solution of the unconstrained problem, we see that \cref{ass:subAndSupersolutionsForZRho} is satisfied. Therefore, all of the results in \cref{sec:mainresults} up to and including \cref{sec:main_results_penalised} are applicable.

\section{Properties of the penalised problem}\label{sec:penalisedProblem}
This section culminates in a result that shows the existence (in a constructive way) of extremal solutions to \eqref{eq:penalisedPDEGeneralMRho}. To arrive at such a result, we first have to study some intermediary problems which will also be of considerable use in later sections.

Recalling $\sigma_\rho$ from \eqref{eq:mrhoHK}, let us point out that $\sigma_\rho\colon V \to V^*$ is bounded (in the sense of nonlinear operators), increasing, T-monotone and hemicontinuous\footnote{T-monotonicity in the nonlinear setting means $\langle \sigma_\rho(u)-\sigma_\rho(v), (u-v)^+\rangle \geq 0$ and hemicontinuity means $s \mapsto \langle \sigma_\rho(u+sv), w \rangle$ is continuous for all $u, v, w \in V$.}.   
T-monotonicity and the fact that $\sigma_\rho$ is increasing will be needed for the comparison results that are required for this paper.
Note that the T-monotonicity condition implies monotonicity \cite[Lemma 2.1, Chapter 2]{Mosco}. Another important property is the following, which shows that $\sigma_\rho$ is indeed a penalty operator.
\begin{lem}\label{lem:penaltyTypeCondition}
We have that
\[z_\rho \weaklyto z \text{ in $V$ and } \sigma_\rho(z_\rho) \to 0 \text{ in $V^*$} \implies z \leq 0.\]
\end{lem}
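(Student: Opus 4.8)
The plan is to test the weak limit $z$ against a carefully chosen nonnegative element and exploit the structure of $\sigma_\rho$ together with T-monotonicity. First I would observe that the statement $z \leq 0$ (in $V$, hence a.e.\ in $\Omega$) is equivalent to $z^+ = 0$, i.e.\ $\langle g, z^+ \rangle \le 0$ for a suitable test functional, or more directly to showing $\int_\Omega (z^+)^2 = 0$. So the goal reduces to proving $z^+ = 0$ in $H$.

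The key computation is to pair $\sigma_\rho(z_\rho)$ against $(z_\rho)^+$ (or against $z_\rho$ directly). Since $\sigma_\rho(r) = 0$ for $r \le 0$ and $\sigma_\rho(r) > 0$ for $r > 0$, we have pointwise $\sigma_\rho(z_\rho)(x)\, z_\rho(x) = \sigma_\rho(z_\rho)(x)\, (z_\rho)^+(x) \ge 0$, and moreover from \eqref{eq:mrhoHK}, for $0 < r$ one checks $\sigma_\rho(r) \ge \tfrac{1}{2}\min(r, \rho)\, r$; in particular when $r \ge \rho$ one has $\sigma_\rho(r) = r - \rho/2 \ge r/2$, while for $0 < r < \rho$, $\sigma_\rho(r) = r^2/(2\rho)$. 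The upshot is a lower bound of the form $\langle \sigma_\rho(z_\rho), (z_\rho)^+ \rangle \ge c \int_{\{z_\rho \ge \rho\}} (z_\rho)^2$ for an absolute constant $c > 0$ (plus a nonnegative contribution from the region $0 < z_\rho < \rho$). On the other hand, since $\sigma_\rho(z_\rho) \to 0$ in $V^*$ and $(z_\rho)^+$ is bounded in $V$ (using the assumed bound $\|v^+\|_V \le C\|v\|_V$ and weak convergence, which gives boundedness of $\{z_\rho\}$ in $V$), the pairing $\langle \sigma_\rho(z_\rho), (z_\rho)^+ \rangle \to 0$. Hence $\int_{\{z_\rho \ge \rho\}} (z_\rho)^2 \to 0$.

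From here I would pass to the limit to conclude $z^+ = 0$. Using the compact embedding implicit in $V \cts H$ together with $z_\rho \weaklyto z$ in $V$ — if only a continuous (not compact) embedding is available, one instead uses that $z_\rho \weaklyto z$ in $H$ and argues via lower semicontinuity — one gets $z_\rho \to z$ in $H$ (or weakly in $H$). We split $\int_\Omega (z^+)^2$: on the set where $z_\rho < \rho$ the integrand of $z_\rho$ is small as $\rho \searrow 0$ (here the subtlety is that $\rho$ and the index are coupled, but $\{z_\rho < \rho\}$ shrinks the positive part of $z_\rho$ uniformly by $\rho$), and on $\{z_\rho \ge \rho\}$ we just showed $\int (z_\rho)^2 \to 0$. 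Combining, $\|(z_\rho)^+\|_H \to 0$, and since $(\cdot)^+$ is continuous from $H$ to $H$ and $z_\rho \to z$ in $H$ along the relevant subsequence, $z^+ = \lim (z_\rho)^+ = 0$, i.e.\ $z \le 0$.

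The main obstacle I anticipate is the bookkeeping around the coupling of $\rho$ and the sequence index: one must ensure that the "small region'' $\{0 < z_\rho < \rho\}$ contributes a vanishing amount to $\|(z_\rho)^+\|_H^2$ as $\rho \searrow 0$, which requires either an a priori $L^2$ (or $V$) bound on $z_\rho$ uniform in $\rho$ — available since $z_\rho \weaklyto z$ forces boundedness — so that $\int_{\{0<z_\rho<\rho\}} (z_\rho)^2 \le \rho \int_{\{0 < z_\rho<\rho\}} z_\rho \le \rho \,\|z_\rho\|_{L^1} \to 0$. The other point of care is which embedding/compactness is actually needed to upgrade weak $V$-convergence to strong $H$-convergence of $(z_\rho)^+$; if $V \cts H$ is merely continuous, one should phrase the final step using weak lower semicontinuity of $v \mapsto \|v^+\|_H$ on $H$ together with the already-established $\|(z_\rho)^+\|_H \to 0$, which still yields $z^+ = 0$.
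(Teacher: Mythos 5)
Your argument takes a genuinely different route from the paper's, and it has one step that fails in the generality the paper works in. The paper proves the claim via Minty's trick: it first records that $\sigma_\rho(h) \to h^+$ in $H$ for every \emph{fixed} $h \in H$ (from the pointwise estimate $0 \le h^+ - \sigma_\rho(h) \le \min(h^+, \rho/2)$ and dominated convergence), then uses monotonicity of $\sigma_\rho$ to write
\[
\langle \sigma_\rho(z_\rho) - \sigma_\rho(z + \lambda v),\, z_\rho - z - \lambda v \rangle \ge 0 \qquad \forall v \in V,\ \lambda > 0,
\]
passes to the limit $\rho \searrow 0$ (the first difference converges strongly in $V^*$ and in $H$, the second weakly in $V$), obtains $\langle (z+\lambda v)^+, \lambda v\rangle \ge 0$, divides by $\lambda$ and sends $\lambda \searrow 0$ to conclude $\langle z^+, v \rangle \ge 0$ for all $v \in V$; taking $v = -z^+ \in V$ forces $z^+ = 0$. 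No case analysis on the explicit form of $\sigma_\rho$ and no splitting of $\Omega$ are needed, only monotonicity and the penalty-approximation property.

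Your direct-estimation approach --- pairing $\sigma_\rho(z_\rho)$ against $(z_\rho)^+$, deducing $\int_{\{z_\rho \ge \rho\}} z_\rho^2 \to 0$, and then controlling $\int_{\{0<z_\rho<\rho\}} z_\rho^2$ --- is sound in spirit, but the bound $\int_{\{0<z_\rho<\rho\}} z_\rho^2 \le \rho\,\|z_\rho\|_{L^1}$ needs $\{z_\rho\}$ to be bounded in $L^1(\Omega)$. That does not follow from the $V$-bound (hence $L^2$-bound) of a weakly convergent sequence unless $\vartheta(\Omega) < \infty$, and the paper's framework is an arbitrary measure space $(\Omega,\sigma,\vartheta)$. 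The Minty route sidesteps this because dominated convergence is applied to $\sigma_\rho$ evaluated at the fixed function $z + \lambda v$, not at the $\rho$-dependent $z_\rho$. In the finite-measure case your proof is complete, and in the infinite-measure case it can be repaired by testing against nonnegative $\phi \in L^1(\Omega)\cap L^\infty(\Omega)$ instead of taking $L^2$ norms of $(z_\rho)^+$, but as written the step has a gap. Your handling of the final passage --- using weak lower semicontinuity of $v \mapsto \|v^+\|_H$ rather than a compact embedding, which is not assumed here --- is the correct choice.
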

\begin{proof}
First observe that for any $h \in H$, we have $\sigma_\rho(h) \to h^+$ in $H$.
This is an immediate consequence of the estimate
\[0 \leq r^+ - \sigma_\rho(r) \leq \frac{\rho}{2}\]
(see \cite[Lemma 2.1 (iv)]{MR2822818}). Suppose that $z_\rho \weaklyto z$ in $V$ and $\sigma_\rho(z_\rho) \to 0$ in $V^*$. By monotonicity, we have for any $\lambda > 0$,
\[\langle \sigma_\rho(z_\rho)-\sigma_\rho(z+\lambda v), z_\rho - z - \lambda v \rangle \geq 0 \quad \forall v \in V.\]
Passing to the limit $\rho \searrow 0$ here using the strong convergence of $\sigma_\rho(z_\rho)$  and the fact that $\sigma_\rho(z+\lambda v) \to (z+\lambda v)^+$ in $H$, we obtain
\[\langle (z+\lambda v)^+, \lambda v \rangle \geq 0 \quad \forall v \in V.\]
Dividing through by $\lambda$ and using (hemi-)continuity of $(\cdot)^+\colon H \to H$, we derive
\[\langle z^+, v \rangle \geq 0 \quad \forall v \in V.\]
The arbitrariness of $v$ then implies that $z^+=0$.
\end{proof}
\subsection{Results on a semilinear elliptic PDE}\label{sec:resultsOnSemilinear}

For $f \in V^*$ and $\varphi \in H$, consider the equation
\begin{equation}\label{eq:ppFixed}
Au + \frac{1}{\rho}\sigma_\rho(u-\Phi(\varphi)) = f,
\end{equation}
the solution map of which we write
\[u  = T_\rho(f,\varphi),\]
so that $T_\rho\colon V^* \times H \to V$.
The equation \eqref{eq:ppFixed} has a unique solution (for fixed $f$ and $\varphi$): the nonlinearity is monotone, radially continuous and bounded, giving pseudomonotonicity of the full elliptic operator by \cite[Lemma 2.9 and Lemma 2.11]{Roubicek} whereas coercivity follows from
\[\langle Au, u-\Phi(\varphi) \rangle + \frac{1}{\rho}\langle \sigma_\rho(u-\Phi(\varphi)), u-\Phi(\varphi)) \geq C_a\norm{u}{V}^2 - C_b\norm{u}{V}\norm{\Phi(\varphi)}{V},\]
leading to existence via \cite[Theorem 2.6]{Roubicek}.

In the next two lemmas,
we utilise the results of \cite{Wachsmuth2021:2}
to obtain Lipschitz estimates
for  $T_\rho$.
\begin{lem}
	\label{lem:small_estimate}
Assume that $\Phi$ is Lipschitz on $U \subset V$ with Lipschitz constant $C_L \ge 0$ satisfying
	\begin{equation}\label{ass:generalLipschitzConstant}
								\text{$C_L < \frac{C_a}{C_b}$}\qquad\text{or}\qquad\text{$A$ is self-adjoint and $C_L < 2\frac{ \sqrt{C_b/C_a}}{1 + C_b/C_a}$.}
\end{equation}
	Then,
	there exist constants $C \ge 0$, $\tilde c \in [0,1)$
	(depending only on $C_L$, $C_a$, $C_b$ and the self-adjointness of $A$)
	such that
	for all
	$u,v \in V$ and $\varphi, \psi \in U$,
	we have
	\begin{equation*}
		\dual{A (u - v)}{u - \Phi(\varphi) - v + \Phi(\psi)}
		\ge
		C \parens[\big]{
			\norm{u - v}V^2 - \tilde c^2 \norm{\varphi - \psi}V^2
		}
		.
	\end{equation*}
\end{lem}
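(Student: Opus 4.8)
The plan is to reduce the desired inequality to an algebraic estimate in the two-dimensional span of $u-v$ and $\Phi(\varphi)-\Phi(\psi)$, exploiting the structural conditions on $C_L$ exactly as in \cite{Wachsmuth2021:2}. Write $w := u-v$ and $g := \Phi(\varphi)-\Phi(\psi)$, so that by the Lipschitz hypothesis on $\Phi$ we have $\norm{g}{V} \le C_L\norm{\varphi-\psi}{V}$. The quantity to be bounded below is
\[
\dual{Aw}{w - g} = \dual{Aw}{w} - \dual{Aw}{g}.
\]
First I would treat the non-self-adjoint case. Here one uses coercivity $\dual{Aw}{w} \ge C_a\norm{w}{V}^2$ and boundedness $\dual{Aw}{g} \le C_b\norm{w}{V}\norm{g}{V}$, giving
\[
\dual{Aw}{w-g} \ge C_a\norm{w}{V}^2 - C_b\norm{w}{V}\norm{g}{V}.
\]
Then I would apply Young's inequality in the form $C_b\norm{w}{V}\norm{g}{V} \le \tfrac{\varepsilon}{2}\norm{w}{V}^2 + \tfrac{C_b^2}{2\varepsilon}\norm{g}{V}^2$ and optimise the split so that the residual coefficient on $\norm{w}{V}^2$ stays positive; since $C_L < C_a/C_b$, a short computation shows one can choose $\varepsilon$ so that the result reads $\dual{Aw}{w-g} \ge C(\norm{w}{V}^2 - \tilde c^2\norm{\varphi-\psi}{V}^2)$ with $\tilde c \in [0,1)$ depending only on $C_L, C_a, C_b$ (the gain $\tilde c < 1$ comes from the strict inequality $C_L < C_a/C_b$, after absorbing the $C_L^2$ factor from $\norm{g}{V}^2 \le C_L^2\norm{\varphi-\psi}{V}^2$).

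For the self-adjoint case the estimate $\dual{Aw}{g} \le C_b\norm{w}{V}\norm{g}{V}$ is too lossy to reach the larger admissible bound $2\sqrt{C_b/C_a}\,(1+C_b/C_a)^{-1}$, so instead I would use that $\dual{A\cdot}{\cdot}$ is an equivalent inner product on $V$: set $\norm{v}{A}^2 := \dual{Av}{v}$, and note the spectral relations $C_a\norm{v}{V}^2 \le \norm{v}{A}^2 \le C_b\norm{v}{V}^2$ give $\norm{v}{A}$-vs-$\norm{v}{V}$ comparability with ratio $\sqrt{C_b/C_a}$. Then $\dual{Aw}{w-g} = \norm{w}{A}^2 - \dual{Aw}{g}$, and Cauchy–Schwarz in the $A$-inner product yields $\dual{Aw}{g} \le \norm{w}{A}\norm{g}{A}$. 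Completing the square, $\dual{Aw}{w-g} \ge \norm{w}{A}^2 - \norm{w}{A}\norm{g}{A} \ge \tfrac12\norm{w}{A}^2 - \tfrac12\norm{g}{A}^2$ is again too crude; the sharper route is to keep a tunable parameter, $\dual{Aw}{w-g} \ge (1-\tfrac{t}{2})\norm{w}{A}^2 - \tfrac{1}{2t}\norm{g}{A}^2$, convert both terms back to $V$-norms using the spectral bounds ($\norm{w}{A}^2 \ge C_a\norm{w}{V}^2$ on the good term, $\norm{g}{A}^2 \le C_b\norm{g}{V}^2 \le C_bC_L^2\norm{\varphi-\psi}{V}^2$ on the bad term), and then choose $t$ to make the leftover coefficients have the form $C$ and $C\tilde c^2$ with $\tilde c<1$; this is where the precise threshold $2\sqrt{C_b/C_a}(1+C_b/C_a)^{-1}$ emerges, and I would simply cite the corresponding lemma of \cite{Wachsmuth2021:2} for the optimal constants rather than re-derive them.

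The main obstacle is the self-adjoint case: getting the sharp constant $\tilde c < 1$ precisely when $C_L$ lies below the stated (larger) threshold requires the $A$-inner-product trick plus a careful optimisation over the Young/square-completion parameter, and one must verify that the resulting $\tilde c$ genuinely stays strictly below $1$ across the whole admissible range of $C_L$ (not merely at the endpoint). In contrast, the non-self-adjoint case is essentially a one-line Young's inequality argument. Throughout, I would lean on the estimates already packaged in \cite{Wachsmuth2021:2} — which is exactly why the paper says it "utilise[s] the results of \cite{Wachsmuth2021:2}" here — so that the proof reduces to identifying $w$, $g$ and invoking the relevant quantitative coercivity lemma, with the $C_L$-dependence of $\tilde c$ inherited directly from there.
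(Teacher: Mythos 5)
Your overall plan matches the paper: the authors' own proof is a one-line citation of Lemma 20 of \cite{Wachsmuth2021:2}, together with the remark that a bounded linear operator $V \to V^*$ is the derivative of a convex function iff it is self-adjoint (which reconciles the hypothesis stated there with the one stated here). Your sketch for the non-self-adjoint case is correct: with $w = u-v$, $g = \Phi(\varphi)-\Phi(\psi)$, coercivity plus boundedness plus Young's inequality reduces matters to the nonnegativity of a quadratic form in $(\norm{w}{V}, \norm{\varphi-\psi}{V})$, and the paper's stated constants $C = C_a/2$, $\tilde c = C_b C_L / C_a$ make that form a perfect square; $\tilde c < 1$ is exactly $C_L < C_a/C_b$.

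There is, however, a real gap in your sketch of the self-adjoint case. Applying $A$-Cauchy--Schwarz $\dual{Aw}{g} \le \norm{w}{A}\norm{g}{A}$, then Young with parameter $t$, and then converting both terms to the $V$-norm via the two spectral bounds $\norm{w}{A}^2 \ge C_a\norm{w}{V}^2$ and $\norm{g}{A}^2 \le C_b\norm{g}{V}^2$ gives, after optimising over $t$ (the optimum is $t=1$), the threshold $C_L < \sqrt{C_a/C_b}$. This is strictly smaller than the claimed $2\sqrt{C_aC_b}/(C_a+C_b)$ whenever $C_b > C_a$, so this route does not reproduce the lemma. The loss comes from estimating $\norm{w}{A}$ and $\norm{g}{A}$ by opposite ends of the spectrum independently, which is incompatible with the actual worst case $g \propto \hat A w$ (with $\hat A$ the Riesz image of $A$). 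The sharp argument is a single spectral computation: after maximising $\dual{Aw}{g}$ over $\norm{g}{V} \le C_L\norm{\varphi-\psi}{V}$ and then minimising over $\norm{\varphi-\psi}{V}$, one is reduced to the scalar inequality $\lambda - \tfrac{C_L^2 \lambda^2}{4C\tilde c^2} \ge C$ for all $\lambda \in [C_a,C_b]$; plugging in the paper's $C = C_a C_b/(C_a+C_b)$ and $\tilde c = (C_a+C_b)C_L/(2\sqrt{C_aC_b})$ turns this into $(\lambda-C_a)(\lambda-C_b) \le 0$, which saturates at both spectral endpoints. Since you explicitly fall back on citing \cite{Wachsmuth2021:2} for the constants, the proposal is acceptable as a pointer to the right source, but the claim that your $t$-optimisation ``is where the precise threshold emerges'' is not correct.
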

\begin{proof}
	This is precisely \cite[Lemma~20]{Wachsmuth2021:2}.
	Note that the linear and continuous operator $A$ is a derivative of a convex function
	if and only if $A$ is self-adjoint.
\end{proof}
If the constant $C_L$ is larger than or equal to the allowed threshold
from \cref{lem:small_estimate},
the result no longer holds,
cf.\ \cite[Theorems~3.6, 3.7]{Wachsmuth2019:2}. 
Note that the latter constant in \eqref{ass:generalLipschitzConstant} is larger than the former one. If $C_L < C_a\slash C_b$, we may choose
\[C= \frac{C_a}{2}, \quad \tilde c = \frac{C_bC_L}{C_a}\]
whereas in the other case we could choose
\[C= \frac{C_aC_b}{C_a + C_b}, \quad  \tilde c = \frac{(C_a+C_b)C_L}{2\sqrt{C_aC_b}}.\]
The next result will be crucial, since it shows that the map
$u \mapsto T_\rho(f, u)$ is a contraction
under appropriate assumptions.

\begin{prop}\label{lem:TrhoLipschitz}
	For all $f, g \in V^*$ and $\varphi, \psi \in H$, we have	
	\[
		\norm{T_\rho(f,\varphi) - T_\rho(g,\psi)}{V}
		\leq
		\sqrt2 C_a^{-1}\norm{f-g}{V^*} + C_a^{-1}(\sqrt{2} C_b)\norm{\Phi(\psi)-\Phi(\varphi)}{V}.
	\]
	In case that
	$\Phi\colon V \to V$ is locally Lipschitz in $U \subset V$
	with small Lipschitz constant $C_L$ satisfying \eqref{ass:generalLipschitzConstant} 
	and if
	$\varphi,\psi  \in U$, then
	\[
		\norm{T_\rho(f,\varphi) - T_\rho(g,\psi)}{V}
		\leq
		\hat C \norm{f-g}{V^*} + \hat c \norm{\psi-\varphi}{V}
	\]
	for some constants $\hat C \ge 0$, $\hat c \in [0,1)$,
	depending only on $C_L$, $C_a$, $C_b$ and the self-adjointness of $A$.
\end{prop}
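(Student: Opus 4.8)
The plan is to compare $u := T_\rho(f,\varphi)$ with $v := T_\rho(g,\psi)$ by interpolating through the intermediate solution $u' := T_\rho(g,\varphi)$, which shares the source term with $v$ and the obstacle with $u$; the triangle inequality then gives
\[
  \norm{T_\rho(f,\varphi)-T_\rho(g,\psi)}{V} \le \norm{u-u'}{V} + \norm{u'-v}{V},
\]
and I would estimate the first summand (capturing the dependence on the source) and the second (capturing the dependence on the obstacle) separately. In both cases the essential trick is to test the difference of the two defining equations not with the plain difference of the solutions, but with the \emph{shifted} difference $u - \Phi(\varphi) - u' + \Phi(\varphi)$, respectively $u' - \Phi(\varphi) - v + \Phi(\psi)$: since $\sigma_\rho$ is monotone, its contribution against such a shifted difference is nonnegative and can simply be dropped. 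This is crucial, because $\rho^{-1}\sigma_\rho$ has Lipschitz constant $\rho^{-1}$ and hence cannot be controlled uniformly in $\rho$ by any other means.

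For the first summand, subtracting $Au + \rho^{-1}\sigma_\rho(u-\Phi(\varphi)) = f$ and $Au' + \rho^{-1}\sigma_\rho(u'-\Phi(\varphi)) = g$ and testing with $u - u' = (u-\Phi(\varphi)) - (u'-\Phi(\varphi)) \in V$, monotonicity of $\sigma_\rho$ removes the penalty term, and coercivity together with Cauchy--Schwarz yields $C_a\norm{u-u'}{V}^2 \le \norm{f-g}{V^*}\norm{u-u'}{V}$, i.e.\ $\norm{u-u'}{V} \le C_a^{-1}\norm{f-g}{V^*}$. For the second summand, subtracting the equations for $u'$ and $v$ (same source $g$, obstacles $\Phi(\varphi)$ and $\Phi(\psi)$) and testing with $w := (u'-\Phi(\varphi)) - (v-\Phi(\psi))$, monotonicity of $\sigma_\rho$ gives $\langle A(u'-v), w\rangle \le 0$, that is,
\[
  \langle A(u'-v),\, u'-\Phi(\varphi) - v + \Phi(\psi)\rangle \le 0 .
\]
For the first (general) estimate I would rewrite $w = (u'-v) - (\Phi(\varphi)-\Phi(\psi))$ and use coercivity and boundedness of $A$ to obtain $C_a\norm{u'-v}{V}^2 \le C_b\norm{u'-v}{V}\norm{\Phi(\psi)-\Phi(\varphi)}{V}$, hence $\norm{u'-v}{V} \le C_b C_a^{-1}\norm{\Phi(\psi)-\Phi(\varphi)}{V}$; combined with the bound on $\norm{u-u'}{V}$ this proves the first estimate, in fact with the sharper constants $C_a^{-1}$ and $C_b C_a^{-1}$, which a fortiori give the stated ones.

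For the second estimate, the bound on $\norm{u-u'}{V}$ is unchanged, and instead of the crude coercivity bound I would feed the displayed inequality directly into \cref{lem:small_estimate} (which applies since $\Phi$ is Lipschitz on $U$ with constant $C_L$ obeying \eqref{ass:generalLipschitzConstant} and $\varphi,\psi \in U$); this produces $C\bigl(\norm{u'-v}{V}^2 - \tilde c^2\norm{\varphi-\psi}{V}^2\bigr) \le 0$ with $\tilde c \in [0,1)$, whence $\norm{u'-v}{V} \le \tilde c\norm{\varphi-\psi}{V}$. Combining gives $\norm{T_\rho(f,\varphi)-T_\rho(g,\psi)}{V} \le C_a^{-1}\norm{f-g}{V^*} + \tilde c\norm{\psi-\varphi}{V}$, i.e.\ the claim with $\hat C := C_a^{-1}$ and $\hat c := \tilde c < 1$. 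I do not expect a genuine obstacle here; the one point that needs care — and the reason \cref{lem:small_estimate} must be invoked rather than the elementary estimate — is that in the self-adjoint case the admissible threshold for $C_L$ can exceed $C_a/C_b$, so the naive contraction constant $C_b C_L/C_a$ need not be below $1$, whereas the sharp estimate from \cite{Wachsmuth2021:2} keeps $\tilde c < 1$ precisely in that regime.
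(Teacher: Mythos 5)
Your proof is correct, but it takes a genuinely different route from the paper's. The paper estimates the difference $u-v$ with $u=T_\rho(f,\varphi)$, $v=T_\rho(g,\psi)$ in a single step: it subtracts the two defining equations, tests against the shifted difference $u-\Phi(\varphi)-v+\Phi(\psi)$, uses monotonicity to drop the penalty term, and is then left with a right-hand side $\dual{f-g}{u-\Phi(\varphi)-v+\Phi(\psi)}$ that couples the source perturbation to both $\norm{u-v}{V}$ and $\norm{\Phi(\varphi)-\Phi(\psi)}{V}$. Untangling this requires two Young inequalities and a weighted Young inequality in the contraction case, which is exactly where the extra factors of $\sqrt 2$ (in the first estimate) and the factor $\sqrt{2/(1+\tilde c^2)}$ in $\hat c$ (in the second) come from. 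You instead interpolate through $u':=T_\rho(g,\varphi)$, so that each leg of the triangle inequality perturbs only one of the two arguments. The source leg is tested with $u-u'$, which is already the shifted difference because the obstacle is unchanged, and gives $\norm{u-u'}{V}\le C_a^{-1}\norm{f-g}{V^*}$ by pure coercivity with no cross terms. The obstacle leg has no source perturbation, so testing with the shifted difference gives $\dual{A(u'-v)}{u'-\Phi(\varphi)-v+\Phi(\psi)}\le 0$ outright, which feeds directly into either boundedness/coercivity (first estimate) or \cref{lem:small_estimate} (second estimate). The result is the same statement with strictly better constants: $1$ and $C_b/C_a$ in place of $\sqrt 2$ and $\sqrt 2\,C_b/C_a$ in the first estimate, and $\hat C=C_a^{-1}$, $\hat c=\tilde c$ in the second (the paper's $\hat c$ is $\tilde c\sqrt{2/(1+\tilde c^2)}>\tilde c$). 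Your closing remark — that \cref{lem:small_estimate} rather than the naive contraction bound $C_bC_L/C_a$ is needed because in the self-adjoint case the admissible threshold for $C_L$ exceeds $C_a/C_b$ — is also exactly right and is the substantive point of the second half of the statement.
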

\begin{proof}
	Let $u=T_\rho(f,\varphi)$ and $v=T_\rho(g,\psi)$.
	We have that
	\begin{align*}
		A u + \frac 1\rho \sigma_\rho(u - \Phi(\varphi)) = f \qquad\text{and}\qquad 
		A v + \frac 1\rho \sigma_\rho(v - \Phi(\psi)) &= g	.
	\end{align*}
	Testing the difference with $u - \Phi(\varphi) - v + \Phi(\psi)$
	and using monotonicity
	leads to
	\begin{equation}
		\label{eq:tested_VIs}
		\dual{A (u - v)}{u - \Phi(\varphi) - v + \Phi(\psi)}
		\le
		\dual{f - g}{u - \Phi(\varphi) - v + \Phi(\psi)}
	\end{equation}
	and, consequently,
	\begin{equation*}
		C_a \norm{u - v}V^2 - C_b \norm{u - v}V \norm{\Phi(\varphi) - \Phi(\psi)}V
		\le
		\norm{f - g}{V^*}\parens[\big]{\norm{u - v}V + \norm{\Phi(\varphi) - \Phi(\psi)}V}.
	\end{equation*}
	Together with the estimates
	\begin{align*}
		C_b \norm{u - v}V \norm{\Phi(\varphi) - \Phi(\psi)}V
		&\le
		\frac{C_a}{4}\norm{u - v}V^2 + \frac{C_b^2}{C_a}\norm{\Phi(\varphi) - \Phi(\psi)}V^2
		,
		\\
		\norm{f - g}{V^*}\norm{u - v}V
		&\le
		\frac{C_a}{4}\norm{u - v}V^2 + \frac{1}{C_a}\norm{f - g}{V^*}^2,
	\end{align*}
	we get
	\begin{align*}
		\frac{1}{2} \norm{u - v}V^2
		&\le
		\frac{C_b^2}{C_a^2}\norm{\Phi(\varphi) - \Phi(\psi)}V^2
		+
		\frac{1}{C_a}\norm{f - g}{V^*}\norm{\Phi(\varphi) - \Phi(\psi)}V
		+
		\frac{1}{C_a^2}\norm{f - g}{V^*}^2
		\\
		&\le
		\parens*{
			\frac{C_b}{C_a}\norm{\Phi(\varphi) - \Phi(\psi)}V
			+
			\frac{1}{C_a}\norm{f - g}{V^*}
		}^2.
	\end{align*}
	This shows the first estimate.

	In order to arrive at the second estimate, we use
	\cref{lem:small_estimate}
	in \eqref{eq:tested_VIs}
	to obtain
	\begin{equation*}
		C \parens[\big]{
			\norm{u - v}V^2 - \tilde c^2 \norm{\varphi - \psi}V^2
		}
		\le
		\norm{f - g}{V^*}
		\parens*{\norm{u - v}V + C_L \norm{\varphi - \psi}V }
		.
	\end{equation*}
	Together with
	\begin{equation*}
		\norm{f - g}{V^*}
		\norm{u - v}V
		\le
		C \frac{1 - \tilde c^2}{2} \norm{u - v}V^2
		+
		\frac1{2C (1 - \tilde c^2)} \norm{f - g}{V^*}^2
	\end{equation*}
	we get
	\begin{align*}
		C \frac{1+\tilde c^2}{2} \norm{u - v}V^2
		&\le
		\frac{1}{2 C (1-\tilde c^2)}\norm{f - g}{V^*}^2
		+
		\norm{f - g}{V^*}
		C_L \norm{\varphi - \psi}V
		+
		C\tilde c^2 \norm{\varphi - \psi}V^2
		\\
		&\le
		\parens*{
			\parens*{
				\frac{1}{\sqrt{2 C (1-\tilde c^2)}}
				+
				\frac{C_L}{2 \sqrt{C} \tilde c}
			}\norm{f - g}{V^*}
			+
			\sqrt{C}\tilde c \norm{\varphi - \psi}V
		}^2
		.
	\end{align*}
	This yields the claim.
\end{proof}
The next lemma shows that the solution of the PDE converges to the solution of associated VI.
\begin{lem}\label{lem:PDEApproximatesVI}
For $f \in V^*$ and $\varphi \in H$, we have $T_\rho(f,\varphi) \to S(f,\varphi)$ in $V$ as $\rho \searrow 0$.
\end{lem}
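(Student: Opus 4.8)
\emph{Proof proposal.} The plan is to run the classical Moreau--Yosida penalisation argument. Write $u_\rho := T_\rho(f,\varphi)$ and $u := S(f,\varphi)$, and note that $\mathbf K(\varphi) = \{v \in V : v \le \Phi(\varphi)\}$ is nonempty (it contains $\Phi(\varphi) \in V$), closed and convex, so the variational inequality defining $S(f,\varphi)$ is uniquely solvable.

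First I would establish a uniform-in-$\rho$ a priori bound. Testing the penalised equation $Au_\rho + \frac1\rho\sigma_\rho(u_\rho-\Phi(\varphi)) = f$ with $u_\rho - \Phi(\varphi)$ and using that $\langle \sigma_\rho(w),w\rangle = \int_\Omega \sigma_\rho(w)\,w \ge 0$ (the integrand is nonnegative pointwise, since $\sigma_\rho(r) = 0$ for $r \le 0$), together with coercivity and boundedness of $A$, yields $\norm{u_\rho}{V} \le M$ with $M$ independent of $\rho$. Rearranging the equation then gives $\norm{\sigma_\rho(u_\rho - \Phi(\varphi))}{V^*} = \rho\norm{f - Au_\rho}{V^*} \le \rho(\norm{f}{V^*} + C_b M) \to 0$ as $\rho \searrow 0$.

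Next, fix a sequence $\rho_n \searrow 0$. Along a subsequence (not relabelled) $u_{\rho_n} \weaklyto z$ in $V$. Since $\Phi(\varphi)$ is fixed we have $u_{\rho_n} - \Phi(\varphi) \weaklyto z - \Phi(\varphi)$, and combined with $\sigma_{\rho_n}(u_{\rho_n} - \Phi(\varphi)) \to 0$ in $V^*$, \cref{lem:penaltyTypeCondition} gives $z - \Phi(\varphi) \le 0$, i.e.\ $z \in \mathbf K(\varphi)$. To identify $z$ with $u$, fix any $v \in \mathbf K(\varphi)$; then $v - \Phi(\varphi) \le 0$, so $\sigma_\rho(v-\Phi(\varphi)) = 0$, and testing the penalised equation with $u_\rho - v$ and using monotonicity of $\sigma_\rho$ (to discard the penalty contribution, writing it as $\langle \sigma_\rho(u_\rho-\Phi(\varphi)) - \sigma_\rho(v-\Phi(\varphi)), (u_\rho-\Phi(\varphi))-(v-\Phi(\varphi))\rangle \ge 0$) gives $\langle Au_\rho - f, u_\rho - v\rangle \le 0$. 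Monotonicity of $A$ then yields $\langle Av - f, u_\rho - v\rangle \le 0$, and passing to the weak limit along the subsequence, $\langle Av - f, z - v\rangle \le 0$ for all $v \in \mathbf K(\varphi)$. A standard Minty argument—using convexity of $\mathbf K(\varphi)$ and linearity of $A$, replace $v$ by $(1-t)z + tw$ for $w \in \mathbf K(\varphi)$, $t \in (0,1]$, and let $t \searrow 0$—converts this into $\langle Az - f, z - v\rangle \le 0$ for all $v \in \mathbf K(\varphi)$, so $z = u$ by uniqueness of the VI solution.

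Finally, strong convergence follows from coercivity: taking $v = u$ in $\langle Au_\rho - f, u_\rho - v\rangle \le 0$ and estimating
\[
C_a\norm{u_\rho - u}{V}^2 \le \langle A(u_\rho-u), u_\rho-u\rangle = \langle Au_\rho - f, u_\rho - u\rangle + \langle f - Au, u_\rho - u\rangle \le \langle f - Au, u_\rho - u\rangle \to 0,
\]
using $u_\rho - u \weaklyto 0$ in $V$ and $f - Au \in V^*$. Since every sequence $\rho_n \searrow 0$ has a subsequence along which $u_{\rho_n} \to u$ strongly, the whole family converges, proving $T_\rho(f,\varphi) \to S(f,\varphi)$ in $V$. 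I expect the only genuinely delicate point to be the feasibility $z \le \Phi(\varphi)$ of the weak limit, which is precisely what \cref{lem:penaltyTypeCondition} supplies; the remaining steps are routine monotone-operator/VI manipulations.
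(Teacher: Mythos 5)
Your proof is correct and is the classical Moreau--Yosida penalisation argument (uniform a priori bound, feasibility of the weak limit via \cref{lem:penaltyTypeCondition}, identification via Minty, and strong convergence by coercivity) — exactly the content the paper's one-line proof defers to by citing \cite[Theorem~2.18]{AHROCQVI}. The only genuinely $\rho$-dependent subtlety (that $\sigma_\rho$ itself varies with $\rho$) is handled through \cref{lem:penaltyTypeCondition}, as you correctly identify.
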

\begin{proof}
This is an extension of the classical penalty theory (see \cite[Theorem 3.1]{Glowinski} or \cite[\S 5.3, Chapter 3]{Lions1969}) to the varying $\sigma_\rho$ setting given in \cite{AHROCQVI}. More precisely, since
$\sigma_\rho$ is hemicontinuous (hence radially continuous) and bounded, this follows by \cite[Theorem 2.18]{AHROCQVI}. 
\end{proof}

\subsection{Order properties}\label{sec:orderProperties}
In this section, we discuss various properties related to the partial order. The next lemma is fundamental: it will be used to show that \eqref{eq:penalisedPDEGeneralMRho} has minimal and maximal solutions.

\begin{lem}\label{lem:TRhoIncreasingMap}The map $T_\rho(\cdot,\cdot)\colon V^* \times H \to V$ is increasing.
\end{lem}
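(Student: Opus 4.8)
The goal is to show: if $f_1 \leq f_2$ in $V^*$ and $\varphi_1 \leq \varphi_2$ in $H$, then $T_\rho(f_1,\varphi_1) \leq T_\rho(f_2,\varphi_2)$. The strategy is the standard comparison argument for monotone operators, testing the difference of the two PDEs with the positive part of the difference of solutions. Write $u_i = T_\rho(f_i,\varphi_i)$, so that
\[
Au_i + \tfrac1\rho \sigma_\rho(u_i - \Phi(\varphi_i)) = f_i, \qquad i=1,2.
\]
Subtract the equation for $i=1$ from that for $i=2$ and test with $v := (u_1 - u_2)^+ \in V_+$ (this is admissible since $V$ is closed under taking positive parts). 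I would expect the inequality to come out of three facts: T-monotonicity of $A$, T-monotonicity of $\sigma_\rho$, and monotonicity of $\Phi$ together with $\varphi_1 \le \varphi_2$.

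The key steps, in order: First, from the $A$-term, by T-monotonicity $\langle A(u_1-u_2), (u_1-u_2)^+\rangle = \langle A(u_1-u_2)^+ , (u_1-u_2)^+\rangle - \langle A(u_1-u_2)^-, (u_1-u_2)^+\rangle \geq C_a \|(u_1-u_2)^+\|_V^2$, using coercivity on the first term and T-monotonicity $\langle A w^+, w^- \rangle \le 0$ (applied to $w = u_1-u_2$, noting $\langle A(u_1-u_2)^-,(u_1-u_2)^+\rangle = \langle A w^-, w^+\rangle \le 0$ by symmetry of the T-monotonicity condition, or rather its standard consequence). Second, for the penalty term, I need $\langle \sigma_\rho(u_1 - \Phi(\varphi_1)) - \sigma_\rho(u_2 - \Phi(\varphi_2)), (u_1-u_2)^+ \rangle \geq 0$. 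Here I use that $\sigma_\rho$ is nondecreasing and that on the set $\{u_1 > u_2\}$ we have $u_1 - \Phi(\varphi_1) \ge u_1 - \Phi(\varphi_2) > u_2 - \Phi(\varphi_2)$ because $\Phi(\varphi_1) \le \Phi(\varphi_2)$ (since $\Phi$ is increasing and $\varphi_1 \le \varphi_2$); hence $\sigma_\rho(u_1-\Phi(\varphi_1)) \ge \sigma_\rho(u_2 - \Phi(\varphi_2))$ a.e.\ on $\{u_1 > u_2\}$, and the integrand of the pairing is nonnegative pointwise. Third, since $f_1 \le f_2$, we have $\langle f_1 - f_2, (u_1-u_2)^+\rangle \le 0$ by definition of the order on $V^*$.

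Combining: testing the difference of the equations with $(u_1-u_2)^+$ gives
\[
\langle A(u_1-u_2), (u_1-u_2)^+\rangle + \tfrac1\rho\langle \sigma_\rho(u_1-\Phi(\varphi_1)) - \sigma_\rho(u_2-\Phi(\varphi_2)), (u_1-u_2)^+\rangle = \langle f_1 - f_2, (u_1-u_2)^+\rangle,
\]
where the left side is $\geq C_a\|(u_1-u_2)^+\|_V^2$ and the right side is $\leq 0$. Therefore $(u_1-u_2)^+ = 0$, i.e.\ $u_1 \le u_2$, which is the claim. The main obstacle — really the only delicate point — is making the pointwise monotonicity argument for the $\sigma_\rho$-term fully rigorous: one must argue that the integrand $\bigl(\sigma_\rho(u_1 - \Phi(\varphi_1)) - \sigma_\rho(u_2-\Phi(\varphi_2))\bigr)(u_1-u_2)^+$ is nonnegative a.e., handling the set where $(u_1-u_2)^+ = 0$ trivially and using monotonicity of $\sigma_\rho$ plus $\Phi(\varphi_1) \le \Phi(\varphi_2)$ on the complementary set; this is exactly where the hypothesis that $\Phi$ is increasing enters. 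Everything else is routine.
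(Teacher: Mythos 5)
Your proposal is correct and follows essentially the same approach as the paper: test the difference of the two equations with the positive part of the difference of solutions, then use T-monotonicity of $A$, monotonicity of $\sigma_\rho$, and the increasing property of $\Phi$ to force the positive part to vanish. The only cosmetic difference is that the paper handles the penalty term in two operator-level steps (first replacing the obstacle via $\sigma_\rho(v-\Phi(\psi)) \ge \sigma_\rho(v-\Phi(\varphi))$, then invoking T-monotonicity of $\sigma_\rho$ with matched obstacles), whereas you argue pointwise on $\{u_1 > u_2\}$ in one go; these amount to the same thing, and the small sign-of-subtraction slip at the start of your write-up is repaired by the final displayed identity.
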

\begin{proof}
Let $f \geq g$, $\varphi \geq \psi$ and consider $u=T_\rho(f,\varphi)$ and $v=T_\rho(g,\psi)$.  Testing the equation for $v-u$ with $(v-u)^+$, we have
\[\langle A(v-u), (v-u)^+ \rangle + \frac 1 \rho \langle \sigma_\rho(v-\Phi(\psi)) - \sigma_\rho(u-\Phi(\varphi)), (v-u)^+\rangle = \langle g-f, (v-u)^+ \rangle.\]
Since $\Phi(\varphi) \geq \Phi(\psi)$ we have $v-\Phi(\psi) \geq v-\Phi(\varphi)$ and hence  by the increasingness property, $\sigma_\rho(v-\Phi(\psi)) \geq \sigma_\rho(v-\Phi(\varphi))$.  This implies from above that
\[\langle A(v-u), (v-u)^+ \rangle + \frac 1 \rho \langle \sigma_\rho(v-\Phi(\varphi)) - \sigma_\rho(u-\Phi(\varphi)), (v-u)^+\rangle \leq 0\]
and hence, using T-monotonicity, we get $(v-u)^+=0$ so that $v \leq u$.
\end{proof}

\begin{lem}\label{lem:TRhoIncreasingInRho}
We have
\[\rho \leq \kappa \implies T_\rho(f,\varphi) \leq T_\kappa (f,\varphi).\]
\end{lem}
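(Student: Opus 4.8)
The plan is to run exactly the comparison scheme used in the proof of \cref{lem:TRhoIncreasingMap}, but now the monotonicity we exploit is that of the penalty family $\rho \mapsto \frac1\rho\sigma_\rho$ rather than of $\Phi$. Write $u = T_\rho(f,\varphi)$ and $w = T_\kappa(f,\varphi)$; the goal is to show $(u-w)^+ = 0$, i.e.\ $u \le w$. Subtracting the two defining equations and testing with $(u-w)^+ \in V$ (which lies in $V$ by the assumed property of $V$) gives
\[
\langle A(u-w),(u-w)^+\rangle + \left\langle \tfrac1\rho\sigma_\rho(u-\Phi(\varphi)) - \tfrac1\kappa\sigma_\kappa(w-\Phi(\varphi)),\,(u-w)^+\right\rangle = 0 .
\]
I would then show that each of the three contributions identified below is nonnegative, with the elliptic one bounded below by $C_a\|(u-w)^+\|_V^2$, which forces $(u-w)^+=0$.

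For the elliptic term, T-monotonicity of $A$ gives $\langle A(u-w),(u-w)^+\rangle \ge C_a\|(u-w)^+\|_V^2 \ge 0$, exactly as in \cref{lem:TRhoIncreasingMap}. For the penalty term I would split
\[
\tfrac1\rho\sigma_\rho(u-\Phi(\varphi)) - \tfrac1\kappa\sigma_\kappa(w-\Phi(\varphi))
= \tfrac1\rho\big(\sigma_\rho(u-\Phi(\varphi)) - \sigma_\rho(w-\Phi(\varphi))\big)
+ \big(\tfrac1\rho\sigma_\rho(w-\Phi(\varphi)) - \tfrac1\kappa\sigma_\kappa(w-\Phi(\varphi))\big).
\]
Tested against $(u-w)^+$, the first bracket is $\ge 0$ by T-monotonicity of $\sigma_\rho$ (its two arguments differ by $u-w$, whose positive part is $(u-w)^+$).

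The crux is the second bracket, which integrated against the nonnegative function $(u-w)^+$ is $\ge 0$ provided one knows the pointwise inequality $\tfrac1\rho\sigma_\rho(r) \ge \tfrac1\kappa\sigma_\kappa(r)$ for every $r\in\R$ whenever $\rho\le\kappa$ — that is, $\rho \mapsto \tfrac1\rho\sigma_\rho(r)$ is nonincreasing. This pointwise monotonicity is the only genuinely new ingredient; I would establish it either by the elementary three-region case distinction from the definition \eqref{eq:mrhoHK} (with a check of continuity at $r=\rho$ and $r=\kappa$), or, more cleanly, via the representation $\tfrac1\rho\sigma_\rho(r) = \max_{0\le s\le 1/\rho}\big(sr - \tfrac\rho2 s^2\big)$, where decreasing $\rho$ simultaneously enlarges the feasible interval and increases the objective, so the maximum can only grow. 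Collecting the three nonnegative contributions, the tested identity forces $C_a\|(u-w)^+\|_V^2 = 0$, hence $u \le w$, which is the claim.
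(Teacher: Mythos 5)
Your argument is correct and follows the paper's blueprint: subtract the two defining equations, test with $(u-w)^+$, and conclude from T-monotonicity of $A$ together with nonnegativity of the penalty contribution. The only difference is cosmetic. You split the penalty difference into two pieces, with $1/\rho$ on the T-monotonicity bracket and the single pointwise inequality $\tfrac1\rho\sigma_\rho(r)\ge\tfrac1\kappa\sigma_\kappa(r)$ on the rest, whereas the paper does a three-way split with $1/\kappa$ on the T-monotonicity bracket and then invokes the two separate pointwise facts $\sigma_\rho\ge 0$ (together with $1/\rho\ge 1/\kappa$) and $\sigma_\rho\ge\sigma_\kappa$. Your pointwise inequality is exactly the sum of those two, so the arguments are morally identical; yours simply packages the $\rho$-monotonicity into one cleaner statement, which you then verify by case distinction from \eqref{eq:mrhoHK}.

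One small slip in an otherwise sound side remark: the variational representation $\tfrac1\rho\sigma_\rho(r)=\max_{0\le s\le 1/\rho}\bigl(sr-\tfrac\rho2 s^2\bigr)$ is not correct (take $\rho=1/2$, $r=1/4$: the right side equals $1/16$, but $\tfrac1\rho\sigma_\rho(r)=1/8$). The correct identity is $\sigma_\rho(r)=\max_{0\le s\le 1}\bigl(sr-\tfrac\rho2 s^2\bigr)$; dividing by $\rho$ and substituting $t=s/\rho$ gives $\tfrac1\rho\sigma_\rho(r)=\max_{0\le t\le 1/\rho}\bigl(tr-\tfrac{\rho^2}{2}t^2\bigr)$, with $\rho^2/2$ rather than $\rho/2$. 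With that fix your ``enlarge the feasible set and increase the objective'' reasoning does go through; and in any case your primary route via the three-region case distinction is correct and suffices.
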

\begin{proof}
Let $u_\rho=T_\rho(f,\varphi)$ and $u_\kappa=T_\kappa(f,\varphi)$. We have
\[A(u_\rho - u_\kappa) + \frac 1\rho \sigma_\rho(u_\rho-\Phi(\varphi)) - \frac 1\kappa \sigma_\kappa(u_\kappa - \Phi(\varphi)) = 0,\]
and we manipulate
\begin{align*}
&\frac 1\rho \sigma_\rho(u_\rho-\Phi(\varphi)) - \frac 1\kappa \sigma_\kappa(u_\kappa - \Phi(\varphi))\\ &= \left(\frac 1\rho-\frac 1\kappa\right) \sigma_\rho(u_\rho-\Phi(\varphi)) + \frac 1\kappa \left(\sigma_\rho(u_\rho-\Phi(\varphi))-\sigma_\kappa(u_\kappa - \Phi(\varphi))\right)\\
&= \left(\frac 1\rho-\frac 1\kappa\right) \sigma_\rho(u_\rho-\Phi(\varphi)) + \frac 1\kappa \left(\sigma_\rho(u_\rho-\Phi(\varphi))-\sigma_\rho(u_\kappa - \Phi(\varphi))\right) + \frac 1\kappa \left(\sigma_\rho(u_\kappa-\Phi(\varphi)) -\sigma_\kappa(u_\kappa - \Phi(\varphi))\right)  
\end{align*}
which, when tested with $(u_\rho-u_\kappa)^+$, is non-negative (the first term by $\rho \leq \kappa$, the second by T-monotonicity and the third because $\sigma_\rho$ satisfies $\rho \leq \kappa \implies \sigma_\rho \geq \sigma_\kappa$).
\end{proof}
We should expect that the solution of the VI is dominated by the solution of the penalised equation.

\begin{lem}\label{lem:VIisLessThanPDE}
We have $S(f,\varphi) \leq T_\rho(f,\varphi)$.
\end{lem}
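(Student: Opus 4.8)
The plan is to prove the pointwise inequality by showing $(w-u)^+ = 0$, where $w := S(f,\varphi)$ and $u := T_\rho(f,\varphi)$; note that $(w-u)^+ \in V$ by the assumed stability of $V$ under the positive part. The argument is the classical one for comparing a VI solution with its penalised counterpart, the only genuinely problem-specific ingredient being the free-boundary structure of $\sigma_\rho$.

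First I would test the variational inequality satisfied by $w$ with the competitor $v := w - (w-u)^+ = \inf(w,u)$. This is admissible: $v \in V$ and $v \le w \le \Phi(\varphi)$, so $v \in \mathbf{K}(\varphi)$; inserting it into $\langle Aw - f, w - v\rangle \le 0$ gives $\langle Aw - f, (w-u)^+\rangle \le 0$. Next I would test the penalised equation $Au + \tfrac1\rho\sigma_\rho(u-\Phi(\varphi)) = f$ with $(w-u)^+$. The key observation is that $\langle \sigma_\rho(u-\Phi(\varphi)), (w-u)^+\rangle = 0$: at a.e.\ point where $(w-u)^+ > 0$ we have $w > u$, and since $w \le \Phi(\varphi)$ this forces $u - \Phi(\varphi) < 0$ there, where $\sigma_\rho$ vanishes by definition \eqref{eq:mrhoHK}; hence the penalty term drops out and we obtain $\langle Au - f, (w-u)^+\rangle = 0$.

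Subtracting these two relations yields $\langle A(w-u), (w-u)^+\rangle \le 0$. To conclude I would invoke T-monotonicity together with coercivity: applying T-monotonicity to $-(w-u)$ gives $\langle A(w-u)^-, (w-u)^+\rangle \le 0$, so that $\langle A(w-u), (w-u)^+\rangle = \langle A(w-u)^+, (w-u)^+\rangle - \langle A(w-u)^-, (w-u)^+\rangle \ge C_a\norm{(w-u)^+}{V}^2$. Combining with the preceding inequality forces $(w-u)^+ = 0$, i.e.\ $w \le u$, which is the claim. The main point requiring care is the measure-theoretic justification that $\sigma_\rho(u-\Phi(\varphi))$ is annihilated against $(w-u)^+$; everything else is routine manipulation of the type already used in the proof of \cref{lem:TRhoIncreasingMap}.
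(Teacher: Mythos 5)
Your proof is correct and follows essentially the same route as the paper: test the VI for $S(f,\varphi)$ with $\inf(S(f,\varphi),T_\rho(f,\varphi))$, test the penalised equation with $(S(f,\varphi)-T_\rho(f,\varphi))^+$, eliminate the penalty term using that $\sigma_\rho$ vanishes on $(-\infty,0]$, and conclude via T-monotonicity and coercivity. The only cosmetic difference is that the paper handles the penalty term by first bounding $\sigma_\rho(u_\rho-\Phi(\varphi))\le\sigma_\rho(u_\rho-v)$ via monotonicity of $\sigma_\rho$ before observing the disjoint supports, whereas you argue the pointwise vanishing directly; both are fine.
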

\begin{proof}
 Let $u_\rho =T_\rho(f,\varphi)$ and $v=S(f,\varphi)$. Take as test function in the VI for $v$ the function $v-(v-u_\rho)^+$ and combine to get
\[\langle A(v-u_\rho), (v-u_\rho)^+\rangle - \frac 1\rho \langle \sigma_\rho(u_\rho-\Phi(\varphi)), (v-u_\rho)^+ \rangle \leq 0.\]
Since $v \leq \Phi(\varphi)$, we have $u_\rho-\Phi(\varphi) \leq u_\rho -v$ and the increasing property of $\sigma_\rho$ as well as the fact that $\sigma_\rho \equiv 0$ on $(-\infty,0]$ implies that
\[\langle \sigma_\rho(u_\rho-\Phi(\varphi)), (v-u_\rho)^+ \rangle \leq \langle \sigma_\rho(u_\rho-v), (v-u_\rho)^+ \rangle \leq 0.\]
Using this fact above, we deduce that $\langle A(v-u_\rho), (v-u_\rho)^+\rangle \leq 0$ which gives the claim.
\end{proof}
Before we move on, let us prove, with the aid of a result from this section, a claim we made earlier in \cref{sec:example}.
\begin{lem}\label{lem:allSubSupSolnsSatisfied}
The example in \cref{sec:example} satisfies every assumption on sub- and supersolutions in the paper. More precisely, with $W$, $\underline u$ and $\overline u$ defined as in \cref{sec:example},  any $f \in W$ and the set $W$ satisfy \cref{ass:forZSubAndSuperSolutions}, \cref{ass:subAndSupersolutionsZ}, \cref{ass:subAndSupersolutionsForZRho}, \cref{ass:ocSubAndSupersolutions} and \cref{ass:ocSubAndSupersolutionsStronger} (with $W=\Uad$), \cref{ass:forZRhoSubAndSuperSolutions} and \cref{ass:subAndSuperSolutions}.
\end{lem}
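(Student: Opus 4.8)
The plan is to use, once and for all, the single pair $\underline u := 0$ and $\overline u := A^{-1}F$ as sub-/supersolution data for every map occurring in the various assumptions, and to verify the required inequalities \emph{uniformly} over all $g \in W$ and (where a penalisation parameter enters) over all $\rho > 0$, so that the radii $\bar\delta$ and thresholds $\rho_0$ demanded by \cref{ass:subAndSupersolutionsZ}, \cref{ass:subAndSupersolutionsForZRho}, \cref{ass:ocSubAndSupersolutionsStronger} (and the later \cref{ass:forZRhoSubAndSuperSolutions}, \cref{ass:subAndSuperSolutions}) may be chosen arbitrarily. Two elementary facts do all the work: (a) $A$ is inverse-positive (test $Aw \ge 0$ with $w^-$ and use T-monotonicity and coercivity), and consequently $S(g,\varphi) \le A^{-1}g$ and $T_\rho(g,\varphi) \le A^{-1}g$ for \emph{every} $\varphi \in H$ and $\rho > 0$ — the first bound is already noted in \cref{remark:whyWeUseW}, the second follows since $\sigma_\rho \ge 0$, so that $Au = g - \tfrac1\rho\sigma_\rho(u-\Phi(\varphi)) \le g$ in $V^*$ and the same negative-part test applied to $u - A^{-1}g$ gives the claim; (b) $\Phi(0) \ge 0$, which makes $0 \in \mathbf{K}(0)$ and hence $S(g,0)$ well defined.

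First I would check that $\underline u = 0$ is a subsolution, i.e.\ $0 \le S(g,0)$ and $0 \le T_\rho(g,0)$ for all $g \in W$ (hence, a fortiori, for all $g$ in any ball intersected with $W$). For the VI: since $0 \le \Phi(0)$ and $u := S(g,0) \le \Phi(0)$, the function $u^+ = \sup(u,0)$ lies in $\mathbf{K}(0)$; testing with $v = u^+$ gives $\langle Au - g, u^- \rangle \ge 0$, and combining $\langle Au, u^-\rangle \le -C_a\norm{u^-}{V}^2$ (T-monotonicity and coercivity) with $\langle g, u^-\rangle \ge 0$ (as $g \ge 0$) forces $u^- = 0$. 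For the penalised PDE the argument is even shorter: on $\{u<0\}$ one has $u - \Phi(0) < 0$, so $\sigma_\rho(u-\Phi(0)) = 0$ there and the penalty term vanishes when tested with $u^-$, leaving $\langle Au, u^- \rangle = \langle g, u^- \rangle \ge 0$ and again $u^- = 0$.

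Next I would check that $\overline u = A^{-1}F$ is a supersolution. Inverse-positivity applied to $F \ge 0$ gives $0 \le \overline u$, so $\underline u \le \overline u$; and for $g \in W$ we have $g \le F$, whence by (a) and order-preservation of $A^{-1}$, $S(g,\overline u) \le A^{-1}g \le A^{-1}F = \overline u$ and $T_\rho(g,\overline u) \le A^{-1}g \le A^{-1}F = \overline u$ for every $\rho > 0$. This immediately yields \cref{ass:forZSubAndSuperSolutions}, \cref{ass:subAndSupersolutionsZ}, \cref{ass:subAndSupersolutionsForZRho}, \cref{ass:ocSubAndSupersolutions} and \cref{ass:ocSubAndSupersolutionsStronger} (taking $W = \Uad$), with $\bar\delta$ and $\rho_0$ unrestricted — note in particular that the mixed assumptions, pairing $S$ at $\underline u$ with $T_{\rho_0}$ at $\overline u$, are covered because the two estimates above are uniform in $\varphi$ and $\rho$. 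For the remaining \cref{ass:forZRhoSubAndSuperSolutions} and \cref{ass:subAndSuperSolutions} — which I expect to involve iterated maps and/or a family of $\rho$'s — the same two elements $0$ and $A^{-1}F$ serve: one additionally invokes monotonicity of $S(\cdot,\cdot)$ and $T_\rho(\cdot,\cdot)$ (\cref{lem:TRhoIncreasingMap}), monotonicity in $\rho$ (\cref{lem:TRhoIncreasingInRho}) to see that $[0, A^{-1}F]$ is preserved by the relevant iteration and that the supersolution property is inherited as $\rho$ varies, and $S \le T_\rho$ (\cref{lem:VIisLessThanPDE}) to reconcile the VI- and PDE-flavoured versions.

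The computations are entirely routine; the only point needing a little care is the quantifier bookkeeping — confirming that the single comparison estimate $S(g,\varphi), T_\rho(g,\varphi) \le A^{-1}g$ is genuinely uniform in $\varphi$ and $\rho$ so that it simultaneously discharges \cref{ass:subAndSupersolutionsForZRho} and the $\Uad$-versions. The only genuine (and mild) obstacle is that \cref{ass:forZRhoSubAndSuperSolutions} and \cref{ass:subAndSuperSolutions} are not displayed at this point in the text, so the write-up must restate each of their hypotheses and tick them off individually against the two estimates above.
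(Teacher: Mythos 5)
Your proposal is correct, but it takes a different (and more self-contained) route than the paper. The paper's own proof is purely order-theoretic: it notes that $\overline u = A^{-1}F = T_\rho(F,\infty)$, and then uses monotonicity of $T_\rho(\cdot,\cdot)$ in both arguments (\cref{lem:TRhoIncreasingMap} for $\rho>0$, the corresponding classical comparison result for $S$ when $\rho = 0$) to deduce $\overline u \ge T_\rho(F,\overline u) \ge T_\rho(f,\overline u)$ for $f \le F$; for the subsolution it uses $T_\rho(f,0) \ge T_\rho(0,0) = 0$, where $T_\rho(0,0) = 0$ follows because $\sigma_\rho(-\Phi(0)) = 0$. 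You instead re-derive everything from first principles: the subsolution property by testing the VI/PDE with $u^-$ and using T-monotonicity plus $g \ge 0$, and the supersolution property via the universal comparison $S(g,\varphi), T_\rho(g,\varphi) \le A^{-1}g$ (uniform in $\varphi$ and $\rho$), obtained from $\sigma_\rho \ge 0$ and inverse-positivity of $A$. The paper's version is shorter because it reuses the already-established lemmas and thus avoids re-proving comparison principles; yours makes the uniformity in $\varphi$ and $\rho$ more explicit, which is a legitimate advantage when discharging several assumptions at once. One small factual note: \cref{ass:forZRhoSubAndSuperSolutions} and \cref{ass:subAndSuperSolutions} do appear in the paper (later, in \cref{sec:minMaxSolnsOfPDEs} and \cref{sec:convergencetoQVIs}); they contain no iterated maps, only the same four inequalities you already verify, so the extra caution at the end of your proposal about invoking \cref{lem:TRhoIncreasingMap}, \cref{lem:TRhoIncreasingInRho} and \cref{lem:VIisLessThanPDE} for those two assumptions is unnecessary — your direct estimates already cover them.
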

\begin{proof}
It suffices to show that $\underline u$ and $\overline u$ are sub- and supersolutions for $S(f,\cdot)$ and $T_\rho(f,\cdot)$ for all $f \in W$. It is not difficult to see this:
\begin{itemize}
\item Since $\Phi$ is increasing,  for all $\rho \geq 0$, we have $\overline u = T_\rho(F, \infty) \geq T_\rho(F, \overline u) \geq T_\rho(f, \overline u)$ for any $f \leq F$ because of \cref{lem:TRhoIncreasingMap} (for $\rho >0)$ and \cite[\S 4:5, Theorem 5.1]{Rodrigues} (for $\rho=0$). Thus $\overline u$ is a supersolution of $S(f, \cdot)$ and $T_\rho(f, \cdot)$ for all $f \in W$.

\item If $f \in W$, for all $\rho \geq 0$, we have $T_\rho(f,0) \geq T_\rho(0,0) = 0$ again by the above-cited results and since $f \geq 0$. Hence $\underline u$ is a subsolution for $S(f,\cdot)$ and $T_\rho(f,\cdot)$ for all $f \in W$.

\end{itemize}

\end{proof}

\subsection{Minimal and maximal solutions of PDEs}\label{sec:minMaxSolnsOfPDEs}
Recall \eqref{eq:penalisedPDEGeneralMRho}:
\[Au+ \frac{1}{\rho}\sigma_\rho(u-\Phi(u)) = f.\]
Let us assume the existence of a sub- and supersolution for $T_\rho(f,\cdot)$ and prove our earlier claim that \eqref{eq:penalisedPDEGeneralMRho} has extremal solutions.
\begin{ass}[Well definedness of $\Z_\rho(f)$]\label{ass:forZRhoSubAndSuperSolutions}
Given $f \in V^*$, assume that there exist $\underline u, \overline u \in V$ such that
\[\underline u \leq T_\rho(f, \underline u), \quad \overline{u} \geq T_\rho(f, \overline u), \quad\text{and}\quad  \underline u \leq \overline u.\]
\end{ass}
This assumption is exactly \eqref{ass:newTRhoSubSup}. Arguing like in \cref{prop:existenceOfZ}, we have the following.
\begin{prop}\label{prop:existenceOfZRho}Under \cref{ass:forZRhoSubAndSuperSolutions}, there exist a minimal solution $\m_\rho(f)$ and maximal solution $\M_\rho(f)$ to the equation \eqref{eq:penalisedPDEGeneralMRho}
on $[\underline u, \overline u]$.
\end{prop}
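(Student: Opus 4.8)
The plan is to run exactly the same argument as in \cref{prop:existenceOfZ}, with the variational-inequality solution map $S(f,\cdot)$ replaced by the penalised-PDE solution map $T_\rho(f,\cdot)$. First I would recall that, by the discussion in \cref{sec:resultsOnSemilinear}, for each fixed $\rho>0$ and $f\in V^*$ the map $T_\rho(f,\cdot)\colon H\to V$ is everywhere defined and single-valued (the semilinear equation \eqref{eq:ppFixed} is uniquely solvable), and that $u\in V$ solves \eqref{eq:penalisedPDEGeneralMRho} if and only if $u$ is a fixed point of $T_\rho(f,\cdot)$, i.e.\ $u=T_\rho(f,u)$. Thus the statement reduces to producing a least and a greatest fixed point of $T_\rho(f,\cdot)$ inside the order interval $[\underline u,\overline u]$.

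Next I would collect the two order-theoretic ingredients needed for the Birkhoff--Tartar theorem. Monotonicity: by \cref{lem:TRhoIncreasingMap} the map $T_\rho(f,\cdot)$ is increasing. Invariance of the interval: \cref{ass:forZRhoSubAndSuperSolutions} provides $\underline u\le\overline u$ with $\underline u\le T_\rho(f,\underline u)$ and $\overline u\ge T_\rho(f,\overline u)$, and combining these with monotonicity shows that $T_\rho(f,\cdot)$ maps $[\underline u,\overline u]$ into itself, since for $v\in[\underline u,\overline u]$ one has $\underline u\le T_\rho(f,\underline u)\le T_\rho(f,v)\le T_\rho(f,\overline u)\le\overline u$. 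With these two facts in hand, the Birkhoff--Tartar theorem (\cite[\S 15.2.2, Proposition 2]{MR556865}, see also \cite{tartar1974inequations} and \cite[\S 4:5]{Rodrigues}) applies exactly as in \cref{prop:existenceOfZ} and yields a minimal fixed point $\m_\rho(f)$ and a maximal fixed point $\M_\rho(f)$ of $T_\rho(f,\cdot)$ in $[\underline u,\overline u]$; these are then the sought minimal and maximal solutions of \eqref{eq:penalisedPDEGeneralMRho}.

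I do not expect a genuine obstacle here: the only non-trivial inputs — well-posedness of $T_\rho(f,\cdot)$ and its monotonicity — are already established, and everything else is the abstract lattice fixed-point argument, which requires no continuity or compactness of $\Phi$. (If one instead wanted a constructive proof, one could use the monotone iteration $u_0=\underline u$, $u_{k+1}=T_\rho(f,u_k)$, which is nondecreasing and bounded above by $\overline u$, hence bounded in $V$; but passing to the limit would require a weak--strong continuity property of $\Phi$ such as \eqref{ass:PhiCC}, which is not assumed in this proposition, so the order-theoretic route is the cleaner one.) The one point worth stating carefully is that the hypotheses of the cited Birkhoff--Tartar result are met in the present function-space setting — namely that the order interval $[\underline u,\overline u]\subset V$ together with the a.e.\ partial order and the self-map $T_\rho(f,\cdot)$ fits its framework — but this is precisely the setting for which that theorem is designed, and it is invoked in the same way for $S(f,\cdot)$ in \cref{prop:existenceOfZ}.
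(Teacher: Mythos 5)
Your proposal matches the paper's proof: both reduce the claim to finding extremal fixed points of the increasing map $T_\rho(f,\cdot)$ on $[\underline u,\overline u]$ and then invoke the Birkhoff--Tartar theorem, exactly as in \cref{prop:existenceOfZ}, using the monotonicity from \cref{lem:TRhoIncreasingMap} and the sub/supersolution hypotheses from \cref{ass:forZRhoSubAndSuperSolutions}. Your write-up is simply a more explicit rendering of the same argument.
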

\begin{proof}
Due to \cref{lem:TRhoIncreasingMap}, it follows by the Birkhoff--Tartar theorem \cite[\S 15.2, Proposition 2]{MR556865} that the set of fixed points of $u \mapsto T_\rho(f,u)$ is non-empty and possesses a minimal and maximal solution on the interval $[\underline u, \overline u].$
\end{proof}
Now, we focus on ways to approximate these extremal solutions by sequences.
\begin{defn}\label{defn:iterativeSeq}Define the iterative sequence $\{\overline u^n_\rho\}$ by 
\begin{align*}
\overline u^n_\rho &= T_\rho(f, \overline u^{n-1}_\rho),\\
\overline u^0_\rho &= \overline u,
\end{align*}
and $\{\underline u^n_\rho\}$ by
\begin{align*}
\underline u^n_\rho &= T_\rho(f, \underline u^{n-1}_\rho),\\
\underline u^0_\rho &= \underline u.
\end{align*}
\end{defn}
Note that $\{\overline u^n_\rho\}$ is a decreasing sequence and $\{\underline u^n_\rho\}$ is an increasing sequence (see the proof of the next result). In fact, $\overline u^n_\rho$ approaches $\M_\rho(f)$ from above and $\underline u^n_\rho$ approaches $\m_\rho(f)$ from below.

\begin{prop}[Strong convergence]\label{prop:strongConvergenceOfPenVIProblems}
Under
\cref{ass:forZRhoSubAndSuperSolutions}, assume \eqref{ass:PhiCC} or that
\begin{align}
&\Phi\colon V \to V \text{ is weakly sequentially continuous}\label{ass:PhiWeakSeqCts},\\
 &V \ctsCompact H\label{ass:VCompactInH}.
 \end{align}
Then
\[\text{$\overline u^n_\rho \searrow \M_\rho(f)$ and $\underline u^n_\rho \nearrow \m_\rho(f)$ strongly in $V$ as $n \to \infty$.}\]
\end{prop}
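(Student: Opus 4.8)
The plan is to establish the monotonicity of the iterates first, then pass to the limit using the monotone convergence structure, and finally identify the limit as the extremal solution of the penalised PDE.

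\textbf{Monotonicity of the iterates.} First I would show that $\{\overline u^n_\rho\}$ is decreasing and $\{\underline u^n_\rho\}$ is increasing. For the decreasing sequence: by \cref{ass:forZRhoSubAndSuperSolutions} we have $\overline u^1_\rho = T_\rho(f,\overline u) \le \overline u = \overline u^0_\rho$. Since $T_\rho(f,\cdot)$ is increasing (\cref{lem:TRhoIncreasingMap}), an induction gives $\overline u^{n+1}_\rho = T_\rho(f,\overline u^n_\rho) \le T_\rho(f,\overline u^{n-1}_\rho) = \overline u^n_\rho$. The same argument with the reverse inequality handles $\{\underline u^n_\rho\}$. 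Moreover, again by monotonicity of $T_\rho(f,\cdot)$ and the ordering $\underline u \le \overline u$, every iterate stays in the order interval $[\underline u, \overline u]$, and in fact $\underline u^n_\rho \le \overline u^m_\rho$ for all $n,m$.

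\textbf{Passing to the limit.} The decreasing sequence $\{\overline u^n_\rho\}$ is bounded below (by $\underline u$) pointwise a.e., hence converges a.e.\ and, by dominated convergence, strongly in $H$ to some limit $\overline u^\infty_\rho$. To upgrade to $V$-convergence I would test the equation for $\overline u^n_\rho$, namely $A\overline u^n_\rho + \tfrac1\rho \sigma_\rho(\overline u^n_\rho - \Phi(\overline u^{n-1}_\rho)) = f$, with $\overline u^n_\rho$ itself; using coercivity of $A$, the sign of $\sigma_\rho$, and boundedness in $H$ one gets a uniform $V$-bound, so (a subsequence of) $\overline u^n_\rho \weaklyto \overline u^\infty_\rho$ in $V$. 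Now the complete continuity of $\Phi$ (assumption \eqref{ass:PhiCC}), or alternatively \eqref{ass:PhiWeakSeqCts} together with the compact embedding \eqref{ass:VCompactInH}, gives $\Phi(\overline u^{n-1}_\rho) \to \Phi(\overline u^\infty_\rho)$ strongly in $V$; combined with the Lipschitz estimate of \cref{lem:TrhoLipschitz} (first estimate, comparing $\overline u^n_\rho = T_\rho(f,\overline u^{n-1}_\rho)$ with $T_\rho(f,\overline u^\infty_\rho)$) this forces $\overline u^n_\rho \to T_\rho(f,\overline u^\infty_\rho)$ strongly in $V$, and by uniqueness of limits $\overline u^\infty_\rho = T_\rho(f,\overline u^\infty_\rho)$, i.e.\ $\overline u^\infty_\rho$ is a solution of \eqref{eq:penalisedPDEGeneralMRho}. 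The argument for $\underline u^n_\rho \nearrow \underline u^\infty_\rho$ is identical and produces another solution.

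\textbf{Identifying the limits as the extremal solutions.} It remains to show $\overline u^\infty_\rho = \M_\rho(f)$ and $\underline u^\infty_\rho = \m_\rho(f)$. Let $u$ be any solution of \eqref{eq:penalisedPDEGeneralMRho} in $[\underline u, \overline u]$; then $u = T_\rho(f,u) \le T_\rho(f,\overline u) = \overline u^1_\rho$, and inductively $u \le \overline u^n_\rho$ for all $n$, hence $u \le \overline u^\infty_\rho$ in the limit. Since $\overline u^\infty_\rho$ is itself a solution, it is the maximal one: $\overline u^\infty_\rho = \M_\rho(f)$. Symmetrically $u \ge \underline u^n_\rho$ for all $n$ gives $u \ge \underline u^\infty_\rho$, so $\underline u^\infty_\rho = \m_\rho(f)$. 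Finally, since the whole decreasing (resp.\ increasing) sequence converges along every subsequence to the same limit, the full sequence converges, giving $\overline u^n_\rho \searrow \M_\rho(f)$ and $\underline u^n_\rho \nearrow \m_\rho(f)$ strongly in $V$.

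\textbf{Main obstacle.} The delicate point is the promotion from weak $H$- (or weak $V$-) convergence of the iterates to \emph{strong} $V$-convergence. The monotone a.e.\ convergence is cheap, but to conclude strong convergence in $V$ one genuinely needs the compactness built into \eqref{ass:PhiCC} (or \eqref{ass:PhiWeakSeqCts}+\eqref{ass:VCompactInH}) to pass to the limit in the nonlinear term $\sigma_\rho(\cdot - \Phi(\overline u^{n-1}_\rho))$, combined with the contraction/Lipschitz continuity of $T_\rho(f,\cdot)$ from \cref{lem:TrhoLipschitz}; care is also needed that the subsequence extraction is harmless because the limit is independent of the subsequence (thanks to the ordering forcing a unique a.e.\ limit).
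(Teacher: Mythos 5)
Your overall structure (monotonicity, limit extraction, identification as extremal solution) matches the paper's, and under \eqref{ass:PhiCC} your identification of the limit via the Lipschitz estimate of \cref{lem:TrhoLipschitz} is correct and even slightly more streamlined than the paper's two-step argument (Minty's trick followed by a separate strong convergence argument). However, there is a genuine gap in the case where only \eqref{ass:PhiWeakSeqCts} and \eqref{ass:VCompactInH} are assumed. You claim that these hypotheses give $\Phi(\overline u^{n-1}_\rho)\to\Phi(\overline u^\infty_\rho)$ \emph{strongly in $V$}, but weak sequential continuity of $\Phi\colon V\to V$ only yields $\Phi(\overline u^{n-1}_\rho)\weaklyto\Phi(\overline u^\infty_\rho)$ in $V$, and the compact embedding upgrades that to strong convergence in $H$, not in $V$. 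The first estimate of \cref{lem:TrhoLipschitz} needs strong $V$-convergence of $\Phi(\cdot)$, so your argument breaks down at exactly the step where it is supposed to close the fixed-point equation. In the paper, that case is handled differently: the equation is passed to the limit weakly using the Lipschitz continuity of $\sigma_\rho\colon H\to H$ together with strong $H$-convergence of $\overline u^{n}_\rho$ and $\Phi(\overline u^{n-1}_\rho)$ (yielding $\sigma_\rho(\overline u^n_\rho-\Phi(\overline u^{n-1}_\rho))\to\sigma_\rho(u-\Phi(u))$ in $H^*\cts V^*$), and the strong $V$-convergence is then obtained separately by testing the difference of the equations with $\overline u^n_\rho-u$ and using the compact embedding on the right-hand side.

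A secondary, smaller imprecision: you derive the uniform $V$-bound by testing the equation with $\overline u^n_\rho$ and invoking ``the sign of $\sigma_\rho$''. Since $\sigma_\rho\ge 0$ but $\overline u^n_\rho$ has no sign, this pairing is not sign-definite and the penalty term cannot simply be dropped. One either has to estimate it brutally using the $1$-Lipschitz property of $\sigma_\rho\colon H\to H$ and the order-boundedness $\Phi(\underline u)\le\Phi(\overline u^{n-1}_\rho)\le\Phi(\overline u)$ in $H$ (acceptable here since $\rho$ is fixed, though the bound is $\rho$-dependent), or, more cleanly as the paper does, test with $\overline u^n_\rho - \Phi(\underline u)$: since $\Phi(\underline u)-\Phi(\overline u^{n-1}_\rho)\le 0$ one has $\sigma_\rho(\Phi(\underline u)-\Phi(\overline u^{n-1}_\rho))=0$, and then T-monotonicity makes the penalty contribution $\ge 0$ so it drops, giving a $\rho$-independent bound.
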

\begin{proof}
For readibility, let us write $u^n$ instead of $\overline u^n_\rho$. Each $u^n$ satisfies
\[Au^n + \frac{1}{\rho}\sigma_\rho(u^n-\Phi(u^{n-1})) = f.\]
By definition of supersolution, $u^0 = \overline u \geq T_\rho(f, \overline u) = u^1$, and since we have shown above that $T_\rho(f,\cdot)$ is increasing, we obtain in this fashion that $u^n \geq u^{n+1}$ so that $\{u^n\}$ is a decreasing sequence. 

Note also that $u^1 \geq T_\rho(f,\underline{u}) \geq \underline{u}$, hence $u^n \geq \underline{u}$ for all $n$. Define $v_0 = \Phi(\underline u)$. Then we have $v_0  \leq \Phi(u^n)$ for all $n$ since $\Phi$ is increasing, therefore,
\begin{align*}
\langle \sigma_\rho(u^n-\Phi(u^{n-1})), u^n-v_0 \rangle = \langle \sigma_\rho(u^n-\Phi(u^{n-1})) - \sigma_\rho(v_0-\Phi(u^{n-1})), u^n-v_0\rangle \geq 0
\end{align*}
by monotonicity. Testing the $u^n$ equation with $u^n-v_0$,
\[C_a\norm{u^n}{V}^2 \leq \norm{f}{V^*}\norm{u^n}{V} + \norm{f}{V^*}\norm{v_0}{V}+ C_b\norm{u^n}{V}\norm{v_0}{V}\]
and this leads to a uniform bound in $V$. 
Thus $u^n \weaklyto u$ in $V$
for some $u$, for the entire sequence by monotonicity (see e.g. \cite[Lemma 2.3]{AHROCQVI}).  Take any solution $u^* = T_\rho(f,u^*)$ with $u^* \leq u^0$. It follows that $u^* \leq u^1$ by applying $T_\rho(f,\cdot)$ to both sides. Likewise, $u^* \leq u^n$ and hence $u^* \leq u$, so if $u$ is a solution of the limiting problem, it must be the largest solution. Let us show now that $u$ does solve the limiting equation, i.e., that $u=T_\rho(f,u)$.

\medskip

\noindent \emph{Satisfaction of the equation.}

\noindent \emph{Case 1.} Under complete continuity \eqref{ass:PhiCC}, making the transformation $w^n = u^n - \Phi(u^{n-1})$, we can write the equation for $u^n$ as
\[Aw^n + \frac 1\rho \sigma_\rho(w^n) = f - A\Phi(u^{n-1}).\]
Call the operator on the left-hand side $\hat A$. By monotonicity, we have for all $v \in V$,
\[0 \leq \langle \hat A(w^n) - \hat A(v), w^n -v \rangle  = \langle f- A\Phi(u^{n-1}) - \hat A(v), w^n -v \rangle,\]
and hence, noting that $w^n \weaklyto u-\Phi(u) =: w$ and $\Phi(u^{n-1}) \to \Phi(u)$ by \eqref{ass:PhiCC}  (observe that it suffices to have this complete continuity only for monotonic sequences),
\[0 \leq \langle f- A\Phi(u) - \hat A(v), w - v\rangle \quad \forall v \in V.\]
Since $\hat A$ is radially continuous, by Minty's trick \cite[Lemma 2.13]{Roubicek}, we obtain $\hat A(w) = f- A\Phi(u)$, i.e.,
\[Aw + \frac 1\rho \sigma_\rho(w ) = f - A\Phi(u).\]
Since $w = u-\Phi(u)$,  we see that $u = T_\rho(f,u)$.

\medskip

\noindent \emph{Case 2.}  Otherwise, by \eqref{ass:PhiWeakSeqCts}
and the Lipschitz continuity of $\sigma_\rho\colon H \to H$ and the fact that $V \ctsCompact H$, we obtain $\sigma_\rho(u^n-\Phi(u^{n-1})) \weaklyto \sigma_\rho(u-\Phi(u))$ in $V^*$. This lets us pass to the limit in the equation for $u^n$.

\medskip

\noindent \emph{Strong convergence.}
It remains for us to show that $u_n \to u$ in $V$ strongly.

\noindent \emph{Case 1.} By using \cref{lem:TrhoLipschitz}, we obtain the continuous dependence estimate
\[\norm{u^n-u}{V} \leq  C\norm{\Phi(u^{n-1})-\Phi(u)}{V},\]
we can pass to the limit on the right-hand side using \eqref{ass:PhiCC},  yielding $u^n \to u$.

\medskip

\noindent \emph{Case 2.} In the second case, we test the equation for $u^n-u$ with $u^n-u$ and manipulate
\begin{align*}
    C_a\rho\norm{u^n-u}{V}^2 &\leq \langle \sigma_\rho(u-\Phi(u))-\sigma_\rho(u^n-\Phi(u^{n-1})) , u^n-u \rangle_{H^*,H}\\
 & \to 0
\end{align*}
with the convergence because we have $\sigma_\rho(u^n-\Phi(u^{n-1})) \to \sigma_\rho(u-\Phi(u))$  in $H^*$ by the compact embedding \eqref{ass:VCompactInH}, and  $u^n-u \to 0$ in $H$ for the same reason.
\end{proof}
\begin{remark}
If we assume that $\Phi \colon H \to V$ is continuous, \eqref{ass:VCompactInH} implies \eqref{ass:PhiCC}. Since the aforementioned continuity of $\Phi$ and \eqref{ass:VCompactInH} typically do hold in examples, the above result is rather a powerful property that we attain without cost.
\end{remark}
In some sense, the conclusion of \cref{prop:strongConvergenceOfPenVIProblems} improves the similar convergence result of \cite[Theorem 2.18]{AHROCQVI} where it was shown that, in greater generality and in the absence of the assumption that $\Phi$ is increasing, solutions of \eqref{eq:penalisedPDEGeneralMRho} converge along a subsequence to some solution of the QVI \eqref{eq:QVIIntro}. Here, we are able to select precisely the minimal or maximal solution as the limiting objects thanks to the strengthened structure.

\section{Convergence to the QVIs}\label{sec:convergencetoQVIs}
We now consider the limiting behaviour of $\M_\rho$ and $\m_\rho$ as $\rho \searrow 0$ and show that they converge to the expected limits under some circumstance. First, we need some more properties.
\subsection{Properties with respect to varying $\rho$}
In the next lemma, we show that $\rho \mapsto \Z_\rho$ is increasing. In other words, $\Z_\rho$ shrinks as $\rho$ gets smaller (this is natural since we expect $\Z_\rho$ to converge to the solution of the constrained problem). Recall Definition \ref{defn:iterativeSeq}. 
\begin{lem}\label{lem:barURhoIsDecreasing}
Let $\rho, \kappa > 0$ and assume that $\underline u$ is a subsolution and $\overline u$ is a supersolution of  both $T_\rho(f,\cdot)$ and $T_\kappa(f,\cdot)$ with $\underline u \leq \overline u$. If $\rho \leq \kappa$, then 
\[\text{$\overline u^n_\rho \leq \overline u^n_\kappa\qquad$  and $\qquad\underline{u}^n_\rho \leq  \underline{u}^n_\kappa$}.\]
Thus if the assumptions of \cref{prop:strongConvergenceOfPenVIProblems} hold, then 
\[\text{$\M_\rho(f) \leq \M_\kappa(f)\qquad$ and $\qquad\m_\rho(f) \leq \m_\kappa(f)$}.\]
\end{lem}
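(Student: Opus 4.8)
The plan is to prove the assertion by induction on $n$, using the monotonicity properties of $T_\rho$ established earlier in \cref{lem:TRhoIncreasingMap} (monotonicity in the data slot) together with the monotonicity in $\rho$ from \cref{lem:TRhoIncreasingInRho} (namely $\rho \le \kappa \implies T_\rho(f,\varphi) \le T_\kappa(f,\varphi)$). I will treat the sequence $\{\overline u^n_\rho\}$ in detail; the argument for $\{\underline u^n_\rho\}$ is verbatim the same with $\overline u$ replaced by $\underline u$ throughout. Recall from \cref{defn:iterativeSeq} that $\overline u^0_\rho = \overline u^0_\kappa = \overline u$ and $\overline u^n_\bullet = T_\bullet(f, \overline u^{n-1}_\bullet)$.

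The base case $n=0$ is trivial since $\overline u^0_\rho = \overline u = \overline u^0_\kappa$. For the inductive step, assume $\overline u^{n-1}_\rho \le \overline u^{n-1}_\kappa$. Then I would chain two comparisons:
\[
\overline u^n_\rho = T_\rho(f, \overline u^{n-1}_\rho) \le T_\rho(f, \overline u^{n-1}_\kappa) \le T_\kappa(f, \overline u^{n-1}_\kappa) = \overline u^n_\kappa,
\]
where the first inequality uses that $T_\rho(f, \cdot)$ is increasing (\cref{lem:TRhoIncreasingMap}) applied to the induction hypothesis, and the second uses \cref{lem:TRhoIncreasingInRho} with the fixed argument $\varphi = \overline u^{n-1}_\kappa$. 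This establishes $\overline u^n_\rho \le \overline u^n_\kappa$ for all $n$, and the identical argument gives $\underline u^n_\rho \le \underline u^n_\kappa$.

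For the final conclusion about the extremal solutions, I would invoke \cref{prop:strongConvergenceOfPenVIProblems}: under its hypotheses we have $\overline u^n_\rho \searrow \M_\rho(f)$ and $\overline u^n_\kappa \searrow \M_\kappa(f)$ strongly in $V$ (and likewise $\underline u^n_\rho \nearrow \m_\rho(f)$, $\underline u^n_\kappa \nearrow \m_\kappa(f)$). Passing to the limit $n \to \infty$ in $\overline u^n_\rho \le \overline u^n_\kappa$ and using that the order relation $\le$ (defined pointwise a.e.\ through $\vartheta$) is preserved under strong $V$-convergence — since $V \cts H = L^2(\Omega)$ and a.e.\ limits of a subsequence respect the inequality, and the whole sequence converges — yields $\M_\rho(f) \le \M_\kappa(f)$, and similarly $\m_\rho(f) \le \m_\kappa(f)$.

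I do not anticipate any serious obstacle here; the only point requiring a word of care is the closedness of the order cone under the relevant convergence, but this is standard since $V \cts H$ and the nonnegative cone $H_+$ is closed and convex, hence weakly (a fortiori strongly) closed in $H$, so the closed cone $V_+$ inherits closedness in $V$. One should also note that \cref{lem:TRhoIncreasingInRho} does not require any sub-/supersolution hypothesis — it holds for all $f, \varphi$ — so the only role of the hypotheses in the present lemma is to guarantee that all the iterates $\overline u^n_\rho$, $\overline u^n_\kappa$, $\underline u^n_\rho$, $\underline u^n_\kappa$ are well defined and (for the second conclusion) that \cref{prop:strongConvergenceOfPenVIProblems} applies.
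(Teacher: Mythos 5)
Your proof is correct and takes essentially the same approach as the paper: the paper also chains $T_\rho(f,\overline u^{n-1}_\rho)\le T_\rho(f,\overline u^{n-1}_\kappa)\le T_\kappa(f,\overline u^{n-1}_\kappa)$ using \cref{lem:TRhoIncreasingMap} and \cref{lem:TRhoIncreasingInRho}, and then passes to the limit via \cref{prop:strongConvergenceOfPenVIProblems}. The paper presents the first two iterates explicitly and asserts the pattern continues, whereas you formalise the induction; the content is identical.
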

\begin{proof}
Set $\overline u_\rho:=\M_\rho(f)$ $\overline u_\kappa:= \M_\kappa(f)$. We have $\overline u^1_\rho = T_\rho(f,\overline u) \leq T_\kappa(f, \overline u) =\overline u^1_\kappa$ by \cref{lem:TRhoIncreasingInRho}. Hence $\overline u^2_\rho = T_\rho(f, \overline u^1_\rho) \leq T_\rho(f, \overline u^1_\kappa) \leq T_\kappa(f, \overline u^1_\kappa) = \overline u^2_\kappa$ by the increasing property of \cref{lem:TRhoIncreasingMap} and again \cref{lem:TRhoIncreasingInRho}. The same holds when one replaces the supersolution by the subsolution. This implies the first claim and then taking $n \to \infty$, using \cref{prop:strongConvergenceOfPenVIProblems} implies the second.
\end{proof}

\begin{remark}In the above lemma, we could consider two different pairs of sub/supersolution for $T_\rho$ and $T_\kappa$. We can prove the same result (but $\Z_\rho$ and $\Z_\kappa$ would be defined on different intervals of course) if we assume the subsolution (supersolution) for the $\rho$ problem is less than or equal to than the subsolution (supersolution) for the $\kappa$ problem. We leave the details to the reader.
\end{remark}
In a similar fashion to \cref{defn:iterativeSeq}, we introduce the following.

\begin{defn}
Define the sequences
\begin{align*}
\hat u^n &= S(f, \hat u^{n-1}),\\
\hat u^0 &= \overline u,
\end{align*}
and
\begin{align*}
\tilde u^n &= S(f, \tilde u^{n-1}),\\
\tilde u^0 &= \underline u.
\end{align*}
\end{defn}
The map $S$ can be thought of as $T_0$ (i.e., $T_\rho$ with $\rho=0$).

\begin{prop}[Lemmas 3.2 and 3.3 of \cite{AHRExtremals}]\label{lem:convergenceOfVIsToQVIs}
Under \cref{ass:forZSubAndSuperSolutions}, assume \eqref{ass:PhiCC}.
Then $\hat u^n \searrow \M(f)$ and $\tilde {u}^n  \nearrow \m(f)$ in $V$.
\end{prop}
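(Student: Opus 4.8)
The plan is to mimic the proof of \cref{prop:strongConvergenceOfPenVIProblems}, replacing the penalised solution map $T_\rho(f,\cdot)$ by the VI solution map $S(f,\cdot)$, which is increasing by \cite[\S 4:5, Theorem 5.1]{Rodrigues}. First I would establish monotonicity of the iterates: since $\hat u^0 = \overline u$ is a supersolution, $\hat u^1 = S(f,\overline u) \le \overline u = \hat u^0$, and applying the increasing map $S(f,\cdot)$ repeatedly gives that $\{\hat u^n\}$ is decreasing; moreover $\hat u^n \ge S(f,\underline u)^{(n)} \ge \underline u$ by the same monotonicity together with $\underline u \le S(f,\underline u)$, so the sequence is bounded below by $\underline u$. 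Dually, $\{\tilde u^n\}$ is increasing and bounded above by $\overline u$. Hence both sequences converge pointwise a.e.\ and, being monotone in the order interval $[\underline u,\overline u]$, they are bounded in $H$.

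Next I would obtain a uniform bound in $V$. Testing the VI for $\hat u^n$, which reads $\hat u^n \in \mathbf K(\hat u^{n-1})$, $\langle A\hat u^n - f, \hat u^n - v\rangle \le 0$ for all $v \le \Phi(\hat u^{n-1})$, with the admissible comparison element $v = \Phi(\underline u)$ (admissible since $\Phi(\underline u) \le \Phi(\hat u^{n-1})$ by monotonicity of $\Phi$, and $\Phi(\underline u) \in V$), coercivity and boundedness of $A$ give $C_a\norm{\hat u^n}{V}^2 \le \norm{f}{V^*}(\norm{\hat u^n}{V} + \norm{\Phi(\underline u)}{V}) + C_b\norm{\hat u^n}{V}\norm{\Phi(\underline u)}{V}$, whence $\{\hat u^n\}$ is bounded in $V$. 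By monotonicity of the sequence, the whole sequence converges weakly in $V$ to some $\hat u$ (cf.\ \cite[Lemma 2.3]{AHROCQVI}), and $\hat u \in [\underline u, \overline u]$.

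Then I would pass to the limit to show $\hat u = S(f,\hat u)$, i.e.\ that $\hat u$ solves the QVI. Using \eqref{ass:PhiCC}, $\Phi(\hat u^{n-1}) \to \Phi(\hat u)$ strongly in $V$ (complete continuity applied along the monotone, hence norm-convergent-in-$H$ and weakly-convergent-in-$V$, sequence). The obstacle sets $\mathbf K(\hat u^{n-1})$ then converge to $\mathbf K(\hat u)$ in the sense of Mosco, so a standard stability argument for VIs (construct recovery sequences $v_n = v - \Phi(\hat u) + \Phi(\hat u^{n-1}) \in \mathbf K(\hat u^{n-1})$ for $v \in \mathbf K(\hat u)$, use weak lower semicontinuity of $\langle A\cdot,\cdot\rangle$ via coercivity) yields $\hat u \in \mathbf K(\hat u)$ and the variational inequality in the limit, i.e.\ $\hat u = S(f,\hat u)$. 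Finally, strong convergence in $V$: by the continuous-dependence estimate for $S$ (the VI analogue of \cref{lem:TrhoLipschitz}, valid since $\Phi(\hat u^{n-1}), \Phi(\hat u)$ are fixed elements of $V$) one gets $\norm{\hat u^n - \hat u}{V} \le C\norm{\Phi(\hat u^{n-1}) - \Phi(\hat u)}{V} \to 0$. To identify $\hat u$ as the \emph{maximal} solution: if $u^*$ is any QVI solution in $[\underline u, \overline u]$, then $u^* = S(f,u^*) \le S(f,\overline u) = \hat u^1$, and inductively $u^* \le \hat u^n$, so $u^* \le \hat u$; thus $\hat u = \M(f)$. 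The argument for $\tilde u^n \nearrow \m(f)$ is entirely dual (test with $v = \Phi(\overline u)$, compare from above). The main obstacle is the limit passage $\hat u = S(f,\hat u)$, i.e.\ the Mosco-type stability of the constraint sets under the moving obstacle; this is exactly where \eqref{ass:PhiCC} is essential, providing the strong convergence $\Phi(\hat u^{n-1}) \to \Phi(\hat u)$ in $V$ that makes both the recovery-sequence construction and the final continuous-dependence estimate work. Since this is quoted as Lemmas 3.2 and 3.3 of \cite{AHRExtremals}, I would cite that reference for the details after sketching the structure above.
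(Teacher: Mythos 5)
The paper does not prove this proposition itself --- it cites it as Lemmas~3.2 and~3.3 of \cite{AHRExtremals} --- so there is no internal proof to compare against. Your reconstruction is correct and follows exactly the blueprint the paper lays out for the penalised analogue (\cref{prop:strongConvergenceOfPenVIProblems}, Case~1): monotonicity and order-boundedness of the iterates, a uniform $V$-bound via the admissible comparison element $\Phi(\underline u)$, weak convergence of the whole monotone sequence, identification of the limit, strong convergence from the continuous-dependence estimate plus \eqref{ass:PhiCC}, and extremality by induction; the only genuine substitution you make is replacing Minty's trick (used for the penalised PDE nonlinearity) by Mosco convergence of the constraint sets $\mathbf K(\hat u^{n-1})$ with the recovery sequence $v_n = v - \Phi(\hat u) + \Phi(\hat u^{n-1})$, which is precisely the natural VI analogue. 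This is a sound and faithful translation, so citing \cite{AHRExtremals} for the details after the sketch, as you propose, is appropriate.
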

This proposition corresponds to the convergence results of \cref{prop:strongConvergenceOfPenVIProblems} for the $\rho=0$ case.

The next lemma shows that the unconstrained iterates (which solve PDEs) are greater than the constrained iterates which solve VIs).
\begin{lem}\label{lem:comparisonRhoAndVI}
If $\{\overline u^n_\rho\}, \{\hat u^n\}$ and $\{\underline u^n_\rho\}, \{\tilde u^n\}$ are defined as above with the same initial elements $\overline u$ and $\underline u$ respectively, then $\overline u^n_\rho \geq \hat u^n$  and $\underline u^n_\rho \geq \tilde u^n$.
\end{lem}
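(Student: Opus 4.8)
The plan is to prove both inequalities simultaneously by induction on $n$, exploiting the two comparison results already established: \cref{lem:VIisLessThanPDE}, which gives $S(f,\varphi) \leq T_\rho(f,\varphi)$ for every $\varphi \in H$, and \cref{lem:TRhoIncreasingMap}, which gives monotonicity of $T_\rho(f,\cdot)$ in its second argument. First I would set up the base case $n=0$: by definition, $\overline u^0_\rho = \overline u = \hat u^0$ and $\underline u^0_\rho = \underline u = \tilde u^0$, so the claimed inequalities hold with equality. This is immediate from the hypothesis that the two families of iterates start from the same initial elements.

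For the inductive step, suppose $\overline u^{n-1}_\rho \geq \hat u^{n-1}$. Then I would chain the two lemmas as follows:
\[
\overline u^n_\rho = T_\rho(f, \overline u^{n-1}_\rho) \geq T_\rho(f, \hat u^{n-1}) \geq S(f, \hat u^{n-1}) = \hat u^n,
\]
where the first inequality is \cref{lem:TRhoIncreasingMap} applied with the induction hypothesis $\overline u^{n-1}_\rho \geq \hat u^{n-1}$ (and the fact that $T_\rho(f,\cdot)$ is increasing in the obstacle argument), and the second inequality is \cref{lem:VIisLessThanPDE} with $\varphi = \hat u^{n-1}$. The argument for the lower iterates is verbatim the same: assuming $\underline u^{n-1}_\rho \geq \tilde u^{n-1}$,
\[
\underline u^n_\rho = T_\rho(f, \underline u^{n-1}_\rho) \geq T_\rho(f, \tilde u^{n-1}) \geq S(f, \tilde u^{n-1}) = \tilde u^n.
\]
This closes the induction and establishes $\overline u^n_\rho \geq \hat u^n$ and $\underline u^n_\rho \geq \tilde u^n$ for all $n$.

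There is essentially no obstacle here: the statement is a routine bookkeeping exercise once the two key monotonicity/comparison lemmas are in hand. The only point requiring a moment's care is the direction of monotonicity in \cref{lem:TRhoIncreasingMap} — one must be sure that a larger obstacle (second argument) produces a larger solution, which is precisely what that lemma asserts — and that the initial conditions genuinely coincide, which is built into the hypothesis "with the same initial elements." I would therefore keep the proof to the few lines above.
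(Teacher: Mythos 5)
Your proof is correct and follows essentially the same inductive argument as the paper; the only (inconsequential) difference is that you chain $T_\rho(f,\overline u^{n-1}_\rho) \geq T_\rho(f,\hat u^{n-1}) \geq S(f,\hat u^{n-1})$ using the monotonicity of $T_\rho(f,\cdot)$ (\cref{lem:TRhoIncreasingMap}) followed by \cref{lem:VIisLessThanPDE}, whereas the paper chains $T_\rho(f,\overline u^{n-1}_\rho) \geq S(f,\overline u^{n-1}_\rho) \geq S(f,\hat u^{n-1})$ using \cref{lem:VIisLessThanPDE} first and then the monotonicity of $S(f,\cdot)$. Both orderings are valid and rely on the same two ingredients.
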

\begin{proof}
This is essentially \cref{lem:barURhoIsDecreasing} with $\rho=0$.

We have, using \cref{lem:VIisLessThanPDE} , $\overline u^1_\rho = T_\rho(f, \overline u) \geq S(f,\overline u) = \hat u_1$, and hence $\overline u^2_\rho = T_\rho(f,\overline u^1_\rho) \geq S(f, \overline u^1_\rho) \geq S(f,\hat u_1) = \hat u_2$, and so on. Here, we have used the increasing property of $S(f,\cdot)$. The same applies with the supersolution replacing the subsolution.
\end{proof}

\subsection{The $\rho \searrow 0$ limit}
We want to prove that the penalised extremal solutions converge to the (non-penalised) extremal solutions in the limit $\rho \searrow 0$. First, we have to guarantee that all these objects exist.
\begin{ass}[Well definedness of $\Z(f)$ and $\Z_\rho(f)$ for all $\rho$ sufficiently small]\label{ass:subAndSuperSolutions}
Given $f \in V^*$, assume that
 there exist $\underline u, \overline u \in V$ and $\rho_0 >0$ such that
\begin{equation*}
\begin{aligned}
\underline u &\leq \overline u,\\
\underline u &\leq S(f, \underline u),\\
\overline u &\geq T_{\rho_0}(f,\overline u). 
\end{aligned}
\end{equation*}
\end{ass}
\begin{remark}\label{rem:equivalenceOfSubAssumptions}
The statements
\begin{equation*}
\underline u \leq S(f,\underline u)
\end{equation*}
and
\begin{equation*}
\underline u \leq T_\rho(f,\underline u) \quad \forall \rho \leq \rho_0
\end{equation*}
are equivalent. One direction follows from the convergence result (as $\rho \to 0$) of \cref{lem:PDEApproximatesVI} and the other from \cref{lem:VIisLessThanPDE}.
\end{remark}

Under this assumption, $\underline u \leq S(f,\underline u) \leq T_\rho(f, \underline u)$ for all $\rho$ (see the above remark), and 
\[\overline u \geq T_{\rho_0}(f, \overline u) \geq T_\rho(f,\overline u) \geq S(f, \overline u) \quad \forall \rho \leq \rho_0\] so that $(\underline u, \overline u)$ are a sub- and supersolution pair for both $T_\rho(f, \cdot)$ (for all $\rho \leq \rho_0)$) and $S(f, \cdot)$. This means that both \cref{ass:forZSubAndSuperSolutions} and \eqref{ass:newTRhoSubSup} (i.e., \cref{ass:forZRhoSubAndSuperSolutions}) are satisfied and both $\Z_\rho(f)$ and $\Z(f)$ are well defined objects in $[\underline u, \overline u]$, for all $\rho \leq \rho_0$.

\begin{theorem}\label{thm:convergenceOfMRho}
Assume
\cref{ass:subAndSuperSolutions}
and the weak sequential continuity \eqref{ass:PhiWeakSeqCts}.
Then $\M_\rho(f) \searrow \M(f)$ weakly in $V$ and $\m_\rho(f) \searrow u$ weakly in $V$,
where $u \in V$ is a solution of \eqref{eq:QVIIntro}.
If \eqref{ass:PhiCC} holds, then the convergences are strong. 
\end{theorem}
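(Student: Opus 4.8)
The plan is to use the monotonicity in $\rho$ established in \cref{lem:barURhoIsDecreasing} together with the comparison with the VI iterates in \cref{lem:comparisonRhoAndVI} to squeeze the penalised extremal solutions between controlled quantities, then pass to the limit. First I would fix a sequence $\rho_k \searrow 0$ (with $\rho_k \le \rho_0$) and note that by \cref{lem:barURhoIsDecreasing} the family $\{\M_{\rho_k}(f)\}$ is decreasing in $k$ (and similarly $\{\m_{\rho_k}(f)\}$), while by \cref{lem:VIisLessThanPDE} applied at each fixed point we have $\M(f) \le \M_{\rho_k}(f)$ and $\m(f) \le \m_{\rho_k}(f)$, so both sequences are bounded below by a fixed element of $V$; in particular each is order-bounded inside $[\underline u, \overline u]$. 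A uniform $V$-bound then follows by testing the penalised equation $A\M_{\rho_k}(f) + \frac{1}{\rho_k}\sigma_{\rho_k}(\M_{\rho_k}(f)-\Phi(\M_{\rho_k}(f))) = f$ with $\M_{\rho_k}(f) - \Phi(\underline u)$ exactly as in the proof of \cref{prop:strongConvergenceOfPenVIProblems} (using that $\Phi(\underline u) \le \Phi(\M_{\rho_k}(f))$ by monotonicity of $\Phi$ together with monotonicity of $\sigma_{\rho_k}$ to kill the penalty term's sign), giving $\M_{\rho_k}(f) \weaklyto \bar u$ and $\m_{\rho_k}(f) \weaklyto \underline w$ in $V$ along subsequences; since the sequences are monotone the whole sequences converge.

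Next I would identify the limits as solutions of the QVI. Rewriting the penalised equation shows $\sigma_{\rho_k}(\M_{\rho_k}(f)-\Phi(\M_{\rho_k}(f))) = \rho_k(f - A\M_{\rho_k}(f)) \to 0$ in $V^*$; under \eqref{ass:PhiWeakSeqCts} we have $\Phi(\M_{\rho_k}(f)) \weaklyto \Phi(\bar u)$, so $\M_{\rho_k}(f)-\Phi(\M_{\rho_k}(f)) \weaklyto \bar u - \Phi(\bar u)$, and \cref{lem:penaltyTypeCondition} gives $\bar u \le \Phi(\bar u)$, i.e.\ $\bar u \in \mathbf K(\bar u)$. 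To get the variational inequality I would test the penalised equation with $\M_{\rho_k}(f) - v$ for $v \in \mathbf K(\bar u)$ arbitrary; the term $\frac{1}{\rho_k}\langle \sigma_{\rho_k}(\M_{\rho_k}(f)-\Phi(\M_{\rho_k}(f))), \M_{\rho_k}(f)-v\rangle$ needs to be shown asymptotically nonnegative — here one uses that $v \le \Phi(\bar u)$ and the hope is to approximate $\Phi(\bar u)$ by $\Phi(\M_{\rho_k}(f))$ plus an error, writing $\M_{\rho_k}(f) - v = (\M_{\rho_k}(f) - \Phi(\M_{\rho_k}(f))) + (\Phi(\M_{\rho_k}(f)) - v)$ and using $\sigma_{\rho_k} \ge 0$, $\sigma_{\rho_k}$ increasing and $\sigma_{\rho_k} \equiv 0$ on $(-\infty,0]$ on the two pieces respectively, modulo the error $\Phi(\M_{\rho_k}(f)) - \Phi(\bar u) \to 0$ (which is where \eqref{ass:PhiCC} rather than mere weak continuity would be needed for the strong statement, and where a density/approximation argument handles the general direction). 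Combined with weak lower semicontinuity of $\langle A\cdot,\cdot\rangle$ applied to the quadratic term this yields $\langle A\bar u - f, \bar u - v\rangle \le 0$ for all $v \in \mathbf K(\bar u)$, so $\bar u$ solves \eqref{eq:QVIIntro}; likewise $\underline w$ solves \eqref{eq:QVIIntro}.

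Finally I would pin down which solutions these are. For the maximal map: any solution $u^*$ of the QVI with $u^* \in [\underline u, \overline u]$ satisfies $u^* = S(f,u^*) \le T_{\rho_k}(f,u^*) \le T_{\rho_k}(f,\overline u)$ by \cref{lem:VIisLessThanPDE} and monotonicity of $T_{\rho_k}$, hence iterating, $u^* \le \overline u^n_{\rho_k}$ for all $n$, so $u^* \le \M_{\rho_k}(f)$ for all $k$; passing to the limit, $u^* \le \bar u$. Taking $u^* = \M(f)$ gives $\M(f) \le \bar u$, and since also $\bar u \le \M(f)$ ($\bar u$ being a solution in the interval and $\M(f)$ maximal), $\bar u = \M(f)$. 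For the minimal map this argument only gives $\underline w \ge \m(f)$ wait — one only gets that $\underline w$ is \emph{some} solution $\ge \m(f)$, which is exactly the weaker conclusion stated in the theorem. For the strong convergence under \eqref{ass:PhiCC}: once weak convergence and solvability are known, test the difference of the penalised equation and the QVI (or use \cref{lem:TrhoLipschitz}-type estimates) with the difference; the compensated-compactness / complete-continuity of $\Phi$ upgrades $\Phi(\M_{\rho_k}(f)) \to \Phi(\bar u)$ strongly, and testing the penalised equation with $\M_{\rho_k}(f)-\bar u$ and using coercivity of $A$ gives $\M_{\rho_k}(f) \to \bar u$ in $V$. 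The main obstacle is the limit passage in the penalised variational inequality, specifically controlling the sign of the penalty term tested against $\M_{\rho_k}(f)-v$ for general $v \in \mathbf K(\bar u)$ — the clean split above works when $v \le \Phi(\M_{\rho_k}(f))$, and handling the gap between $\Phi(\bar u)$ and $\Phi(\M_{\rho_k}(f))$ in the weak-continuity-only case is the delicate point that forces the weaker conclusion for $\m_\rho$ and, conversely, is exactly what the stronger hypothesis \eqref{ass:PhiCC} resolves.
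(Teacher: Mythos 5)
Your overall skeleton is right and matches the paper's proof in its major phases: derive a uniform $V$-bound by testing the penalised equation against $u_\rho - \underline u$ (using $\Phi(\underline u)\leq\Phi(u_\rho)$ and monotonicity of $\sigma_\rho$), extract a weak limit $\bar u$, show $\sigma_\rho(u_\rho-\Phi(u_\rho))\to 0$ and invoke \cref{lem:penaltyTypeCondition} plus \eqref{ass:PhiWeakSeqCts} to conclude $\bar u\leq\Phi(\bar u)$, and finally sandwich $\M(f)\leq\bar u\leq\M(f)$. Your subsolution argument ($u^*=S(f,u^*)\leq T_\rho(f,u^*)$ iterated against $\overline u^n_\rho$) for the identification $\M(f)\leq \M_\rho(f)$ is slightly different from the paper's route via \cref{lem:comparisonRhoAndVI} and \cref{lem:convergenceOfVIsToQVIs}, but is an equivalent and perfectly fine comparison argument.

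The genuine gap is in your handling of the penalty term when passing to the limit in the variational inequality. You correctly identify this as the delicate point, but you propose to approximate $\Phi(\bar u)$ by $\Phi(u_\rho)$ up to an error term, and you believe that controlling this error is what forces \eqref{ass:PhiCC} for strong convergence and causes the weaker conclusion for $\m_\rho$. In fact there is no error to control at all: since $\{\Z_\rho(f)\}$ is decreasing in $\rho$ (\cref{lem:barURhoIsDecreasing}) and weak limits preserve the a.e.\ order, the weak limit $\bar u$ satisfies $\bar u\leq u_\rho$ for all $\rho$, whence $\Phi(\bar u)\leq\Phi(u_\rho)$ by monotonicity of $\Phi$. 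Therefore any test function $v$ with $v\leq\Phi(\bar u)$ automatically satisfies $v\leq\Phi(u_\rho)$ for every $\rho$, and the monotonicity inequality $\langle\sigma_\rho(u_\rho-\Phi(u_\rho)),u_\rho-v\rangle\geq\langle\sigma_\rho(v-\Phi(u_\rho)),u_\rho-v\rangle=0$ (since $v-\Phi(u_\rho)\leq 0$ implies $\sigma_\rho(v-\Phi(u_\rho))=0$) kills the penalty term exactly, with no residual. This works identically for $\m_\rho$, so the limit of $\m_\rho(f)$ is indeed a solution of the QVI; the only reason you cannot identify it as $\m(f)$ is that the comparison of iterates goes the wrong way for the minimal solution ($\underline u^n_\rho\geq\tilde u^n$, see the paper's remark after the theorem), not because of any difficulty in the limit passage.

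Your strong-convergence paragraph is too vague to evaluate as written. The paper's device is to introduce $v_\rho:=\bar u+\Phi(u_\rho)-\Phi(\bar u)$, which satisfies $v_\rho\leq\Phi(u_\rho)$ and, under \eqref{ass:PhiCC}, $v_\rho\to\bar u$ strongly; testing with $u_\rho-v_\rho$, using the monotonicity inequality to drop the penalty, and applying coercivity of $A$ then gives $u_\rho-v_\rho\to 0$ in $V$. You should make explicit which recombination of test functions you would use and why the penalty term retains a definite sign.
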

\begin{proof}
As mentioned above, $\M(f)$ and $\M_\rho(f)$ exist for all $0 < \rho \leq \rho_0$ by \cref{ass:subAndSuperSolutions}. Define $u_\rho := \M_\rho(f)$.  Since $\underline u \leq S(f,\underline u) \leq \Phi(\underline u) \leq \Phi(u_\rho)$ for all $\rho$, if we set $v_0 := \underline u$, we can test the $u_\rho$ equation with $u_\rho -v_0$ and we get that $\{u_\rho\}$ is bounded by using
\begin{align*}
\langle \sigma_\rho(u_\rho-\Phi(u_\rho)), u_\rho - v_0 \rangle = 
\langle \sigma_\rho(u_\rho-\Phi(u_\rho)) - \sigma_\rho(v_0 - \Phi(u_\rho)), u_\rho - v_0 \rangle  \geq 0.
\end{align*}
Hence $u_\rho \weaklyto u$ in $V$ for a subsequence that we have relabelled. This implies that $\rho (f - Au_\rho) = \sigma_\rho(u_\rho -\Phi(u_\rho)) \to 0$ in $V^*$. Then, testing the equation for $u_\rho$ with $u_\rho-v$ for $v \in V$ and using the monotonicity formula 
\begin{align}
\langle \sigma_\rho(u_\rho - \Phi(u_\rho)), u_\rho-v\rangle &\geq \langle \sigma_\rho(v-\Phi(u_\rho)), u_\rho-v\rangle\quad \forall v \in V\label{eq:monotonicityIneq}
\end{align}
(this follows by the monotonicity of $\sigma_\rho$; see the proof of \cite[Theorem 2.18]{AHROCQVI}),
we have
\[\langle Au_\rho, u_\rho \rangle + \frac 1\rho \langle \sigma_\rho(v-\Phi(u_\rho)), u_\rho - v \rangle \leq \langle f, u_\rho - v \rangle + \langle Au_\rho, v \rangle.\]
Let $v \in V$ be such that $v \leq \Phi(u)$. Since $u \leq u_\rho $ (as \cref{lem:barURhoIsDecreasing} shows that $\{u_\rho\}$ is a decreasing sequence), $v \leq \Phi(u_\rho)$ and hence the second term on the left disappears. We can pass to the limit and use weak lower semicontinuity to obtain that $u$ solves the expected inequality. By \eqref{ass:PhiWeakSeqCts},  $u_\rho-\Phi(u_\rho) \weaklyto u-\Phi(u)$ in $V$, which in conjunction with the fact that $\sigma_\rho(u_\rho-\Phi(u_\rho)) \to 0$ implies by \cref{lem:penaltyTypeCondition} that $u \leq \Phi(u)$ so that
$u$ solves \eqref{eq:QVIIntro}.

We also have, using \cref{lem:comparisonRhoAndVI} for the first inequality and with the limits below being weak,
\[u_\rho = \M_\rho(f) = \lim_n \overline u^n_\rho \geq \lim_n \hat u^n \geq \M(f) \]
(recall $\hat u^n = S(f,\hat u^{n-1})$ is defined above) where we used \cref{lem:convergenceOfVIsToQVIs} for the final inequality. Passing to weak limit in $\rho$ proves the result.

For strong convergence, we begin by defining $v_\rho := u+\Phi(u_\rho)-\Phi(u)$ which satisfies
\begin{align*}
&v_\rho \to u \text{ in $V$},\\
&v_\rho \leq \Phi(u_\rho),\\
&u_\rho - v_\rho = (u_\rho -u) + (\Phi(u)-\Phi(u_\rho)) \weaklyto 0 \text{ in $V$},
\end{align*}
with the first line thanks to \eqref{ass:PhiCC}. Testing the equation for $u_\rho$ with $u_\rho-v_\rho$, we have
\[\langle A(u_\rho - v_\rho), u_\rho-v_\rho \rangle = \langle f, u_\rho -v_\rho \rangle - \frac 1\rho \langle \sigma_\rho(u_\rho-\Phi(u_\rho)), u_\rho - v_\rho\rangle  - \langle Av_\rho, u_\rho - v_\rho \rangle\]
and to this we apply the monotonicity formula \eqref{eq:monotonicityIneq} and coercivity of $A$ to find
\begin{align*}
C_a\norm{u_\rho - v_\rho}{V}^2 &\leq \langle f, u_\rho -v_\rho \rangle - \frac 1\rho \langle  \sigma_\rho(v_\rho-\Phi(u_\rho)), u_\rho - v_\rho\rangle - \langle Av_\rho, u_\rho - v_\rho \rangle\\
&= \langle f, u_\rho -v_\rho \rangle- \langle Av_\rho, u_\rho - v_\rho \rangle\tag{since $v_\rho \leq \Phi(u_\rho)$}.
\end{align*}
The right-hand side converges to zero, hence $u_\rho-v_\rho \to 0$ strongly in $V$, implying $u_\rho \to u$.

The claim for the minimal solution follows similar lines as the above to deduce that the weak limit $u$
is a solution to \eqref{eq:QVIIntro}. 
\end{proof}
Note that we are not (yet) able to identify $u$ above as the minimal solution and prove that $\m_\rho(f) \weaklyto \m(f)$ in the above theorem under such general circumstances. it appears difficult to do this because $\underline u_\rho^n \geq \tilde u^n$ (by virtue of $\underline u_\rho^n$ being a solution of the unconstrained problem) and thus in the limit we do not obtain anything useful. This has been an open problem as identified in \cite[Chapter 4, Remark 1.4]{LionsBensoussan}. But we can identify the desired limit with a contractive argument under different assumptions, see \cref{thm:continuityOfRhoMapsUnderContraction}, which we will prove below.

	\subsection{The $\rho \searrow 0$ limit under a contraction assumption}\label{sec:convUnderContraction}

By assuming the small Lipschitz constant assumption \eqref{ass:PhiSmallLipschitzZ}, we can prove the convergence result that we wanted. It is convenient to introduce the following notation. Considering the cases $\Z=\m$ or $\Z = \M$, we define the mappings $Z^n \colon \R^+ \cup \{0\} \times V^* \to V$
via
\begin{align*}
	Z^{n}(\rho, g) &:= T_\rho( g, Z^{n-1}(\rho, g)),\\
	Z^0(\rho, g ) &:= \begin{cases}
	\underline u &\text{if $\Z = \m$}\\
	\overline u &\text{if $\Z = \M$}
	\end{cases}
	\end{align*}
where $\underline u, \overline u$ are given and independent of $\rho$ and $g$ (to be fixed later).
For convenience, we define $T_0(g,\varphi) := S(g,\varphi)$. With this, note that 	$Z^n(0,g) = S(g, Z^{n-1}(0,g))$. 
\begin{lem}\label{lem:ZnCts}
If $\Phi\colon V \to V$ is continuous, then for each $n$, the map $Z^n\colon \mathbb{R}^+ \times V^* \to V$ is continuous at all points from the set $\{0\} \times V^*$.
\end{lem}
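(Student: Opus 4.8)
The plan is to argue by induction on $n$. For the base case $n = 0$, the map $Z^0(\rho, g)$ is constant (equal to $\underline u$ or $\overline u$) and hence trivially continuous everywhere, in particular on $\{0\} \times V^*$. For the inductive step, suppose $Z^{n-1}\colon \mathbb{R}^+ \times V^* \to V$ is continuous at every point of $\{0\} \times V^*$; we must show the same for $Z^n(\rho,g) = T_\rho(g, Z^{n-1}(\rho,g))$. Fix $(0, g_0)$ and take any sequence $(\rho_k, g_k) \to (0, g_0)$ with $\rho_k \ge 0$ and $g_k \to g_0$ in $V^*$. Write $w_k := Z^{n-1}(\rho_k, g_k)$, so that $w_k \to w_0 := Z^{n-1}(0, g_0)$ in $V$ by the inductive hypothesis (and continuity of $\Phi$ is not even needed here). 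We want $Z^n(\rho_k, g_k) = T_{\rho_k}(g_k, w_k) \to T_0(g_0, w_0) = S(g_0, w_0) = Z^n(0, g_0)$ in $V$.

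The key step is therefore a joint continuity statement for the family of solution maps $(\rho, g, \varphi) \mapsto T_\rho(g, \varphi)$ at $\rho = 0$. I would obtain this by splitting the estimate through an intermediate term: first compare $T_{\rho_k}(g_k, w_k)$ with $T_{\rho_k}(g_0, w_0)$, and then compare $T_{\rho_k}(g_0, w_0)$ with $S(g_0, w_0)$. For the first comparison, apply the (unconditional) first estimate of \cref{lem:TrhoLipschitz}, which is uniform in $\rho$:
\[
\norm{T_{\rho_k}(g_k, w_k) - T_{\rho_k}(g_0, w_0)}{V} \le \sqrt 2 C_a^{-1}\norm{g_k - g_0}{V^*} + \sqrt 2 C_a^{-1}C_b \norm{\Phi(w_0) - \Phi(w_k)}{V},
\]
and this tends to $0$ since $g_k \to g_0$ in $V^*$, $w_k \to w_0$ in $V$, and $\Phi\colon V \to V$ is continuous (this is the one place continuity of $\Phi$ is used). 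For the second comparison, $T_{\rho_k}(g_0, w_0) \to S(g_0, w_0)$ in $V$ as $\rho_k \searrow 0$ is exactly \cref{lem:PDEApproximatesVI} (noting $\Phi(w_0) \in V \cts H$); if some $\rho_k = 0$ the term is literally $S(g_0, w_0)$, so there is nothing to prove for those indices. Combining the two gives $T_{\rho_k}(g_k, w_k) \to S(g_0, w_0)$, which is the desired conclusion, and since the sequence was arbitrary this proves sequential continuity of $Z^n$ at $(0, g_0)$; as $V$ and $\mathbb{R}^+ \times V^*$ are metric spaces, sequential continuity is continuity.

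The main obstacle I anticipate is purely bookkeeping rather than conceptual: one must be careful that \cref{lem:PDEApproximatesVI} is stated for a \emph{fixed} source and \emph{fixed} obstacle argument, so it cannot be applied directly to the moving data $(g_k, w_k)$ — hence the need for the intermediate term $T_{\rho_k}(g_0, w_0)$ with frozen second and third arguments. One should also double-check that the uniform-in-$\rho$ first estimate of \cref{lem:TrhoLipschitz} genuinely holds with a $\rho$-independent constant (it does, by inspection of its proof, which never uses a lower bound on $\rho$), and handle the degenerate indices $\rho_k = 0$ separately by interpreting $T_0 = S$ as in the text. No small-Lipschitz assumption on $\Phi$ is required for this lemma.
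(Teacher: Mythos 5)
Your proposal is correct and takes essentially the same approach as the paper: the identical decomposition $T_{\rho}(g, Z^{n-1}(\rho,g)) - T_\rho(g_0, Z^{n-1}(0,g_0)) + T_\rho(g_0, Z^{n-1}(0,g_0)) - T_0(g_0, Z^{n-1}(0,g_0))$, handled respectively by the unconditional Lipschitz estimate from \cref{lem:TrhoLipschitz} together with the induction hypothesis and continuity of $\Phi$, and by \cref{lem:PDEApproximatesVI}. You are somewhat more explicit than the paper in noting that one must use the \emph{first} (unconditional, $\rho$-uniform) estimate in \cref{lem:TrhoLipschitz} rather than the contraction estimate (which would need the small-Lipschitz hypothesis that is not assumed here), and in handling the degenerate indices $\rho_k = 0$, but these are clarifications, not a different route.
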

\begin{proof}
We show this by induction. It is clear for $n = 0$ as $Z^0$ is constant in its arguments.  Assume that $Z^n(\rho,g) \to Z^n(0,f)$ as $(\rho,g) \to (0,f)$. The inductive step is:
		\begin{align*}
			Z^{n+1}(\rho, g) - Z^{n+1}(0,f)
			&=
			T_\rho( g, Z^n(\rho, g))
			-
			T_0( f, Z^n(0, f))
			\\&=
			[
				T_\rho( g, Z^n(\rho, g))
				-
				T_\rho( f, Z^n(0,f))
			]
			+
			[
				T_\rho( f, Z^n(0,f))
				-
				T_0( f, Z^n(0, f))
			]
			.
		\end{align*}
		The first bracket is continuous due to
		\cref{lem:TrhoLipschitz} and the induction hypothesis
		and the second bracket is continuous due to  \cref{lem:PDEApproximatesVI}.
\end{proof}

\begin{lem}\label{lem:ZnInBall}
Assume that $\Phi \colon V \to V$ is continuous
and $f \in V^*$ is such that \eqref{ass:PhiSmallLipschitzZ} and 
\begin{align}
&\text{$Z^n(0,f) \to \Z(f)$}\label{ass:gerd1}.
\end{align}
Then for every $\epsilon > 0$, there exist $N \in \mathbb{N}$
and $\rho_0,\delta > 0$ such that 
\begin{equation}
	\label{eq:ZnInBall}
	Z^n(\rho,g) \in B_\epsilon(\Z(f))
	\qquad\forall
	n \ge N, \rho \in [0,\rho_0], g \in B_\delta(f)
	.
\end{equation}
\end{lem}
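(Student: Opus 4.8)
The plan is to exploit the contraction property of the maps $\varphi \mapsto T_\rho(g,\varphi)$ guaranteed by \cref{lem:TrhoLipschitz} under \eqref{ass:PhiSmallLipschitzZ}, together with the continuity of $Z^n$ at points of $\{0\}\times V^*$ from \cref{lem:ZnCts}. The key quantitative fact is the second estimate of \cref{lem:TrhoLipschitz}: once all the relevant iterates $Z^{n-1}(\rho,g)$, $Z^{n-1}(0,f)$ lie in the region $U$ where $\Phi$ has the small Lipschitz constant (e.g.\ $U = B_{\epsilon^*}(\Z(f))$), one has
\[
\norm{Z^n(\rho,g) - Z^n(0,f)}{V} \le \hat C\,\norm{g-f}{V^*} + \hat c\,\norm{Z^{n-1}(\rho,g) - Z^{n-1}(0,f)}{V}
\]
with $\hat c \in [0,1)$, which iterated geometrically yields smallness uniformly in $n$. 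The main subtlety is the bookkeeping needed to ensure that the iterates never leave $U$ during the induction, which is why one has to fix the radius, then $N$, then $\rho_0$ and $\delta$ in the right order.

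First I would fix $\epsilon>0$; without loss of generality shrink it so that $\epsilon \le \epsilon^*$, so that $B_\epsilon(\Z(f)) \subseteq B_{\epsilon^*}(\Z(f))$ is inside the small-Lipschitz region, and so that the contraction estimate of \cref{lem:TrhoLipschitz} applies with constants $\hat C, \hat c$ whenever both arguments lie in $B_\epsilon(\Z(f))$. Next, using assumption \eqref{ass:gerd1}, choose $N\in\N$ such that $\norm{Z^n(0,f) - \Z(f)}{V} \le \epsilon/4$ for all $n \ge N$; in particular $Z^n(0,f) \in B_{\epsilon/4}(\Z(f))$ for $n \ge N$. Now I would run an induction on $n \ge N$ to show that, for suitably small $\rho_0,\delta$, the estimate $\norm{Z^n(\rho,g) - Z^n(0,f)}{V} \le \epsilon/2$ holds for all $\rho\in[0,\rho_0]$ and $g\in B_\delta(f)$, which combined with the previous display gives $Z^n(\rho,g)\in B_{3\epsilon/4}(\Z(f)) \subset B_\epsilon(\Z(f))$.

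For the base case $n = N$: by \cref{lem:ZnCts}, $Z^N$ is continuous at $(0,f)$, so there exist $\rho_1,\delta_1>0$ with $\norm{Z^N(\rho,g) - Z^N(0,f)}{V} \le \epsilon/2$ for $\rho\in[0,\rho_1]$, $g\in B_{\delta_1}(f)$; this also forces $Z^N(\rho,g)\in B_\epsilon(\Z(f))$. For the inductive step, assume $Z^n(\rho,g)\in B_\epsilon(\Z(f))$ and $\norm{Z^n(\rho,g)-Z^n(0,f)}{V}\le\epsilon/2$ for all $\rho\in[0,\rho_0^{(n)}]$, $g\in B_{\delta^{(n)}}(f)$. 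Since $Z^n(0,f)\in B_{\epsilon/4}(\Z(f))\subset B_\epsilon(\Z(f))$ as well, both arguments lie in the small-Lipschitz ball, so \cref{lem:TrhoLipschitz} gives
\[
\norm{Z^{n+1}(\rho,g) - Z^{n+1}(0,f)}{V} \le \hat C\,\norm{g-f}{V^*} + \hat c\,\tfrac{\epsilon}{2}.
\]
Choosing $\delta^{(n+1)} := \min(\delta^{(n)}, (1-\hat c)\epsilon/(2\hat C))$ (and $\delta^{(n+1)}:=\delta^{(n)}$ if $\hat C = 0$) and $\rho_0^{(n+1)} := \rho_0^{(n)}$, the right-hand side is bounded by $(1-\hat c)\epsilon/2 + \hat c\,\epsilon/2 = \epsilon/2$, closing the induction and giving $Z^{n+1}(\rho,g)\in B_{3\epsilon/4}(\Z(f))\subset B_\epsilon(\Z(f))$. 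Finally, the thresholds $\delta^{(n)}$ are nonincreasing in $n$ but bounded below by $\min(\delta_1,(1-\hat c)\epsilon/(2\hat C)) > 0$ (the contraction constant $\hat c$ and $\hat C$ do not depend on $n$ or $\rho$), so setting $\delta := \min(\delta_1, (1-\hat c)\epsilon/(2\hat C))$ and $\rho_0 := \rho_1$ works uniformly for all $n\ge N$, which is exactly \eqref{eq:ZnInBall}.

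The main obstacle is precisely the uniformity in $n$: a naive induction would let the neighbourhood $\delta^{(n)}$ shrink at each step and collapse to zero. The point that saves the argument is that the coefficient $\hat c\in[0,1)$ in \cref{lem:TrhoLipschitz} is a fixed constant independent of $\rho$ and of the iteration index, so the geometric correction $(1-\hat c)\epsilon/(2\hat C)$ needed at each step is the same; one only needs the single continuity input at level $n=N$ (the base case) to start the recursion, and everything past that is controlled by the contraction alone.
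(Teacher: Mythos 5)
There is a genuine gap in the inductive step, which stems from the fact that you are comparing iterates at two different values of $\rho$. You write $Z^{n+1}(\rho,g) = T_\rho(g,Z^n(\rho,g))$ and $Z^{n+1}(0,f) = T_0(f,Z^n(0,f)) = S(f,Z^n(0,f))$, and then invoke \cref{lem:TrhoLipschitz} to obtain
\[
\norm{Z^{n+1}(\rho,g) - Z^{n+1}(0,f)}{V} \le \hat C\,\norm{g-f}{V^*} + \hat c\,\norm{Z^n(\rho,g)-Z^n(0,f)}{V}.
\]
But \cref{lem:TrhoLipschitz} compares $T_\rho(f,\varphi)$ with $T_\rho(g,\psi)$ for the \emph{same} $\rho$; it gives no information about $T_\rho$ versus $T_0 = S$. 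The penalisation error
\[
\norm{T_\rho(f,Z^n(0,f)) - T_0(f,Z^n(0,f))}{V}
\]
is missing from your display, and because its argument $Z^n(0,f)$ varies with $n$, one cannot simply absorb it by invoking \cref{lem:PDEApproximatesVI} pointwise: one would need convergence $T_\rho \to T_0$ uniformly along the sequence $\{Z^n(0,f)\}$, which is not immediate. Your base case, by contrast, correctly uses the joint $(\rho,g)$-continuity from \cref{lem:ZnCts}, so the inconsistency is localised to the inductive step.

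The paper sidesteps this cleanly by running the induction on the claim $Z^n(\rho,g) \in B_\epsilon(\Z(f))$ directly and comparing $Z^{n+1}(\rho,g) = T_\rho(g,Z^n(\rho,g))$ with the \emph{fixed} point $\Z(f) = T_0(f,\Z(f))$. This splits off a single penalisation term $\norm{T_\rho(f,\Z(f)) - T_0(f,\Z(f))}{V}$ with a fixed argument $\Z(f)$, controllable once and for all by \cref{lem:PDEApproximatesVI} when choosing $\rho_0$. Your argument can be repaired either by switching to the paper's comparison point, or by adding a triangle inequality through $\Z(f)$ inside your remainder term: for $n \ge N$ with $Z^n(0,f) \in B_{\epsilon/4}(\Z(f))$, one has
\[
\norm{T_\rho(f,Z^n(0,f)) - T_0(f,Z^n(0,f))}{V}
\le
2\hat c\,\norm{Z^n(0,f) - \Z(f)}{V}
+ \norm{T_\rho(f,\Z(f)) - T_0(f,\Z(f))}{V},
\]
which is uniformly small in $n$. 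As written, however, the proof is incomplete.
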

Regarding the above assumptions, note that we are implicitly assuming that $\Z(f)$ is defined (it would be true if $f$, $\underline u$ and $\overline u$ satisfy \cref{ass:forZSubAndSuperSolutions},  see \cref{lem:convergenceOfVIsToQVIs}).
\begin{proof}
Without loss of generality, we assume $\epsilon \le \epsilon^*$. Let us first record some useful estimates. 
	Due to \eqref{ass:gerd1}, we get $N \in \N$ such that $n \ge N$ gives $ Z^n(0,f) \in B_{\epsilon\slash 2}(\Z(f))$.
	For $g \in V^*$ and $\rho \ge 0$
	we have
\begin{align*}
		\norm{ Z^N(\rho, g) - \Z(f) }{V}
		&\le
		\norm{ Z^N(\rho, g) - Z^N(0,f) }{V}
		+
		\norm{ Z^N(0,f) - \Z(f) }{V}\\
		&\le
		\norm{ Z^N(\rho, g) - Z^N(0,f) }{V}
		+
		\frac\epsilon2.
\end{align*}	
The function $Z^N$ is continuous at $(0,f)$ due to the previous lemma. This, together with \cref{lem:PDEApproximatesVI}, means that we can choose
	$\rho_0 > 0$
	and
	$\delta > 0$
	such that
	\begin{align*}
		Z^N(\rho,g) &\in B_\epsilon(\Z(f))
		\qquad\forall
		\rho \in [0,\rho_0], g \in B_\delta(f),
		\\
		\norm{T_\rho( f, \Z(f) ) - T_0(f, \Z(f))}{V}
		&\le
		\frac{1-\hat c}{2}\epsilon
		\mspace{53.5mu}
		\forall \rho \in [0,\rho_0],
		\\
		\delta
		&\le
		\frac{1 - \hat c}{1 + \hat C} \frac\epsilon2.
	\end{align*}
	Here,
	the constants $\hat C \ge 0$ and $\hat c \in [0,1)$
	are chosen as in \cref{lem:TrhoLipschitz}.

	By induction over $n$, we show that \eqref{eq:ZnInBall} holds.
	To this end,
	suppose that
	$n \ge N$, $\rho \in [0,\rho_0]$
	and $g \in B_\delta(f)$
	are given 	such that
	$Z^n(\rho,g) \in B_\epsilon(\Z(f))$.
	Using the definition of $Z^{n+1}$, we find
	\begin{align*}
		\norm{Z^{n+1}(\rho,g) - \Z(f) }{V}
		&=
		\norm{T_\rho( g, Z^n(\rho,g)) - T_0(f, \Z(f))}{V}
		\\&
		\le
		\norm{T_\rho( g, Z^n(\rho,g)) - T_\rho(f, \Z(f))}{V}
		+
		\norm{T_\rho( f, \Z(f) ) - T_0(f, \Z(f))}{V}
		\\&
		\le
		\norm{T_\rho( g, Z^n(\rho,g)) - T_\rho(f, \Z(f))}{V}
		+
		\frac{1-\hat c}{2}\epsilon
		.
	\end{align*}
	Since $Z^n(\rho,g) \in B_\epsilon(\Z(f)) \subset B_{\epsilon^*}(\Z(f))$,
	we can apply \cref{lem:TrhoLipschitz}
	and get
	\begin{align*}
		\norm{Z^{n+1}(\rho,g) - \Z(f) }{V}
		&\le
		\hat C \norm{f - g}{V^*} + \hat c \norm{ Z^n(\rho,g) - \Z(f) }{V}
		+
		\frac{1-\hat c}{2}\epsilon\\
		&\le
		\hat C \delta
		+
		\hat c \epsilon
		+
		\frac{1-\hat c}{2}\epsilon\\
		&\le
		\epsilon
		.
	\end{align*}
	This shows $Z^{n+1}(\rho, g) \in B_\epsilon(\Z(f))$.
	By induction, \eqref{eq:ZnInBall} follows.
\end{proof}
The important point in the previous result is that \eqref{eq:ZnInBall} holds for $\rho \leq \rho_0$ and $g \in B_\delta(f)$ uniformly in $n$.

Let us now prove the theorem on the convergence of $\Z_\rho(g) \to \Z(f)$.
\begin{proof}[Proof of \cref{thm:jointContinuityOfRhoMapsUnderContraction}]\label{proof:lipschitzZrho}
We take $\underline u$ and $\overline u$ in the definition of $Z^0$ to satisfy \cref{ass:subAndSupersolutionsForZRho}.  By \cref{ass:subAndSupersolutionsForZRho}, \cref{ass:subAndSuperSolutions} is satisfied for every source term in $B_{\bar \delta}(f)\cap W$  and we get that $\Z$ and $\Z_\rho$ are well defined on the set $B_{\bar \delta}(f)\cap W$ for small $\rho$.

\medskip

\noindent \emph{Step 1.} By \cref{lem:ZnInBall} (note that $\Phi$ is continuous by \eqref{ass:PhiCC}) there exist $N \in \mathbb{N}$ and $\rho_0, \delta > 0$ such that $Z^n(\rho,g) \in B_{\epsilon^*\slash 2}(\Z(f))$ as long as $n \geq N$, $\rho \leq \rho_0$ and $g \in B_{\delta}(f)$. Without loss of generality, we can assume that $\delta \leq \hat \delta$. For $g \in B_\delta(f)$, let us define the sequence 
\begin{align*}
y^n &:= T_\rho(g, y^{n-1})\\
y^0 &:= Z^N(\rho, g).
\end{align*}
i.e., $y^n = Z^{N+n}(\rho, g)$. As noted, we have $y^0 \in B_{\epsilon^*\slash 2}(\Z(f))$ under the stated conditions on $\rho$ and $g$. We claim that $T_\rho(g,\cdot) \colon B_{\epsilon^*\slash 2}(\Z(f)) \to  B_{\epsilon^*\slash 2}(\Z(f))$ is a contraction for sufficiently small $\rho$ and $g$ sufficiently close to $f$. Indeed, take $\varphi \in B_{\epsilon^*\slash 2}(\Z(f))$, $g \in B_{\delta^*}(f)$ 
where 
\[\delta^* = \frac{(1-\hat c)\epsilon^*}{4(\hat C + 1)}\] and take $\rho$ small enough (let us say $\rho \leq \rho_1$) so that $T_\rho(f, \Z(f)) \in B_{\hat C \delta^*}(\Z(f))$ (this is possible by \cref{lem:PDEApproximatesVI}).
Then using  \cref{lem:TrhoLipschitz} and \eqref{ass:PhiSmallLipschitzZ},
\begin{align*}
\norm{T_\rho(g,\varphi)-\Z(f)}{V}
&\leq \norm{T_\rho(g,\varphi) - T_\rho(f, \Z(f))}{V} + \norm{T_\rho(f,\Z(f)) - \Z(f)}{V}\\
&\leq \hat C\norm{g-f}{V^*} + \hat c\norm{\varphi-\Z(f)}{V} + \frac{(1-\hat c)\epsilon^*}{4} \\
&\leq \frac{(1-\hat c)\epsilon^*}{4} + \frac{\hat c \epsilon^*}{2} + \frac{(1-\hat c)\epsilon^*}{4}\\
&= \frac{\epsilon^*}{2}
\end{align*}
so that $T_\rho(g,\cdot)$ is invariant on the ball in question. 
For the contraction property, due to \cref{lem:TrhoLipschitz} and \eqref{ass:PhiSmallLipschitzZ},
\[\norm{T_\rho(g,\varphi)-T_\rho(g,\psi)}{V} \leq \hat c\norm{\varphi-\psi}{V} \qquad \forall \varphi, \psi \in B_{\epsilon^*\slash 2}(\Z(f)) \]
where $\hat c \in [0,1)$.  

Hence, by Banach's fixed point theorem, we obtain $y^n \to y$ in $V$ where $y=T_\rho(g,y)$. That is, $Z^n(\rho,g) \to Z^\infty(\rho,g)$ for some $Z^\infty(\rho,g)$. Furthermore, we have
\[\norm{y^n-y}{V} \leq \hat c \norm{y^{n-1}-y}{V}\]
which implies
\[\norm{y^n-y}{V} \leq \hat c^n \norm{y_0-y}{V}\]
i.e.
\[\norm{Z^{N+n}(\rho,g)-Z^\infty(\rho,g)}{V} \leq \hat c^n \norm{Z^N(\rho,g)-Z^\infty(\rho,g)}{V} \leq \epsilon \hat c^n\]
since $Z^N(\rho,g), Z^\infty(\rho,g) \in B_{\epsilon\slash 2}(\Z(f))$. Recall that the above holds as long as $n \geq N$, $\rho \leq \min(\rho_0, \rho_1)$ and $g \in B_{\min(\delta, \delta^*)}(f)$.

We can rewrite this as
\[\norm{Z^{n}(\rho,g)-Z^\infty(\rho,g)}{V} \leq \epsilon \hat c^{n-N}\quad\text{for $n \geq 2N$, $\rho \leq \min(\rho_0, \rho_1)$ and $g \in B_{\min(\delta, \delta^*)}(f)$},\]
where we note that the right-hand side of the inequality is independent of $\rho$ and $g$.

\medskip

\noindent \emph{Step 2.} \cref{ass:subAndSupersolutionsForZRho} implies that \cref{ass:forZRhoSubAndSuperSolutions} is satisfied for all  $g \in B_{\bar \delta}(f) \cap W$ and small enough $\rho$ and thus for $g$ taken in $B_{\bar \delta}(f) \cap W$, we can apply \cref{prop:strongConvergenceOfPenVIProblems}  which allows us to identify $Z^\infty(\rho, g) = \Z_\rho(g)$.

\medskip

\noindent \emph{Conclusion.} 
		To summarise, we have shown
				\begin{equation*}
		\begin{aligned}
			Z^n(\rho, g) &\to \Z_\rho(g) &&\text{uniformly in $\rho$, $g \in B_{\delta}(f) \cap W$ as $n \to \infty$},\\
		\end{aligned}
		\end{equation*}
		while from \cref{lem:ZnCts}, we have 
				\begin{equation*}
						\begin{aligned}
Z^n(\rho, g) &\to Z^n(0,f) &&\text{as $\rho \searrow 0$, $g \to f$}.
		\end{aligned}
		\end{equation*}		
Thus, we can interchange the iterated limits and get (the limit $g \to f$ below should be understood for $g \in W$)
		\begin{equation*}
			\lim_{\substack{\rho \searrow 0\\g\to f}} \Z_\rho(g)
			=
			\lim_{\substack{\rho \searrow 0\\g\to f}} \lim_{n \to \infty} Z^n(\rho, g)
			=
			\lim_{n \to \infty} \lim_{\substack{\rho \searrow 0\\g\to f}} Z^n(\rho, g)
			=
			\lim_{n \to \infty} Z^n(0,f)
			=
			\Z(f).
		\end{equation*}
\end{proof}

By taking $\underline u$, $\overline u$ in the definition of $Z^0$ to satisfy  \cref{ass:subAndSuperSolutions} (rather than \cref{ass:subAndSupersolutionsForZRho}) and arguing similarly to above, we obtain \cref{thm:continuityOfRhoMapsUnderContraction}.

\begin{remark}\label{remark:constructiveApproxOfExtremals}
Examining \cref{sec:penalisedProblem} and \cref{sec:convergencetoQVIs}, we see that there is a constructive way to approach the minimal and maximal solutions: we start at a subsolution or a supersolution, solve iteratively to get $\underline{u}^n_\rho$ or $\overline{u}^n_\rho$ (see \cref{defn:iterativeSeq}) for a large $n$, take $\rho$ small and we will be close to the minimal solution or the maximal solution, thanks to the results of \cref{prop:strongConvergenceOfPenVIProblems} and either \cref{thm:convergenceOfMRho}  or, in the case of the maximal solution,  \cref{thm:continuityOfRhoMapsUnderContraction}. This can be useful for numerical realisations.
\end{remark}
\section{Local Lipschitz continuity of $\Z$ and $\Z_\rho$}\label{sec:lipschitz}
Local Lipschitz continuity for these maps does not immediately follow from the continuous dependence estimate of \cref{lem:TrhoLipschitz} if we impose only the local Lipschitz condition of $\Phi$ (as in the statement of the result below), since we do not know a priori that (even if $f$ and $g$ are close enough) $\Z_\rho(f)$ and $\Z_\rho(g)$ are in the neighbourhood where $\Phi$ is Lipschitz with a small Lipschitz constant. Instead, we have to argue using the results of the above section.

\begin{proof}[Proof of \cref{thm:localLipschitzForZRho}]
Since by \cref{thm:jointContinuityOfRhoMapsUnderContraction}, $\Z_\rho(g) \in B_{\epsilon^*}(\Z(f))$ for all $g \in W$ sufficiently close to $f$ and $\rho$ sufficiently small, we obtain via \cref{lem:TrhoLipschitz} and \eqref{ass:PhiSmallLipschitzZ}, for $\hat c < 1$, the estimate
\[\norm{\Z_\rho(f)-\Z_\rho(g)}{V} \leq \hat C\norm{f-g}{V^*} + \hat c\norm{\Z_\rho(f)-\Z_\rho(g)}{V}.\]
\end{proof}
Regarding Lipschitz continuity for $\Z$, a first thought might be that we could pass to the limit in $\rho$ in the inequality of \cref{thm:localLipschitzForZRho} but the assumptions with respect to $T_\rho$ would still be needed with that approach.  We argue differently.

\begin{proof}[Proof of Theorem \ref{thm:localLipschitzForZ}]\label{proof:lipschitzZ}

The idea is that if we had that $\Z(g) \in B_{\epsilon^*}(\Z(f))$ for $g$ sufficiently close to $f$, we can, like in the above proof, once again apply \cref{lem:TrhoLipschitz} (with $\rho=0$) and the smallness assumption \eqref{ass:PhiSmallLipschitzZ} to obtain the result.

Thus, we need the result of \cref{thm:jointContinuityOfRhoMapsUnderContraction} for $\rho=0$ (without any assumptions on $\sigma_\rho$ or other $\rho$-dependent quantities. This can be achieved by simply noting that the arguments of \cref{sec:convUnderContraction} still hold with $\rho=0$ and with \cref{ass:subAndSupersolutionsForZRho} replaced by \cref{ass:subAndSupersolutionsZ}. The proofs of the results can be modified in the obvious way but let us point out that in the proof of \cref{thm:jointContinuityOfRhoMapsUnderContraction}, we need to use  \cref{lem:convergenceOfVIsToQVIs} in place of \cref{prop:strongConvergenceOfPenVIProblems} (observe that \eqref{ass:aAssumptionsForVI} implies \cref{ass:forZSubAndSuperSolutions}).
\end{proof}
It is worth noting that the Lipschitz constants in \cref{thm:localLipschitzForZ} and \cref{thm:localLipschitzForZRho} are both exactly 
\[\frac{\hat C}{1-\hat c}\]
with $\hat c$ and $\hat C$ given in \cref{lem:TrhoLipschitz}.
\section{Directional differentiability}\label{sec:dirDiff}
In this section, we shall prove that $\Z_\rho$ and $\Z$ are directionally differentiable maps (and also Hadamard differentiable in a certain sense). Our line of attack is based on the iteration approach from \cite{AHR} (where we approximate the QVI solutions by a sequence of solutions of VIs, derive expansion formula for the elements of the sequence and then pass to the limit) combined with some refinements in \cite{Wachsmuth2021:2}. We start with the analysis for $\Z_\rho$.

\subsection{Differentiability for $\Z_\rho$}\label{sec:rhoPositiveCase}

An essential task is to obtain differentiability for $T_\rho$ in its arguments. In the equation defining $T_\rho$, observe that the nonlinearity $\sigma_\rho\colon V \to V^*$ is Hadamard differentiable and the derivative is bounded in the direction: in fact, when seen as a real-valued function, $\sigma_\rho$ is $C^1$, and by using the Lipschitzness and boundedness of $\sigma_\rho'$, we have that $\sigma_\rho\colon V \to V^*$ is G\^ateaux differentiable \cite[Theorem 8]{MR1231260} (thus, it is also Hadamard differentiable since $\sigma_\rho$ is Lipschitz). We use this fact below.
\begin{lem}\label{lem:TRhoDirectionallyDiff}
Let  $f \in V^*$ and assume that $\Phi$ is directionally differentiable at $\varphi \in V$. 
Then $T_\rho\colon V^* \times V \to V$ is directionally differentiable at $(f,\varphi)$, i.e.,
\[\lim_{s \searrow 0}\frac{T_\rho(f+sd,\varphi+sh)-T_\rho(f,\varphi)}{s} = T_\rho'(f,\varphi)(d,h)\qquad\text{for $d \in V^*$ and $h  \in V,$}\]
where $T_\rho'(f,\varphi)(d,h)=\delta$ is the unique solution of the equation  
\begin{equation}
	A\delta + \frac{1}{\rho}\sigma_\rho'(u-\Phi(\varphi))(\delta-\Phi'(\varphi)(h)) = d\label{eq:formulaForTRhoDerivative}.
\end{equation}
\end{lem}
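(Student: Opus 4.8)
The plan is to proceed by a fixed-point/contraction argument directly on the defining equation, exploiting that for fixed $\rho>0$ the operator $A + \frac1\rho\sigma_\rho(\cdot - \Phi(\varphi))$ is uniformly monotone (coercive) and that its derivative is invertible with a uniform bound. First I would establish well-posedness of the linearised equation \eqref{eq:formulaForTRhoDerivative}: since $\sigma_\rho' \ge 0$ is bounded (indeed $0 \le \sigma_\rho'(r) \le 1$) and $A$ is coercive, the operator $\delta \mapsto A\delta + \frac1\rho \sigma_\rho'(u-\Phi(\varphi))\delta$ is linear, bounded and coercive on $V$, hence an isomorphism $V \to V^*$ by Lax--Milgram; the term $\frac1\rho\sigma_\rho'(u-\Phi(\varphi))\Phi'(\varphi)(h)$ lies in $V^*$ (using $\Phi'(\varphi)(h) \in V$ and boundedness of the multiplication operator $V \to H \hookrightarrow V^*$), so \eqref{eq:formulaForTRhoDerivative} has a unique solution $\delta$, and moreover $\|\delta\|_V \le C(\|d\|_{V^*} + \|\Phi'(\varphi)(h)\|_V)$ for a constant independent of the particular $d,h$.

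Next I would set $u_s := T_\rho(f+sd, \varphi+sh)$, $u := T_\rho(f,\varphi)$, and define the difference quotient $\delta_s := (u_s - u)/s$. Subtracting the two equations, dividing by $s$, and testing with $\delta_s$ gives, via coercivity of $A$ and monotonicity of $\sigma_\rho$, an a priori bound $\|\delta_s\|_V \le C(\|d\|_{V^*} + \|(\Phi(\varphi+sh)-\Phi(\varphi))/s\|_V)$; since $\Phi$ is directionally differentiable at $\varphi$, the second term is bounded as $s \searrow 0$, so $\{\delta_s\}$ is bounded in $V$ and, along a subsequence, $\delta_s \rightharpoonup \delta^*$ in $V$ and (by \eqref{ass:VCompactInH}, if available, or just weakly) we track the nonlinear term. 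Writing the equation for $\delta_s$ as
\[
A\delta_s + \frac1\rho \cdot \frac{\sigma_\rho(u_s - \Phi(\varphi+sh)) - \sigma_\rho(u - \Phi(\varphi))}{s} = d,
\]
the key is to pass to the limit in the quotient of $\sigma_\rho$ terms. Here I would use that $\sigma_\rho \colon \R \to \R$ is $C^1$ with Lipschitz derivative, so by a mean-value/Taylor argument the quotient equals $\sigma_\rho'(u - \Phi(\varphi))\cdot \big(\delta_s - (\Phi(\varphi+sh)-\Phi(\varphi))/s\big)$ plus a remainder controlled by $\rho^{-1}\|\sigma_\rho'\|_{\Lip{}}\,\|u_s - \Phi(\varphi+sh) - (u-\Phi(\varphi))\|\cdot|\delta_s - \dots|$ in $L^1$, which vanishes because $u_s \to u$ and $\Phi(\varphi+sh)\to\Phi(\varphi)$ strongly (by Lipschitz continuity of $T_\rho$ from \cref{lem:TrhoLipschitz} and continuity of $\Phi$). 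Combined with $(\Phi(\varphi+sh)-\Phi(\varphi))/s \to \Phi'(\varphi)(h)$ strongly in $V$ (directional differentiability), the limit equation for $\delta^*$ is exactly \eqref{eq:formulaForTRhoDerivative}; by uniqueness of that solution, $\delta^* = \delta$, the whole sequence converges, and a standard testing argument upgrades weak to strong convergence in $V$ (test the $\delta_s$-equation minus the $\delta$-equation with $\delta_s - \delta$ and use coercivity plus monotonicity of $\sigma_\rho'$).

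The main obstacle I anticipate is handling the composition $\sigma_\rho(u_s - \Phi(\varphi+sh))$: the inner argument involves $\Phi(\varphi+sh)$, whose difference quotient only converges strongly in $V$ (not better) and only directionally, while $\delta_s$ converges merely weakly in $V$ a priori; one must be careful that the product $\sigma_\rho'(u-\Phi(\varphi))\cdot \delta_s$ — a product of an $L^\infty$ function with a weakly convergent $V$-sequence — converges in the right topology, which is fine as a map into $V^*$ since multiplication by a fixed $L^\infty$ function is weak-to-weak continuous $V \to H$, but the remainder/linearisation error estimate for $\sigma_\rho$ must be made quantitative enough (using the explicit piecewise form \eqref{eq:mrhoHK}, or just Lipschitzness of $\sigma_\rho'$ with the factor $\rho^{-1}$) to kill the cross terms. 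Once that estimate is in place the rest is routine. Note also that Hadamard differentiability (not merely Gâteaux) follows for free because $T_\rho$ is locally Lipschitz by \cref{lem:TrhoLipschitz}, so the directional derivative automatically enjoys the Hadamard property; I would remark on this at the end.
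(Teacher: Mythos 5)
Your plan is genuinely different from the paper's proof, and it has a gap at the point where you upgrade weak to strong convergence. Let me spell out the difference and the gap.

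The paper does \emph{not} form the difference quotient $\delta_s = (u_s - u)/s$ and extract a weak limit. Instead it defines $\delta$ a priori as the solution of \eqref{eq:formulaForTRhoDerivative}, sets $\hat y := y - \Phi(\varphi + sh)$, $\hat u := u - \Phi(\varphi)$, $\hat\delta := \delta - \Phi'(\varphi)(h)$, and inserts $\sigma_\rho(\hat u + s\hat\delta)$ to split
\[
\sigma_\rho(\hat y) - \sigma_\rho(\hat u) - s\,\sigma_\rho'(\hat u)(\hat\delta)
=
\bigl[\sigma_\rho(\hat y) - \sigma_\rho(\hat u + s\hat\delta)\bigr]
+
\bigl[\sigma_\rho(\hat u + s\hat\delta) - \sigma_\rho(\hat u) - s\,\sigma_\rho'(\hat u)(\hat\delta)\bigr].
\]
Testing with $\hat y - \hat u - s\hat\delta$, the first bracket is handled by T-monotonicity, and the second bracket is the Hadamard remainder of the Nemytskii map $\sigma_\rho\colon V \to V^*$ in the \emph{fixed} direction $\hat\delta$; it is $o(s)$ in $V^*$ by the cited result from \cite{MR1231260}. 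This gives directly $C_a\norm{y - u - s\delta}{V} \le \frac1\rho\norm{o^m_s}{V^*} + (C_a + C_b)\norm{l_s}{V}$ and the conclusion follows with no extraction of subsequences.

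Your approach instead Taylor-expands $\sigma_\rho$ pointwise around $w := u - \Phi(\varphi)$ with the $s$-dependent increment $g_s := (w_s - w)/s$, which only converges \emph{weakly} in $V$ before you have proved anything. The remainder $r_s$ then satisfies the pointwise bound $|r_s| \lesssim \frac{s}{\rho}|g_s|^2$. This makes $r_s \to 0$ in $L^1$ and $r_s$ bounded in $L^2$, so $r_s \weaklyto 0$ in $V^*$; that is enough to identify the weak limit $\delta^* = \delta$ (your first limit pass is fine). But the final step — testing the difference of the $\delta_s$- and $\delta$-equations with $\delta_s - \delta$ — produces the cross term $\frac1\rho\int_\Omega r_s(\delta_s - \delta)$, which is a pairing of a sequence converging only weakly in $L^2$ against another sequence converging only weakly in $L^2$. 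Estimating it via $|r_s|\lesssim \frac{s}{\rho}|g_s|^2$ leaves $\frac{s}{\rho}\int|g_s|^2|\delta_s - \delta|$, an $L^3$-type quantity that is not controlled by the $V$-bounds in this abstract setting (no embedding $V \hookrightarrow L^p$ for $p>2$, and no $V \ctsCompact H$, is assumed in the lemma). Your parenthetical appeal to \eqref{ass:VCompactInH} acknowledges the difficulty, but that hypothesis is not part of this lemma.

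The fix is exactly the paper's trick and you are close to it: linearise $\sigma_\rho$ around $w$ along the \emph{fixed} direction $\hat\delta = \delta - \Phi'(\varphi)(h)$ (not along the weakly convergent $g_s$) by inserting $\sigma_\rho(w + s\hat\delta)$ into the difference, so that the nonlinear cross term $\sigma_\rho(w_s) - \sigma_\rho(w + s\hat\delta)$ is absorbed by T-monotonicity and the only remainder is the directional-derivative error of $\sigma_\rho\colon V \to V^*$ at a \emph{fixed} increment, which is $o(s)$ in $V^*$. With that substitution you never need a weak limit of $\delta_s$, the a priori bound, nor the strong-convergence upgrade — and the assumptions of the lemma suffice as stated. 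Your observations on Lax--Milgram for the linearised equation, the boundedness of $\delta_s$, and Hadamard upgrading from local Lipschitzness are all correct and match the paper.
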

\begin{proof}
First, it is easy to see that \eqref{eq:formulaForTRhoDerivative} has a unique solution: if we make a transformation $\hat \delta = \delta-\Phi'(\varphi)(h)$, we have
\[	A\hat \delta + \frac{1}{\rho}\sigma_\rho'(u-\Phi(\varphi))(\hat \delta) = d - A\Phi'(\varphi)(h)\]
and this is uniquely solvable by the Lax--Milgram lemma because the
linear operator $A + \frac{1}{\rho}\sigma_\rho'(u-\Phi(\varphi))$
is coercive and bounded\footnote{In general, if $\sigma_\rho$ is directionally differentiable at $w \in V$, then $\sigma_\rho'(w)\colon V \to V^*$ is a monotone operator; this follows from
\begin{align*}
\langle \sigma_\rho'(w)(a) - \sigma_\rho'(b), a-b \rangle &= 
\frac{1}{s}\langle \sigma_\rho(w+sa)-o^m(s,a) - \sigma_\rho(w+sb) + o^m(s,b), a-b \rangle \\
&\geq \frac{1}{s}\langle o^m(s,b)  - o^m(s,a), a-b \rangle.
\end{align*}}.

Let $y:=T_\rho(f+sd , \varphi + sh )$,  $u:=T_\rho(f,\varphi)$ and define $\delta$ as the solution of \eqref{eq:formulaForTRhoDerivative}.  Let us make the transformation $\hat y = y-\Phi(\varphi+sh)$, $\hat u = u-\Phi(\varphi)$ and (as above) $\hat \delta = \delta-\Phi'(\varphi)(h)$ so that
\begin{align*}
A\hat y + \frac{1}{\rho}\sigma_\rho(\hat y) &= f+sd  - A\Phi(\varphi + sh ),\\
A\hat u + \frac{1}{\rho}\sigma_\rho(\hat u) &= f - A\Phi(\varphi),\\
A\hat \delta + \frac{1}{\rho}\sigma_\rho'(\hat u )(\hat \delta) &= d - A\Phi'(\varphi)(h).
\end{align*}
Multiplying by $s$ the last equation and subtracting the latter two equations from the first and adding and subtracting $\rho^{-1}\sigma_\rho(\hat u+ s\hat \delta)$, we obtain
\begin{align*}
&A(\hat y-\hat u-s\hat \delta) + \frac{1}{\rho}\left(\sigma_\rho(\hat y) - \sigma_\rho(\hat u+s\hat \delta)\right) + \frac{1}{\rho}\left(\sigma_\rho(\hat u+s\hat \delta) - \sigma_\rho(\hat u) - s\sigma_\rho'(\hat u)(\hat \delta)\right) = - A l_s(\varphi, h),
\end{align*}
where $l_s$ is the remainder term associated to $\Phi$. The above is, using the fact that $\sigma_\rho$ is directionally differentiable, 
\begin{align*}
&A(\hat y-\hat u-s\hat \delta) + \frac{1}{\rho}\left(\sigma_\rho(\hat y) - \sigma_\rho(\hat u+s\hat \delta)\right) + \frac{1}{\rho}o_s^m(\hat u, \hat \delta) =- A l_s(\varphi, h),
\end{align*}
where $o_s^m$ denotes the remainder term of $\sigma_\rho$. Testing with $\hat y-\hat u-s\hat \delta$ and using monotonicity,
	\[C_a\norm{\hat y- \hat u - s\hat \delta}{V} \leq \frac 1\rho \norm{o_s^m(\hat u, \hat \delta)}{V^*} + C_b\norm{ l_s(\varphi, h)}{V}.\]
Now note that $\hat y- \hat u - s\hat \delta = y-u-s\delta - l_s(\varphi, h)$ so that
\begin{align*}
C_a\norm{y-  u - s\delta}{V} &\leq \frac 1\rho \norm{o_s^m(u-\Phi(\varphi), \delta-\Phi'(\varphi)(h))}{V^*} +  (C_a+C_b)\norm{ l_s(\varphi, h)}{V}.
\end{align*}
Dividing by $s$ and sending $s \to 0$ proves the result.
\end{proof}
Suppose that we are 
\[\text{given $f \in V^*$ and a set $W \subseteq V^*$ satisfying \cref{ass:subAndSupersolutionsForZRho}}\]
so that $\Z_\rho(g)$ and $\Z(g)$ are well defined for all $g \in B_{\bar \delta}(f) \cap W$ and sufficiently small $\rho.$ 
Let $d \in \mathcal{T}_W(f)$, so there exist $\{d_k\}$ with $d_k \to d$ in $V^*$ and $\{s_k\}$ with $s_k \searrow 0$ such that $f+s_kd_k \in W$. Define
\begin{align*}
u_n^k &:= T_\rho(f+s_kd_k, u_{n-1}^k),\\
u_0^k &:= \Z_\rho(f).
\end{align*}
For convenience, let us also define
\[u:=\Z_\rho(f).\]
We have omitted writing the dependence on $\rho$ in these definitions for ease of reading. In the following, we need, in particular, that $\Phi$ is locally Lipschitz on $B_{\epsilon^*}(\Z(f))$ and take $C_L$ as in \eqref{ass:generalLipschitzConstant} from \cref{lem:small_estimate}, i.e., we assume \eqref{ass:PhiSmallLipschitzZ}.  An alternative approach could be to instead assume it is locally Lipschitz on $B_{\epsilon^*}(\Z_\rho(f))$; this would entail a different set of assumptions to below.

\begin{lem}\label{lem:iteratesInBallNEW}
Assume
\eqref{ass:PhiCC},
\eqref{ass:PhiSmallLipschitzZ} and \cref{ass:subAndSupersolutionsForZRho}. 
If $\rho$ is sufficiently small and $k$ is sufficiently large, we have 
\[u_n^k \to u^k := \Z_\rho(f + s_k d_k) \text{ in $V$ as $n \to \infty$}.\]
\end{lem}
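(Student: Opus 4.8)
The plan is to recognise $u^k := \Z_\rho(f+s_kd_k)$ as the unique fixed point of the contraction $\varphi \mapsto T_\rho(f+s_kd_k,\varphi)$ on a fixed small closed ball around $\Z(f)$, and then to run Banach's fixed point theorem from the starting point $u_0^k = \Z_\rho(f)$, which lies in that ball thanks to \cref{thm:jointContinuityOfRhoMapsUnderContraction} (whose hypotheses \eqref{ass:PhiCC}, \eqref{ass:PhiSmallLipschitzZ} and \cref{ass:subAndSupersolutionsForZRho} are exactly those assumed here).

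I would first observe that, since $d_k \to d$ in $V^*$, the sequence $\{d_k\}$ is bounded, so $g_k := f + s_kd_k \to f$ in $V^*$; moreover $g_k \in W$ by the construction of $\{d_k\}$, $\{s_k\}$, hence $g_k \in B_{\bar\delta}(f)\cap W$ for $k$ large, which already ensures $u^k = \Z_\rho(g_k)$ is well defined. Applying \cref{thm:jointContinuityOfRhoMapsUnderContraction} with radius $\epsilon^*\slash 2$ yields thresholds $\rho_0,\delta>0$ with $\Z_\rho(g) \in B_{\epsilon^*\slash 2}(\Z(f))$ for all $\rho\le\rho_0$ and $g\in B_\delta(f)\cap W$; in particular both $u_0^k = \Z_\rho(f)$ and $\Z_\rho(g_k)$ lie in $B_{\epsilon^*\slash 2}(\Z(f))$ once $\rho\le\rho_0$ and $k$ is large enough that $g_k\in B_\delta(f)$. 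Next, arguing exactly as in Step 1 of the proof of \cref{thm:jointContinuityOfRhoMapsUnderContraction} (via \cref{lem:TrhoLipschitz}, \cref{lem:PDEApproximatesVI} and \eqref{ass:PhiSmallLipschitzZ}), I would produce $\rho_1,\delta^*>0$ such that for $\rho\le\rho_1$ and $g\in B_{\delta^*}(f)$ the map $T_\rho(g,\cdot)$ maps $B_{\epsilon^*\slash 2}(\Z(f))$ into itself and obeys
\[\norm{T_\rho(g,\varphi)-T_\rho(g,\psi)}{V}\le \hat c\,\norm{\varphi-\psi}{V}\qquad\forall\,\varphi,\psi\in B_{\epsilon^*\slash 2}(\Z(f)),\]
with $\hat c\in[0,1)$ from \cref{lem:TrhoLipschitz}. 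I then fix $\rho\le\min(\rho_0,\rho_1)$ and take $k$ large enough that $g_k\in B_{\min(\delta,\delta^*)}(f)$.

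With these choices, $u_0^k=\Z_\rho(f)\in B_{\epsilon^*\slash 2}(\Z(f))$ and the ball is $T_\rho(g_k,\cdot)$-invariant, so every iterate $u_n^k$ stays in $B_{\epsilon^*\slash 2}(\Z(f))$ and $\norm{u_{n+1}^k-u_n^k}{V}\le\hat c^{\,n}\norm{u_1^k-u_0^k}{V}$; hence $\{u_n^k\}$ is Cauchy in the complete space $B_{\epsilon^*\slash 2}(\Z(f))$ and converges to some $y^k\in B_{\epsilon^*\slash 2}(\Z(f))$ with $y^k=T_\rho(g_k,y^k)$. On the other hand, $\Z_\rho(g_k)$ is also a fixed point of $T_\rho(g_k,\cdot)$ and lies in $B_{\epsilon^*\slash 2}(\Z(f))$ by the previous paragraph, while the contraction property forces uniqueness of such a fixed point in the ball; therefore $y^k=\Z_\rho(g_k)=\Z_\rho(f+s_kd_k)=u^k$, which is the claim.

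The only genuinely delicate point is the coordination of the quantifiers: a single smallness bound on $\rho$ and a single largeness bound on $k$ must simultaneously guarantee that $\Z_\rho(f)$ and $\Z_\rho(g_k)$ lie in the ball, that $T_\rho(g_k,\cdot)$ is a self-map of the ball, and that it is a contraction there. All three are delivered by results already proved above, so the argument is essentially a localisation of Banach's fixed point theorem made possible by \cref{thm:jointContinuityOfRhoMapsUnderContraction}, and presents no new difficulty beyond that bookkeeping.
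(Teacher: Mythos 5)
Your proof is correct and follows essentially the same route as the paper: identify $\Z_\rho(f+s_kd_k)$ as the unique fixed point of the contraction $T_\rho(f+s_kd_k,\cdot)$ in a small ball around $\Z(f)$ and run Banach's fixed-point theorem from the starting iterate $u_0^k=\Z_\rho(f)$. The only cosmetic difference is that you reuse the self-map and contraction estimate from Step 1 of the proof of \cref{thm:jointContinuityOfRhoMapsUnderContraction} on $B_{\epsilon^*/2}(\Z(f))$, while the paper redoes that calculation directly on $B_{\epsilon^*}(\Z(f))$ (with a tuning parameter $K$) and places $\Z_\rho(f+s_kd_k)$ in the ball via \cref{thm:localLipschitzForZRho}.
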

\begin{proof}
	We take $\rho$ small enough and $K>1$ (to be specified later)
	such that $u_0^k = u = \Z_\rho(f) \in B_{{\epsilon^*}\slash K}(\Z(f)) \subset B_{\epsilon^*}(\Z(f))$,
	possible thanks to \cref{thm:continuityOfRhoMapsUnderContraction}
	\footnote{We could instead apply  \cref{thm:convergenceOfMRho} if $\Z_\rho = \M_\rho$ is under consideration; this would lead to different assumptions being required for this result.}.

	If $k$ is sufficiently large, we have $f+s_k d_k \in B_{\bar\delta}(f)$,
	and we have by assumption that $f+s_k d_k \in W$.
	Hence, by \cref{ass:subAndSupersolutionsForZRho},
	for $\rho$ sufficiently small and $k$ sufficiently large, $\Z_\rho(f+s_k d_k)$ is well defined
	and
	$\Z_\rho(f+s_kd_k) \in B_{\epsilon^*}(\Z(f))$,
	due to the local Lipschitz property for $\Z_\rho$, see \cref{thm:localLipschitzForZRho}.

	We next show that
	the operator $T_\rho( f + s_k d_k, \cdot)$ maps the ball $B_{\epsilon^*}(\Z(f))$
	onto itself,
	if $k$ and $K$ are large enough.
	We take an arbitrary $\varphi \in B_{\epsilon^*}({\Z(f)})$.
	By using
	$\Z_\rho(f) = T_\rho(f, \Z_\rho(f))$
	and by
	utilising \cref{lem:TrhoLipschitz},
	we get
	\begin{align*}
		\norm{T_\rho(f+s_kd_k, \varphi) - \Z(f)}{V}
		&\leq \norm{T_\rho(f+s_kd_k, \varphi) - \Z_\rho(f)}{V} + \norm{\Z_\rho(f)-\Z(f)}{V}\\
		&\leq s_k\hat C \norm{d_k}{V^*} + \hat c\norm{\varphi-\Z_\rho(f)}{V} + \frac{{\epsilon^*}}{K}.
	\end{align*}
	Here, $\hat C \ge 0$ and $\hat c \in [0,1)$ are given by \cref{lem:TrhoLipschitz}.
	For the second term on the right-hand side,
	we employ the triangle inequality to get
	\begin{equation*}
		\norm{\varphi-\Z_\rho(f)}{V}
		\le
		\norm{\varphi-\Z(f)}{V}
		\norm{\Z(f)-\Z_\rho(f)}{V}
		\le
		\epsilon^* + \frac{\epsilon^*}{K}.
	\end{equation*}
	Altogether,
	we arrive at
	\begin{equation*}
		\norm{T_\rho(f+s_kd_k, \varphi) - \Z(f)}{V}
		\leq
		s_k\hat C C_1 + \hat c\left({\epsilon^*} + \frac{{\epsilon^*}}{K}\right)+ \frac{{\epsilon^*}}{K}
	\end{equation*}
	where $C_1$ is the uniform bound on the $d_k$.
	The right-hand side is less than ${\epsilon^*}$ if $k$ is sufficiently large
	and $K$ is chosen large enough (we need $K > (1+\hat c)(1-\hat c)^{-1}$). 

	This proves the mapping property
	$T_\rho(f+s_kd_k, \cdot)\colon B_{\epsilon^*}(\Z(f)) \to B_{\epsilon^*}(\Z(f))$.
	Using \cref{lem:TrhoLipschitz} again, we find $T_\rho(f+s_kd_k, \cdot)$ is a contraction on $B_{\epsilon^*}(\Z(f))$.
	Hence, the assertions follow from the celebrated Banach fixed point theorem,
	since $\Z_\rho(f + s_k d_k)$
	is a fixed point of $T_\rho(f + s_k d_k; \cdot)$
	on $B_{\epsilon^*}(\Z(f))$.
\end{proof}

The next proposition shows that if $\Phi$ is differentiable at $u=\Z_\rho(f)$, we can obtain a Taylor expansion for $u_n^k$.
	\begin{prop}\label{prop:unsDifferentiableNEW}
Let \eqref{ass:PhiSmallLipschitzZ} and  \eqref{ass:PhiDiffAtU}  hold. 
For $\rho$ sufficiently small, we have for each $n$, 
	\begin{equation*}
		\lim_{k \to \infty}\frac{u_n^k - u}{s_k} = \alpha_n
	\end{equation*}						
	where  $\alpha_n := T_\rho'(f,u)(d,\alpha_{n-1})$, i.e.,
		\[A\alpha_n + \frac{1}{\rho}\sigma_\rho'(u-\Phi(u))(\alpha_n-\Phi'(u)(\alpha_{n-1})) = d.\]
	\end{prop}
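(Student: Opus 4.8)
The plan is to induct on $n$. For $n = 0$ the statement is immediate: $u_0^k = u$ for every $k$, so $(u_0^k - u)/s_k = 0$, and we read the recursion as starting from $\alpha_0 := 0$ (consistently, $\alpha_1 = T_\rho'(f,u)(d,0)$). For the inductive step, suppose $(u_{n-1}^k - u)/s_k \to \alpha_{n-1}$ in $V$ and set $h_k := (u_{n-1}^k - u)/s_k$, so that $h_k \to \alpha_{n-1}$ in $V$, $u_{n-1}^k = u + s_k h_k \to u$, and (using $u = T_\rho(f,u)$)
\[
	\frac{u_n^k - u}{s_k}
	=
	\frac{T_\rho(f + s_k d_k,\, u + s_k h_k) - T_\rho(f, u)}{s_k}.
\]
The goal is to show that the right-hand side tends to $T_\rho'(f,u)(d, \alpha_{n-1}) = \alpha_n$ in $V$.

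The subtlety is that \cref{lem:TRhoDirectionallyDiff} only provides a derivative of $T_\rho$ along a \emph{fixed} direction, whereas here both the $V^*$-direction $d_k$ and the $V$-direction $h_k$ vary with $k$; we therefore need a Hadamard-type derivative of $T_\rho$ at $(f,u)$. This is available because $T_\rho$ is locally Lipschitz there: for $\rho$ small, \cref{thm:continuityOfRhoMapsUnderContraction} gives $u = \Z_\rho(f) \in B_{\epsilon^*}(\Z(f))$, so $\Phi$ is Lipschitz in a neighbourhood of $u$ by \eqref{ass:PhiSmallLipschitzZ}, and then \cref{lem:TrhoLipschitz} shows $T_\rho\colon V^* \times V \to V$ is Lipschitz near $(f,u)$. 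A map that is simultaneously locally Lipschitz and directionally differentiable (the latter by \cref{lem:TRhoDirectionallyDiff}, using that \eqref{ass:PhiDiffAtU} makes $\Phi$ directionally differentiable at $u = \Z_\rho(f)$) is automatically Hadamard directionally differentiable; applying this with the sequences $d_k \to d$ and $h_k \to \alpha_{n-1}$ yields exactly $(u_n^k - u)/s_k \to \alpha_n$, closing the induction.

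The route I would actually write out is to re-run the estimate in the proof of \cref{lem:TRhoDirectionallyDiff}, with $d$ replaced by $d_k$, $h$ by $h_k$, and $\delta$ by $\alpha_n$. After the transformations $\hat y := y - \Phi(u + s_k h_k)$, $\hat u := u - \Phi(u)$, $\hat\alpha_n := \alpha_n - \Phi'(u)(\alpha_{n-1})$, subtracting the three equations, adding and subtracting $\rho^{-1}\sigma_\rho(\hat u + s_k\hat\alpha_n)$, testing with $\hat y - \hat u - s_k\hat\alpha_n$ and using monotonicity of $\sigma_\rho$ together with coercivity of $A$, one obtains
\[
	C_a\norm{y - u - s_k\alpha_n}{V}
	\le
	\tfrac1\rho \norm{o^m_{s_k}(\hat u, \hat\alpha_n)}{V^*}
	+ (C_a + C_b)\norm{r_k}{V}
	+ \norm{d_k - d}{V^*},
\]
where $r_k := \Phi(u + s_k h_k) - \Phi(u) - s_k\Phi'(u)(\alpha_{n-1})$. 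Dividing by $s_k$ and letting $k \to \infty$: the $\sigma_\rho$-remainder term vanishes since $\sigma_\rho$ is Hadamard differentiable; the last term vanishes by hypothesis; and $\norm{r_k}{V}/s_k \to 0$ because $\Phi$, being locally Lipschitz and directionally differentiable at $u$, is Hadamard differentiable there, which additionally forces $\Phi'(u)(h_k) \to \Phi'(u)(\alpha_{n-1})$ by continuity of the (positively homogeneous) derivative $\Phi'(u)$.

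The main obstacle is precisely this upgrade from a fixed-direction to a Hadamard derivative: it is what compels us to take $\rho$ small (so that $\Z_\rho(f)$ lies inside the ball where $\Phi$ is Lipschitz, via \cref{thm:continuityOfRhoMapsUnderContraction}) and to lean on the sharp estimate of \cref{lem:TrhoLipschitz} with $C_L$ subject to \eqref{ass:generalLipschitzConstant}. Once this is in place, the induction itself is routine.
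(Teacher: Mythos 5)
Your proposal is correct and follows essentially the same route as the paper: prove the inductive step by upgrading the fixed-direction directional derivative of $T_\rho$ from \cref{lem:TRhoDirectionallyDiff} to a Hadamard derivative (using local Lipschitzness of $T_\rho$ via \cref{lem:TrhoLipschitz}, with $\rho$ small enough that $u=\Z_\rho(f)\in B_{\epsilon^*}(\Z(f))$), and then apply this Hadamard differentiability to the varying sequences $d_k\to d$ and $(u_{n-1}^k-u)/s_k\to\alpha_{n-1}$. Your explicit re-derivation of the estimate from the proof of \cref{lem:TRhoDirectionallyDiff} is a perfectly reasonable way to spell out the Hadamard step, but it is not needed and does not change the substance.
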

	\begin{proof}
First of all, due to \cref{lem:TrhoLipschitz} (which gives local Lipschitzness for $T_\rho$ around $V^*\times B_{\epsilon^*}(\Z(f))$) and \cref{lem:TRhoDirectionallyDiff} (which gives directional differentiability of $T_\rho$ at $(f,u)$) we find that $T_\rho$ is Hadamard differentiable at $(f,u)$ because we have taken $\rho$ such that $u \in B_{\epsilon^*}(\Z(f))$.
	
We use a proof by induction. The base case is obviously true (with $\alpha_n  = 0$).
Assume $(1\slash s_k)(u_n^k-u) \to \alpha_n$. Then we have
\begin{align*}
\frac{u_{n+1}^k-u}{s_{k}} &= \frac{T_\rho(f+s_{k}d_{k}, u_n^k)-T_\rho(f,u)}{s_{k}} \\
&= \frac{T_\rho(f+s_{k}d_{k}, u+s_k(\frac{u_n^k-u}{s_k}))-T_\rho(f,u)}{s_{k}}\\
&\to T_\rho'(f,u)(d,\alpha_n)
\end{align*}
where we used that
$T_\rho$ is Hadamard differentiable and $d_k \to d$.
	\end{proof}

\begin{lem}\label{lem:alphaNBoundedNEW}
Let \eqref{ass:PhiSmallLipschitzZ} and \eqref{ass:PhiDiffAtU} hold. We have that $\alpha_n \to \alpha$ in $V$, where $\alpha$ is the unique solution of
	\[A\alpha + \frac{1}{\rho}\sigma_\rho'(u-\Phi(u))(\alpha-\Phi'(u)(\alpha)) = d.\]
Furthermore, the map $d \mapsto \alpha$ is bounded and continuous from $V^*$ to $V$.
\end{lem}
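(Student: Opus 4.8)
The plan is to recognise that $\alpha_n$ is produced by iterating the map $\Psi_d\colon V \to V$, $\beta \mapsto T_\rho'(f,u)(d,\beta)$, and to show that $\Psi_d$ is a contraction on $V$ with a modulus $\tilde c \in [0,1)$ that does not depend on $n$, $d$ or $\rho$. Everything then follows from Banach's fixed point theorem: $\alpha_n \to \alpha$ with $\alpha$ the unique fixed point of $\Psi_d$, which is exactly the unique solution of the displayed equation, and the stability of fixed points in $d$ gives the remaining assertions.

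\emph{Preliminaries.} First I would fix $\rho$ small enough that $u = \Z_\rho(f) \in B_{\epsilon^*/2}(\Z(f))$, which is possible by \cref{thm:continuityOfRhoMapsUnderContraction} (or \cref{thm:convergenceOfMRho} when $\Z_\rho = \M_\rho$); then $u$ lies in the interior of the ball on which \eqref{ass:PhiSmallLipschitzZ} supplies the Lipschitz constant $C_L$ of $\Phi$. Consequently $h \mapsto \Phi'(u)(h)$, which exists by \eqref{ass:PhiDiffAtU}, is defined on all of $V$, positively homogeneous with $\Phi'(u)(0) = 0$, and globally Lipschitz on $V$ with constant $\le C_L$, since $\|\Phi'(u)(h_1) - \Phi'(u)(h_2)\|_V = \lim_{s \searrow 0} \|\Phi(u + sh_1) - \Phi(u + sh_2)\|_V / s \le C_L \|h_1 - h_2\|_V$. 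I would also record that $w := u - \Phi(u)$ is a fixed element and that $\sigma_\rho'(w)\colon V \to V^*$ is a bounded, nonnegative (hence monotone) linear operator, because $\sigma_\rho'$ takes values in $[0,1]$ pointwise.

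\emph{Contraction and convergence.} For $\beta_1, \beta_2 \in V$ with $\delta_i := \Psi_d(\beta_i)$ I would subtract the two defining equations, use linearity of $\sigma_\rho'(w)$, test the difference with $(\delta_1 - \delta_2) - (\Phi'(u)(\beta_1) - \Phi'(u)(\beta_2))$, and discard the resulting nonnegative $\sigma_\rho'(w)$-term to obtain
\[\langle A(\delta_1 - \delta_2),\, (\delta_1 - \delta_2) - \Phi'(u)(\beta_1) + \Phi'(u)(\beta_2)\rangle \le 0.\]
Applying \cref{lem:small_estimate} with the map $\Phi$ there replaced by the Lipschitz map $\Phi'(u)$ (legitimate since that lemma uses only the Lipschitz constant, which is again $C_L$, still satisfying \eqref{ass:generalLipschitzConstant}) yields $\|\delta_1 - \delta_2\|_V \le \tilde c \|\beta_1 - \beta_2\|_V$ with $\tilde c \in [0,1)$. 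Hence $\{\alpha_n\}$ is Cauchy, $\alpha_n \to \alpha$ in $V$, and passing to the limit in the $\alpha_n$-equation (using continuity of $\Phi'(u)$ and boundedness of $A + \rho^{-1}\sigma_\rho'(w)(\cdot)$) shows $\alpha$ solves the stated equation; applying the same one-step estimate to any two solutions gives uniqueness.

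\emph{Bounded and continuous dependence on $d$.} For $\alpha = \alpha(d)$ and $\alpha' = \alpha(d')$ I would subtract the two limit equations, test with $(\alpha - \alpha') - (\Phi'(u)(\alpha) - \Phi'(u)(\alpha'))$, discard the nonnegative term, and combine the lower bound $C(1 - \tilde c^2)\|\alpha - \alpha'\|_V^2$ from \cref{lem:small_estimate} with the upper bound $(1 + C_L)\|d - d'\|_{V^*}\|\alpha - \alpha'\|_V$ to get $\|\alpha - \alpha'\|_V \le \frac{1 + C_L}{C(1 - \tilde c^2)}\|d - d'\|_{V^*}$. This is Lipschitz (hence continuous) dependence, and since $d = 0$ forces $\alpha = 0$ (as $\Phi'(u)(0) = 0$), it also gives $\|\alpha(d)\|_V \le \frac{1 + C_L}{C(1 - \tilde c^2)}\|d\|_{V^*}$. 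The only point requiring care is the transfer of the smallness/Lipschitz property from $\Phi$ to $\Phi'(u)$ so that \cref{lem:small_estimate} is applicable; the remaining steps are routine energy estimates of exactly the type used in the proof of \cref{lem:TrhoLipschitz}.
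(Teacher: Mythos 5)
Your proof is correct and takes essentially the same route as the paper: you show $T_\rho'(f,u)(d,\cdot)$ is a contraction by testing the difference with $(\alpha-\hat\alpha)-(\Phi'(u)(\beta)-\Phi'(u)(\hat\beta))$, discarding the nonnegative $\sigma_\rho'$ term, and invoking \cref{lem:small_estimate} applied to the Lipschitz map $\Phi'(u)$, then apply Banach's fixed point theorem and repeat the same energy estimate to get boundedness and Lipschitz dependence on $d$. The only cosmetic difference is that you argue the sign of the $\sigma_\rho'$-term directly from the pointwise range of $\sigma_\rho'$, whereas the paper appeals to the general monotonicity of the directional derivative of a monotone operator; both reach the same inequality.
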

\begin{proof}
Consider the map $\beta \mapsto \alpha$ defined
as the solution mapping of
	\[A\alpha + \frac{1}{\rho}\sigma_\rho'(u-\Phi(u))(\alpha-\Phi'(u)(\beta)) = d,\]
	i.e., the map $T_\rho'(f,u)(d, \cdot)$.
	We show that it is a contraction.
	By using $\beta, \hat\beta \in V$
	and the associated solutions $\alpha, \hat\alpha \in V$,
	we get
	\[A(\alpha-\hat\alpha) +\frac{1}{\rho}\left(\sigma_\rho'(u-\Phi(u))(\alpha- \Phi'(u)(\beta))  - \sigma_\rho'(u-\Phi(u))(\hat\alpha -\Phi'(u)(\hat\beta))\right) = 0.\]
Testing with $\alpha- \Phi'(u)(\beta) -\hat\alpha + \Phi'(u)(\hat\beta)$ and using monotonicity, we obtain
	\[\langle A(\alpha-\hat\alpha), \alpha- \Phi'(u)(\beta) -\hat\alpha + \Phi'(u)(\hat\beta)\rangle \leq 0.\]	
	Now, since $\Phi'(u)\colon V \to V$ is Lipschitz with the same Lipschitz constant $C_L$ as $\Phi$, we obtain via similar arguments to \cite{Wachsmuth2021:2} (see also the proof of \cref{lem:TrhoLipschitz} following \eqref{eq:tested_VIs}) that $T_\rho'(f,u)(d, \cdot)$ is a contraction since $C_L$ satisfies \eqref{ass:generalLipschitzConstant}. The Banach fixed point theorem gives the result.
	
The map $d \mapsto \alpha$ defined through \eqref{eq:equationForAlphaRhoNEW} is sensible for all $d \in V^*$ by the above procedure, and it is bounded as can be seen by testing with $\alpha-\Phi'(u)(\alpha)$ and using the smallness condition on $C_L$ of \cref{lem:small_estimate}.
For continuity, if $d_n \to d$ in $V^*$ and $\alpha_n$ and $\alpha$ are the associated derivatives, we have
	\[\langle A(\alpha_n-\alpha), \alpha_n- \Phi'(u)(\alpha_n) -\alpha + \Phi'(u)(\alpha)\rangle \leq \langle d_n -d, \alpha_n- \Phi'(u)(\alpha_n) -\alpha + \Phi'(u)(\alpha)\rangle\]
and making use again of the Lipschitz property of $\Phi'(u)$, we conclude the claim from
\[\norm{\alpha_n - \alpha}{V} \leq C\norm{d_n-d}{V^*}.\]	
\end{proof}
\begin{lem}\label{lem:onsUniformNEW}Let the assumptions of \cref{thm:localLipschitzForZRho} hold. If $k$ is sufficiently large and $\rho$ is sufficiently small, we have
\[\lim_{n \to \infty}\frac{u^k_n -u}{s_k} =\frac{u^k-u}{s_k}\quad \text{ uniformly in $k$ and $\rho$.}\]
\end{lem}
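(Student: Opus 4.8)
I want to show that the convergence $u_n^k \to u^k$ as $n\to\infty$ (established in \cref{lem:iteratesInBallNEW} for fixed $k$) holds with a rate that is uniform in $k$ (and $\rho$), and likewise after dividing by $s_k$. The natural strategy is to re-examine the Banach fixed point argument from the proof of \cref{lem:iteratesInBallNEW} and extract the geometric decay explicitly. Since $T_\rho(f+s_kd_k,\cdot)$ is a contraction on $B_{\epsilon^*}(\Z(f))$ with contraction constant $\hat c \in [0,1)$ that is \emph{independent of $k$ and $\rho$} (it depends only on $C_L$, $C_a$, $C_b$ and the self-adjointness of $A$, by \cref{lem:TrhoLipschitz}), the standard a posteriori fixed-point estimate gives, for $k$ large and $\rho$ small,
\[
\norm{u_n^k - u^k}{V} \le \hat c^{\,n}\norm{u_0^k - u^k}{V} = \hat c^{\,n}\norm{\Z_\rho(f) - \Z_\rho(f+s_kd_k)}{V}.
\]
By the local Lipschitz estimate \cref{thm:localLipschitzForZRho}, the right-hand factor is bounded by $C s_k \norm{d_k}{V^*} \le C C_1 s_k$, with $C$ and $C_1$ independent of $k$ and $\rho$. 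Dividing by $s_k$,
\[
\frac{\norm{u_n^k - u^k}{V}}{s_k} \le \hat c^{\,n} C C_1,
\]
and the right-hand side tends to $0$ as $n\to\infty$ independently of $k$ and $\rho$. This is exactly the asserted uniform limit $\lim_{n\to\infty} (u_n^k - u)/s_k = (u^k - u)/s_k$.

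\textbf{Key steps in order.} First, invoke \cref{lem:iteratesInBallNEW} to fix $\rho$ small and $k$ large enough that $u_0^k = \Z_\rho(f) \in B_{\epsilon^*}(\Z(f))$, that $\Z_\rho(f+s_kd_k)$ is well defined and lies in $B_{\epsilon^*}(\Z(f))$, and that $T_\rho(f+s_kd_k,\cdot)$ maps $B_{\epsilon^*}(\Z(f))$ into itself; note the iterates $u_n^k$ all stay in this ball. Second, apply \cref{lem:TrhoLipschitz} on this ball to get the contraction estimate $\norm{u_{n+1}^k - u^k}{V} = \norm{T_\rho(f+s_kd_k,u_n^k) - T_\rho(f+s_kd_k,u^k)}{V} \le \hat c \norm{u_n^k - u^k}{V}$ with $\hat c$ uniform, and iterate to obtain $\norm{u_n^k - u^k}{V} \le \hat c^{\,n}\norm{u_0^k - u^k}{V}$. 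Third, bound $\norm{u_0^k - u^k}{V} = \norm{\Z_\rho(f) - \Z_\rho(f+s_kd_k)}{V} \le C\norm{s_kd_k}{V^*} \le CC_1 s_k$ via \cref{thm:localLipschitzForZRho} (whose hypotheses are assumed here) and the uniform bound $C_1$ on $\{d_k\}$. Fourth, divide by $s_k$ and conclude, observing that $\hat c^n C C_1 \to 0$ as $n\to\infty$ with no dependence on $k$ or $\rho$, which is precisely the meaning of the claimed uniformity.

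\textbf{Main obstacle.} The only delicate point is ensuring that \emph{every} constant appearing — the contraction factor $\hat c$, the Lipschitz constant $C$ for $\Z_\rho$, and the bound $C_1$ on the directions $d_k$ — is genuinely independent of $k$ and of $\rho$. For $\hat c$ and $C$ this is guaranteed by the statements of \cref{lem:TrhoLipschitz} and \cref{thm:localLipschitzForZRho}, which assert dependence only on $C_L$, $C_a$, $C_b$ and the self-adjointness of $A$; for $C_1$ it is automatic since $d_k \to d$ in $V^*$ makes $\{d_k\}$ bounded. One should also double-check that the smallness restrictions on $\rho$ and largeness restrictions on $k$ needed for the ball-invariance and contraction (from \cref{lem:iteratesInBallNEW}) are compatible with those required by \cref{thm:localLipschitzForZRho}; since both ultimately come from \cref{thm:jointContinuityOfRhoMapsUnderContraction} / \cref{thm:continuityOfRhoMapsUnderContraction} and \cref{lem:TrhoLipschitz}, taking the intersection of the admissible parameter ranges suffices and no new difficulty arises.
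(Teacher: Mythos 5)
Your proof is correct and follows essentially the same route as the paper: both extract the geometric decay $\norm{u_n^k - u^k}{V} \le \hat c^{\,n}\norm{u_0^k - u^k}{V}$ from the Banach fixed-point iteration, bound $\norm{u_0^k - u^k}{V} = \norm{\Z_\rho(f) - \Z_\rho(f+s_kd_k)}{V} \le C s_k\norm{d_k}{V^*}$ via \cref{thm:localLipschitzForZRho}, and divide by $s_k$, noting that $\hat c$, $C$, and the bound on $\{d_k\}$ are independent of $k$ and $\rho$. (Minor terminological quibble: this is the a priori fixed-point estimate, not a posteriori, but the argument is the same.)
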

In \cite[\S 5.3]{AHR}, three of the present authors showed that (under a different setup to what we have here) the limit $\lim_{s \searrow 0} \frac{u_n^s-s}{s}$ is uniform in $n$. Here though, like in \cite[Theorem 32]{Wachsmuth2021:2}, we will show uniformity in $k$ (and $\rho$) in the limit $n \to \infty$.
\begin{proof}
We argue similarly to \cite[Proof of Theorem 32]{Wachsmuth2021:2}.
In the proof of \cref{lem:iteratesInBallNEW}, we have used
the Banach fixed point theorem to obtain the convergence of the sequence $u_n^k$.
This directly yields the a-priori estimate
\begin{equation*}
	\norm{ u_n^k - u^k }{V}
	\le
	\hat c^k
	\norm{u^k_0-u^k}{V}
	\leq \hat c^n C s_k \norm{d_k}{V^*},
\end{equation*}
where we used that
$u^k_0 - u^k= \Z_\rho(f)-\Z_\rho(f+s_kd_k)$ and the estimate from \cref{thm:localLipschitzForZRho}.
This shows that
\[\frac{\norm{u_n^k-u^k}{V}}{s_k} \to 0 \quad\text{ as $n \to \infty$ uniformly in $k$ and $\rho$}.\]
Using  $u^k_n -u^k = u^k_n -u - (u^k-u)$, we deduce the result.
\end{proof}
We now prove our differentiability result for $\Z_\rho$.
\begin{proof}[Proof of \cref{thm:hadamardDiffMRhoNEW}]
The above results allow us to switch limits:
\begin{align*}
\lim_{k \to \infty} \frac{u^k-u}{s_k} = \lim_{k \to \infty} \lim_{n \to \infty} \frac{u_n^k-u}{s_k} =  \lim_{n \to \infty} \lim_{k \to \infty}\frac{u_n^k-u}{s_k} = \lim_{n \to \infty}\alpha_n = \alpha.
\end{align*}
This is exactly the Hadamard differentiability claim \ref{item:HadamardDifferentiabilityOfZRho}. 
The remaining assertions on the derivative have been shown in \cref{lem:alphaNBoundedNEW}. 
\end{proof}

\subsection{Differentiability for $\Z$}\label{sec:differentiabilityForZProof}
We cannot pass to the limit in $\rho$ to deduce that $\Z$ is differentiable because we do not have uniformity in $\rho$ or $s$ of the appropriate expression but we may repeat the arguments in \cref{sec:rhoPositiveCase} with $\rho$ taken to be zero and with  \cref{ass:subAndSupersolutionsZ} rather than \cref{ass:subAndSupersolutionsForZRho}. Let us point out the changes. For the $\rho=0$ version of \cref{lem:TRhoDirectionallyDiff}, we have from similar arguments to \cite[Propositon 1]{AHR} the following, making use of the differentiability of the VI solution map result in \cite{WGuidedTour} given under a general vector lattice setting, which generalises Mignot's result in \cite{MR0423155}.
\begin{lem}\label{lem:SDirectionallyDiff}Let  $\Phi$ be directionally differentiable at $\varphi \in V$   and take $f \in V^*$. Then $S\colon V^* \times V \to V$ is directionally differentiable at $(f,\varphi)$ and we have
\[\frac{S(f+sd,\varphi+sh)-S(f,\varphi)}{s} \to S'(f,\varphi)(d,h)\qquad\text{for $d \in V^*$ and $h  \in V$,}\]
where the derivative $S'(f,\varphi)(d,h)=\delta$ is the solution of the inequality
\begin{equation*}
\delta \in \mathcal{K}^u(\varphi, h) :	\langle A\delta-d, \delta-v \rangle \leq 0 \quad \forall v \in \mathcal{K}^u(\varphi, h)
\end{equation*}
where $u= S(f,\varphi)$ and
\[\mathcal{K}^u(\varphi,h) := \Phi'(\varphi)(h) + \mathcal{T}_{\mathbf K(\varphi)}(u) \cap [f-Au]^\perp.\]
\end{lem}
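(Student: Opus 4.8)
The plan is to reduce the statement to the conical differentiability of the solution map of a variational inequality over the \emph{fixed} convex set $-V_+ = \{v \in V : v \le 0\}$, and then to invoke the vector-lattice version of Mignot's theorem from \cite{WGuidedTour} (this mirrors the route taken in \cite[Proposition~1]{AHR}). The role of $\Phi$ is then absorbed by a single affine shift of the obstacle together with a single affine shift of the source term; the directional differentiability of $\Phi$ at $\varphi$ takes care of the error terms in both shifts. Uniqueness of the limiting object is then a routine consequence of coercivity of $A$.

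First I would rewrite the VI defining $S$ so that the obstacle is fixed. Write $u_s := S(f+sd,\varphi+sh)$ and $u := S(f,\varphi)$, and set $w_s := u_s - \Phi(\varphi+sh)$, $w_0 := u - \Phi(\varphi)$. The constraint $u_s \le \Phi(\varphi+sh)$ becomes $w_s \le 0$, and testing the VI for $u_s$ with $v = \tilde v + \Phi(\varphi+sh)$, $\tilde v \le 0$, shows $w_s = \bar S\big(f + sd - A\Phi(\varphi+sh)\big)$ and $w_0 = \bar S\big(f - A\Phi(\varphi)\big)$, where $\bar S \colon V^* \to V$ denotes the solution map of the VI over $-V_+$. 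Since $\Phi$ is directionally differentiable at $\varphi$, we have $\Phi(\varphi+sh) = \Phi(\varphi) + s\,\Phi'(\varphi)(h) + r_s$ with $\norm{r_s}{V} = o(s)$, hence (using that $A$ is bounded and linear and $\Phi'(\varphi)(h)\in V$) the source in the VI for $w_s$ has the form $g_s = g_0 + s\,\dot g + o(s)$ in $V^*$, with $g_0 := f - A\Phi(\varphi)$ and $\dot g := d - A\Phi'(\varphi)(h)$.

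Next I would apply the conical differentiability result: in the vector-lattice setting the cone $-V_+$ is polyhedric at $w_0$ (this is the cited result of \cite{WGuidedTour}, generalising \cite{MR0423155}), so $\bar S$ is Hadamard differentiable at $g_0$, and therefore $\tfrac1s(w_s - w_0) \to \eta$ in $V$, where $\eta$ is the unique solution of
\[
\eta \in \mathcal{C} : \langle A\eta - \dot g, \eta - v \rangle \le 0 \quad \forall v \in \mathcal{C}, \qquad \mathcal{C} := \mathcal{T}_{-V_+}(w_0) \cap [g_0 - Aw_0]^\perp .
\]
Translating back, $\tfrac1s(u_s - u) = \tfrac1s(w_s - w_0) + \tfrac1s\big(\Phi(\varphi+sh)-\Phi(\varphi)\big) \to \eta + \Phi'(\varphi)(h) =: \delta$. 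It remains to identify the data: by translation invariance of the tangent cone, $\mathcal{T}_{-V_+}(w_0) = \mathcal{T}_{\mathbf K(\varphi)}(u)$, since $\mathbf K(\varphi) = \Phi(\varphi) + (-V_+)$ and $u = \Phi(\varphi)+w_0$; moreover $g_0 - Aw_0 = f - Au$, so $\mathcal{C} = \mathcal{T}_{\mathbf K(\varphi)}(u)\cap[f-Au]^\perp$. Substituting $\eta = \delta - \Phi'(\varphi)(h)$ and $\dot g = d - A\Phi'(\varphi)(h)$ into the VI for $\eta$, the terms $A\Phi'(\varphi)(h)$ cancel, and with $v' := \Phi'(\varphi)(h) + v$ ranging over $\mathcal{K}^u(\varphi,h)$ as $v$ ranges over $\mathcal{C}$, this VI is exactly $\delta \in \mathcal{K}^u(\varphi,h)$, $\langle A\delta - d, \delta - v'\rangle \le 0$ for all $v' \in \mathcal{K}^u(\varphi,h)$. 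Uniqueness of $\delta$ follows from coercivity of $A$ and the closedness and convexity of $\mathcal{K}^u(\varphi,h)$ by Lions--Stampacchia.

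The only genuinely nontrivial ingredient is the polyhedricity/conical differentiability invoked in the middle step, but that is precisely the cited theorem of \cite{WGuidedTour}, so no work is needed there. The main point to watch is that the reduction produces an $o(s)$ perturbation of the source \emph{on top of} the leading linear term $s\,\dot g$, so one must use a Hadamard-type version of Mignot's result rather than a merely Gâteaux-type one; this is available in \cite{WGuidedTour} and is used in exactly the same spirit as in the proof of \cref{lem:TRhoDirectionallyDiff}. The remaining steps are bookkeeping of the two affine shifts and of the tangent cone.
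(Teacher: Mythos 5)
Your proof is correct and follows essentially the route the paper has in mind: the paper merely cites \cite[Proposition 1]{AHR} together with the vector-lattice version of Mignot's theorem from \cite{WGuidedTour}, and your argument spells out precisely the standard obstacle-translation reduction to a VI over the fixed cone $-V_+$ that those references use, correctly noting that Hadamard (rather than merely G\^ateaux) differentiability of $\bar S$ is needed to absorb the $o(s)$ error from the expansion of $\Phi$, and that this follows from directional differentiability since $\bar S$ is Lipschitz.
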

Above, recall that $\mathcal{T}_{\mathbf K(\varphi)}(u)$ is the tangent cone, which can be defined as the closure $\overline{\mathcal{R}_{\mathbf K(\varphi)}(u)}$.
\begin{remark}\label{rem:criticalConeExpression}
When we are in a Dirichlet space setting (see the discussion around \cref{ass:forCStationarity}), we obtain an explicit expression for the tangent cone and we  in fact have that
\[\mathcal{K}^u(\varphi,h) = \{ w \in V : w \leq \Phi'(\varphi)(h) \text{ q.e. on } \{u=\Phi(\varphi)\} \text{ and } \langle Au-f, w -\Phi'(\varphi)(h)\rangle = 0\}.\]
\end{remark}
 Let us assume \eqref{ass:PhiCC}, \eqref{ass:PhiSmallLipschitzZ}, \eqref{ass:PhiDiffAtUNew} and \cref{ass:subAndSupersolutionsZ}. Similarly to before, we let $d \in \mathcal{T}_W(f)$, so that there exist $\{d_k\}$ with $d_k \to d$ in $V^*$ and $\{s_k\}$ with $s_k \searrow 0$ such that $f+s_kd_k \in W$. Define
\begin{align*}
u_n^k &:= S(f+s_kd_k, u_{n-1}^k),\\
u_0^k &:= \Z(f),
\end{align*}
and $u=\Z(f)$.
\begin{itemize}
\item \cref{lem:iteratesInBallNEW} still holds with $u_n^k \to u^k := \Z(f+s_kd_k)$ if we use \cref{thm:localLipschitzForZ} instead of \cref{thm:localLipschitzForZRho}.  

\item In \cref{prop:unsDifferentiableNEW}, we may use \cref{lem:SDirectionallyDiff} instead of \cref{lem:TRhoDirectionallyDiff}  and we have instead that $\alpha_n := S'(f,u)(d, \alpha_{n-1})$ which satisfies $d-A\alpha_n \in \mathcal N_{\mathcal{K}^u(u, \alpha_{n-1})}(\alpha_n)$,
i.e.,
\[\alpha_n \in \mathcal{K}^u(u, \alpha_{n-1}) : \langle A\alpha_n - d, \alpha_n - v \rangle \leq 0 \qquad \forall v \in \mathcal{K}^u(u, \alpha_{n-1}).\]
\item The result of \cref{lem:alphaNBoundedNEW} still holds.  The map for the derivative is well defined for all $d \in V^*$: we can consider the VI
\[	 		\alpha \in \mathcal{K}^u(\beta) : \langle A\alpha -d, \alpha - v \rangle \leq 0 \quad \forall v \in \mathcal{K}^u(\beta)	 \]
and use a fixed point approach, just like in the proof of \cref{lem:alphaNBoundedNEW}  (or see \cite[Proposition 3.9]{AHROCQVI}). For the continuity of the derivative, a similar argument to that above works (or see \cite[Proposition 3.12]{AHROCQVI}). \cref{lem:onsUniformNEW} also holds if we again use \cref{thm:localLipschitzForZ}.
\item  Finally, arguing similarly to the proof of \cref{thm:hadamardDiffMRhoNEW},  we can prove \cref{thm:hadamardDiffZNEW}.
\end{itemize}

\section{Optimal control and stationarity}\label{sec:oc}

The proof for the existence of optimal points is straightforward.
\begin{proof}[Proof of \cref{thm:rfsfsdd}]

Let $\{f_n\} \subset \Uad$ be an infimising sequence with $y_n =\M(f_n)$ and $z_n = \m(f_n)$, i.e., \[J(y_n, z_n, f_n) \to \inf_{\substack{f \in \Uad,\\ y =\M(f),\\ z=\m(f)}}J(y,z, f).\]
Then by \cref{ass:smallLipschitzForOCProblem} \ref{item:uniformBd}, $\{f_n\}$ is bounded in $H$
and therefore, there exists $f^* \in H$ such that, for a subsequence, 
\[f_{n_j} \weaklyto f^* \text{ in $H$}.\]
The weak sequential closedness of $\Uad$  yields that $f^* \in \Uad$.  By \cref{ass:smallLipschitzForOCProblem} \ref{item:smallLipschitzM} and \ref{item:smallLipschitzm}, \eqref{ass:PhiSmallLipschitzZ}  holds in a ball around the points $\M(f^*)$ and $\m(f^*)$. Using $H \ctsCompact V^*$, we have $f_{n_j} \to f^*$ in $V^*$  so $f_{n_j} \in B_{\delta}(f^*)$ sufficiently far along the sequence.  

Since $B_{\bar \delta}(f) \cap \Uad  \subset \Uad$ for any $f \in \Uad$, by  \cref{ass:ocSubAndSupersolutions}, we have that \cref{ass:subAndSupersolutionsZ} holds  (with $W$ selected as $\Uad$). 
Thus we can use \cref{thm:localLipschitzForZ}, and pass to the limit to discover $(y_{n_j}, z_{n_j}) =(\M(f_{n_j}), \m(f_{n_j})) \to (\M(f^*), \m(f^*)) = (y^*, z^*)$ in $V$.

To see that this point is optimal, we observe that (dispensing with the subsequence notation now), using \cref{ass:smallLipschitzForOCProblem} \ref{item:wlscForJ},
\begin{align*}
J(y^*,z^*, f^*) &\leq \liminf_{n \to \infty} J(y_n, z_n, f_n) \leq \lim_{n \to \infty} J(y_n, z_n, f_n) = \min_{\substack{f \in \Uad\\ y = \M(f),\\ z =\m(f)}}J(y,z,f).\qedhere
\end{align*}
\end{proof}

\subsection{Bouligand stationarity}\label{sec:BS}
Working directly with the nonsmooth optimisation problem, we can obtain a Bouligand stationarity characterisation of local minimisers (as in the case for variational inequalities, see \cite[\S 5]{MR0423155} and \cite[Lemma 3.1]{MR739836}).

\begin{proof}[Proof of \cref{lem:characterisationOfOC}] Take $h$ in the radial cone of $\Uad$ at $f^*$ so that it is an admissible direction. Writing $y_s = \M(f^*+sh)$ and $z_s = \m(f^*+sh)$, we obtain by \cref{thm:hadamardDiffZNEW} that
\begin{align*}
y_s = y^* + s\alpha + o(s) \quad\text{and}\quad z_s = z^* + s\beta + o(s),
\end{align*}
where  $o$ is a remainder term and $\alpha = \M'(f^*)(h)$ and $\beta = \m'(f^*)(h)$. It follows that $(f^*+sh,y_s,z_s)$ can be made arbitrarily close to $(f^*,y^*,z^*)$ if $s$ is sufficiently small.

By definition of local minimiser, we have $J(y_s, z_s, f^* + sh) - J(y^*, z^*, f^*) \geq 0$ for $s$ sufficiently small. Dividing by $s$ and taking the limit, using the fact that $J$ is (at least) Hadamard differentiable, this yields
\[J_y(y^*,z^*,f^*)(\alpha) + J_z(y^*,z^*,f^*)(\beta) + J_f(y^*,z^*,f^*)(h) \geq 0 \quad \forall h \in \mathcal{R}_{\Uad}(f^*),\]
and by density and continuity of the derivatives appearing above with respect to the direction, also for $h \in \mathcal{T}_{\Uad}(f^*).$ 
\end{proof}
\subsection{The penalised problem}
We will not work directly with the penalised problem \eqref{eq:ocProblemMainResults} but instead a modified problem in order to prove that \textit{every} minimiser is a stationarity point. This is a classical localisation approach.
\begin{prop}\label{prop:convergenceOfOptimalPoints}
Assume \eqref{ass:PhiCC}.
For any local minimiser $(y^*,z^*,f^*)$ of \eqref{eq:ocProblemMostGeneral}, there exists a sequence of locally optimal points $(y_\rho^*, z_\rho^*, f_\rho^*)$ of  \begin{equation}\label{eq:ocProblemPenBarbu}
\min_{f \in \Uad} J(\M_\rho(f), \m_\rho(f),f) + \frac 12\norm{f-f^*}{H}^2
\end{equation}
 such that  $(y_\rho^*, z_\rho^*, f_\rho^*) \to (y^*,z^*, f^*)$ in $V \times V \times H$.
\end{prop}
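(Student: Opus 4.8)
The plan is to use the classical localisation device of Barbu. First I would fix $r>0$ small enough that the triple $(\M(f),\m(f),f)$ lies in the $V\times V\times H$-neighbourhood of $(y^*,z^*,f^*)$ witnessing local optimality whenever $f\in\Uad$ and $\norm{f-f^*}{H}\le r$; this is legitimate because $\M$ and $\m$ are locally Lipschitz near $f^*$ by \cref{thm:localLipschitzForZ} (its hypotheses following from \cref{ass:ocSubAndSupersolutionsStronger} and \cref{ass:smallLipschitzForOCProblem}) and $H\ctsCompact V^*$, so a small $H$-perturbation of $f^*$ induces a small $V$-perturbation of $y^*$ and $z^*$. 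Then, for each $\rho\le\rho_0$, I would introduce the ball-constrained auxiliary problem
\[
  \min\set{ J(\M_\rho(f),\m_\rho(f),f)+\tfrac12\norm{f-f^*}{H}^2 \given f\in\Uad,\ \norm{f-f^*}{H}\le r },
\]
whose feasible set is nonempty, bounded, convex and weakly sequentially closed in $H$, and whose objective is weakly sequentially lower semicontinuous: for a weakly $H$-convergent feasible sequence, the compact embedding $H\ctsCompact V^*$ together with the local Lipschitz continuity of $\M_\rho,\m_\rho$ from \cref{thm:localLipschitzForZRho} gives strong $V$-convergence of the states, whereupon \cref{ass:smallLipschitzForOCProblem} \ref{item:wlscForJ} and weak lower semicontinuity of $\norm{\cdot-f^*}{H}^2$ apply. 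The direct method then yields a minimiser $f_\rho^*$, and I would set $y_\rho^*:=\M_\rho(f_\rho^*)$ and $z_\rho^*:=\m_\rho(f_\rho^*)$.

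Next I would pass to the limit $\rho\searrow0$. Using $f^*$ itself as a competitor gives
\[
  J(y_\rho^*,z_\rho^*,f_\rho^*)+\tfrac12\norm{f_\rho^*-f^*}{H}^2 \le J(\M_\rho(f^*),\m_\rho(f^*),f^*),
\]
and the right-hand side tends to $J(y^*,z^*,f^*)$ since $\M_\rho(f^*)\to y^*$ and $\m_\rho(f^*)\to z^*$ in $V$ by \cref{thm:continuityOfRhoMapsUnderContraction} (applied to whichever of $\M,\m$ actually enters $J$; if, say, $J_z\equiv0$ the $z$-slot is irrelevant). In particular $\{f_\rho^*\}$ is bounded in $H$, so along a subsequence $f_\rho^*\weaklyto\bar f$ in $H$ with $\bar f\in\Uad$ and $\norm{\bar f-f^*}{H}\le r$, and $f_\rho^*\to\bar f$ in $V^*$ by the compact embedding. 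The key step is then to identify the limits of the states: since $f_\rho^*\to\bar f$ in $V^*$ with $f_\rho^*\in\Uad$ and $\rho\searrow0$, the \emph{joint} $(\rho,g)$-continuity result \cref{thm:jointContinuityOfRhoMapsUnderContraction} (with $W=\Uad$, whose hypotheses are in force by \cref{ass:ocSubAndSupersolutionsStronger} and \cref{ass:smallLipschitzForOCProblem}) gives $y_\rho^*\to\M(\bar f)$ and $z_\rho^*\to\m(\bar f)$ strongly in $V$.

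Combining the last displayed estimate with weak lower semicontinuity of $J$ (\cref{ass:smallLipschitzForOCProblem} \ref{item:wlscForJ}, using that the states converge strongly in $V$ and $f_\rho^*\weaklyto\bar f$ in $H$) and of $\norm{\cdot-f^*}{H}^2$ yields
\[
  J(\M(\bar f),\m(\bar f),\bar f)+\tfrac12\norm{\bar f-f^*}{H}^2 \le \liminf_{\rho\searrow0}\Bigl[J(y_\rho^*,z_\rho^*,f_\rho^*)+\tfrac12\norm{f_\rho^*-f^*}{H}^2\Bigr] \le J(y^*,z^*,f^*).
\]
Since $\bar f$ is feasible for \eqref{eq:ocProblemMostGeneral} and $\norm{\bar f-f^*}{H}\le r$, local optimality forces $J(\M(\bar f),\m(\bar f),\bar f)\ge J(y^*,z^*,f^*)$, hence $\norm{\bar f-f^*}{H}=0$, i.e.\ $\bar f=f^*$, $\M(\bar f)=y^*$ and $\m(\bar f)=z^*$. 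Feeding this back into the chain of inequalities shows $\norm{f_\rho^*-f^*}{H}\to0$ along the subsequence, so $(y_\rho^*,z_\rho^*,f_\rho^*)\to(y^*,z^*,f^*)$ in $V\times V\times H$, and a routine subsequence argument upgrades this to convergence of the whole sequence. Finally, $\norm{f_\rho^*-f^*}{H}<r$ for $\rho$ small, so $f_\rho^*$ lies in the $H$-interior of the ball constraint and is therefore a local minimiser of the full problem \eqref{eq:ocProblemPenBarbu}. The main obstacle is exactly this state identification: mere pointwise convergence $\Z_\rho(f^*)\to\Z(f^*)$ does not suffice because $f_\rho^*$ moves, so one genuinely needs the joint continuity of \cref{thm:jointContinuityOfRhoMapsUnderContraction} — which, for $\m_\rho$, is itself the delicate ingredient resting on the small-Lipschitz/contraction machinery of \cref{sec:convUnderContraction}.
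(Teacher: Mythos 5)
Your proof is correct and follows essentially the same Barbu localisation strategy as the paper: ball-constrain around $f^*$, obtain minimisers of the auxiliary problem, compare with the competitor $f^*$ to get a $\limsup$ bound, extract a weakly $H$-convergent subsequence, identify the limit states, close the inequality sandwich via weak lower semicontinuity to force $\bar f=f^*$ and $f_\rho^*\to f^*$ strongly in $H$, and finally observe $f_\rho^*$ lies in the $H$-interior of the ball so it locally minimises \eqref{eq:ocProblemPenBarbu}. The only presentational difference is in the state-identification step: you invoke \cref{thm:jointContinuityOfRhoMapsUnderContraction} directly, whereas the paper unfolds the identical argument by writing $\M_\rho(\bar f_\rho)-\M(\hat f)=(\M_\rho(\bar f_\rho)-\M_\rho(\hat f))+(\M_\rho(\hat f)-\M(\hat f))$ and applying \cref{thm:localLipschitzForZRho} and \cref{thm:continuityOfRhoMapsUnderContraction} separately (which is exactly what the joint-continuity proof does anyway), so the two are the same in substance.
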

\begin{proof}

Denote by $\gamma$ the radius such that $f^*$ is a minimiser on $\Uad\cap B^H_\gamma(f^*)$ (the latter object is the closed ball in $H$ of radius $\gamma$ with centre $f^*$). 

Define the augmented functional $\bar J(y, z, f) := J(y, z, f) + \frac 12\norm{f-f^*}{H}^2$ that appears in \eqref{eq:ocProblemPenBarbu} and consider the problem
\begin{equation}\label{eq:ocProblemPenBarbuA}
\min_{f \in \Uad \cap B_\gamma^H(f^*) } \bar J(\M_\rho(f), \m_\rho(f),f).
\end{equation}
By the same proof as for \cref{thm:rfsfsdd} with the obvious modifications, we find that there exists an optimal point to this problem, which we denote by $(\bar y_\rho, \bar z_\rho, \bar f_\rho)$. From
\begin{equation}\label{eq:2}
\bar J(\bar y_\rho,   \bar z_\rho, \bar f_\rho) \leq \bar J(\M_\rho(f^*), \m_\rho(f^*), f^*),
\end{equation}
and using $\Z_\rho(f^*) \to \Z(f^*)$ (due to \cref{thm:jointContinuityOfRhoMapsUnderContraction}) and the continuity of $\bar J$, we have
\[\limsup_{\rho \to 0}\bar J(\bar y_\rho, \bar z_\rho, \bar f_\rho) \leq J(y^*, z^*, f^*).\]
On the other hand,  it follows from \eqref{eq:2}  and \cref{thm:jointContinuityOfRhoMapsUnderContraction}
that $\bar J(\bar y_\rho, \bar z_\rho, \bar f_\rho)$ is uniformly bounded,  and hence, due to \cref{ass:smallLipschitzForOCProblem} \ref{item:uniformBd}, we obtain the existence of $\hat f$ such that (for a subsequence that we have relabelled) $\bar f_\rho \weaklyto \hat f$ in $H$ with the convergence strong in $V^*$. 

We have
\[\M_\rho(\bar f_\rho)-\M(\hat f) = (\M_\rho(\bar f_\rho)-\M_\rho(\hat f)) + (\M_\rho(\hat f)-\M(\hat f))\]
 and availing ourselves of the Lipschitz estimate of \cref{thm:localLipschitzForZRho} (with the Lipschitz constant independent of $\rho$), we have that the first term above converges to zero and the second term does also
 due to
 \cref{thm:continuityOfRhoMapsUnderContraction}.  Hence $\bar y_\rho \to \hat y:=\M(\hat f)$ and, arguing similarly, $\bar z_\rho \to \hat z :=\m(\hat f)$. By the identity $\limsup(a_n) + \liminf(b_n) \leq \limsup(a_n + b_n)$ and weak lower semicontinuity, this gives
\[\limsup_{\rho \to 0}\bar J(\bar y_\rho, \bar z_\rho, \bar f_\rho) \geq J(\hat y, \hat z, \hat f) + \limsup_{\rho \to 0}\norm{\bar f_\rho-f^*}{H}^2 \geq J(y^*, z^*, f^*) + \limsup_{\rho \to 0}\norm{\bar f_\rho-f^*}{H}^2, \]
with the last inequality because $(y^*, z^*, f^*)$ is a local minimiser and $\hat f \in B_\gamma^H(f^*)$. Combining these two inequalities shows that $\hat f = f^*$ and $\bar f_\rho \to f^*$ in $H$. The latter fact implies that for $\rho$ sufficiently small, $\bar f_\rho \in B_\gamma^H(f^*)$ automatically and hence the feasible set in \eqref{eq:ocProblemPenBarbuA} can be taken to be just $\Uad$. Finally, since the limit point $\hat f = f^*$ is independent of the subsequence that was taken, it follows by the subsequence principle that the entire sequence $\{\bar f_\rho\}$ converges. From this, we also gain convergence for $\{\bar y_\rho\}$ and $\{\bar z_\rho\}$ (by repeating the above arguments).
\end{proof}

\subsection{C-stationarity}\label{sec:CStationarity}
Via \cref{prop:convergenceOfOptimalPoints}, we obtain the existence of minimisers $(y_\rho^*, z_\rho^*, f_\rho^*)$  of \eqref{eq:ocProblemPenBarbu} such that
\[(y_\rho^*, z_\rho^*, f_\rho^*) \to (y^*, z^*, f^*) \text{ in $V \times V \times H$.}\]
Thus, for any $\epsilon > 0$, we can find a $\rho_0$ such that $\rho \leq \rho_0$ implies \[(y_\rho^*, z_\rho^*) \in B_\epsilon(y^*) \times B_\epsilon(z^*).\]
We make the standing assumption \cref{ass:consolidatedAss} \ref{item:PhiDirDiffAndDerivLinearInBall}    on the local differentiability of $\Phi$ and linearity of the derivative on the above balls.
Observe that \eqref{ass:PhiDiffAtU} (which in this context is the assumption that $\Phi$ is differentiable at $\Z_\rho(f_\rho^*)$) follows from these assumptions: since $\Z_\rho(f_\rho^*) \to \Z(f^*)$ (thanks to \cref{thm:jointContinuityOfRhoMapsUnderContraction}), for sufficiently small $\rho$, $\Z_\rho(f_\rho^*) \in B_\epsilon(\Z(f^*))$ and $\Phi$ is differentiable at these points too.

In the next result, we meet the conditions to apply the directional differentiability result of \cref{thm:hadamardDiffMRhoNEW}.
\begin{prop}\label{prop:ssForPenalisedOC}
Let \eqref{ass:PhiCC}
hold.
	For any optimal point $(y_\rho^*, z_\rho^*, f_\rho^*$) of \eqref{eq:ocProblemPenBarbu}, there exists $(p_\rho^*, q_\rho^*) \in V\times V $ such that 
\begin{equation}\label{eq:penalisedOCSystem}
\begin{aligned}
A^*p_\rho^* + \frac1\rho(\Id-\Phi'(y_\rho^*))^*\sigma_\rho'(y_\rho^*-\Phi(y_\rho^*))p_\rho^*  &= -J_y(y_\rho^*, z_\rho^*, f_\rho^*),\\
A^*q_\rho^* + \frac1\rho(\Id-\Phi'(z_\rho^*))^*\sigma_\rho'(z_\rho^*-\Phi(z_\rho^*))q_\rho^*  &= -J_z(y_\rho^*, z_\rho^*, f_\rho^*),\\
\langle  J_f(y_\rho^*, z_\rho^*, f_\rho^*)-p_\rho^* - q_\rho^*, f_\rho^* -v \rangle + (f_\rho^*-f^*, f_\rho^*-v)_H &\leq 0 \quad \forall v \in \Uad.
\end{aligned}
\end{equation}
\end{prop}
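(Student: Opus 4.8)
The plan is to invoke the Hadamard differentiability of $\M_\rho$ and $\m_\rho$ from \cref{thm:hadamardDiffMRhoNEW}, write down the first-order optimality condition for the reduced functional of \eqref{eq:ocProblemPenBarbu} over the convex set $\Uad$, and then eliminate the control-to-state sensitivities by introducing two adjoint states defined through the transposed linearised equations.

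First I would verify that, for $\rho$ small, all hypotheses of \cref{thm:hadamardDiffMRhoNEW} hold at $f=f_\rho^*$: \cref{ass:subAndSupersolutionsForZRho} holds with $W=\Uad$ by the standing \cref{ass:ocSubAndSupersolutionsStronger}; the small-Lipschitz condition \eqref{ass:PhiSmallLipschitzZ} holds near $\M(f_\rho^*)$ and $\m(f_\rho^*)$ by \cref{ass:smallLipschitzForOCProblem} \ref{item:smallLipschitzM}, \ref{item:smallLipschitzm}; and \eqref{ass:PhiDiffAtU} together with linearity of $\Phi'$ holds because $\Z_\rho(f_\rho^*)\to\Z(f^*)$ by \cref{thm:jointContinuityOfRhoMapsUnderContraction}, so that $\Z_\rho(f_\rho^*)$ lies in the ball of \cref{ass:consolidatedAss} \ref{item:PhiDirDiffAndDerivLinearInBall} once $\rho$ is small. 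Hence $\M_\rho'(f_\rho^*),\m_\rho'(f_\rho^*)\colon V^*\to V$ are \emph{linear} (because \eqref{eq:equationForAlphaRhoNEW} is linear in $d$ once $\Phi'$ is linear) and bounded, and for $h\in H$ the sensitivity $\alpha:=\M_\rho'(f_\rho^*)(h)$ is the unique solution of $\mathcal{A}_\rho^y\alpha=h$, where $\mathcal{A}_\rho^y:=A+\tfrac1\rho\sigma_\rho'(y_\rho^*-\Phi(y_\rho^*))(\Id-\Phi'(y_\rho^*))\colon V\to V^*$, with $\mathcal{A}_\rho^z$ defined analogously with $z_\rho^*$ in place of $y_\rho^*$. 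By \cref{lem:alphaNBoundedNEW} the solution map of $\mathcal{A}_\rho^y$ is defined on all of $V^*$ and bounded, and the fixed-point argument there also yields injectivity, so $\mathcal{A}_\rho^y$ is a Banach-space isomorphism $V\to V^*$; since $V$ is reflexive, the adjoint $(\mathcal{A}_\rho^y)^*\colon V\to V^*$ is an isomorphism as well. I therefore \emph{define} $p_\rho^*\in V$ and $q_\rho^*\in V$ as the unique solutions of
\[
(\mathcal{A}_\rho^y)^*p_\rho^* = -J_y(y_\rho^*,z_\rho^*,f_\rho^*), \qquad (\mathcal{A}_\rho^z)^*q_\rho^* = -J_z(y_\rho^*,z_\rho^*,f_\rho^*).
\]
Writing out these adjoints, using $(BC)^*=C^*B^*$ and that $\sigma_\rho'(w)\colon V\to V^*$ is pointwise multiplication by a nonnegative $L^\infty$ function, hence symmetric, gives exactly the first two equations of \eqref{eq:penalisedOCSystem}.

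Next I would record the first-order condition. The reduced functional $j_\rho(f):=J(\M_\rho(f),\m_\rho(f),f)+\tfrac12\norm{f-f^*}{H}^2$ is Gâteaux differentiable at $f_\rho^*$ by the chain rule (the Fréchet-differentiable $J$ composed with Hadamard-differentiable maps whose derivatives are linear), with
\[
j_\rho'(f_\rho^*)(h) = \langle J_y, \M_\rho'(f_\rho^*)(h)\rangle + \langle J_z, \m_\rho'(f_\rho^*)(h)\rangle + \langle J_f, h\rangle + (f_\rho^*-f^*,h)_H,
\]
where $J_y,J_z,J_f$ are evaluated at $(y_\rho^*,z_\rho^*,f_\rho^*)$. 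Since $f_\rho^*$ is a local minimiser of $j_\rho$ over the convex set $\Uad$ and $v-f_\rho^*\in\mathcal{R}_{\Uad}(f_\rho^*)$ for every $v\in\Uad$, the difference quotient along $v-f_\rho^*$ is nonnegative for small step sizes, so $j_\rho'(f_\rho^*)(v-f_\rho^*)\ge 0$ for all $v\in\Uad$.

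Finally comes the adjoint identity: for $h=v-f_\rho^*\in H$, testing $\mathcal{A}_\rho^y\alpha=h$ against $p_\rho^*$ and inserting the defining equation of $p_\rho^*$ gives $\langle J_y, \M_\rho'(f_\rho^*)(h)\rangle = -\langle p_\rho^*, h\rangle$, and likewise $\langle J_z, \m_\rho'(f_\rho^*)(h)\rangle = -\langle q_\rho^*, h\rangle$. Substituting these into the variational inequality and rearranging yields the third line of \eqref{eq:penalisedOCSystem}. I expect the main obstacle to be the structural bookkeeping of the first step — choosing $\rho$ small enough that $y_\rho^*$ and $z_\rho^*$ sit inside the neighbourhoods where $\Phi$ is directionally differentiable with linear derivative and obeys the small-Lipschitz bound, so that \cref{thm:hadamardDiffMRhoNEW} applies and $\mathcal{A}_\rho^y,\mathcal{A}_\rho^z$ (and hence their adjoints) are invertible — whereas everything downstream is a routine adjoint computation.
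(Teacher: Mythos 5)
Your proposal is correct and follows essentially the same route as the paper's proof. Both reduce the problem to the first-order condition for the reduced functional via the chain rule, and both obtain the adjoint states by transposing the linearised operator from \cref{thm:hadamardDiffMRhoNEW}: the paper writes this as $p_\rho^* := \M_\rho'(f_\rho^*)^*(-J_y)$ together with the stated characterisation of $\M_\rho'(g)^*(d)$, which is exactly your $(\mathcal{A}_\rho^y)^*p_\rho^* = -J_y$ after expanding the adjoint and using the self-adjointness of the multiplication operator $\sigma_\rho'(\cdot)$; the bookkeeping of when \cref{thm:hadamardDiffMRhoNEW} applies matches the paper's preamble to the proposition.
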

\begin{proof}
Defining $\hat J(f) := \bar J(\M_\rho(f),\m_\rho(f), f)$  we consider the reduced problem
\[\min_{f \in \Uad} \hat J(f).\]
Note that we may use the chain rule (e.g., see \cite[Proposition 2.47]{MR1756264}) to differentiate $\hat J$ since it is the composition of a $C^1$ map with a directionally differentiable map. Now, at the optimal point $f_\rho^*$, we have $\hat J(f_\rho^*+sh) - \hat J(f_\rho^*) \geq 0$ for all $h \in \mathcal{R}_{\Uad}(f_\rho^*)$, hence
\[\langle \hat J'(f_\rho^*), h \rangle \geq 0 \quad \forall h \in \mathcal{R}_{\Uad}(f_\rho^*).\]
We calculate, with $y_\rho^* = \M_\rho(f_\rho^*)$ and $z_\rho^* = \m_\rho(f_\rho^*)$,
\begin{align*}
\langle \hat J'(f_\rho^*), h \rangle &= \langle J_y(y_\rho^*, z_\rho^*, f_\rho^*), \M_\rho'(f_\rho^*)(h)\rangle+ \langle J_z(y_\rho^*, z_\rho^*, f_\rho^*), \m_\rho'(f_\rho^*)(h)\rangle  + \langle J_f(y_\rho^*, z_\rho^*, f_\rho^*), h \rangle\\
&= \langle \M_\rho'(f_\rho^*)^*J_y(y_\rho^*, z_\rho^*, f_\rho^*) +  \m_\rho'(f_\rho^*)^*J_z(y_\rho^*, z_\rho^*, f_\rho^*), h\rangle + \langle J_f(y_\rho^*, z_\rho^*, f_\rho^*), h \rangle + (f_\rho^*-f^*, h)_H
\end{align*}
with the adjoint well defined since $\Z_\rho'(f_\rho^*)$ is a bounded linear map thanks to \cref{ass:consolidatedAss} \ref{item:PhiDirDiffAndDerivLinearInBall} (which implies that the derivative satisfies a linear PDE, see \eqref{eq:equationForAlphaRhoNEW}). It is easy to see that the previous inequality in fact holds for all $h \in \mathcal{T}_{\Uad}(f_\rho^*)$ by a simple density argument. 

Defining $\theta_\rho^* := -\left(\M_\rho'(f_\rho^*)^*(J_y(y_\rho^*, z_\rho^*, f_\rho^*)) +  \m_\rho'(f_\rho^*)^*(J_z(y_\rho^*, z_\rho^*, f_\rho^*))\right)$, we write the above as
\[\langle J_f(y_\rho^*, z_\rho^*, f_\rho^*)  - \theta_\rho^*, h \rangle + ( f_\rho^*-f^*, h)_H \geq 0 \quad \forall h \in \mathcal{T}_{\Uad}(f).\]
Take $v \in \Uad$, then $h:= v-f_\rho^*$ is in the tangent cone. With this choice of $h$ we recover 
\[\langle J_f(y_\rho^*, z_\rho^*, f_\rho^*) - \theta_\rho^*, v-f_\rho^* \rangle + (f_\rho^*-f^*, v-f_\rho^*)_H \geq 0 \quad \forall v \in \Uad.\]
Let us characterise each term in $\theta_\rho$. First, observe that 
\[p := \M_\rho'(g)^*(d) \iff A^* p + \frac1\rho(\Id-\Phi'(v_\rho))^*\sigma_\rho'(v_\rho-\Phi(v_\rho))p = d\quad \text{where $v_\rho = \M_\rho(g)$}\]
and a similar formula holds for $\m_\rho'(f)^*(w)$. Note that these adjoint maps (which are solution maps of linear PDEs) are linear in $w$. Hence if we define $p_\rho^*:= \M_\rho'(f_\rho^*)^*(-J_y(y_\rho^*, z_\rho^*, f_\rho^*))$ and $q_\rho^*:= \m_\rho'(f_\rho^*)^*(-J_z(y_\rho^*, z_\rho^*, f_\rho^*))$, they satisfy $\theta_\rho^*=p_\rho^*+q_\rho^*$ and the equations stated in the proposition.
\end{proof}

Before proceeding, let us record some facts. Due to the Lipschitz condition \cref{ass:smallLipschitzForOCProblem} \ref{item:smallLipschitzM}, \ref{item:smallLipschitzm}, we have
\begin{equation}
(\Id-\Phi'(w))\colon V \to V \text{ is invertible for $w \in B_\epsilon(y^*)$ if $J_y \not\equiv 0$, and for $w \in B_\epsilon(z^*)$ if $J_z \not\equiv 0$},\label{ass:newPhiInvInvertible}
\end{equation}
which follows from the Neumann series,
and the inverse satisfies $\norm{(\Id-\Phi'(w))^{-1}v}{V} \leq (1-C_L)^{-1}\norm{v}{V}$ for all $v \in V$.
For an arbitrary $v \in V$, we set $u = (\Id-\Phi'(w))^{-1}v$. Then we have
\begin{align*}
\langle A(\Id-\Phi'(w))^{-1}v, v \rangle &= \langle Au, (\Id-\Phi'(w)) u \rangle \geq C_a'\norm{u}{V}^2
\geq \frac{C_a'}{(1 + C_L)^2} \norm{v}{V}^2
\end{align*}
for some $C_a'$ depending only on $C_L, C_a, C_b$ and the self-adjointedness of $A$, by using \cref{lem:small_estimate} (see also \cite{Wachsmuth2021:2}) adapted to the operator $\Phi'(w)$.
Thus we have shown that
\begin{align}
&A(\Id-\Phi'(w))^{-1}\colon V \to V^* \text{ is uniformly bounded and uniformly coercive}
\label{ass:newAPhiInvCoerciveUniform}
\end{align}
for $w$ belonging to the same sets as in \eqref{ass:newPhiInvInvertible},
see also \cite[Lemmas~3.3, 3.5]{Wachsmuth2019:2}.

\begin{lem}
Under \cref{ass:consolidatedAss} \ref{item:doubleConts}, if $J_y \not\equiv 0$, for sequences $v_n \to v$ and $q_n \weaklyto q$  in $V$ with $v_n, v \in B_\epsilon(y^*)$,  we have 
\begin{equation}
\liminf_{n \to \infty}\langle A(\Id-\Phi'(v_n))^{-1}q_n, q_n \rangle \geq \langle A(\Id-\Phi'(v))^{-1}q, q \rangle \label{ass:weakLowerSC}.
\end{equation}
A similar result holds if $J_z \not\equiv 0$ with the obvious modifications.
\end{lem}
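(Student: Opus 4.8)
The plan is to rewrite the bilinear expressions in terms of $B_n := \Id - \Phi'(v_n)$ and $B := \Id - \Phi'(v)$, which by \cref{ass:consolidatedAss} \ref{item:PhiDirDiffAndDerivLinearInBall} are bounded linear operators, invertible on $B_\epsilon(y^*)$ as recorded in \eqref{ass:newPhiInvInvertible}, with operator norm at most $1 + C_L$ where $C_L$ satisfies \eqref{ass:generalLipschitzConstant}. Set $u_n := B_n^{-1} q_n$ and $u := B^{-1} q$, so that the quantity to estimate is $\dual{A u_n}{q_n}$ and the target is $\dual{A u}{q} = \dual{A(\Id - \Phi'(v))^{-1}q}{q}$; by \eqref{ass:doubleContWeak}, $u_n \weaklyto u$ in $V$, hence $A u_n \weaklyto A u$ in $V^*$. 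The structural ingredient is that \cref{lem:small_estimate}, applied to the linear operator $\Phi'(v_n)$ (which is globally Lipschitz with constant $C_L$) as in \cite{Wachsmuth2021:2}, gives constants $C \ge 0$, $\tilde c \in [0,1)$ independent of $n$ such that $\dual{A w}{B_n w} \ge C(1 - \tilde c^2)\norm{w}{V}^2 \ge 0$ for all $w \in V$.

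First I would split $\dual{A u_n}{q_n} = \dual{A u_n}{q} + \dual{A u_n}{q_n - q}$. The first summand converges to $\dual{A u}{q}$ because $A u_n \weaklyto A u$ in $V^*$ and $q$ is fixed. The main obstacle is the second summand: both factors $A u_n$ and $q_n - q$ only converge weakly, so one cannot pass to the limit directly.

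To resolve this I would insert the bridge sequence $\tilde u_n := B_n^{-1} q$, which by \eqref{ass:doubleCont} (applied with the constant sequence $q$) converges \emph{strongly} to $B^{-1} q = u$ in $V$, so that $A \tilde u_n \to A u$ strongly in $V^*$. Since $q_n = B_n u_n$, $q = B_n \tilde u_n$ and $B_n$ is linear, we have $q_n - q = B_n(u_n - \tilde u_n)$; expanding $u_n = \tilde u_n + (u_n - \tilde u_n)$ in the left slot and using $\dual{A\tilde u_n}{B_n(u_n - \tilde u_n)} = \dual{A\tilde u_n}{q_n - q}$ gives
\[
\dual{A u_n}{q_n - q} = \dual{A \tilde u_n}{q_n - q} + \dual{A(u_n - \tilde u_n)}{B_n(u_n - \tilde u_n)} .
\]
Here $\dual{A \tilde u_n}{q_n - q} \to 0$ (strong times weak) and $\dual{A(u_n - \tilde u_n)}{B_n(u_n - \tilde u_n)} \ge 0$ by the structural ingredient above. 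Setting $x_n := \dual{A u_n}{q} + \dual{A \tilde u_n}{q_n - q} \to \dual{A u}{q}$ and $c_n := \dual{A(u_n - \tilde u_n)}{B_n(u_n - \tilde u_n)} \ge 0$, we conclude $\liminf_n \dual{A u_n}{q_n} = \liminf_n(x_n + c_n) \ge \dual{A u}{q}$, which is the assertion. The case $J_z \not\equiv 0$ follows verbatim with $y^*$ replaced by $z^*$ and the corresponding parts of \cref{ass:consolidatedAss} \ref{item:doubleConts}.
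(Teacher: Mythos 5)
Your proposal is correct and is essentially the paper's argument in disguise: the paper starts from $0 \le \langle A T_n(q_n-q), q_n-q\rangle$ with $T_n := (\Id-\Phi'(v_n))^{-1}$ (the positivity coming from the uniform coercivity of $A T_n$ established around \eqref{ass:newAPhiInvCoerciveUniform}), expands the four bilinear terms, rearranges, and passes to the limit using \eqref{ass:doubleContWeak} for the weakly convergent factor $T_n q_n$ and \eqref{ass:doubleCont} for the strongly convergent factor $T_n q$. Your decomposition via the bridge sequence $\tilde u_n = T_n q$ is algebraically the same rearrangement — your non-negative remainder $\langle A(u_n-\tilde u_n), B_n(u_n-\tilde u_n)\rangle$ is exactly $\langle A T_n(q_n-q), q_n-q\rangle$ — so no new idea is introduced, though the "bridge" phrasing makes the roles of the two continuity conditions a little more transparent.
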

\begin{proof}
Let $T_n := (\Id-\Phi'(v_n))^{-1}$. We have, due to the coercivity above,
\begin{align*}
0 &\leq \langle AT_n(q_n-q), q_n-q \rangle = \langle AT_nq_n, q_n \rangle - \langle AT_n q_n, q \rangle - \langle AT_n q, q_n \rangle + \langle AT_n q, q \rangle
\end{align*}
Rearranging,
\begin{align*}
\langle AT_nq_n, q_n \rangle \geq  \langle AT_n q_n, q \rangle + \langle AT_n q, q_n \rangle - \langle AT_n q, q \rangle
\end{align*}
taking the limit inferior, using on the right-hand side \eqref{ass:doubleContWeak} for the first and last terms and \eqref{ass:doubleCont} for the second term, we obtain the desired statement.
\end{proof}

For convenience and because of structural reasons, the proof of \cref{thm:ocPenalisation} will be realised via the next three propositions. First, we show that a system of so-called \emph{weak C-stationarity} is satisfied, see \cite[\S 5]{AHROCQVI} for the terminology.

\begin{prop}[Weak C-stationarity]\label{prop:weakCStationarity}
There exists $(p^*, q^*,    \lambda^*, \zeta^*) \in   V \times V \times V^* \times V^*$ satisfying 
\begin{subequations}\label{eq:weakStationaritySystemAgain}
\begin{align}
y^* &= \M(f^*),\\
z^* &= \m(f^*),\\
A^*  p^*  + (\Id-\Phi'(y^*))^*\lambda^* &= -J_y(y^*, z^*, f^*),\label{eq:wSS1}\\
A^*q^*  + (\Id-\Phi'(z^*))^*\zeta^* &= -J_z(y^*, z^*, f^*),\label{eq:wSS1b}\\
f^* \in \Uad : \langle J_f(y^*, z^*, f^*)- p^* - q^*, f^*-v\rangle &\leq 0
 \quad \forall v \in \Uad,\label{eq:wSS3}\\
\langle  \lambda^*,  p^* \rangle &\geq 0,\label{eq:wSS5}\\
\langle \zeta^*, q^* \rangle &\geq 0.\label{eq:wSS5b}
\end{align}
\end{subequations}
\end{prop}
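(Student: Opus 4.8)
\emph{Proof strategy.} The plan is to produce the multipliers by passing to the limit $\rho\searrow 0$ in the penalised stationarity system of \cref{prop:ssForPenalisedOC}. By \cref{prop:convergenceOfOptimalPoints} (using \eqref{ass:PhiCC}) there is a sequence of local minimisers $(y_\rho^*,z_\rho^*,f_\rho^*)$ of \eqref{eq:ocProblemPenBarbu} with $(y_\rho^*,z_\rho^*,f_\rho^*)\to(y^*,z^*,f^*)$ in $V\times V\times H$, and for $\rho$ small \cref{prop:ssForPenalisedOC} furnishes $p_\rho^*,q_\rho^*\in V$ solving \eqref{eq:penalisedOCSystem}. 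Setting $\lambda_\rho^*:=\tfrac1\rho\sigma_\rho'(y_\rho^*-\Phi(y_\rho^*))p_\rho^*$ and $\zeta_\rho^*:=\tfrac1\rho\sigma_\rho'(z_\rho^*-\Phi(z_\rho^*))q_\rho^*$, the first two lines of \eqref{eq:penalisedOCSystem} read $A^*p_\rho^*+(\Id-\Phi'(y_\rho^*))^*\lambda_\rho^*=-J_y(y_\rho^*,z_\rho^*,f_\rho^*)$ and $A^*q_\rho^*+(\Id-\Phi'(z_\rho^*))^*\zeta_\rho^*=-J_z(y_\rho^*,z_\rho^*,f_\rho^*)$. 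We may assume $J_y\not\equiv 0$ and $J_z\not\equiv 0$, since otherwise $p_\rho^*\equiv 0$ (resp.\ $q_\rho^*\equiv 0$), so that $p^*=\lambda^*=0$ (resp.\ $q^*=\zeta^*=0$) and the corresponding relations hold trivially.

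\emph{Uniform bounds and weak limits.} Because $p_\rho^*=\M_\rho'(f_\rho^*)^*(-J_y(y_\rho^*,z_\rho^*,f_\rho^*))$ and the operator norm of $\M_\rho'(f_\rho^*)\colon V^*\to V$ is bounded by the local Lipschitz constant of $\M_\rho$, which is uniform in $\rho$ by \cref{thm:localLipschitzForZRho}, while $J\in C^1$ gives $J_y(y_\rho^*,z_\rho^*,f_\rho^*)\to J_y(y^*,z^*,f^*)$ in $V^*$, the family $\{p_\rho^*\}$ is bounded in $V$; likewise $\{q_\rho^*\}$. Since $y_\rho^*\in B_\epsilon(y^*)$ for $\rho$ small, $(\Id-\Phi'(y_\rho^*))^*$ is boundedly invertible on $V^*$ with a bound uniform in $\rho$ (cf.\ \eqref{ass:newPhiInvInvertible}), so from $(\Id-\Phi'(y_\rho^*))^*\lambda_\rho^*=-J_y(y_\rho^*,z_\rho^*,f_\rho^*)-A^*p_\rho^*$ we read off that $\{\lambda_\rho^*\}$ is bounded in $V^*$, and similarly $\{\zeta_\rho^*\}$. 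Passing to a subsequence, $p_\rho^*\weaklyto p^*$, $q_\rho^*\weaklyto q^*$ in $V$ and $\lambda_\rho^*\weaklyto\lambda^*$, $\zeta_\rho^*\weaklyto\zeta^*$ in $V^*$.

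\emph{Limit of the equations and the control inequality.} For fixed $v\in V$ I would test the $p_\rho^*$-equation with $(\Id-\Phi'(y_\rho^*))^{-1}v$ to obtain
\[\langle A(\Id-\Phi'(y_\rho^*))^{-1}v,p_\rho^*\rangle+\langle\lambda_\rho^*,v\rangle=-\langle J_y(y_\rho^*,z_\rho^*,f_\rho^*),(\Id-\Phi'(y_\rho^*))^{-1}v\rangle.\]
By \eqref{ass:doubleCont} one has $(\Id-\Phi'(y_\rho^*))^{-1}v\to(\Id-\Phi'(y^*))^{-1}v$ in $V$, so every term converges (using $p_\rho^*\weaklyto p^*$, $\lambda_\rho^*\weaklyto\lambda^*$, and strong convergence of $J_y$); since $v\in V$ is arbitrary and $\Id-\Phi'(y^*)$ is invertible, substituting $v=(\Id-\Phi'(y^*))w$ converts the limiting identity into \eqref{eq:wSS1}, and \eqref{eq:wSS1b} follows identically. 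For \eqref{eq:wSS3} I would pass to the limit in the third line of \eqref{eq:penalisedOCSystem}: $f_\rho^*\to f^*$ and $J_f(y_\rho^*,z_\rho^*,f_\rho^*)\to J_f(y^*,z^*,f^*)$ in $H$, $p_\rho^*,q_\rho^*\weaklyto p^*,q^*$ in $H$ (from $V\cts H$), and $(f_\rho^*-f^*,f_\rho^*-v)_H\to 0$. Finally $y^*=\M(f^*)$ and $z^*=\m(f^*)$ hold by the choice of the minimiser.

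\emph{The sign conditions — the main obstacle.} Since $\sigma_\rho'\ge 0$, $\langle\lambda_\rho^*,p_\rho^*\rangle=\tfrac1\rho\int_\Omega\sigma_\rho'(y_\rho^*-\Phi(y_\rho^*))(p_\rho^*)^2\ge 0$. Putting $v=p_\rho^*$ in the tested identity gives
\[\langle\lambda_\rho^*,p_\rho^*\rangle=-\langle J_y(y_\rho^*,z_\rho^*,f_\rho^*),(\Id-\Phi'(y_\rho^*))^{-1}p_\rho^*\rangle-\langle A(\Id-\Phi'(y_\rho^*))^{-1}p_\rho^*,p_\rho^*\rangle\ge 0.\]
Taking $\limsup_{\rho\searrow 0}$: the first term converges because $(\Id-\Phi'(y_\rho^*))^{-1}p_\rho^*\weaklyto(\Id-\Phi'(y^*))^{-1}p^*$ by \eqref{ass:doubleContWeak} and $J_y$ converges strongly, while the weak lower semicontinuity estimate \eqref{ass:weakLowerSC} bounds the second term from below; hence
\[0\le\limsup_{\rho\searrow 0}\langle\lambda_\rho^*,p_\rho^*\rangle\le-\langle J_y(y^*,z^*,f^*),(\Id-\Phi'(y^*))^{-1}p^*\rangle-\langle A(\Id-\Phi'(y^*))^{-1}p^*,p^*\rangle.\]
The right-hand side equals $\langle\lambda^*,p^*\rangle$: test the limit equation \eqref{eq:wSS1} with $(\Id-\Phi'(y^*))^{-1}p^*$ and use $\langle A^*p^*,w\rangle=\langle Aw,p^*\rangle$. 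This yields \eqref{eq:wSS5}, and \eqref{eq:wSS5b} is obtained verbatim with $z^*,q^*,\zeta^*$ in place of $y^*,p^*,\lambda^*$. The genuinely delicate point is this last step, where $\langle\lambda_\rho^*,p_\rho^*\rangle$ is a pairing of two merely weakly convergent sequences: it is precisely the uniform coercivity \eqref{ass:newAPhiInvCoerciveUniform} of $A(\Id-\Phi'(\cdot))^{-1}$, together with the continuity package \cref{ass:consolidatedAss} \ref{item:doubleConts} (which supplies \eqref{ass:doubleContWeak}, \eqref{ass:doubleCont} and, through the preceding lemma, \eqref{ass:weakLowerSC}), that allows the non-negativity of $\langle\lambda_\rho^*,p_\rho^*\rangle$ to be transferred to the sign of $\langle\lambda^*,p^*\rangle$.
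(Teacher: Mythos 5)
Your argument is correct and follows essentially the same route as the paper: pass to the limit in the penalised adjoint system of \cref{prop:ssForPenalisedOC} using the convergence from \cref{prop:convergenceOfOptimalPoints}, then use the continuity package \cref{ass:consolidatedAss}\ref{item:doubleConts} together with the weak lower semicontinuity \eqref{ass:weakLowerSC} to transfer the sign of $\langle \lambda_\rho^*, p_\rho^*\rangle$ to the limit. The only notable deviation is your boundedness argument for $\{p_\rho^*\}$: you read it off from a uniform bound on $\|\M_\rho'(f_\rho^*)\|$, which you attribute to the local Lipschitz constant of $\M_\rho$ (strictly speaking, the Lipschitz constant only bounds the derivative on $\mathcal T_W(f_\rho^*)$; for the extension to all of $V^*$ one needs the $\rho$-uniform energy estimate as in \cref{lem:alphaNBoundedNEW}), whereas the paper obtains the bound directly by testing the transformed adjoint equation with $v=p_\rho^*$ and invoking the uniform coercivity \eqref{ass:newAPhiInvCoerciveUniform} and the non-negativity of $\sigma_\rho'$; both routes yield the required uniform bound, and the remainder of your derivation (extracting $\lambda_\rho^*\weaklyto\lambda^*$ directly rather than via $\mu_\rho$, then recovering \eqref{eq:wSS1} by the substitution $v=(\Id-\Phi'(y^*))w$) is a harmless cosmetic variation of the paper's step.
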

In this and the following proofs, for ease of reading, we will omit the stars in $\rho$-dependent notation as $p_\rho^*$ and simply write this as $p_\rho$. 
\begin{proof}
By construction, we already know that $(y_\rho, z_\rho, f_\rho) \to (y^*,z^*,f^*)$ in $V\times V \times H$ due to \cref{prop:convergenceOfOptimalPoints}. We now need to pass to the limit in the system \eqref{eq:penalisedOCSystem} for the adjoint states and the optimal control. We write the arguments just for $p_\rho$; obvious modifications will work for the $q_\rho$ equation too. 

\medskip

\noindent\textit{1. Satisfaction of the equation.} The weak form of the equation for $p_\rho$ is
\begin{align*}
\langle A^*p_\rho, \varphi \rangle + \frac1\rho\langle \sigma_\rho'(y_\rho-\Phi(y_\rho))p_\rho, (\Id-\Phi'(y_\rho))\varphi \rangle  &= -\langle J_y(y_\rho, z_\rho, f_\rho), \varphi \rangle \qquad \forall \varphi \in V.
\end{align*}
By defining $ v:=(\Id-\Phi'(y_\rho))\varphi$,
thanks to the invertibility property \eqref{ass:newPhiInvInvertible}, this can be transformed to 
\begin{align*}
\langle A^*p_\rho, (\Id-\Phi'(y_\rho))^{-1}v \rangle + \frac1\rho\langle \sigma_\rho'(y_\rho-\Phi(y_\rho))p_\rho, v \rangle  &= -\langle J_y(y_\rho, z_\rho, f_\rho), (\Id-\Phi'(y_\rho))^{-1}v \rangle \qquad \forall v \in V.
\end{align*}
Now, selecting $v=p_\rho$, using the coercivity \eqref{ass:newAPhiInvCoerciveUniform}, the monotonicity of $\sigma_\rho$ (which implies that $\langle \sigma_\rho'(v)(h),h \rangle \geq 0$ for all $v, h \in V$), Young's inequality with $\gamma>0$ and the uniform boundedness of $J_y$ (see \cref{ass:smallLipschitzForOCProblem} \ref{item:test}) and of $(\Id-\Phi'(y_\rho))^{-1}$ (see the discussion above \eqref{ass:newAPhiInvCoerciveUniform}), we obtain
\begin{align*}
C_a'\norm{p_\rho}{V}^2 \leq C_\gamma + \gamma\norm{p_\rho}{V}^2.
\end{align*}
Selecting $\gamma$ sufficiently small so that the right-most term is absorbed onto the left, we obtain a 
bound on $\{p_\rho\}$ independent of $\rho$. This gives rise to the convergence (for a subsequence that has been relabelled)
\[p_\rho\weaklyto p.\]
In a similar way, we also obtain $q_\rho \weaklyto q$. Define 
\begin{align*}
 \lambda_\rho &:= \frac 1\rho \sigma_{\rho}'(y_\rho-\Phi(y_\rho))^* p_\rho,\\
 \mu_\rho &:= \frac 1\rho (\Id-\Phi'(y_\rho))^*\sigma_{\rho}'(y_\rho-\Phi(y_\rho))^*p_\rho= -J_y(y_\rho, z_\rho, f_\rho) - A^*p_\rho,
\end{align*}
the latter of which, since the right-hand side converges, satisfies 
\begin{alignat}{3}
 \mu_\rho &\weaklyto  \mu := -J_y(y, z, f)-A^*p
\label{eq:convMuAndXi}.
\end{alignat}
Setting $ \lambda := (\Id-\Phi'(y)^*)^{-1} \mu$ in \eqref{eq:convMuAndXi} we get \eqref{eq:wSS1}.

\medskip

\noindent\textit{2. Inequality relating multiplier to adjoint.}
Again using monotonicity of $\sigma_\rho$, 
\begin{align*}
\langle  J_y(y_\rho, z_\rho, f_\rho)+ A^*p_\rho, (\Id-\Phi'(y_\rho))^{-1}p_\rho\rangle = -\langle  \mu_\rho, (\Id-\Phi'(y_\rho))^{-1}p_\rho\rangle
&= -\frac 1\rho \langle \sigma_{\rho}'(y_\rho-\Phi(y_\rho))^* p_\rho,  p_\rho \rangle \leq 0,
\end{align*}
and taking the limit superior of this,   we obtain (noting that $ (\Id-\Phi'(y_\rho))^{-1}p_\rho \weaklyto  (\Id-\Phi'(y))^{-1}p$ by \eqref{ass:doubleContWeak})
\begin{align*}
0 &\geq \limsup_{\rho \to 0}\langle  J_y(y_\rho, z_\rho, f_\rho) + A^* p_\rho, (\Id-\Phi'(y_\rho))^{-1}p_\rho\rangle \\
&\geq \limsup_{\rho \to 0}\langle  J_y(y_\rho, z_\rho, f_\rho), (\Id-\Phi'(y_\rho))^{-1}p_\rho\rangle  + \liminf_{\rho \to 0} \langle A(\Id-\Phi'(y_\rho))^{-1} p_\rho, p_\rho\rangle \tag{using $\limsup(a_n + b_n) \geq \limsup(a_n) + \liminf(b_n)$}\\
&\geq \langle  J_y(y, z, f), (\Id-\Phi'(y))^{-1}p \rangle +\langle A(\Id-\Phi'(y))^{-1}p, p \rangle \\
&= \langle  -\mu^*, (\Id-\Phi'(y))^{-1}p \rangle
\end{align*}
using the continuity of the Fr\'echet derivative from \cref{ass:smallLipschitzForOCProblem} \ref{item:test} and  \eqref{ass:weakLowerSC} for the final inequality. This shows \eqref{eq:wSS5}.

\medskip

\noindent\textit{3. VI relating control to adjoint.}
 Finally, writing the VI relating $u_\rho$ and $\theta_\rho := p_\rho + q_\rho$ in \eqref{eq:penalisedOCSystem} as
\[0 \leq \langle J_f(y_\rho, z_\rho, f_\rho)- \theta_\rho , v-f_\rho \rangle + (f_\rho - f^*, v-f_\rho)_H = \langle J_f(y_\rho, z_\rho, f_\rho), v-f_\rho \rangle-\langle \theta_\rho, v-f_\rho \rangle 
+ (f_\rho - f^*, v-f_\rho)_H  \quad \forall v \in \Uad\]
 and taking the limit inferior here and using the continuity of $J_f$ from \cref{ass:smallLipschitzForOCProblem} \ref{item:test} and the identity $\liminf_n (a_n + b_n) \leq \limsup_n a_n + \liminf b_n$, we get the desired inequality.
\end{proof}

The next results (till the end of this section) use the fact that $(\cdot)^+\colon V \to V$ is continuous. Furthermore, the next proposition uses weak sequential continuity of the map too.

\begin{prop}[Orthogonality conditions]\label{prop:secondPart}
 We have
\begin{subequations}
\begin{align*}
\langle \xi_1^*, (p^*)^+ \rangle = \langle \xi_1^*, (p^*)^- \rangle = \langle \xi_2^*, (q^*)^+ \rangle = \langle \xi_2^*, (q^*)^- \rangle&= 0.
\end{align*}
\end{subequations} 
\end{prop}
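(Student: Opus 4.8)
The plan is to pass to the limit $\rho\searrow 0$ in the penalised optimality system \eqref{eq:penalisedOCSystem}, reusing the convergences established in the proof of \cref{prop:weakCStationarity}, and to exploit two elementary pointwise inequalities for $\sigma_\rho$ together with a Cauchy--Schwarz splitting. Recall from \cref{prop:convergenceOfOptimalPoints} that $(y_\rho^*,z_\rho^*,f_\rho^*)\to(y^*,z^*,f^*)$ in $V\times V\times H$. I would set $r_\rho:=y_\rho^*-\Phi(y_\rho^*)$ and $s_\rho:=z_\rho^*-\Phi(z_\rho^*)$; by \eqref{ass:PhiCC} these converge strongly in $V$ to $r^*:=y^*-\Phi(y^*)$ and $s^*:=z^*-\Phi(z^*)$. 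Writing $\xi_{1,\rho}:=\tfrac1\rho\sigma_\rho(r_\rho)=f_\rho^*-Ay_\rho^*\in H$ (from the penalised state equation) and using $Ay_\rho^*\to Ay^*$ in $V^*$ and $f_\rho^*\to f^*$ in $V^*$, we get $\xi_{1,\rho}\to\xi_1^*=f^*-Ay^*$ \emph{strongly} in $V^*$ (and likewise $\xi_{2,\rho}:=\tfrac1\rho\sigma_\rho(s_\rho)\to\xi_2^*$). From the proof of \cref{prop:weakCStationarity} we also have $p_\rho^*\weaklyto p^*$, $q_\rho^*\weaklyto q^*$ in $V$, and the penalisation multipliers $\lambda_\rho:=\tfrac1\rho\sigma_\rho'(r_\rho)p_\rho^*$, $\zeta_\rho:=\tfrac1\rho\sigma_\rho'(s_\rho)q_\rho^*$ are bounded in $V^*$; moreover, invoking the (weak sequential) continuity of $(\cdot)^+\colon V\to V$ together with $V\ctsCompact H$, we obtain $(p_\rho^*)^\pm\weaklyto(p^*)^\pm$ and $(q_\rho^*)^\pm\weaklyto(q^*)^\pm$ in $V$.

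The heart of the argument is the following estimate, valid for any $v\in V_+$: using the pointwise bounds $0\le\sigma_\rho(r)\le\sigma_\rho'(r)\,r$ and $\sigma_\rho'(r)\,r^2\le 2\,\sigma_\rho(r)\,r$ (both immediate from \eqref{eq:mrhoHK}), together with Cauchy--Schwarz for the nonnegative measure $\tfrac1\rho\sigma_\rho'(r_\rho)\,\mathrm dx$,
\[
\dual{\xi_{1,\rho}}{v}
= \int_\Omega \frac1\rho\sigma_\rho(r_\rho)\,v
\le \int_\Omega \frac1\rho\sigma_\rho'(r_\rho)\,r_\rho\,v
\le \Big(\int_\Omega \frac1\rho\sigma_\rho'(r_\rho)\,r_\rho^2\Big)^{1/2}\Big(\int_\Omega \frac1\rho\sigma_\rho'(r_\rho)\,v^2\Big)^{1/2}.
\]
The first factor is controlled by $\int_\Omega\tfrac2\rho\sigma_\rho(r_\rho)\,r_\rho=2\dual{\xi_{1,\rho}}{r_\rho}$, which, by the strong convergences $\xi_{1,\rho}\to\xi_1^*$ in $V^*$ and $r_\rho\to r^*$ in $V$, tends to $2\dual{\xi_1^*}{y^*-\Phi(y^*)}=0$ by the complementarity relation in \eqref{eq:complementarity}. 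Choosing $v=(p_\rho^*)^+$, the second factor equals $\int_\Omega\tfrac1\rho\sigma_\rho'(r_\rho)\big((p_\rho^*)^+\big)^2=\dual{\lambda_\rho}{(p_\rho^*)^+}$, which stays bounded since $\lambda_\rho$ is bounded in $V^*$ and $(p_\rho^*)^+$ in $V$; choosing $v=(p_\rho^*)^-$, it equals $-\dual{\lambda_\rho}{(p_\rho^*)^-}$, again bounded. Hence $\dual{\xi_{1,\rho}}{(p_\rho^*)^\pm}\to 0$ (and, since $\dual{\xi_{1,\rho}}{(p_\rho^*)^\pm}\ge 0$, this is unambiguous).

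It then remains to pass to the limit: because $\xi_{1,\rho}\to\xi_1^*$ strongly in $V^*$ and $(p_\rho^*)^\pm\weaklyto(p^*)^\pm$ in $V$, the pairings converge, $\dual{\xi_{1,\rho}}{(p_\rho^*)^\pm}\to\dual{\xi_1^*}{(p^*)^\pm}$, whence $\dual{\xi_1^*}{(p^*)^+}=\dual{\xi_1^*}{(p^*)^-}=0$. The identical chain of estimates with $s_\rho$, $\zeta_\rho$, $q_\rho^*$, $z^*$ in place of $r_\rho$, $\lambda_\rho$, $p_\rho^*$, $y^*$ yields $\dual{\xi_2^*}{(q^*)^+}=\dual{\xi_2^*}{(q^*)^-}=0$.

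I expect the main obstacle to be not conceptual but in selecting the right combination of pointwise $\sigma_\rho$-inequalities so that, after Cauchy--Schwarz, one factor is literally a duality pairing that is stable in the limit (the term $\dual{\xi_{1,\rho}}{r_\rho}$, which vanishes thanks to complementarity) while the other is bounded by the already-available $V^*$-bounds on the penalisation multipliers $\lambda_\rho,\zeta_\rho$; a naive bound retaining a factor $\rho^{-1}$ would be useless. Care is also needed to ensure $\xi_{1,\rho}$ converges \emph{strongly} in $V^*$ — which is why one rewrites it as $f_\rho^*-Ay_\rho^*$ and uses the strong convergence of $(y_\rho^*,f_\rho^*)$ — and that $(p_\rho^*)^\pm$ converges weakly in $V$, which is exactly where the assumed continuity of $(\cdot)^+\colon V\to V$ (plus the compact embedding $V\ctsCompact H$) enters.
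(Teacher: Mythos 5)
Your proposal is correct and follows essentially the same route as the paper's proof: a weighted Cauchy--Schwarz estimate in which the first factor is controlled by $\dual{\xi_{1,\rho}}{r_\rho}\to\dual{\xi_1^*}{y^*-\Phi(y^*)}=0$ (complementarity) and the second by the a priori bound on $\dual{\lambda_\rho}{p_\rho^*}$ (equivalently $\dual{\mu_\rho}{(\Id-\Phi'(y_\rho^*))^{-1}p_\rho^*}$), followed by strong $V^*$-convergence of $\xi_{1,\rho}=f_\rho^*-Ay_\rho^*$ against weak $V$-convergence of $(p_\rho^*)^\pm$. The only difference is cosmetic: your unified pointwise inequalities $\sigma_\rho(r)\le\sigma_\rho'(r)\,r$ and $\sigma_\rho'(r)\,r^2\le 2\sigma_\rho(r)\,r$ absorb the paper's explicit case split over $\{0\le r_\rho<\rho\}$ and $\{r_\rho\ge\rho\}$, giving a cleaner packaging of the identical estimate.
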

In the proof, we use specific properties of the fact that $H$ is a Lebesgue space. The proof is almost identical to that of \cite[Theorem 5.11]{AHROCQVI} but we give it here for completeness.
\begin{proof}
Let us introduce the sets
\[M_1(\rho) := \{ 0 \leq y_\rho - \Phi(y_\rho) < \epsilon\} \qquad\text{and}\qquad M_2(\rho) := \{y_\rho - \Phi(y_\rho) \geq \epsilon\}.\]
Since $\langle \xi_\rho, y_\rho-\Phi(y_\rho) \rangle \to \langle \xi^*, y-\Phi(y) \rangle = 0$,  we find
\begin{align*}
\frac 1\rho \int_{M_1(\rho)}\frac{(y_\rho - \Phi(y_\rho))^3}{2\epsilon} + \frac 1\rho\int_{M_2(\rho)}\left(y_\rho - \Phi(y_\rho)-\frac \epsilon 2\right)(y_\rho - \Phi(y_\rho)) &\to 0,
\end{align*}
and as both integrands  are non-negative, 
\begin{equation}\label{eq:pr1}
\norm{\frac{\chi_{M_1(\rho)}(y_\rho - \Phi(y_\rho))^{\frac 32}}{\sqrt{\rho \epsilon}}}{} \to 0\qquad\text{and}\qquad \norm{\frac{\chi_{M_2(\rho)}(y_\rho - \Phi(y_\rho)-\frac\epsilon 2)}{\sqrt{\rho}}}{} \to 0,
\end{equation}
where for the second convergence we used the fact that $y_\rho - \Phi(y_\rho) \geq y_\rho - \Phi(y_\rho)-\epsilon\slash 2 \geq 0$.
We calculate
\begin{align}
\nonumber \langle \xi_\rho, p_\rho \rangle &= \frac 1\rho \int_{M_1(\rho)}\frac{(y_\rho-\Phi(y_\rho))^2}{2\epsilon}p_\rho  + \frac 1\rho \int_{M_2(\rho)}\left(y_\rho-\Phi(y_\rho)-\frac \epsilon 2\right)p_\rho\\
&\leq \frac 12\norm{\chi_{M_1(\rho)}\frac{(y_\rho-\Phi(y_\rho))^{3\slash 2}}{\sqrt{\rho\epsilon}}}{}\norm{\frac{(y_\rho-\Phi(y_\rho))^{1\slash 2}}{\sqrt{\rho\epsilon}}\chi_{M_1(\rho)}p_\rho}{}  + \norm{\frac{\chi_{M_2(\rho)}\left(y_\rho-\Phi(y_\rho)-\frac \epsilon 2\right)}{\sqrt{\rho}}}{}\norm{\frac{\chi_{M_2(\rho)}p_\rho}{\sqrt{\rho}}}{}.\label{eq:productXiP}
\end{align}
Now, using \eqref{eq:pr1}, the first factor in each term above converges to zero and hence the above right-hand side will converge to zero if we are able to show that the second factor in each term remains bounded.
Since $\mu_\rho$ and $(\Id-\Phi'(y_\rho))^{-1}p_\rho$ are bounded (for the latter, see \eqref{ass:newPhiInvInvertible} and the discussion), so is their duality product, and therefore
\begin{align*}
\nonumber C &\geq |\langle \mu_\rho, (\Id-\Phi'(y_\rho))^{-1}p_\rho \rangle|\\
\nonumber  &=\frac 1\rho \left|\int_\Omega  \sigma_\rho'(y_\rho-\Phi(y_\rho))(p_\rho)^2\right|\\
&=\frac{1}{\rho}\int_{\Omega}\chi_{M_1(\rho)}\frac{y_\rho-\Phi(y_\rho)}{\epsilon}(p_\rho)^2 + \frac{1}{\rho}\int_{\Omega}\chi_{M_2(\rho)}(p_\rho)^2.
\end{align*}
Both of the terms on the right-hand side are individually bounded uniformly in $\rho$ as the integrands are non-negative. This fact then implies from \eqref{eq:productXiP} that 
\[\langle \xi^*, p^* \rangle = 0.\]
Replacing $p_\rho$ by $(p_\rho)^+$ in \eqref{eq:productXiP} and in the above calculation, we also obtain in the same way (utilising the fact\footnote{This is due to the compact embedding $V \ctsCompact H$ and the fact that $(\cdot)^+\colon H \to H$ is continuous as well as the boundedness of $(\cdot)^+\colon V \to V$ that we assumed in the introduction.} that $v_n \weaklyto v$ in $V$ implies that $v_n^+ \weaklyto v^+$ in $V$)
\[\langle \xi^*, (p^*)^+ \rangle = 0.\]
\end{proof}

We are left to show the conditions \eqref{eq:eaGerd1} and \eqref{eq:eaGerd2} on the multipliers.  To do so, we will follow an approach motivated by \cite[Lemma~2.6]{Wachsmuth2014:2}. 
\begin{lem}
	\label{lem:limit_process_multiplier}
	If $g_n \weaklyto g$ in $V^*$
	and $s_n \to s$ in $V$ with
	$s_n \ge 0$ and
	\begin{equation*}
		\dual{g_n}{v} = 0
		\qquad\forall v \in V, \;\;0 \le v \le s_n,
	\end{equation*}
then
	\begin{equation*}
		\dual{g}{v} = 0
		\qquad\forall v \in V, \;\; 0 \le v \le s.
	\end{equation*}
\end{lem}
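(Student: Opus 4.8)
The statement is a stability-under-weak-limits result for the property "$g$ annihilates all nonnegative test functions below $s$". The natural approach is a density/truncation argument: given $v \in V$ with $0 \le v \le s$, I want to build a sequence $v_n \in V$ with $0 \le v_n \le s_n$ and $v_n \to v$ in $V$, so that $\dual{g_n}{v_n} = 0$ for every $n$ and passing to the limit (using $g_n \weaklyto g$ in $V^*$ together with $v_n \to v$ strongly in $V$) yields $\dual{g}{v} = 0$. The obvious candidate is $v_n := \inf(v, s_n) = v - (v - s_n)^+$. By hypothesis $(\cdot)^+ \colon V \to V$ is continuous (this is \cref{ass:forCStationarity}, in force for these results), and $v - s_n \to v - s$ in $V$, so $(v-s_n)^+ \to (v-s)^+ = 0$ in $V$ because $v \le s$; hence $v_n \to v$ in $V$. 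Clearly $0 \le v_n \le v$ and $v_n \le s_n$ pointwise, so $v_n$ is an admissible test function for $g_n$, giving $\dual{g_n}{v_n} = 0$.

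**Executing the limit.** With $v_n \to v$ in $V$ and $g_n \weaklyto g$ in $V^*$, I have $\dual{g_n}{v_n} \to \dual{g}{v}$ (weak-strong pairing converges), so $\dual{g}{v} = 0$. Since $v$ was an arbitrary element of $V$ with $0 \le v \le s$, this is exactly the conclusion. The only mild subtlety is checking $v_n \in V$: this is immediate since $V$ is closed under $(\cdot)^+$ and under linear combinations, so $v_n = v - (v-s_n)^+ \in V$; and $v_n \ge 0$ holds because $(v - s_n)^+ \le v^+ = v$ when... actually one needs $0 \le v$, which is given, together with $v_n = \min(v, s_n) \ge \min(0, s_n)$; but $s_n \ge 0$ so $\min(v,s_n) \ge 0$ pointwise a.e., hence $v_n \in V_+$.

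**Main obstacle.** There is no deep obstacle here; the argument is essentially the $V$-continuity of the lattice operation combined with weak-strong duality convergence. The one point that requires a little care is ensuring the truncation $v_n = \inf(v, s_n)$ really does converge \emph{strongly} in $V$ (not just in $H$): this is precisely where the assumed continuity of $(\cdot)^+ \colon V \to V$ is used, and it is worth spelling out that $(v - s_n)^+ \to (v-s)^+$ in $V$ and that $(v-s)^+ = 0$ since $v \le s$ a.e. (so the nonnegative function $(v-s)^+$ vanishes in $H$, hence is the zero element of $V$). A secondary point is that one should pick the test functions to be the $\min$ with $s_n$ rather than with $s$, since it is $g_n$ (not $g$) for which we have the annihilation property, and $g_n$'s admissible cone is governed by $s_n$.

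*I would then remark* that this lemma applies with $g_n = \lambda_\rho^*$ (suitably, the multipliers from \cref{prop:weakCStationarity}'s construction), $s_n$ a sequence converging to $\Phi(y^*) - y^* \ge 0$ built from the penalised iterates, and $s = \Phi(y^*) - y^*$, which combined with the characterisation of $\{v \in V : v = 0 \text{ q.e. on } \{y^* = \Phi(y^*)\}\}$ in a regular Dirichlet space (functions vanishing q.e. on the coincidence set are dense among, or coincide with, an appropriate order ideal) will yield \eqref{eq:eaGerd1}, and symmetrically \eqref{eq:eaGerd2}.
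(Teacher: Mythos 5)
Your proof is correct and follows essentially the same route as the paper: both use the truncation $v_n := \inf(v,s_n)$, observe $0\le v_n\le s_n$ and $v_n\to v$ in $V$ (via continuity of $(\cdot)^+\colon V\to V$ and $(v-s)^+=0$), and then pass to the limit in the weak--strong pairing $\dual{g_n}{v_n}=0$. You just make explicit the details the paper leaves implicit.
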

\begin{proof}
	Let $v \in V$ with $0 \le v \le s$ be given. Set $v_n:= \inf(v,s_n)$, which satisfies $0 \leq v_n \leq s_n$ and $v_n \to v$.
	Thus,
	\begin{equation*}
		0
		=
		\dual{g_n}{\inf(v,s_n)}
		\to
		\dual{g}{v}.
	\end{equation*}
\end{proof}
In the next lemma, we use the fact that 	\begin{equation*}
		\sigma_\rho( z ) = \sigma_\rho( z - v )
		\quad
		\forall v \in V, \;\; 0 \le v \le z^-.
	\end{equation*}
This essentially means  that $\sigma_\rho(z)$ ignores changes of $z$ in the regions where $z$ is already negative.

\begin{lem}
	\label{lem:complementarity_condition}
	We have
	\begin{equation*}
		\dual{\lambda^*}{v} = 0
		\quad
		\forall v \in V, \;\;0 \le v \le \Phi(y^*)- y^*
		.
	\end{equation*}
\end{lem}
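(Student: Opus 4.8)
The goal is to show that the limiting multiplier $\lambda^*$ appearing in \eqref{eq:wSS1} annihilates all $v \in V$ with $0 \le v \le \Phi(y^*) - y^*$. The plan is to obtain this by passing to the limit $\rho \searrow 0$ in an analogous statement for the $\rho$-level multipliers, using \cref{lem:limit_process_multiplier} with $g_n := \lambda_\rho$ (appropriately identified) and $s_n := \Phi(y_\rho) - y_\rho$ (or a suitable surrogate). First I would set $z_\rho := y_\rho - \Phi(y_\rho)$ and recall from the proof of \cref{prop:weakCStationarity} that $\lambda_\rho = \frac1\rho \sigma_\rho'(z_\rho)^* p_\rho$ and that $\mu_\rho = (\Id - \Phi'(y_\rho))^* \lambda_\rho \weaklyto \mu$, with $\lambda_\rho \weaklyto \lambda^*$ following (via \eqref{ass:doubleContWeak}) from $\lambda_\rho = (\Id - \Phi'(y_\rho)^*)^{-1}\mu_\rho$; moreover $y_\rho \to y^*$ in $V$ so $z_\rho \to y^* - \Phi(y^*)$ and hence $s_\rho := z_\rho^- = (\Phi(y_\rho) - y_\rho)^+ \to (\Phi(y^*) - y^*)^+ = \Phi(y^*) - y^*$ in $V$ by continuity of $(\cdot)^+\colon V \to V$.

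**The key step at the $\rho$ level.**

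The heart of the argument is the pointwise-in-$\rho$ claim
\[
\dual{\lambda_\rho}{v} = 0 \qquad \forall v \in V,\;\; 0 \le v \le s_\rho.
\]
To see this, fix such a $v$. Using that $0 \le v \le z_\rho^-$, the function $\sigma_\rho$ vanishes on $(-\infty,0]$ and $z_\rho \le z_\rho - v \le 0$ a.e.\ on $\{v > 0\}$, so that $\sigma_\rho'(z_\rho) = 0$ a.e.\ on $\{v \ne 0\}$ (indeed $\sigma_\rho' \equiv 0$ on $(-\infty,0]$, and $z_\rho \le -v < 0$ there). Since $\sigma_\rho'(z_\rho)$ acts as multiplication by the measurable function $\sigma_\rho'(z_\rho(\cdot))$, we get
\[
\dual{\lambda_\rho}{v} = \frac1\rho \int_\Omega \sigma_\rho'(z_\rho) p_\rho v = \frac1\rho \int_{\{v \ne 0\}} \sigma_\rho'(z_\rho) p_\rho v = 0,
\]
because the integrand is supported where $v \ne 0$, yet $\sigma_\rho'(z_\rho)$ is zero there. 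I should double-check the boundary case $v = z_\rho^-$ pointwise: on $\{z_\rho < 0\}$ we have $\sigma_\rho'(z_\rho) = 0$, and on $\{z_\rho \ge 0\}$ we have $z_\rho^- = 0$ hence $v = 0$ there — so in all cases the product $\sigma_\rho'(z_\rho) v = 0$ a.e., which is exactly what is needed.

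**Passing to the limit.**

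With the $\rho$-level identity in hand, I apply \cref{lem:limit_process_multiplier} with $g_\rho = \lambda_\rho$, $s_\rho = (\Phi(y_\rho) - y_\rho)^+$: we have $\lambda_\rho \weaklyto \lambda^*$ in $V^*$, $s_\rho \to \Phi(y^*) - y^*$ in $V$, $s_\rho \ge 0$, and $\dual{\lambda_\rho}{v} = 0$ for all $v \in V$ with $0 \le v \le s_\rho$. The lemma then yields $\dual{\lambda^*}{v} = 0$ for all $v \in V$ with $0 \le v \le \Phi(y^*) - y^*$, which is the claim. The main obstacle, and the one requiring care, is the identification $\sigma_\rho'(z_\rho) v = 0$ a.e.\ for admissible $v$: this relies on interpreting the operator $\sigma_\rho'(z_\rho)\colon V \to V^*$ concretely as multiplication by the Nemytskii function $r \mapsto \sigma_\rho'(r)$ evaluated at $z_\rho$ (which is legitimate since $\sigma_\rho \in C^1(\R)$ with bounded, Lipschitz derivative, so the chain rule for the Nemytskii operator applies), together with the crucial support observation; everything else is a routine limit passage already packaged in the preceding lemmas. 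An alternative, should the pointwise manipulation feel delicate, is to invoke the displayed identity $\sigma_\rho(z) = \sigma_\rho(z - v)$ for $0 \le v \le z^-$ recorded just before \cref{lem:complementarity_condition}, differentiate it in a direction, and read off $\dual{\sigma_\rho'(z_\rho)^* p_\rho}{v} = 0$ directly from the definition of the Gâteaux derivative — but the Nemytskii computation above is the cleanest route.
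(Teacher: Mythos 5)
Your proof is correct and follows essentially the same route as the paper: establish $\dual{\lambda_\rho}{v}=0$ for $0\le v\le(y_\rho-\Phi(y_\rho))^-$, then pass to the limit via \cref{lem:limit_process_multiplier} using $\lambda_\rho\weaklyto\lambda^*$ and $(y_\rho-\Phi(y_\rho))^-\to\Phi(y^*)-y^*$ in $V$. The only cosmetic difference is that you justify $\sigma_\rho'(z_\rho)v=0$ by a direct pointwise Nemytskii support argument, whereas the paper derives it from the invariance identity $\sigma_\rho(z)=\sigma_\rho(z-v)$ displayed just before the lemma — you even note this alternative yourself.
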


The condition on $\lambda^*$ means,
roughly speaking,
that $\lambda^*$ vanishes
on the inactive set on which $y^* - \Phi(y^*) < 0$.

\begin{proof}
	The property on $\sigma_\rho$ stated above immediately implies
	\begin{equation*}
		\sigma_\rho'(z) v = 0
		\quad
		\forall v \in V, \;\; 0 \le v \le z^-.
	\end{equation*}
	Using the definition of $\lambda_\rho$, we find
	\begin{equation*}
		\dual{\lambda_\rho}{v}
		=
		\frac1\rho \dual{\sigma_{\rho}'(y_\rho-\Phi(y_\rho)) v }{p_\rho}
		=
		0
		\quad\forall v \in V, \;\;  0 \le v \le (y_\rho-\Phi(y_\rho))^-
	\end{equation*}
	by the above property of $\sigma_\rho'$.
	As $(y_\rho-\Phi(y_\rho))^- \to (y^* -\Phi(y^*))^- = \Phi(y^*) - y^*$,
	\cref{lem:limit_process_multiplier} yields the claim.
\end{proof}
Obviously, a similar condition also holds for $\zeta^*$.

\begin{prop}\label{prop:multiplerConditionsGerd}
Let \cref{ass:forCStationarity} hold. We have
\begin{align*}
 \langle \lambda^*, v \rangle &= 0  \text{  $\forall v \in V : v = 0$ q.e. on $\{y^* = \Phi(y^*)\}$},\\
 \langle \zeta^*, v \rangle &= 0  \text{  $\forall v \in V : v = 0$ q.e. on $\{z^* = \Phi(z^*)\}$}.
\end{align*}
Hence, the  system \eqref{eq:cStationaritySystem} is satisfied.
\end{prop}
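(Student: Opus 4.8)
The plan is to upgrade the conclusion of \cref{lem:complementarity_condition} — which states $\langle \lambda^*, v \rangle = 0$ for every $v \in V$ with $0 \le v \le \Phi(y^*) - y^*$ — to the stated condition on the coincidence set by a capacity-theoretic truncation argument; the argument for $\zeta^*$ will be identical, using the analogue of \cref{lem:complementarity_condition} recorded just after its proof. Throughout, every element of $V$ is identified with its quasi-continuous representative. Write $w := \Phi(y^*) - y^* \in V$, so that $w \ge 0$ q.e.\ by \eqref{eq:complementarity} and $\{w = 0\} = \{y^* = \Phi(y^*)\}$ up to a set of capacity zero. Rescaling the inequality $0 \le v \le w$, \cref{lem:complementarity_condition} already yields
\[\langle \lambda^*, v \rangle = 0 \qquad \forall v \in V \text{ with } 0 \le v \le n w, \; n \in \N.\]

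First I would reduce to nonnegative test functions. Let $v \in V$ with $v = 0$ q.e.\ on $\{y^* = \Phi(y^*)\}$. Then $v^+, v^- \in V$ (this is the standing assumption on $V$ from \cref{sec:intro} together with \cref{ass:forCStationarity}), both are nonnegative, and both vanish q.e.\ on $\{w = 0\}$ since $\{v^+ > 0\} \cup \{v^- > 0\} \subseteq \{v \ne 0\}$ up to capacity zero. By the linearity of $\lambda^*$ it therefore suffices to treat the case $v \ge 0$, $v = 0$ q.e.\ on $\{w = 0\}$.

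For such a $v$, set $v_n := \inf(v, nw) = v - (v - nw)^+ \in V$. Then $0 \le v_n \le n w$, so $\langle \lambda^*, v_n \rangle = 0$ for all $n$ by the displayed identity. The sequence $\{v_n\}$ is nondecreasing in $n$ and converges q.e.\ to $v$: on $\{w > 0\}$ we have $n w \to +\infty$, so $v_n = v$ for $n$ large, while on $\{w = 0\}$ both $v_n$ and $v$ vanish q.e. Since $V$ is a regular Dirichlet space and $v \in V$, the monotone convergence property of $V$ (cf.\ \cite{FukushimaBook}) gives $v_n \to v$ in $V$. Passing to the limit in $\langle \lambda^*, v_n \rangle = 0$ yields $\langle \lambda^*, v \rangle = 0$, which is \eqref{eq:eaGerd1}; the same reasoning applied to $z^*$ and $\Phi(z^*) - z^* \ge 0$ gives \eqref{eq:eaGerd2}.

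Finally, assembling the pieces: \cref{prop:weakCStationarity} provides \eqref{eq:eafirst}, \eqref{eq:eafirstB}, the control variational inequality and \eqref{eq:eaCs}, \eqref{eq:eaCsB}; \cref{prop:secondPart} provides \eqref{eq:eaXiP}; the present computation provides \eqref{eq:eaGerd1}, \eqref{eq:eaGerd2}; and $y^* = \M(f^*)$, $z^* = \m(f^*)$ hold by hypothesis. Hence the full system \eqref{eq:cStationaritySystem} holds, which proves \cref{thm:ocPenalisation}. The step I expect to be the main obstacle is the $V$-convergence $\inf(v, nw) \to v$: this is precisely where the regular Dirichlet space structure and the capacity-level hypothesis (that $v$ vanishes q.e.\ on the coincidence set, not merely a.e.) are indispensable, and one has to phrase $v_n = \inf(v, nw)$ and its pointwise limit in terms of quasi-continuous representatives so that every identity is meaningful up to sets of capacity zero.
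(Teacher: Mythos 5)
The approach diverges from the paper's at the crucial step, and the divergence introduces a genuine gap. The paper obtains the approximating sequence abstractly from Mignot's characterisation of the tangent cone $\mathcal{T}_{V_+}(\hat y) = \{\varphi \in V : \varphi \ge 0 \text{ q.e.\ on } A\}$ (a nontrivial theorem of Dirichlet-space theory, cited from \cite{MR1756264} and Mignot), and then post-processes that sequence via $\max(0,\cdot)$. You instead take the explicit candidate $v_n := \inf(v,nw)$ and assert that $v_n \to v$ in $V$ by ``the monotone convergence property of $V$.'' That convergence is exactly the hard content of Mignot's lemma, and it is not a monotone convergence result: the standard Dirichlet-space monotone convergence theorem requires an a priori bound $\sup_n \|v_n\|_V < \infty$ (and even then only yields weak convergence and strong convergence of Ces\`aro means), whereas $\{\inf(v,nw)\}$ need not be bounded in $V$ at all.

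Concretely, your claim can fail. Take $V = H^1_0(0,1)$, $w(x) = e^{-1/(x-1/2)^+}(1-x)$ and $v(x) = ((x-1/2)^+)^{3/4}(1-x)$, both in $H^1_0(0,1)$, both vanishing on $[0,1/2]$, so $v = 0$ q.e.\ on $\{w=0\}$. With $t := x - 1/2$ and $t_n$ the boundary of $\{v > nw\}$ (determined by $n e^{-1/t_n} = t_n^{3/4}$, so $1/t_n \approx \ln n$), one computes
\begin{equation*}
\norm{(v-nw)^+}{H^1_0}^2
\ge \tfrac12 \int_0^{t_n} (n w')^2\,\mathrm{d}t - \int_0^{t_n}(v')^2\,\mathrm{d}t
\gtrsim \sqrt{\ln n} - O\bigl((\ln n)^{-1/2}\bigr) \to \infty,
\end{equation*}
so $\inf(v,nw) = v - (v-nw)^+$ does not converge to $v$ in $V$, and is not even bounded in $V$. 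The trouble is precisely the regime where $w$ vanishes much faster than $v$ near $\partial\{w=0\}$, which the hypothesis ``$v = 0$ q.e.\ on $\{w=0\}$'' does not exclude. The rest of your proof (the reduction to $v \ge 0$ via $v = v^+ - v^-$, the rescaling of \cref{lem:complementarity_condition}, and the assembly of the system \eqref{eq:cStationaritySystem} from the preceding propositions) is fine; the fix is to replace the explicit sequence $\inf(v,nw)$ by the approximating sequence furnished by the tangent-cone identity, as the paper does, and then pass through $\max(0,\cdot)$ using its $V$-continuity.
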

\begin{proof}
Set $\hat y := \Phi(y^*) - y^*$. Since $\hat y \in V$, it has a quasi-continuous representative and we will identify
$\hat y$ with its representative. Define the active set
\[A := \{ \hat y = 0 \}.\] 
Let $v \in V$ with $v \geq 0$ and $v = 0$ q.e. on $A$ be given. Since we have the following expression for the tangent cone of $V_+$ (see \cite[Theorem 6.57]{MR1756264} in the $V=H^1_0(\Omega)$ setting or \cite[Lemme 3.2]{MR0423155} in the general Dirichlet space setting):
\[\mathcal{T}_{V_+}(\hat y) = \{ \varphi \in V : \varphi \geq 0 \text{ q.e. on $A$} \},\]
it follows that $v \in \mathcal{T}_{V^+}(\hat y)$ and hence, there exists a sequence $\{v_n\}$ with $v_n \to v$ in $V$ and $v_n \le t_n \hat y$ for some $t_n > 0$. Thus,
    \[0 \le \max( 0, v_n / t_n ) \le \hat y\]
and we can apply the conclusion of \cref{lem:complementarity_condition} and get
    \[ \langle \lambda^*, \max(0, v_n / t_n) \rangle = 0.\]
Multiplying by $t_n$ and passing to the limit $n \to \infty$ gives
  \[  \langle \lambda^*, v \rangle = 0   \text{ for all $v \ge 0$, $v = 0$ q.e. on $A$.}
\]
Then, using the decomposition $v=v^+ - v^-$, we obtain the result.
\end{proof}

\begin{remark}[$\mathcal{E}$-almost C-stationarity]\label{prop:epsAlmostCStationarity}
Define the inactive sets
\[\mathcal{I}_1 = \{ y^* < \Phi(y^*) \} \quad\text{and}\quad\mathcal{I}_2 = \{ z^* < \Phi(z^*) \}.\]
The following argument shows that the conditions
\begin{align}
\forall \tau > 0, \exists E^\tau \subset \mathcal{I}_1 \text{ with } |\mathcal{I}_1 \setminus E^\tau| \leq \tau : \langle \lambda^*, v \rangle &= 0 \quad \forall v \in V  : v = 0 \text{ a.e. on $\Omega \setminus E^\tau$}\label{eq:eaEaaa},\\
\forall \tau > 0, \exists E^\tau \subset \mathcal{I}_2 \text{ with } |\mathcal{I}_2 \setminus E^\tau| \leq \tau : \langle \zeta^*, v \rangle &= 0 \quad \forall v \in V  : v = 0 \text{ a.e. on $\Omega \setminus E^\tau$}\label{eq:eaEBaa},
\end{align}
are an easy consequence of \cref{prop:multiplerConditionsGerd} and of the regularity of the Lebesgue measure.
For every $\tau > 0$, there exists an open set $O^\tau$
with
$\set{y^* = \Phi(y^*)} \subset O^\tau$
and
$\abs{ O^\tau \setminus \set{y^* = \Phi(y^*)} } \le \tau$.
Using $\mathcal I_1 = \Omega \setminus \set{y^* = \Phi(y^*)}$
and $E^\tau := \Omega \setminus O^\tau$
we get
$E^\tau \subset \mathcal I_1$
and
$\abs{ \mathcal I_1 \setminus E^\tau } \le \tau$ by taking complements.
Next, we take an arbitrary function $v \in V$ with $v = 0$ a.e.\ on $\Omega \setminus E^\tau = O^\tau$.
Since $O^\tau$ is open, this gives $v = 0$ q.e.\ on $O^\tau$
and, in particular, $v = 0$ q.e.\ on $\set{y^* = \Phi(y^*)}$.
Thus, \cref{prop:multiplerConditionsGerd} yields
$\dual{\lambda^*}{v} = 0$ and
we get \eqref{eq:eaEaaa}.
\end{remark}

\begin{remark}[Regularity of optimal control]Suppose we have $J_f(y,z,f) = \nu f$ and we take $\Uad$ to be of the box constraint type
\[\Uad = \{ u \in H: u_a \leq u \leq u_b \text{ a.e. in $\Omega$}\}\]
for given functions $u_a, u_b \in H$.  The VI relating $f^*$ and $p^*$ is, in this case,
\[f^* \in \Uad : \langle \nu f^* - p^*-q^* , f^*-v\rangle \leq 0
 \quad \forall v \in \Uad,\]
Using the characterisation in \cite[\S II.3]{Kinderlehrer},
\[\frac 1\nu (p^*+q^*)+ \left(u_a-\frac{p^*+q^*}{\nu}\right)^+-\left(\frac{p^*+q^*}{\nu}-u_b\right)^+ = f^*\]
and it follows that $f^* \in V$ if $u_a$ and $u_b$ belong to $V$.
\end{remark}

\subsection{Alternative stationarity conditions}\label{sec:penalisationOfQVI}

In some papers, e.g. \cite{MR3056408}, in direct analogy with the finite dimensional setting, rather than the inequality condition \eqref{eq:eaCs}, the stronger condition
\begin{equation*}
\langle \lambda^*, \psi p^* \rangle \geq 0 \quad \text{for all sufficiently smooth and non-negative $\psi$}
\end{equation*}
is required in order to satisfy the terminology \textit{C-stationarity}. We can show this holds under an additional assumption.
\begin{prop}[Satisfaction of alternative criterion in C-stationarity]\label{prop:altCondition}Under the conditions of
\cref{thm:ocPenalisation}, assume also that for $q_\rho \weaklyto q$ in $V$,
\begin{equation}\label{ass:wlscForPhiInvForSCS}
\liminf_{\rho \to 0} \langle A^*q_\rho, (\Id-\Phi'(y_\rho^*))^{-1}(\psi q_\rho) \rangle  \geq \langle A^*q, (\Id-\Phi'(y^*))^{-1}(\psi q) \rangle\quad \forall \psi \in W^{1,\infty}(\Omega) \text{ with } \psi \geq 0.
\end{equation}
Then the inequality condition \eqref{eq:eaCs} can be strengthened to
\[\langle \lambda^*, \psi p^*\rangle \geq 0 \quad \forall \psi \in W^{1,\infty}(\Omega) \text{ with } \psi \geq 0.\]
Under the obvious modifications to the above assumption, \eqref{eq:eaCsB} can also be strengthened similarly.
\end{prop}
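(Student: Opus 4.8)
The plan is to re-run Step~2 of the proof of \cref{prop:weakCStationarity}, carrying the weight $\psi$ through every estimate and invoking \eqref{ass:wlscForPhiInvForSCS} exactly where \eqref{ass:weakLowerSC} was used. I would argue only for $\lambda^*$ and $p^*$; the statement for $\zeta^*$ and $q^*$ follows verbatim after replacing $(y^*,p^*,\lambda^*,J_y)$ by $(z^*,q^*,\zeta^*,J_z)$ and \eqref{ass:wlscForPhiInvForSCS} by its analogue. At the outset I would note that one may assume $J_y \not\equiv 0$: otherwise the adjoint equation forces $p_\rho^* \equiv 0$, whence $p^*=0$ and $\lambda^*=0$, and the inequality is trivial.

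First I would recall from the proof of \cref{prop:weakCStationarity} the quantities
\[
\lambda_\rho = \tfrac1\rho\,\sigma_\rho'(y_\rho^*-\Phi(y_\rho^*))^*p_\rho^*,\qquad
\mu_\rho = (\Id-\Phi'(y_\rho^*))^*\lambda_\rho = -J_y(y_\rho^*,z_\rho^*,f_\rho^*) - A^*p_\rho^*,
\]
together with $(y_\rho^*,z_\rho^*)\to(y^*,z^*)$ in $V\times V$, $f_\rho^*\to f^*$ in $H$, and $p_\rho^*\weaklyto p^*$ in $V$ along the relabelled subsequence. Fixing $\psi\in W^{1,\infty}(\Omega)$ with $\psi\ge 0$, the observation that drives everything is that $\psi p_\rho^*\in V$ and $\sigma_\rho'\in[0,1]$ pointwise (by \eqref{eq:mrhoHK}) yield the sign
\[
0 \le \langle\lambda_\rho,\psi p_\rho^*\rangle = \tfrac1\rho\int_\Omega\sigma_\rho'(y_\rho^*-\Phi(y_\rho^*))\,\psi\,(p_\rho^*)^2 .
\]
Rewriting $\lambda_\rho=\big((\Id-\Phi'(y_\rho^*))^{-1}\big)^*\mu_\rho$ (legitimate by the invertibility \eqref{ass:newPhiInvInvertible}) and abbreviating $a_\rho:=(\Id-\Phi'(y_\rho^*))^{-1}(\psi p_\rho^*)$, this becomes
\[
0 \le \langle\mu_\rho,a_\rho\rangle = -\langle J_y(y_\rho^*,z_\rho^*,f_\rho^*),a_\rho\rangle - \langle A^*p_\rho^*,a_\rho\rangle .
\]

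The core of the argument is then the passage $\rho\searrow 0$. Since multiplication by $\psi\in W^{1,\infty}(\Omega)$ is a bounded linear operator on $V$ (here the Dirichlet-space structure of $V$ enters), $\psi p_\rho^*\weaklyto\psi p^*$ in $V$; combined with $y_\rho^*\to y^*$ in $V$ and the weak continuity \eqref{ass:doubleContWeak} (note $y_\rho^*,y^*\in B_\epsilon(y^*)$), this gives $a_\rho\weaklyto a^*:=(\Id-\Phi'(y^*))^{-1}(\psi p^*)$ in $V$. Hence $\langle J_y(y_\rho^*,z_\rho^*,f_\rho^*),a_\rho\rangle\to\langle J_y(y^*,z^*,f^*),a^*\rangle$ by continuity of $J_y$ (\cref{ass:smallLipschitzForOCProblem} \ref{item:test}) and the strong--weak pairing, while \eqref{ass:wlscForPhiInvForSCS} applied with $q_\rho=p_\rho^*$ gives $\liminf_{\rho\searrow0}\langle A^*p_\rho^*,a_\rho\rangle\ge\langle A^*p^*,a^*\rangle$. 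Taking $\limsup_{\rho\searrow0}$ in the last displayed inequality therefore delivers $0\le-\langle J_y(y^*,z^*,f^*)+A^*p^*,a^*\rangle$. Finally I would insert the adjoint equation \eqref{eq:wSS1}, i.e.\ $J_y(y^*,z^*,f^*)+A^*p^*=-(\Id-\Phi'(y^*))^*\lambda^*$, to conclude $\langle(\Id-\Phi'(y^*))^*\lambda^*,a^*\rangle\ge 0$, that is $\langle\lambda^*,(\Id-\Phi'(y^*))a^*\rangle=\langle\lambda^*,\psi p^*\rangle\ge 0$, which is the claim.

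The main obstacle is exactly the term $\langle A^*p_\rho^*,a_\rho\rangle$: under only the hypotheses of \cref{thm:ocPenalisation} this is a weak--weak pairing whose $\liminf$ cannot be controlled, and it is precisely this difficulty that the extra assumption \eqref{ass:wlscForPhiInvForSCS} is tailored to overcome (just as \eqref{ass:weakLowerSC} does in the unweighted setting of \cref{prop:weakCStationarity}). A secondary point worth checking carefully is that multiplication by $\psi$ maps $V$ boundedly into itself, so that $\psi p_\rho^*\in V$ and $\psi p_\rho^*\weaklyto\psi p^*$; beyond that the proof is bookkeeping, keeping track that all the relevant iterates lie in $B_\epsilon(y^*)$ so that \eqref{ass:newPhiInvInvertible}, \eqref{ass:doubleContWeak} and \eqref{ass:wlscForPhiInvForSCS} are applicable.
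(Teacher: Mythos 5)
Your proposal is correct and follows essentially the same route as the paper: test the $p_\rho^*$-adjoint equation with $(\Id-\Phi'(y_\rho^*))^{-1}(\psi p_\rho^*)$, observe the sign $\langle\lambda_\rho,\psi p_\rho^*\rangle\ge 0$ coming from $\sigma_\rho'\ge 0$ and $\psi\ge 0$, then pass to the limit using $\psi p_\rho^*\weaklyto\psi p^*$, \eqref{ass:doubleContWeak}, continuity of $J_y$, and the ad hoc lower semicontinuity hypothesis \eqref{ass:wlscForPhiInvForSCS}. Your small additions (the explicit reduction to $J_y\not\equiv 0$, and the note that multiplication by $\psi\in W^{1,\infty}(\Omega)$ is bounded $V\to V$, which justifies $\psi p_\rho^*\in V$ and $\psi p_\rho^*\weaklyto\psi p^*$) are harmless clarifications of steps the paper leaves tacit, not a different argument.
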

\begin{proof}
Testing the equation for $p_\rho$ with $(\Id-\Phi'(y_\rho))^{-1}(\psi p_\rho) $, noticing that $\psi p_\rho \weaklyto \psi p$ in $V$ and arguing in a similar way to the proof of \cref{prop:weakCStationarity},
\begin{align*}
\limsup_{\rho \to 0}-\langle \mu_\rho, (\Id-\Phi'(y_\rho))^{-1}(\psi p_\rho) \rangle  
&= \limsup_{\rho \to 0}\langle J_y(y_\rho,z_\rho,f_\rho), (\Id-\Phi'(y_\rho))^{-1}(\psi p_\rho)\rangle\\
&\quad   + \liminf_{\rho \to 0}\langle A^*p_\rho, (\Id-\Phi'(y_\rho))^{-1}(\psi p_\rho) \rangle\\
&\geq \langle J_y(y,z,f), (\Id-\Phi'(y))^{-1}(\psi p) \rangle  + \langle A^*p, (\Id-\Phi'(y))^{-1}(\psi p) \rangle\tag{using \cref{ass:smallLipschitzForOCProblem} \ref{item:test} and \eqref{ass:wlscForPhiInvForSCS}}\\
&= -\langle \mu, (\Id-\Phi'(y))^{-1}(\psi p)\rangle\\
&= -\langle \lambda, \psi p\rangle.
\end{align*}
On the other hand, we have
\[ \langle \mu_\rho, (\Id-\Phi'(y_\rho))^{-1}(\psi p_\rho) \rangle = \langle \lambda_\rho, \psi p_\rho \rangle  = \frac 1\rho \int_\Omega \sigma_\rho'(y_\rho-\Phi(y_\rho))(p_\rho)^2\psi\geq 0\]
which implies the result. 
\end{proof}

\begin{remark}
Some works (such as \cite{MR2822818}) call the system \eqref{eq:cStationaritySystem} C-stationarity only if the `q.e.' in conditions \eqref{eq:eaGerd1} and \eqref{eq:eaGerd2} are replaced by `a.e'. Note that this is a stronger condition.
\end{remark}

\section{Conclusion}\label{sec:conclusion}
In conclusion, we have provided a thorough theory of Lipschitz and differential stability for $\M$ and $\m$ and the penalised versions. We studied in depth the penalised problem \eqref{eq:penalisedPDEGeneralMRho} and its properties and used it to derive stationarity conditions for a general class of optimisation problems with the extremal maps as constraints. We conclude with some remarks.

\begin{itemize}
\item Applying this theory to other real-world phenomena (such as  applications in biomedicine \cite{2021arXiv211002817S}) in this context and studying numerical schemes in line with \cref{remark:constructiveApproxOfExtremals} are natural next steps.

\item It would be interesting to derive strong stationarity conditions for \eqref{eq:ocProblemMostGeneral} using the approaches of \cite{Wachsmuth2013:2, AHROCQVI}.

\item Resolving whether in \cref{thm:convergenceOfMRho} $\m_\rho(f)$ indeed converges (weakly) to $\m(f)$ or providing a counterexample is open.

\item We aim to investigate whether the convergence result for $\M_\rho(f)$  in \cref{thm:convergenceOfMRho} can be used to obtain differentiability results for $\M$ without the small Lipschitz assumption \eqref{ass:smallLipschitzForOCProblem}.

\end{itemize}

\section*{Acknowledgements}
We thank the referees for their helpful comments. CNR was partially supported by NSF grant DMS-2012391.

\printbibliography

\end{document}